\documentclass{article}
\usepackage[british]{babel}
\usepackage{amsmath,amssymb,enumerate,latexsym,xcolor,theorem, dsfont, hyperref,graphicx,subcaption,float}
\usepackage[a4paper,portrait,
            left=3.5cm,
            right=3.5cm,
            top=3.5cm,
            bottom=3.5cm]{geometry}
\newcommand{\assign}{:=}

\newcommand{\nobracket}{}

\newcommand{\tmem}[1]{{\em #1\/}}
\newcommand{\tmop}[1]{\ensuremath{\operatorname{#1}}}

\DeclareMathOperator*{\esssup}{ess\,sup}
\newenvironment{enumeratealpha}{\begin{enumerate}[a{\textup{)}}] }{\end{enumerate}}

\newenvironment{enumerateroman}{\begin{enumerate}[i.] }{\end{enumerate}}
\newenvironment{proof}{\noindent\textbf{Proof\ }}{\hspace*{\fill}$\Box$\medskip}
\newtheorem{definition}{Definition}[section]
\newtheorem{theorem}[definition]{Theorem}
\newtheorem{lemma}[definition]{Lemma}
\newtheorem{remark}[definition]{Remark}
\newtheorem{example}[definition]{Example}
\newtheorem{proposition}[definition]{Proposition}
\newtheorem{corollary}[definition]{Corollary}
\numberwithin{equation}{section}

\newcommand{\RR}{\ensuremath{\mathbb{R}}}

\begin{document}

\title{A pathwise approach to semilinear\\
SPDEs with L\'evy drivers}

\author{
  Dirk Becherer\thanks{Humboldt-Universit\"at zu Berlin, Berlin, Germany.}
  \and
  Peter Friz\thanks{Technische Universit\"at Berlin and Weierstrass Institute, Berlin, Germany.}
  \and
  Yuchen Sun\thanks{Humboldt-Universit\"at zu Berlin and Technische
    Universit\"at Berlin, Berlin, Germany.}
}
\date{24th~September 2026}

\maketitle

\begin{abstract}
We extend the notion of robust viscosity solutions to semilinear rough partial differential equations (RPDEs) of the form
\[
\partial_t u + \mathcal{L}_t u + f(t,x,u,\sigma^{\top}\nabla_x u) + h(t,x,u)\,\diamond dW = 0, 
\quad u(T,x)=\xi(x),
\]
driven by a discontinuous path $W$ of finite $q$-variation for some $q<2$.
This includes, in particular, typical paths from a broad class of  L\'evy processes, for instance from $\alpha$-stable L\'evy processes with $\alpha<2$.
Our notion of solution is
formulated through a solution map that agrees with the classical viscosity
solution for smooth drivers and is continuous in the terminal condition
and the driver, with both the driver and the solution paths viewed in a suitable
Skorokhod-type decorated-path metric
\cite{chevyrev_superdiffusive_2024}. These requirements naturally imply that robust viscosity solutions must employ Marcus-type jumps, and we establish well-posedness, a flow property, and a stochastic representation in terms of the rough BSDEs with discontinuous Young drivers \cite{becherer_rough_2026}. For L\'evy processes 
of finite $q$-variation with $q<2$,
our notion of RPDE solutions provides a pathwise interpretation for corresponding SPDEs, which are seen to be Markov processes in an infinite-dimensional function space.
\end{abstract}

\medskip
\noindent\textbf{Keywords:} RPDE; SPDE; viscosity solutions; L\'evy processes; Skorokhod topologies;
Marcus integration; BSDE.

\smallskip
\noindent\textbf{MSC Subject Classification:} Primary 60L50,
60H15, 35D40; secondary 35B30, 60G51. 

\section{Introduction}

Let $W$ be a deterministic c\`agl\`ad\footnote{Left-continuous paths with right limits are natural here, as we work with backward equations.} path with values in $\mathbb{R}^e$ and finite $q$-variation for some $q\in[1,2)$. Denote by $D^q([0,T];\mathbb{R}^e)$ the space of such paths. We study solutions $u=(u_i(t,x))_{i=1,\ldots,k}$, $t\in[0,T]$, $x\in\mathbb{R}^d$, to the system of semilinear rough PDEs
\begin{equation}
  d u_i + (\mathcal{L}_t u_i + f_i(t,x,u,\sigma^{\top} \nabla_x u_i)) \, dt + h_i(t,x,u) \diamond dW = 0, 
  \quad u(T,x) = \xi(x), \quad 1 \leqslant i \leqslant k, 
  \label{semilinear-RPDE}
\end{equation}
with the differential operator
\begin{equation*}
  \mathcal{L}_t
  =
  \frac12
  \sum_{m,n=1}^d
  (\sigma\sigma^\top)_{m,n}(t,x)\,
  \partial_{x_m x_n}^2
  +
  \sum_{m=1}^d
  b_m(t,x)\,\partial_{x_m}.
\end{equation*}
A wide range of stochastic processes have sample paths of finite $q$-variation almost surely, including fractional Brownian motion with Hurst parameter $H>1/2$ as a continuous example. A prominent discontinuous example relevant for this paper is given by $\alpha$-stable L\'evy processes with $\alpha < 2$ (see Section~\ref{section-markov-process} for further examples). Solutions to RPDEs \eqref{semilinear-RPDE} then offer a pathwise notion for solutions to the corresponding stochastic PDEs (SPDEs), which are driven by such stochastic processes.

We recall the notion of robust viscosity solutions. More precisely, a
solution map $(\xi,W)\mapsto u$ is called a robust viscosity solution map if it
recovers the classical viscosity solution of \eqref{semilinear-RPDE}, whenever
$W$ is smooth, and if it is continuous with respect to both the terminal condition
$\xi$ and the driving signal $W$. In particular, this implies continuity of
the solution map restricted to smooth drivers. This allows us to view the robust
viscosity solution map as the unique continuous extension of the classical
viscosity solution map to the closure of smooth drivers in a suitable topology;
see Remark~\ref{remark-unique-continuous-extension} for further details.
This concept is well established for PDEs with continuous rough drivers; see,
for example,
\cite{caruana_partial_2009,caruana_rough_2011,diehl_backward_2012,
friz_rough_2014,friz_eikonal_2017,diehl_backward_2017,
liang_multidimensional_2023,song_backward_2025-1}. But to the best of our knowledge, a
corresponding robust viscosity theory for discontinuous drivers has not yet
been developed.

In the discontinuous setting, the choice of topologies on the domain
and codomain of the solution map becomes particularly important. Following 
\cite{chevyrev_canonical_2019,chevyrev_superdiffusive_2024,becherer_rough_2026}, our choice is
guided by two considerations. First, topologies on both spaces need to allow discontinuous paths 
to be approximated by continuous ones. Second, we want
the topology on the solution space to retain sufficient information about how
a solution evolves across a jump. These considerations lead us to use a
Skorokhod-type topology on decorated paths introduced in \cite{chevyrev_superdiffusive_2024}; see
Section~\ref{section-decorated-path}.

The main goal of this paper is to extend the notion of robust viscosity
solutions to semilinear PDEs with discontinuous drivers of finite
$q$-variation in the Young regime $q<2$.

We introduce a precise notion of robust viscosity solutions for discontinuous
drivers in Definition~\ref{def-robust-viscosity-solution}. Our main results,
which are formulated compactly in Theorem~\ref{main-result}, establish the
well-posedness of the RPDE in this class, together with a flow property and a
stochastic representation in terms of the rough BSDEs recently developed in
\cite{becherer_rough_2026}. To substantiate the interpretation of this
pathwise solution as a solution to a corresponding SPDE, we further show in
Proposition~\ref{Prop-randomise} that the RPDE solution can be randomised to
obtain a stochastic process adapted to the filtration generated by the driving
process. Finally, when the driver is a L\'evy process,
Theorem~\ref{Theorem-Markov} shows that the randomised solution defines an infinite-dimensional
Markov process.

For our analysis, the choice of integration for jumps is crucial. The notation $\diamond\, dW$ indicates that jumps are integrated in the Marcus sense (see \cite{marcus_modeling_1980,kurtz_stratonovich_1995,kunita_stochastic_2004,applebaum_levy_2009}), meaning that the jump of $u$ is given by
\begin{align}\label{marcus-type-jump}
\Delta u(t,x) := u(t+,x) - u(t,x) = u(t+,x) - \varphi\big(-h(t,x,\cdot)\,\Delta W_t,\, u(t+,x), 0\big),
\end{align}
where, for a vector field $V$, the map $r \mapsto \varphi(V,x,r)$ denotes the flow associated to the ODE
\begin{align}\label{Marusjump-ODE}
  \frac{dy}{dr}(r) = V(y(r)), \quad y(1)=x, \quad 0 \le r \le 1.
\end{align}
We refer to \eqref{marcus-type-jump} as the \textbf{Marcus jump}. It differs from the more commonly used forward-type jump
\begin{align} \label{forward-jump}
    \Delta u(t,x) = - h(t,x,u(t+,x))\,\Delta W_t,
\end{align}
which describes an instantaneous displacement, whereas the Marcus jump corresponds to evolution along the vector field over a fictitious time interval. 
This distinction is important both from a modelling and an analytical point of view. In particular, it is known that SDEs with jumps interpreted in the Marcus sense enjoy properties similar to Stratonovich SDEs in the continuous case, such as the existence of Wong--Zakai-type smooth approximations. 
We further refer to \cite{chechkin_marcus_2014} for a discussion from an application point of view.

For the present work, however, the choice of the Marcus jump structure has
effectively already been made once we decide to work with robust viscosity
solutions and choose a topology on the driver space that is weak enough to
admit smooth approximations of $W$. Indeed, if $W^n$ is any sequence of smooth
paths converging to $W$ in this topology, continuity of the solution map
requires the corresponding classical viscosity solutions to converge to the
robust viscosity solution of the RPDE driven by $W$. As the following example
shows, even the weak requirement of pointwise convergence at the continuity
points of $W$ generally rules out forward jumps in the limiting solution. In
fact, as we prove in
Proposition~\ref{prop-robust-solution-marcus-jumps} and discuss further in
Remark~\ref{remark-marcus-lift-not-circular}, the notion of robust viscosity solution
naturally requires Marcus jump dynamics. Thus, in the discontinuous
setting considered here, requiring consistency with classical viscosity
solutions for smooth drivers together with continuity of the solution map also
amounts to selecting the Marcus jump structure.

\begin{example}  \label{example}
Given a sequence of smooth paths $(W^n)_{n \in \mathbb{N}} \subset C^\infty([0,T];\mathbb{R})$ converging pointwise to a c\`agl\`ad path $W$ at every $t \in [0,T]$ at which $W$ is continuous, consider the PDE with smooth driver
\[
\partial_t u^n(t,x)  + \Delta u^n(t,x) = \mathcal{R} u^n(t,x)\, \dot{W^n_t},
\]
with $W_T = W^n_T = 0$, $\mathcal{R} = \begin{pmatrix}
0 & 1\\
-1 & 0
\end{pmatrix}$, $u^n(T,x) = \sin(x) \begin{pmatrix}
a\\
b
\end{pmatrix}$. 
A direct computation shows that the solution is given by
\[
u^n(t,x) = e^{t-T} \sin(x)
\begin{pmatrix}
\cos(W^n_t) & \sin(W^n_t)\\
-\sin(W^n_t) & \cos(W^n_t)
\end{pmatrix}
\begin{pmatrix}
a\\
b
\end{pmatrix}.
\]
In the case $(a,b) = (1,0)$, this simplifies to
\[
u^n(t,x) = e^{t-T} \sin(x)
\begin{pmatrix}
\cos(W^n_t)\\
-\sin(W^n_t)
\end{pmatrix}.
\]
As $n \to \infty$, we obtain convergence of $u^n$ to
\begin{align} \label{solution-example}
u(t,x) = e^{t-T} \sin(x)
\begin{pmatrix}
\cos(W_t)\\
-\sin(W_t)
\end{pmatrix},
\end{align}
for all continuity points $t$ of $W$. Since we work with c\`agl\`ad paths, this already uniquely determines for all $t \in [0,T]$ the limiting solution $u$ of the RPDE
\begin{align} \label{example-RPDE}
    \partial_t u(t,x) + \Delta u(t,x) = \mathcal{R} u(t,x)\, \diamond dW_t.
\end{align}
At each jump time $t$, the values $u(t)$ and $u(t+)$ are respectively the
initial and terminal values of the ODE
\begin{align*}
\frac{d \varphi}{dr}(r) = \mathcal{R} \Delta W_{t}  \varphi(r), \quad \varphi(1)=u(t+)=e^{t-T} \sin(x)\, (\cos(W_{t+}),\; -\sin(W_{t+}))^{\top}, \quad 0 \le r \le 1.
\end{align*}
This means that the limiting solution $u$ in \eqref{solution-example} does not admit forward jumps as in \eqref{forward-jump}, but instead exhibits Marcus-type jumps as in \eqref{marcus-type-jump}.
We emphasize that pointwise convergence at continuity points alone is not
sufficient for our purposes. We therefore prove convergence in a
Skorokhod-type topology on the space of decorated paths; see
Section~\ref{section-decorated-path} and Section~\ref{section-example-revisited}.
\end{example}

The RPDE theory developed here naturally yields pathwise solutions to
semilinear SPDEs driven by discontinuous stochastic processes whose sample
paths almost surely have finite $q$-variation, $q<2$. Importantly, this
includes a large class of L\'evy processes. L\'evy-driven SPDEs
have been studied extensively; however, while the theory of SPDEs with forward
jumps is by now well developed (cf.\ the monograph by Peszat and Zabczyk
\cite{peszat_stochastic_2007}), SPDEs with Marcus jumps have only recently
begun to receive increased attention. We mention \cite{hartmann_first-order_2023}, where the authors show that, for first-order linear SPDEs, the Marcus interpretation is the appropriate framework to obtain solutions via the method of {stochastic characteristics, originally developed by Kunita \cite{kunita1990stochastic} for first-order linear SPDEs driven by Brownian motion}. The BSDE approach used in this paper and described below could be viewed as the analogue of the method of characteristics for second-order semilinear PDEs. We also mention the series of papers by Brzeźniak and Manna with coauthors \cite{brzezniak_weak_2019, brzezniak_wongzakai_2019, brzezniak_martingale_2019, brzezniak_weak_2020}, where different SPDEs driven by L\'evy noise arising in mathematical physics are studied. In these works, the use of Marcus jumps allows the solutions to preserve certain invariance properties that are intrinsic to the underlying physical models and are consistent with the corresponding continuous dynamics. The authors also point out that rough path techniques may provide a useful framework for the analysis of such equations. Although the SPDEs considered there differ from those studied in this paper, our results can be viewed as contributing to this line of investigation.

Besides being one of our main results, the rough FBSDE representation is also
one of the principal tools used in our analysis of the RPDE. In a nutshell, we define a candidate
for the robust viscosity solution by $u(t,x)\assign Y^{t,x}_t$, where
$(X^{t,x},Y^{t,x},Z^{t,x})$ solves the rough forward-backward
SDE (FBSDE)
\begin{align}
  X^{t,x}_s = & \; x + \int_t^s b(r,X^{t,x}_r)\,dr + \int_t^s \sigma(r,X^{t,x}_r)\,dB_r,\label{rough-FBSDE} \\
  Y^{t,x}_s = & \; \xi(X^{t,x}_T) + \int_s^T f(r,X^{t,x}_r,Y^{t,x}_r,Z^{t,x}_r)\,dr + \int_s^T h(r,X^{t,x}_r,Y^{t,x}_r) \diamond dW_r  - \int_s^T Z^{t,x}_r\,dB_r,  \nonumber
\end{align}
which is multidimensional in the $Y$-component, with $k>1$.
To show that the function $u$ defined in this way is indeed a robust viscosity
solution, it remains to verify the two defining properties: consistency with
classical viscosity solutions for smooth drivers and continuity with respect
to the terminal condition and the driving signal.

For a smooth driver $W$, the rough FBSDE and RPDE reduce to their classical
counterparts. We can therefore use the multidimensional BSDE and viscosity
solution theory developed by Pardoux in \cite{decreusefond_backward_1998},
which shows that $u(t,x)=Y_t^{t,x}$ is a viscosity solution of the
corresponding PDE. Since viscosity solution theory for multidimensional
systems appears less familiar than its scalar counterpart, we revisit it in
Section~\ref{Chap-ApproxPDE}, where we complement the existing theory by a
short argument for uniqueness in Theorem~\ref{multidimentional-visco}. This establishes consistency of our
RPDE solution with classical viscosity solutions for smooth drivers.

Establishing continuity of the solution map is considerably more delicate and
constitutes one of the main technical difficulties of the paper. Given paths
$(W^n)_{n\in\mathbb{N}}$ converging to $W$, we consider the corresponding
solutions $u^n(t,x)=Y_t^{n,t,x}$, where
$(X^{t,x},Y^{n,t,x},Z^{n,t,x})$ solves the rough FBSDE
\eqref{rough-FBSDE} driven by $W^n$. The goal is to prove that $u^n$ converges
to $u(t,x)=Y_t^{t,x}$. The stability theory for rough BSDEs, however, yields
only pointwise convergence of $u^n(t,x)$ to $u(t,x)$ at the continuity points
of $W$. For continuous drivers, an Arzel\`a--Ascoli argument would extend such
pointwise convergence to locally uniform convergence. However, such an argument cannot be directly applied in our setting, since the driving paths and hence the solutions are non-continuous. We therefore work with the Skorokhod-type topology on the space of decorated
paths. Heuristically, a
decorated path consists of the original path together with additional data at
each jump describing how the path would evolve during an inserted fictitious time
interval. We refer to these additional path segments as \emph{excursions}. To
lift an ordinary path to the decorated-path space, one must specify these
excursions. In our setting, the excursion of $u^n$ at a jump time $t$ is chosen as a
suitable parametrization of the Marcus trajectory
\[
  r\longmapsto
  \varphi\bigl(-h(t,x,\cdot)\,\Delta W^n_t,u^n(t+,x),r\bigr),
  \qquad r\in[0,1],
\]
where $\varphi$ is the flow defined by \eqref{Marusjump-ODE}.
We show that adjoining these excursions to $u^n$ yields a continuous function
and allows us to apply Arzel\`a--Ascoli type arguments. To this end, two further
difficulties must be overcome. First, Skorokhod-type convergence
also involves subtle time reparametrizations, which we study in greater
detail in Section~\ref{section-decorated-path}. Second, in the multidimensional
setting $k>1$, proving the equicontinuity required by the Arzel\`a--Ascoli
argument becomes substantially more delicate. It would be considerably
simpler if the BSDE solution $Y$ depended Lipschitz continuously on its
terminal condition in an $L^2$-sense. Since such results appear less readily available for multidimensional BSDEs in our problem, we
prove and apply a tailored pairwise stability estimate for rough FBSDEs
in Proposition~\ref{prop:uniform-pairwise-stability}.

Even after such compactness arguments, the limiting decorated path is not yet
fully identified. Pointwise convergence determines its underlying c\`agl\`ad
path as $u$, but it does not show that its limiting excursions are still
Marcus trajectories. Proving this identification requires a finer
stability analysis of the RPDE on the inserted fictitious
time intervals. Using the Markovian structure and the backward flow property
of the rough FBSDE, we show that the limiting dynamics on these intervals are
deterministic and adhere to the Marcus ODE.

Identifying the excursions is important in that it validates the choice made
above to equip RPDE solutions with Marcus excursions. To see that this choice
is not imposed artificially, one may start with smooth drivers $W^n$ and the
corresponding continuous solutions $u^n$. Since the paths $u^n$ are
continuous, they have no jump excursions to specify, and the choice of
decoration therefore has no effect. Nevertheless, the identification above
shows that their decorated-path limit carries Marcus excursions. Thus, smooth
approximation itself selects the Marcus decoration and justifies its use for
RPDE solutions with discontinuous drivers. For further details on this convergence and on the continuity of the RPDE
solution map, we refer to Theorem~\ref{theorem-Skorokhod} and
Proposition~\ref{Prop-Continuity}.

The rough FBSDE representation can be viewed as a nonlinear
Feynman--Kac result for RPDEs with discontinuous rough drivers, extending the classical
correspondence between BSDEs and semilinear parabolic PDEs due to
Pardoux and Peng
\cite{rozovskii_backward_1992}. For
continuous RPDEs, analogous BSDE methods have been applied in
\cite{diehl_backward_2017,liang_multidimensional_2023,
song_backward_2025-1}. Compared to approaches based on flow transformations
(see \cite[Chapter 12.2.4]{friz_course_2020} and the references therein), the BSDE approach
directly provides a stochastic representation of the solution. 

The paper is structured in the following way. The precise framework for Skorokhod-type topology on the space of decorated paths
is developed in Section~\ref{section-decorated-path}, where we recall results
on decorated paths and establish
several additional results needed in the proofs. Section~\ref{section-mainresult}
introduces our notion of solution to the RPDE \eqref{semilinear-RPDE} and
states the main results on well-posedness, the flow property, and the
stochastic representation via rough BSDEs. The proofs of these results are
given in Section~\ref{section-proof}. Finally,
Section~\ref{section-markov-process} turns to the stochastic setting. In
Section~\ref{Section-measurability}, we verify the measurability of the
solution map $(\xi,W)\mapsto u$, which allows us to randomise the pathwise
solution and obtain an adapted solution of the corresponding SPDE.
Section~\ref{Section-SPDE-Markov} then establishes the Markov property when the driving signal is a L\'evy process.

\section{Skorokhod-type Rough Path Metrics}\label{section-decorated-path}

We work with the framework of decorated paths introduced in \cite{chevyrev_superdiffusive_2024}. Subsection~\ref{Chap-decorated-paths} revisits their main results in a slightly modified framework, while Subsection~\ref{Chap-new-results-decorated-paths} develops new properties of decorated paths and of the associated Skorokhod-type metrics.

\subsection{Skorokhod-type Metrics on the Space of Decorated
Paths}\label{Chap-decorated-paths}

Our setting differs from that of \cite{chevyrev_superdiffusive_2024} in two respects. First, we work with
c{\`a}gl{\`a}d paths instead of c{\`a}dl{\`a}g paths. As a consequence, several definitions require minor modifications. Second, we allow paths to take values in a metrizable vector space $(E,d_E)$ instead of $\mathbb{R}^e$. This extension is motivated by our later analysis of the solution $u$ to the RPDE \eqref{semilinear-RPDE}, which takes values in $D ([0, T], E)$ with $E =
C (\mathbb{R}^d;\mathbb{R}^k)$, where $C(\mathbb{R}^d;\mathbb{R}^k)$ is equipped with the compact-open topology.

We start by recalling the definition of $p$-variation. Let $(E,d_E)$ be a
metric space and let $I=[a,b]\subset\mathbb R$. We denote by
$\mathcal P(I)$ the collection of all finite partitions
$\pi=(a=t_0<\cdots<t_n=b)$ of $I$. For a path $x:I\to E$ and $p>0$, define
the $E$-valued $p$-variation seminorm by
\[
\|x\|_{p;[a,b];E}
\assign
\left(
\sup_{(t_i)\in\mathcal P(I)}
\sum_i d_E(x_{t_i},x_{t_{i+1}})^p
\right)^{1/p}.
\]
We omit the target space $E$ whenever it is clear from the context. The
space $D^p(I;E)$ consists of all c{\`a}gl{\`a}d paths $x:I\to E$ with
finite $p$-variation.

\begin{definition}
  \label{Def-decorated}Let $I = [a, b] \subset \mathbb{R}$ be a closed interval and let $\Phi : I \rightarrow D ([0, 1] ; E)$ be a path taking values in the space of c{\`a}gl{\`a}d paths on $[0, 1]$. We say $t \in I$ is a stationary point of $\Phi$ if $\Phi(t)$ is constant on $[0,1]$. For $\Pi \subset I$, we denote by $\bar{\mathcal{D}}(I;E)$ the space of pairs $(\Phi,\Pi)$ satisfying the following properties:
  \begin{enumerateroman}
    \item the map $t \mapsto \Phi (t) (0)$ lies in $D (I)$,
    
    \item $\Pi$ is at most countable and contains all non-stationary points of
    $\Phi$,
    
    \item for all $\varepsilon > 0$, there exist only finitely many points $t
    \in \Pi$ such that
    \[ \sup_{s \in [0, 1]} d_E (\Phi (t) (s), \Phi (t) (0)) |  > \varepsilon.
    \]
  \end{enumerateroman}
\end{definition}
Whenever no confusion arises, we write $\bar{\mathcal{D}} (I) \equiv
\bar{\mathcal{D}} (I ; E).$ \\
For two compact intervals $I_1$ and $I_2$, let
$\Lambda_{I_1;I_2}$ denote the set of strictly increasing bijections from
$I_1$ to $I_2$, and write $\Lambda_I\assign\Lambda_{I;I}$. For paths
$\varphi^1\in D(I_1)$ and $\varphi^2\in D(I_2)$, define their Fr{\'e}chet
distance by
\[
  d_{\mathcal F}(\varphi^1,\varphi^2)
  \assign
  \inf_{\lambda\in\Lambda_{I_2;I_1}}
  \|\varphi^1\circ\lambda-\varphi^2\|_\infty.
\]
We say that $\varphi^1$ and $\varphi^2$ are reparametrizations of each other
if $d_{\mathcal F}(\varphi^1,\varphi^2)=0$.

We now introduce an equivalence relation on $\bar{\mathcal{D}} (I)$. We say that
$(\Phi^1, \Pi^1), (\Phi^2, \Pi^2) \in \bar{\mathcal{D}} (I)$ are equivalent if, for every $t \in I$, 
the functions
\[ s \mapsto \Phi^i (t) (s)  \hspace{0.17em} \mathds{1}_{\{s < 1\}} + \Phi^i
   (t +) (0)  \hspace{0.17em} \mathds{1}_{\{s = 1\}} \]
are reparametrizations of each other. In particular, this equivalence
implies $\Phi^1 (t -) (0) = \Phi^1 (t) (0) = \Phi^2 (t) (0) = \Phi^2 (t -)
(0)$ and $\Phi^1 (t +) (0) = \Phi^2 (t +) (0)$ for all $t \in I$. Observe that this equivalence relation is independent of the choice of 
$\Pi^1$ and $\Pi^2$.

\begin{definition}
  We define
\[
\mathfrak{D}(I;E) := \bar{\mathcal{D}}(I;E) / \sim
\]
to be the space of equivalence classes of $\bar{\mathcal{D}}(I;E)$ under the relation $\sim$. 
Elements of $\mathfrak{D}(I;E)$ are called \textbf{decorated paths}\footnote{Compared with \cite{chevyrev_superdiffusive_2024}, we use this terminology slightly more broadly: when no confusion can arise, we also refer to elements of the non-quotient space $\bar{\mathcal{D}}(I;E)$ as decorated paths.}.
\end{definition}

Let $(\Phi, \Pi) \in \bar{\mathcal{D}} ([a, b])$ and suppose that $\Pi = \{t_k \}_{k = 1,
\ldots, m}$. We define the \textbf{$\delta$-extension} of $\Phi$, denoted by 
$\Phi^\delta \in D([a,b+\delta];E)$, by adding a fictitious time interval of length $\delta > 0$ as follows. If $\Pi = \emptyset$, we set
\[ \Phi^{\delta} (t) = \Phi (t) (0)  \quad \text{for } t \in [a, b]  \quad
   \text{and} \quad \Phi^{\delta} (t) = \Phi (b)(0)  \quad \text{for } t \in (b,
   b + \delta].\]
Otherwise, define
\[ r = \sum_{k = 1}^m 2^{- k}  \quad \text{and} \quad r_k = \frac{2^{- k}
   \delta}{r} . \]
We also define a strictly increasing c{\`a}gl{\`a}d function
\begin{equation*}
  \tau^{\delta} : [a, b] \to [a, b + \delta], \qquad \tau^{\delta} (t) = t +
  \sum_{k = 1}^m r_k  \hspace{0.17em} \mathds{1}_{\{t_k < t\}}.
\end{equation*}
and define $\Phi^{\delta}$ by\footnote{If $\Phi(t)(\cdot) \in C([0,1])$ and $\Phi(t+)(1)=\Phi(t)(1)$ for all 
$t \in [a,b]$, then this construction coincides with that of 
\cite{chevyrev_canonical_2019}.} 
\begin{equation}
  \Phi^{\delta}_s = \left\{\begin{array}{ll}
    \Phi (t) (0), & \text{if } s = \tau^{\delta}_t  \text{ for some } t \in [a,
    b],\\
    \Phi (t_k)  ((s - \tau^{\delta}_{t_k}) / r_k), \nobracket & \text{if }
    s \in (\tau^{\delta}_{t_k}, \tau^{\delta}_{t_k +} \nobracket]  \text{ for
    some } 1 \leqslant k \leqslant m.
  \end{array}\right. \label{delta-extention}
\end{equation}
Following \cite{chevyrev_superdiffusive_2024} we
refer to the additional path segments on the
intervals\linebreak $(\tau^{\delta} (t_k), \tau^{\delta} (t_k +)]$, $1 \leqslant k \leqslant m$,
as \textbf{excursions} of the path.
For $(\Phi^1, \Pi^1), (\Phi^2, \Pi^2) \in \bar{\mathcal{D}} (I ; E)$ we introduce the pseudo metric
\[ \alpha_{\infty ; [a, b]} ((\Phi^1, \Pi^1), (\Phi^2, \Pi^2)) \assign
   \lim_{\delta \to 0} \inf_{\lambda \in \Lambda_{[a, b + \delta]}} \max
   \left\{ \| \lambda - \mathrm{id} \|_{\infty}, \hspace{0.27em} \| \Phi^{1,
   \delta} \circ \lambda - \Phi^{2, \delta} \|_{\infty ; [a, b + \delta]}
   \right\} . \]
\cite[Lemma~8.12]{chevyrev_superdiffusive_2024} shows that the above limit 
exists and is independent of the specific choices made in the construction, 
such as the ordering of the jumps, the definition of $r_k$, and, in particular, 
the choice of $\Pi^1$ and $\Pi^2$ (provided that Definition~\ref{Def-decorated}(ii) is satisfied).\\
Therefore, we suppress the explicit reference to $\Pi$ and speak only of $\Phi \in \bar{\mathcal{D}} ([a, b])$, but when necessary, we state it explicitly. Moreover, it is shown that
$\alpha_{\infty ; [a, b]} (\Phi^1, \Phi^2) = 0$ if and only if $\Phi^1 \sim
\Phi^2$. Thus, although $\alpha_{\infty}$ is not a metric on 
$\bar{\mathcal{D}}(I)$, it induces a genuine metric on the quotient space 
$\mathfrak{D}(I)$.
The authors of {\cite{chevyrev_superdiffusive_2024}} also show that
the above 
construction extends to the $p$-variation setting by replacing the uniform 
norm with the $p$-variation norm. We write
\[
  \bar{\mathcal{D}}^{p\text{-var}}(I)
  =
  \left\{ \hspace{0.17em} \Phi \in \bar{\mathcal{D}}(I)
  \hspace{0.27em} \mid \hspace{0.27em}
  \| \Phi^\delta \|_{p\text{-var}} < \infty \right\}
\]
for the corresponding non-quotient space. The quotient space
$\mathfrak{D}^{p\text{-var}}(I)$ is defined from
$\bar{\mathcal{D}}^{p\text{-var}}(I)$ using the same equivalence relation as above and is
equipped with the $p$-variation-type Skorokhod metric $\alpha_p$ defined by
\begin{equation}
  \begin{aligned}
  &\alpha_{p ; [a, b]} ((\Phi^1, \Pi^1), (\Phi^2, \Pi^2))\\
  &\quad\assign \lim_{\delta
  \to 0} \inf_{\lambda \in \Lambda_{[a, b + \delta]}} \max \left\{
  \begin{aligned}
    &\|\lambda-\mathrm{id}\|_\infty,\\
    &{d_E(\Phi^1(a)(0),\Phi^2(a)(0))}\mathbin{+}
    \|\Phi^{1,\delta}\circ\lambda-\Phi^{2,\delta}\|_{p;[a,b+\delta]}
  \end{aligned}\right\} .
  \end{aligned}
  \label{Skorokhod-metric}
\end{equation}
{The initial-value term distinguishes paths differing by a
constant. For $p=\infty$,
we omit the initial-value term in the metric formulas, since it is already captured by the uniform norm.}

There are different ways to embed c\`agl\`ad functions into the space of decorated paths. Depending on the chosen embedding, the Skorokhod metric $\alpha_{p;[a,b]}$ may recover previously studied Skorokhod-type metrics on the path space $D(I)$. Two natural embeddings are given by
\phantomsection\label{embedding-i}
\begin{align}
  \imath : D(I) \hookrightarrow \bar{\mathcal{D}}(I),
  \qquad
  (\imath h)(t)(s)
  &= h(t), \nonumber\\
  \jmath : D(I) \hookrightarrow \bar{\mathcal{D}}(I),
  \qquad
  (\jmath h)(t)(s)
  &= s h(t+) + (1-s) h(t). \nonumber
\end{align}
where the latter is simply the linear path connecting $h(t)$ and $h(t+)$. Notice that we suppressed the set $\Pi$ in the above definition, but when necessary, it may always be chosen as any countable set containing all discontinuities of $h$. \\
For later purposes, we also introduce the continuous non-decreasing surjection
$c^{\delta} : [a, b + \delta] \rightarrow [a, b]$ defined by $c^{\delta}
\equiv (\imath \tmop{id}_{[a, b]})^{\delta}$ in the sense of
\eqref{delta-extention}. One easily verifies that $c^\delta$ admits the equivalent representation
\[ c^{\delta} (t) := \inf \{ s \in [a, b] : \tau^{\delta} (s) \geqslant
   t \}, \]
that is, $c^\delta$ is the left inverse of $\tau^{\delta}$, i.e. $c^{\delta} \circ
\tau^{\delta} = \tmop{id}_{[a, b]}.$ \\

Returning to these two embeddings, we recall their relation with the classical
Skorokhod metrics.
Let $\sigma^p_{J1}$ denote the $p$-variation-type $J1$ metric on $D^p(I)$ (see \cite{friz_differential_2018}) and $\sigma^p_{M1}$ the $p$-variation-type $M1$ metric on $D^p(I)$ (see \cite{chevyrev_canonical_2019}). In the special case $p=\infty$, these reduce to the classical Skorokhod $J1$ metric on $D(I)$ (see \cite[Chap.~12]{billingsley_convergence_1999} or \cite[Chap.~VI]{jacod_limit_2003}) and the classical $M1$ metric on $D(I)$ (see \cite[Chap.~12.3]{whitt_stochastic-process_2002}). It is straightforward to verify that $(D^p(I),\sigma^p_{J1})$ is {homeomorphic} to $(\imath D^p(I),\alpha_p)$, while $(D^p(I),\sigma^p_{M1})$ is {homeomorphic} to $(\jmath D^p(I),\alpha_p)$. Thus, the Skorokhod metric on the space of decorated paths provides a unified framework for both $J1$- and $M1$-type Skorokhod metrics on path spaces.

The decorated-path framework also allows us to go beyond the
classical $J 1 / M 1$ settings. Given a family of complete vector fields $\{
U_t : t \in I \}$, define
\begin{equation} \label{def-kappa-V}
  ({\kappa_U} h) (t) (s) \assign \left\{\begin{array}{ll}
    h (t), & s = 0,\\
    \varphi (U_t, h (t +), s), & s \in (0, 1],
  \end{array}\right.
\end{equation}
where $(x, s) \mapsto \varphi (U_t, x, s)$ denotes the (backward) flow
associated with the ODE

\begin{align*}
  \frac{dy}{ds} = U_t (y (s)),  \qquad y (1) = x, \qquad 0 <
  s \le 1.
\end{align*}
If $h$ is continuous at $t$ and $U_t \equiv 0$ then $({\kappa_U} h) (t)$ is
continuous. \\
In the context of RDEs, such embeddings have the following special form. We
will later also see in
Definition~\ref{Def-Marcus-lift} how the same idea works in the RPDE setting.
\begin{example}[Marcus embedding]\label{Marcus-embedding} ~\
{
Let $W\in D(I;\mathbb R^e)$ be c{\`a}gl{\`a}d and let
$V_t:\mathbb R^k\to\mathcal L(\mathbb R^e,\mathbb R^k)$, $t\in I$.
Set $\Delta W_t=W(t+)-W(t)$ and $U_t=V_t\Delta W_t$, and define
$\kappa_{V,W}h:=\kappa_Uh$.
The excursion $(\kappa_{V,W}h)(t)$ is continuous if and only if
$h(t)=\varphi(V_t\Delta W_t,h(t+),0)$ and this is exactly the Marcus jump condition for $dh_t=V_t(h_t)\diamond dW_t$;
see \cite{chevyrev_canonical_2019,chevyrev_superdiffusive_2024}.
In particular, $\Delta W_t=0$ implies $\Delta h_t=0$, where
$\Delta h_t=h(t+)-h(t)$.
For the backward equation $-dh_t=V_t(h_t)\diamond dW_t$, we apply
the same construction with $-V$, giving the lift $\kappa_{-V,W}h$.
}
\end{example}

We next provide an alternative, but equivalent,  definition of the metric
$\alpha_p$ in \eqref{Skorokhod-metric}. Readers familiar with the classical
Skorokhod $M1$ theory will recognize that this formulation is conceptually similar to the definition
using the completed graph; see \cite{whitt_stochastic-process_2002}.

Define $\mathcal{D}= \bigcup_{a < b} D [a, b]$ to be the space of c{\`a}dl{\`a}g
paths parametrized by compact intervals. For $h_1 \in D [a_1, b_1]$ and
$h_2 \in D [a_2, b_2]$, write $h_1
\sim_{\mathcal{F}} h_2$ if $d_{\mathcal{F}} (h_1, h_2) = 0$. Denote the corresponding equivalence class by
\[ [h] = \{h' \in \mathcal{D}: h \sim_{\mathcal{F}} h' \} . \]
Similarly, we define the $p$-variation-type {\tmem{Fr{\'e}chet distance}} by
\begin{equation}
  d_{\mathcal{F},p} (h_1, h_2) = \inf_{\rho \in
  \Lambda_{[a_2,b_2];[a_1,b_1]}}
  \left( {d_E(h_1(a_1),h_2(a_2))}\mathbin{+}
  \|h_1 \circ \rho - h_2 \|_p\right) .
  \label{Frechet-pvar-distance}
\end{equation}

Given $(x, \Pi) \in \bar{\mathcal{D}} (I)$, we define
\[ \psi^{\delta} (t) = (x^{\delta}, c^{\delta, \Pi}) \]
and refer to $\psi^{\delta}$ as the $\delta$-parametric representation of $(x, \Pi)$.
 Similarly to \cite[Lemma~8.13]{chevyrev_superdiffusive_2024} we obtain the following result.

\begin{lemma}
  \label{lemma-alternative-alpha-p}Let $(x_1, \Pi^1), (x_2, \Pi^2) \in
  \mathfrak{D}^{p \text{-var}}$ for some $p \in \{0\} \cup [1, \infty]$. Then, for all $\delta
  > 0$,
  \begin{equation*}
    \alpha_p (x_1, x_2)
    = \inf_{\rho \in \Lambda_{[0,T+\delta]}}\max\left\{
    \begin{aligned}
      &{d_E(x_1(0)(0),x_2(0)(0))}\mathbin{+}
      \|x_1^\delta\circ\rho-x_2^\delta\|_p,\\
      &\|c^{\delta,\Pi^1}\circ\rho-c^{\delta,\Pi^2}\|_\infty
    \end{aligned}\right\}.
  \end{equation*}
\end{lemma}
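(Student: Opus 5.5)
The plan is to compare the $\delta$-dependent quantity on the right-hand side, call it $\beta^\delta(x_1,x_2)$, with the definition \eqref{Skorokhod-metric}. We show (a) $\beta^\delta$ does not depend on $\delta$, and (b) $\beta^\delta$ agrees with the expression inside the limit in \eqref{Skorokhod-metric} up to an error vanishing as $\delta\to0$. The argument follows \cite[Lemma~8.13]{chevyrev_superdiffusive_2024}; the only changes are the càglàd convention and the additive initial-value term.

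\emph{Step 1: $\delta$-invariance.} For $\delta,\delta'>0$ we construct homeomorphisms $\mu_i\in\Lambda_{[0,T+\delta];[0,T+\delta']}$, $i=1,2$. Each $\mu_i$ maps the base time $\tau^{\delta,\Pi^i}(t)$ to $\tau^{\delta',\Pi^i}(t)$. On each excursion interval $(\tau^{\delta}_{t_k},\tau^{\delta}_{t_k+}]$ it is the affine bijection onto the corresponding $\delta'$-interval. This gives
\[
x_i^{\delta'}\circ\mu_i=x_i^\delta,\qquad c^{\delta',\Pi^i}\circ\mu_i=c^{\delta,\Pi^i}.
\]
Given $\rho\in\Lambda_{[0,T+\delta]}$, set $\rho'=\mu_1\circ\rho\circ\mu_2^{-1}$. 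Then both terms in the max are unchanged, because $p$-variation and sup-norm are invariant under reparametrization of the time variable. Hence $\beta^{\delta'}\le\beta^\delta$, and by symmetry the two are equal. This also shows $\beta^\delta$ does not depend on the ordering or weights $r_k$. For $p=0$ (the sup-type case) the same argument applies.

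\emph{Step 2: $\alpha_p\le\beta$.} Fix $\rho$ nearly optimal for $\beta^\delta$, with $\|c^{\delta,\Pi^1}\circ\rho-c^{\delta,\Pi^2}\|_\infty\le\eta$. We need $\|\rho-\mathrm{id}\|_\infty$ to be controlled. Since $|c^{\delta}(s)-s|\le\delta$ for both $c^{\delta,\Pi^i}$, we get
\[
|\rho(s)-s|\le|\rho(s)-c^{\delta,\Pi^1}(\rho(s))|+\eta+|c^{\delta,\Pi^2}(s)-s|\le \eta+2\delta .
\]
So the inner quantity in \eqref{Skorokhod-metric} at level $\delta$ is at most $\max\{\beta^\delta,\eta\}+2\delta$. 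Letting $\delta\to0$ and using Step 1 yields $\alpha_p\le\beta$.

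\emph{Step 3: $\beta\le\alpha_p$.} This is the converse and the main obstacle. Take $\lambda$ with $\|\lambda-\mathrm{id}\|_\infty\le\eta$ and a good $p$-variation estimate. The identity
\[
c^{\delta,\Pi^1}\circ\lambda-c^{\delta,\Pi^2}=(c^{\delta,\Pi^1}\circ\lambda-\lambda)+(\lambda-\mathrm{id})+(\mathrm{id}-c^{\delta,\Pi^2})
\]
gives the bound $\eta+2\delta$. Thus $\beta^\delta\le \max\{\text{inner}_\delta,\eta\}+2\delta$. Taking $\delta\to0$ and using the $\delta$-invariance from Step 1 gives $\beta\le\alpha_p$.

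The delicate point I expect is rigor in Step 1 when $\Pi^i$ is countably infinite. The paper's construction of $\Phi^\delta$ uses finite $\Pi=\{t_k\}_{k\le m}$, so for infinite $\Pi$ one must pass through the convergent weights $r_k$. One then checks that $\mu_i$ is a strictly increasing bijection, i.e. continuous at accumulation points of $\Pi^i$. This holds because $\sum r_k<\infty$ makes $\tau^\delta$ and $\tau^{\delta'}$ jump sizes summable. The same bijection also ensures $x_i^\delta\in D$, so the $p$-variation is well defined. The càglàd convention only affects whether the excursion is attached on $(\tau_{t_k},\tau_{t_k+}]$, and this is respected by the affine maps.
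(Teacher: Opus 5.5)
Your proposal is correct and is essentially the argument the paper defers to, namely the proof of \cite[Lemma~8.13]{chevyrev_superdiffusive_2024}. The right-hand side is independent of $\delta$ because the $\delta$- and $\delta'$-extensions, together with their clocks $c^{\delta,\Pi^i}$, are conjugate via excursion-wise affine reparametrizations; it then differs from the inner infimum in \eqref{Skorokhod-metric} by at most $2\delta$, since $\|c^{\delta,\Pi^i}-\mathrm{id}\|_\infty\le\delta$, while the c\`agl\`ad convention and the reparametrization-invariant initial-value term change nothing.
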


\begin{proof}
 The argument is analogous to the proof of
\cite[Lemma~8.13]{chevyrev_superdiffusive_2024}.
\end{proof}

\subsection{Skorokhod Metrics on Decorated Paths: New Results}\label{Chap-new-results-decorated-paths}
The following results provide a deeper understanding of the metric $\alpha_p$ and will be crucial for the proofs throughout the paper. Lemmas~\ref{Lemma-localuniform} and \ref{restriction-alpha-metric} extend important properties of classical Skorokhod metrics (see, e.g., \cite{billingsley_convergence_1999, whitt_stochastic-process_2002}) to the setting of decorated paths. We also emphasize the role of Lemma~\ref{alternative-a-metric}, despite its short proof. When considering convergence of a sequence $(x_n,\Pi^n)_{n\in\mathbb{N}} \subset \mathfrak{D}^{p\text{-var}}([0,T])$, working directly with the definition of the Skorokhod metric $\alpha_p$ in \eqref{Skorokhod-metric} can be cumbersome, as it involves several layers of limiting procedures: the limit $n \to \infty$, a vanishing fictitious time parameter $\delta \to 0$, and an infimum over all possible time reparametrizations. The next lemma reduces these to a single limiting procedure, which is particularly useful. For instance, it allows us to apply the Arzelà–Ascoli theorem in Theorem~\ref{theorem-Skorokhod}.

\begin{lemma} \label{alternative-a-metric} Let $p \in \{0\} \cup [1,\infty] $ and let $(x,\Pi), (x_n,\Pi^n) \subset \mathfrak{D}^{p\text{-var}}([0,T])$. Then $(x_n)_{n
  \in \mathbb{N}}$ converge to $x$ in $\alpha_p$ if and only if for any $\delta >
  0$\footnote{For the backward implication, it is in fact enough to assume
  \eqref{alternativ-alpha} to hold for some $\delta > 0$ as one can see from
  the proof.} there exists a sequence $(\lambda^{n, \delta})_{n \in
  \mathbb{N}} \subset \Lambda_{[0, T + \delta]}$ such that
  \begin{equation}
    \lim_{n \rightarrow \infty}
    {d_E(x(0)(0),x_n(0)(0))}
    +\| x^{\delta} - x^{\delta}_n \circ \lambda^{n,
    \delta} \|_p + \| c^{\delta, \Pi} - c^{\delta, \Pi^n} \circ \lambda^{n,
    \delta} \|_{\infty} = 0. \label{alternativ-alpha}
  \end{equation}
  In the special case where $(x_n)_{n \in \mathbb{N}}$ consists of continuous paths, the
  sequence $(x_n)_{n \in \mathbb{N}}$ converges to $x$ in $\alpha_p$ if and
  only if for any $\delta > 0$, there exists a sequence $(\lambda^{n,
  \delta})_{n \in \mathbb{N}} \in \Lambda_{[0, T + \delta] ; [0, T]}$ such
  that
  \begin{equation}
    \lim_{n \rightarrow \infty}
    {d_E(x(0)(0),x_n(0))}
    +\| x^{\delta} - x_n \circ \lambda^{n, \delta}
    \|_p + \| c^{\delta, \Pi} - \lambda^{n, \delta} \|_{\infty} = 0.
    \label{alternativ-alpha-special}
  \end{equation}
\end{lemma}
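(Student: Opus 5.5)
The plan is to derive both directions from Lemma~\ref{lemma-alternative-alpha-p}. That lemma expresses $\alpha_p(x_n,x)$, for every fixed $\delta>0$, as an infimum over $\rho\in\Lambda_{[0,T+\delta]}$ of a maximum of two quantities: the initial-value term plus the $p$-variation distance of the $\delta$-extensions, and the uniform distance of the maps $c^{\delta,\cdot}$.

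\emph{Forward implication.} Suppose $\alpha_p(x_n,x)\to0$ and fix $\delta>0$. By Lemma~\ref{lemma-alternative-alpha-p}, applied with the roles of $x$ and $x_n$ placed so that the reparametrization acts on $x_n$ (swapping roles is harmless since $\rho\mapsto\rho^{-1}$ is a bijection of $\Lambda_{[0,T+\delta]}$), we can choose for each $n$ some $\lambda^{n,\delta}\in\Lambda_{[0,T+\delta]}$ whose value of the max is at most $\alpha_p(x_n,x)+1/n$. Since the sum in \eqref{alternativ-alpha} is at most twice the max, it tends to $0$.

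\emph{Backward implication.} Suppose \eqref{alternativ-alpha} holds for a single $\delta>0$. Using $\rho=\lambda^{n,\delta}$ in the infimum of Lemma~\ref{lemma-alternative-alpha-p} (again after exchanging roles) bounds $\alpha_p(x_n,x)$ by the expression in \eqref{alternativ-alpha}, which tends to $0$. This also explains the footnote: one $\delta$ suffices.

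\emph{Continuous special case.} If $x_n$ is continuous, we may take $\Pi^n=\emptyset$. Then $x_n^\delta$ equals $x_n$ on $[0,T]$ and is constant, equal to $x_n(T)$, on $(T,T+\delta]$, while $c^{\delta,\emptyset}(t)=\min(t,T)$. Given $\lambda^{n,\delta}\in\Lambda_{[0,T+\delta]}$, we set $\tilde\lambda^{n}:=c^{\delta,\emptyset}\circ\lambda^{n,\delta}$. Then $x_n^\delta\circ\lambda^{n,\delta}=x_n\circ\tilde\lambda^n$ and $c^{\delta,\emptyset}\circ\lambda^{n,\delta}=\tilde\lambda^n$. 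The catch is that $\tilde\lambda^n$ is only a non-decreasing continuous surjection $[0,T+\delta]\to[0,T]$, constant on the final piece, not a strictly increasing bijection.

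\emph{Main obstacle.} We must replace $\tilde\lambda^n$ by a genuine element of $\Lambda_{[0,T+\delta];[0,T]}$ while changing the quantities in \eqref{alternativ-alpha-special} only by $o(1)$. To do so, let $s_n:=(\lambda^{n,\delta})^{-1}(T)$ and redefine the map on $[s_n,T+\delta]$ as the linear bijection onto $[T-\eta_n,T]$, compressing $[0,s_n]$ onto $[0,T-\eta_n]$ by the linear rescaling, with $\eta_n\downarrow0$. Two estimates are needed:
\begin{itemize}
\item The uniform error in the $c$-term is $O(\eta_n)$.
\item The $p$-variation error is controlled by uniform continuity of $x_n$ near $T$ together with equicontinuity; the latter is inherited from $\|x_n\circ\tilde\lambda^n - x^\delta\|_p\to0$ and the fact that $x^\delta$ is constant on $(\tau^\delta(T),T+\delta]$.
\end{itemize}
For $p<\infty$ the $p$-variation estimate is the delicate step. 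Here I would try to keep the reparametrization exact on $[0,s_n]$ and only perturb near $T$, so that the error reduces to the $p$-variation of $x_n$ over $[T-\eta_n,T]$, choosing $\eta_n$ small enough (depending on $n$) to make it $o(1)$. The converse direction is the same construction run backwards: compose $\lambda\in\Lambda_{[0,T+\delta];[0,T]}$ with a bijection $[0,T+\delta]\to[0,T+\delta]$ that maps onto the flat tail $(T,T+\delta]$ a tiny interval near the end, again at $o(1)$ cost.
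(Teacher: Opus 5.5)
Your argument for the general equivalence is the paper's. It uses Lemma~\ref{lemma-alternative-alpha-p} at a fixed $\delta$ and picks a near-optimal $\rho$ for each $n$ in one direction. In the other direction it plugs $\lambda^{n,\delta}$ into the infimum, which is also why one $\delta$ suffices.

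For the continuous case your route differs from the paper's, and the difference matters. The paper composes with \emph{global} dilations $\rho_m\in\Lambda_{[0,T+\delta];[0,T]}$ approximating $c^{\delta,\emptyset}=\min(\cdot,T)$, and asserts $\|x_n^\delta-x_n\circ\rho_m\|_p\to0$ as $m\to\infty$. That step is immediate for $p=\infty$ by uniform continuity. For finite $p$ it is not justified as written, because dilations do not act continuously in $p$-variation on general continuous paths of finite $p$-variation. For instance, with $p=1$ and $x_n$ the Cantor function, the variation of $x_n-x_n(c\,\cdot)$ does not tend to $0$ as $c\uparrow1$.

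Your local perturbation avoids this problem and works for every $p$. In its cleanest form, let $\theta_\eta\in\Lambda_{[0,T+\delta];[0,T]}$ be the identity on $[0,T-\eta]$ and linear from $[T-\eta,T+\delta]$ onto $[T-\eta,T]$.
\begin{itemize}
\item The difference $x_n^\delta-x_n\circ\theta_\eta$ vanishes on $[0,T-\eta]$. After that, both $c^{\delta,\emptyset}$ and $\theta_\eta$ take values in $[T-\eta,T]$.
\item Hence $\|x_n^\delta-x_n\circ\theta_\eta\|_p\le 2\|x_n\|_{p;[T-\eta,T]}$, with the oscillation of $x_n$ on $[T-\eta,T]$ in place of the right-hand side when $p=\infty$.
\item Also $\|c^{\delta,\emptyset}-\theta_\eta\|_\infty\le\eta$.
\end{itemize}
Use $\theta_{\eta_n}\circ\lambda^{n,\delta}$ in the forward direction and $\theta_{\eta_n}^{-1}\circ\lambda^{n,\delta}\in\Lambda_{[0,T+\delta]}$ in the converse, together with invariance under reparametrization. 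Choose $\eta_n\downarrow0$ with $\|x_n\|_{p;[T-\eta_n,T]}\to0$; this is possible because the $p$-variation control of a continuous path is continuous. This is exactly the paper's argument with $\rho_m$ replaced by $\theta_\eta$.

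Accordingly, drop your first variant, which linearly compresses $[0,s_n]$ onto $[0,T-\eta_n]$. Drop also the bullet claiming the $p$-variation error is controlled by uniform continuity plus equicontinuity. That variant changes the parametrization on all of $[0,s_n]$, where uniform continuity gives no $p$-variation control. No equicontinuity in $n$ is needed anyway, since $\eta_n$ is chosen separately for each $n$.

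Two small corrections to the local version:
\begin{itemize}
\item The new map can agree with $\lambda^{n,\delta}$ only on $[0,(\lambda^{n,\delta})^{-1}(T-\eta_n)]$, not on $[0,s_n]$. Otherwise $[s_n,T+\delta]$ would have to be sent to the single point $T$.
\item In the converse, the bijection to compose with goes from $[0,T]$ onto $[0,T+\delta]$ (namely $\theta_\eta^{-1}$), not from $[0,T+\delta]$ to itself.
\end{itemize}
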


\begin{remark}
The convergence criterion \eqref{alternativ-alpha} was already stated in
\cite[Lemma~4.4]{becherer_rough_2026}, see also there for additional properties
that may be imposed on the choice of the reparametrizations
$(\lambda^{n,\delta})_{n\in\mathbb N}$. The special case
\eqref{alternativ-alpha-special} is not used directly in this paper, but it is
useful for understanding the behaviour of the $\alpha_p$ metric. In particular,
it could be used to simplify the proof of Theorem~\ref{theorem-Skorokhod}, for
instance in \eqref{property-lambda-n}. We nevertheless work with the more general form \eqref{alternativ-alpha}, since
we reuse the same argument to prove the continuity of the solution map in
Proposition~\ref{Prop-Continuity}.
\end{remark}

\begin{proof}
  \space Fix $\delta > 0$. By Lemma~\ref{lemma-alternative-alpha-p}, the convergence $\alpha_p(x_n,x) \to 0$ is equivalent to \[\lim_{n \to \infty}
\inf_{\rho \in \Lambda_{[0,T+\delta]}}
\max \left\{
{d_E(x_n(0)(0),x(0)(0))}\mathbin{+}
\|x_n^\delta \circ \rho - x^\delta\|_p,\;
\|c^{\delta,\Pi^n} \circ \rho - c^{\delta,\Pi}\|_\infty
\right\}
= 0.
\] 
By a simple diagonalization argument, one can see
that this is obviously equivalent to \eqref{alternativ-alpha}.
  
  To see \eqref{alternativ-alpha-special}, we define a sequence
  $(\rho_m)_{m \in \mathbb{N}} \subset \Lambda_{[0, T + \delta] ; [0, T]}$
  with $\rho_m (t) = t \left( 1 - \frac{1}{m} \right)$ for $t \leqslant T$ and
  $\rho_m (t) = \left( 1 - \frac{1}{m} \right) + \frac{1}{m} (1 - t)$ for $T
  \leqslant t \leqslant T + \delta$. Then it holds 
  \[ \lim_{m \rightarrow \infty} \| x^{\delta}_n - x_n \circ \rho_m \|_p + \|
     c^{\delta, \emptyset} - \tmop{id} \circ \rho_m \|_{\infty} = 0, \]
  combining this with \eqref{alternativ-alpha} yields
  \eqref{alternativ-alpha-special}.
\end{proof}

\begin{lemma}
  \label{equicontinuity-p-var}
  Let $p \in \{0\} \cup [1,\infty]$ and let $(x_n,\Pi^n)_{n \in \mathbb{N}}
  \in \mathfrak{D}^{p \text{-var}}$ converge in $\alpha_p$ to
  $(x,\Pi) \in \mathfrak{D}^{p \text{-var}}$. Let
  $(\lambda^{n,\delta})_{n \in \mathbb{N}}$ be the sequence given by
  Lemma~\ref{alternative-a-metric}. Then, for any $\delta > 0$ and any
  $\varepsilon > 0$, there exist $\gamma > 0$ and $N \in \mathbb{N}$ such that
  for all $n \geqslant N$ and all $s \in [0,T+\delta]$,
  \[
    \| x^{\delta}_n \circ \lambda^{n,\delta} \|_{p ; [s, (s+\gamma)\wedge(T+\delta)]}
    +
    \| c^{\delta,\Pi^n} \circ \lambda^{n,\delta} \|_{0 ; [s, (s+\gamma)\wedge(T+\delta)]}
    \leqslant \varepsilon .
  \]
\end{lemma}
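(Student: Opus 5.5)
We compare the reparametrized paths $x^\delta_n\circ\lambda^{n,\delta}$ with the single limit path $x^\delta$, for which uniform smallness over short windows is automatic. Fix $\delta>0$ and $\varepsilon>0$. By Lemma~\ref{alternative-a-metric}, \eqref{alternativ-alpha} holds, so there is $N$ with
\[
\|x^\delta - x^\delta_n\circ\lambda^{n,\delta}\|_{p}+\|c^{\delta,\Pi}-c^{\delta,\Pi^n}\circ\lambda^{n,\delta}\|_\infty\le \varepsilon/4 \quad\text{for all } n\ge N .
\]
The $p$-variation seminorm on $[0,T+\delta]$ dominates it on every subinterval. By the triangle inequality for the seminorm (Minkowski on partitions),
\[
\|x^\delta_n\circ\lambda^{n,\delta}\|_{p;[s,s+\gamma]}\le \|x^\delta\|_{p;[s,s+\gamma]}+\varepsilon/4 .
\]
For the oscillation term, with $\|\cdot\|_0$ understood as the supremum of increments on the interval, we have
\[
\|c^{\delta,\Pi^n}\circ\lambda^{n,\delta}\|_{0;[s,s+\gamma]}\le \|c^{\delta,\Pi}\|_{0;[s,s+\gamma]}+2\cdot\varepsilon/4 .
\]
(If $p=0$ or $p=\infty$, the first estimate is of the same type with sup norms.) It therefore remains to find $\gamma>0$ such that
\[
\sup_s\Bigl(\|x^\delta\|_{p;[s,s+\gamma]}+\|c^{\delta,\Pi}\|_{0;[s,s+\gamma]}\Bigr)\le\varepsilon/4,
\]
which is a statement about one fixed limit object.

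The $c$-term is easy. The map $c^{\delta,\Pi}$ is a continuous non-decreasing surjection on the compact interval $[0,T+\delta]$, hence uniformly continuous. Concretely, it is $1$-Lipschitz, since it is the left inverse of $\tau^\delta(t)\ge t$ and has slope $0$ on excursion intervals. So its oscillation on windows of length $\gamma$ is at most $\gamma$.

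The $x^\delta$-term is the main obstacle. The key fact is that $x^\delta$ is continuous on $[0,T+\delta]$. This is not literally true for a general element of $\bar{\mathcal D}$, so I would first establish it or reduce to it. In the convergence-to-continuous setting where this lemma is used, the Marcus/decorated structure makes $\Phi(t)$ continuous with $\Phi(t)(1)=\Phi(t+)(0)$, so the $\delta$-extension fills every jump. When $\Pi$ is infinite, the $\delta$-extension is taken as the limit of finite truncations, and condition (iii) of Definition~\ref{Def-decorated} ensures the small excursions vanish uniformly. Given continuity and finite $p$-variation, the control $\omega(s,t)=\|x^\delta\|^p_{p;[s,t]}$ is continuous and vanishes on the diagonal; this is the standard fact for continuous paths of finite $p$-variation, $p\ge1$. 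By uniform continuity of $\omega$ on the compact simplex, $\sup_s\omega(s,s+\gamma)\le(\varepsilon/4)^p$ for $\gamma$ small.

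If $x^\delta$ may retain genuine jumps, the statement cannot hold: a jump of size $\eta$ persists in $x^\delta_n\circ\lambda^{n,\delta}$ up to $o(1)$. So I would make this continuity assumption explicit, verifying it from the excursion identification $\Phi(t)(1)=\Phi(t+)(0)$, before concluding. Combining the three bounds and adjusting constants gives the claim for all $n\ge N$ and all $s$.
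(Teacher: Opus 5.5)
\textbf{Verdict: your proof is correct.} It shares the paper's reduction but handles the limit object differently, and it makes explicit a hypothesis the paper's proof needs but never states.

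\textbf{Common reduction.} Both proofs use \eqref{alternativ-alpha} and the triangle inequality, for the $p$-variation seminorm and for the oscillation (with the factor $2$ for the $c$-term). This reduces the claim to a uniform small-window bound for the single limit pair $(x^\delta,c^{\delta,\Pi})$.

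\textbf{Paper's route for the last step.} The paper invokes \cite[Lemma~4.7]{dudley_introduction_1998-1} to get a finite partition on whose closed cells $x^\delta$ has small $p$-variation and $c^{\delta,\Pi}$ has small oscillation. It then takes $\gamma$ to be the minimal mesh and covers each window $[s,s+\gamma]$ by two adjacent cells.

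\textbf{Your route for the last step.} You note that $c^{\delta,\Pi}$ is $1$-Lipschitz, being the left inverse of $\tau^\delta$ with $\tau^\delta(b)-\tau^\delta(a+)\geq b-a$, so its oscillation on a window is at most $\gamma$. For continuous $x^\delta$, the control $\omega(s,t)=\|x^\delta\|^p_{p;[s,t]}$ is continuous and vanishes on the diagonal, so uniform continuity on the compact simplex finishes the argument. This is self-contained and gives an explicit modulus for the clock term.

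\textbf{The hidden hypothesis.} Your route exposes an assumption that both arguments rely on. A partition with small $p$-variation on closed cells cannot exist if $x^\delta$ has a jump of size $\eta$ at some $t_0$. The cell $[t_i,t_{i+1}]$ with $t_i\le t_0<t_{i+1}$ always has $p$-variation at least $\eta$. So the paper's appeal to the partition lemma silently requires continuity of $x^\delta$ too.

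Your remark that the statement fails otherwise is also right. Take $x_n=x=\imath h$ with $h$ a unit step and $\lambda^{n,\delta}=\mathrm{id}$; then any window starting at the jump has $p$-variation at least $1$.

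\textbf{Why this is harmless where the lemma is used.} The only application is in the proof of Theorem~\ref{theorem-Skorokhod}, where $x=\jmath W$. Its $\delta$-extension is continuous: the linear excursions end at $W(t+)$, and condition (iii) of Definition~\ref{Def-decorated} makes accumulating small excursions vanish. So your added hypothesis costs nothing there.
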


\begin{proof}
  Given a fixed $\varepsilon > 0$, by
  \cite[Lemma~4.7]{dudley_introduction_1998-1}, there exists a finite partition
  $\{t_i\}_{i=1,\ldots,m}$ such that
  \[
    \| x^{\delta} \|_{p ; [t_i, t_{i+1}]}
    +
    \| c^{\delta,\Pi} \|_{0 ; [t_i, t_{i+1}]}
    \leqslant \frac{\varepsilon}{2^{p+1}},
    \qquad i=1,\ldots,m-1.
  \]
  Set $\gamma := \min_{i=1,\ldots,m-1} |t_{i+1}-t_i|.$ For some $j=1,\ldots,m-1$ we have $s\in[t_j,t_{j+1}]$, and it holds
  \begin{align}
    \| x^{\delta} \|_{p ; [s,(s+\gamma)\wedge(T+\delta)]}
    &\leqslant
    \| x^{\delta} \|_{p ; [t_j,t_{j+2}]}
    \leqslant
    2^{p-1}
    \big(
      \| x^{\delta} \|_{p ; [t_j,t_{j+1}]}
      +
      \| x^{\delta} \|_{p ; [t_{j+1},t_{j+2}]}
    \big)
    \leqslant \frac{\varepsilon}{4},
    \label{uniform-cts-pvar-x}
    \\
    \| c^{\delta,\Pi} \|_{0 ; [s,(s+\gamma)\wedge(T+\delta)]}
    &\leqslant
    \| c^{\delta,\Pi} \|_{0 ; [t_j,t_{j+2}]}
    \leqslant
    \| c^{\delta,\Pi} \|_{0 ; [t_j,t_{j+1}]}
    +
    \| c^{\delta,\Pi} \|_{0 ; [t_{j+1},t_{j+2}]}
    \leqslant \frac{\varepsilon}{4}.
    \label{uniform-cts-pvar-c}
  \end{align}
  Lemma~\ref{alternative-a-metric} implies that for any fixed $\delta > 0$ there
  exists a sequence $(\lambda^{n,\delta})_{n\in\mathbb N}$ such that
  \[
    \lim_{n\rightarrow\infty}
    \| x^{\delta} - x^{\delta}_n \circ \lambda^{n,\delta} \|_{p ; [0,T+\delta]}
    +
    \| c^{\delta,\Pi} - c^{\delta,\Pi^n} \circ \lambda^{n,\delta}
    \|_{\infty ; [0,T+\delta]}
    =0.
  \]
  Then we can find some $N\in\mathbb N$ such that for all $n\geqslant N$ and
  for all $s\in[0,T+\delta]$,
  \begin{align*}
    \| x^{\delta}_n \circ \lambda^{n,\delta} \|_{p ; [s,(s+\gamma)\wedge(T+\delta)]}
    &\leqslant
    \| x^{\delta} \|_{p ; [s,(s+\gamma)\wedge(T+\delta)]}
    +
    \| x^{\delta} - x^{\delta}_n \circ \lambda^{n,\delta}
    \|_{p ; [0,T+\delta]}
    \\
    &\leqslant
    \| x^{\delta} \|_{p ; [s,(s+\gamma)\wedge(T+\delta)]}
    +
    \frac{\varepsilon}{4},
    \\
    \| c^{\delta,\Pi^n} \circ \lambda^{n,\delta} \|_{0 ; [s,(s+\gamma)\wedge(T+\delta)]}
    &\leqslant
    \| c^{\delta,\Pi} \|_{0 ; [s,(s+\gamma)\wedge(T+\delta)]}
    +
    2
    \| c^{\delta,\Pi} - c^{\delta,\Pi^n} \circ \lambda^{n,\delta}
    \|_{\infty ; [0,T+\delta]}
    \\
    &\leqslant
    \| c^{\delta,\Pi} \|_{0 ; [s,(s+\gamma)\wedge(T+\delta)]}
    +
    \frac{\varepsilon}{4}.
  \end{align*}
  Combining this with
  \eqref{uniform-cts-pvar-x}--\eqref{uniform-cts-pvar-c} proves the claim.
\end{proof}

It is well-known that (see {\cite{whitt_stochastic-process_2002}}) convergence in
Skorokhod metrics (such as $J1$ or $M1$) implies local uniform convergence at the continuity points of the limit. In the following lemma, we show that this property is independent of the choice of the "excursions" and extends to the Skorokhod metrics on decorated paths.

\begin{lemma}\label{Lemma-localuniform}
Let $p \in \{0\} \cup [1,\infty] $ and $x, x^1, x^2, \ldots \in
  \bar{\mathcal{D}}^p ([0,T])$. Assume that
  \[
    \lim_{n \to \infty} \alpha_{p ; [0,T]} (x^n,x)=0.
  \]
  Then, for every $t \in \mathcal{C} (x)$, there exists a $\gamma > 0$ such that 
  \[ \lim_{n \to \infty} \| x^n - x \|_{\infty ; [t - \gamma, t + \gamma]} =
     0. \]
  \begin{proof}
    We fix $t \in \mathcal C(x)$ and argue by contradiction. Suppose that there exist $\varepsilon > 0$
    and sequences $s, s_1, s_2, \ldots \in [t - \gamma, t + \gamma]$ such that $|
    s_n - s | \rightarrow 0$ and $| x^n (s_n) - x (s) | > \varepsilon$ for all
    $n \in \mathbb{N}$. By Lemma~\ref{alternative-a-metric}, for every $\delta > 0$
    there exists a sequence $(\lambda^{n, \delta})_{n \in \mathbb{N}}
    \subset \Lambda_{[0, T + \delta]}$ such that
    \begin{equation}
      \lim_{n \rightarrow \infty} | x^{\delta} (\lambda^{n, \delta}
      (\tau^{\delta, \Pi^n} (s_n))) - x^{\delta}_n (\tau^{\delta, \Pi^n}
      (s_n)) | + | c^{\delta, \Pi} (\lambda^{n, \delta} (\tau^{\delta, \Pi^n}
      (s_n))) - c^{\delta, \Pi^n} (\tau^{\delta, \Pi^n} (s_n)) | = 0.
      \label{proof-localuniform}
    \end{equation}
    In particular, $| c^{\delta, \Pi} (\lambda^{n, \delta}
    (\tau^{\delta, \Pi^n} (s_n))) - s_n | \rightarrow 0$, and hence
    \begin{equation}
      \lim_{n \rightarrow \infty} \lambda^{n, \delta} (\tau^{\delta, \Pi^n}
      (s_n)) = (c^{\delta, \Pi})^{- 1} (s) = \tau^{\delta, \Pi} (s),
      \label{proof-localuniform2}
    \end{equation}
    here we use the fact that the continuity of $x$ at $s$ ensures that $(c^{\delta, \Pi})^{- 1}(s)$ is well-defined. Finally, 
    \begin{align*}
      &| x^n (s_n) - x (s) |\\
      &\quad= | x^{n, \delta} (\tau^{\delta, \Pi^n} (s_n)) -
      x^{\delta} (\tau^{\delta, \Pi} (s)) |\\
      &\quad\leqslant | x^{n, \delta} (\tau^{\delta, \Pi^n} (s_n)) - x^{\delta}
      (\lambda^{n, \delta} (\tau^{\delta, \Pi^n} (s_n))) |\\
      &\qquad+ | x^{\delta}
      (\lambda^{n, \delta} (\tau^{\delta, \Pi^n} (s_n))) - x^{\delta}
      (\tau^{\delta, \Pi} (s)) |.
    \end{align*}
    The first term converges to $0$ by \eqref{proof-localuniform}, and
    the second converges to zero by \eqref{proof-localuniform2}. This contradicts $| x^n
    (s_n) - x (s) | > \varepsilon$.
  \end{proof}
\end{lemma}
Finally, we show that the Skorokhod metric $\alpha_{p ; [0, T]}$ defined in
\eqref{Skorokhod-metric} behaves well under restriction to $[t, T]$ for $t \in
\mathcal{C} (x)$. Here $\mathcal{C} (x) \subset [0,T]$ denotes the set of continuity points of $x$, i.e.
\begin{align*}
    \mathcal{C} (x) \assign \left\{ t \in [0, T] : x \text{ is continuous at time } t \right\}.
\end{align*}
Although this property is well known for classical Skorokhod topologies, its proof in the present setting is surprisingly delicate. The argument resonates with \cite[Theorem~16.2]{billingsley_convergence_1999} for the classical $J1$ topology, while containing the $J1$- and $M1$-type cases as special instances.

\begin{lemma}
  \label{restriction-alpha-metric}Let $p>0$ and $x, x^1, x^2, \ldots \in
  \bar{\mathcal{D}}^p ([0,T])$. Assume that $\lim_{n \to \infty} \alpha_{p ; [0,
  T]} (x^n, x) = 0$. Then, for all $t \in [0, T]$ such that $x(t)$ is constant, we have $\lim_{n \to \infty} \alpha_{p ; [t, T]} (x^n, x) =
  0$.
\end{lemma}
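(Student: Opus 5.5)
The plan is to work with the single-limit criterion of Lemma~\ref{alternative-a-metric} and then transfer the reparametrizations to $[t,T]$. The hypothesis that $x(t)$ is constant means $t$ is a stationary point of $x$. Hence $x(t)(s)=x(t)(0)$ for all $s$, so $x$ has no genuine excursion at $t$. We may also take $\Pi$ with $t\notin\Pi$, which is allowed since the metric is independent of the choice of $\Pi$. Then $\tau^{\delta,\Pi}(t)=\tau^{\delta,\Pi}(t+)=:t^\delta$, and $c^{\delta,\Pi}$ takes the value $t$ only at the point $t^\delta$. Fix $\delta>0$ and the sequence $\lambda^{n,\delta}$ from Lemma~\ref{alternative-a-metric}, so that $x^\delta_n\circ\lambda^{n,\delta}\to x^\delta$ in $p$-variation and $c^{\delta,\Pi^n}\circ\lambda^{n,\delta}\to c^{\delta,\Pi}$ uniformly.

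First, I will locate in the $n$-th path a point corresponding to $t$. Let $u_n:=\lambda^{n,\delta}(t^\delta)$ and set $t_n:=c^{\delta,\Pi^n}(u_n)$. Uniform convergence of the $c$'s gives $t_n\to t$. Two situations can occur. If $u_n=\tau^{\delta,\Pi^n}(t_n)$ or $u_n=\tau^{\delta,\Pi^n}(t_n+)$, then $u_n$ is an endpoint of an excursion of $x_n$ and we can cut there. Otherwise $u_n$ lies strictly inside an excursion of $x_n$ at time $t_n$.

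This second case is the \textbf{main obstacle}. Cutting $x_n$ at time $t_n$ or $t_n+$ forces the cut point in the $\delta$-extension to move to $\tau^{\delta,\Pi^n}(t_n)$ or $\tau^{\delta,\Pi^n}(t_n+)$, and we must show that the excursion of $x_n$ at $t_n$ is asymptotically flat. The tool is $p$-variation convergence together with continuity of $x^\delta$ at $t^\delta$, which holds because $x$ is stationary and continuous in the decorated sense at $t$. Take a window $J=[t^\delta-\eta,t^\delta+\eta]$ on which $\|x^\delta\|_{p;J}$ is small; this is possible by the argument of Lemma~\ref{equicontinuity-p-var}, since $x^\delta$ is continuous at $t^\delta$. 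Then $\|x_n^\delta\circ\lambda^{n,\delta}\|_{p;J}$ is small for large $n$. Moreover, the whole excursion interval of $x_n$ at $t_n$ is mapped by $(\lambda^{n,\delta})^{-1}$ into $J$ for large $n$: on that interval $c^{\delta,\Pi^n}\equiv t_n$, while $c^{\delta,\Pi}$ only approaches $t$ near $t^\delta$. So the oscillation of the excursion, and of $x_n^\delta$ near it, tends to $0$.

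Next, I will modify $\lambda^{n,\delta}$ locally so that it maps $t^\delta$ exactly to $v_n:=\tau^{\delta,\Pi^n}(t_n+)$. I will do this by piecewise-linear adjustment on $J$, keeping $\lambda$ strictly increasing. The change in $\|x^\delta_n\circ\tilde\lambda^{n}-x^\delta\|_{p}$ is bounded by the $p$-variation of both paths over $J$, which is small. The change in the $c$-term is bounded by $\sup_J|c^{\delta,\Pi}-t|+|t_n-t|$, which is also small.

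Finally, I will restrict everything to $[t^\delta,T+\delta]$, respectively $[v_n,T+\delta]$, and identify these with $\delta'$-extensions of the restrictions to $[t,T]$. Here $\delta'$ is the fictitious length remaining after $t$; it differs between $x$ and $x_n$, but affine rescaling of the fictitious parts changes nothing in the limit, by the independence of $\alpha_p$ from the construction, \cite[Lemma~8.12]{chevyrev_superdiffusive_2024}. The initial-value term $d_E(x_n(t_n+)(0),x(t)(0))$ is controlled by the small oscillation shown above. Since $p$-variation on a subinterval is dominated by that on the whole interval, the criterion \eqref{alternativ-alpha} holds on $[t,T]$ (the footnote allows a single $\delta$), and Lemma~\ref{alternative-a-metric} yields $\alpha_{p;[t,T]}(x^n,x)\to0$. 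The case $p=0$ or $p=\infty$ is the same argument with uniform norms.
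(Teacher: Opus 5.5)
The gap is your choice of cut point in the $n$-th path, and it makes the final identification step false. You cut $x_n^\delta$ at $v_n=\tau^{\delta,\Pi^n}(t_n+)$ with $t_n=c^{\delta,\Pi^n}(\lambda^{n,\delta}(t^\delta))$. Since $c^{\delta,\Pi^n}$ maps $[v_n,T+\delta]$ onto $[t_n,T]$, this segment is, up to reparametrization, the extension of $x^n$ over the physical times $(t_n,T]$. It is not the extension of $x^n|_{[t,T]}$: that extension is $x_n^\delta$ on $[\tau^{\delta,\Pi^n}(t),T+\delta]$. Likewise, the initial-value term of $\alpha_{p;[t,T]}(x^n,x)$ is $d_E(x^n(t)(0),x(t)(0))$, not $d_E(x_n(t_n+)(0),x(t)(0))$. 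Nothing forces $t_n=t$. The $\lambda^{n,\delta}$ from Lemma~\ref{alternative-a-metric} are only constrained asymptotically, and even for $x^n\equiv x$ one can have $t_n\neq t$ for every $n$. So you either discard the part of $x^n$ over $[t,t_n]$ or keep a part from before $t$, and your clock on the restricted segment runs over $[t_n,T]$ instead of $[t,T]$. Proving that the excursion of $x_n$ at $t_n$ is asymptotically flat does not address this.

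Your own tools repair it: cut at $w_n:=\tau^{\delta,\Pi^n}(t)$ instead. Since $c^{\delta,\Pi^n}(w_n)=t$, uniform convergence of the clocks gives $c^{\delta,\Pi}\bigl((\lambda^{n,\delta})^{-1}(w_n)\bigr)\to t$. Because $c^{\delta,\Pi}$ is continuous, non-decreasing and attains $t$ only at $t^\delta$ (this is where $t\notin\Pi$ is used), it follows that $(\lambda^{n,\delta})^{-1}(w_n)\to t^\delta$. Your piecewise-linear modification inside $J$ can therefore send $t^\delta$ to $w_n$, with exactly the cost bounds you state. The correct initial term $d_E(x_n^\delta(w_n),x^\delta(t^\delta))$ is controlled by uniform closeness on $J$ and continuity of $x^\delta$ at $t^\delta$, and the case distinction for $u_n$ disappears. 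At fixed $\delta$, the identification with the standard extensions on $[t,T]$ is then exact: conjugate the modified reparametrization with the increasing bijections matching each restricted segment to its standard $\delta$-extension, so no limit in $\delta$ is needed.

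The paper avoids the issue from the start by taking one countable $\Pi$ for $x,x^1,x^2,\ldots$. Then $c^\delta$ and $\tau^\delta(t)$ are common to all paths, the paper only modifies $\lambda^{n,\delta}_{[T]}$ to the right of $\tau^\delta(t)$ so that this point is fixed, and a single $\rho$ as in \eqref{c-timechange} serves all paths.

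Finally, stationarity of $x$ at $t$ alone does not give continuity of $x^\delta$ at $t^\delta$. For example, $x=\imath\mathds{1}_{(t,T]}$ and $x^n=\imath\mathds{1}_{(t-1/n,T]}$ converge in $J1$ on $[0,T]$ but not on $[t,T]$. You need $t\in\mathcal C(x)$, which the paper's proof assumes and which you use implicitly.
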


\begin{proof}
Fix $t \in \mathcal C(x)$ and $\delta > 0$.
{By Lemma~\ref{Lemma-localuniform},
$d_E(x^n(t)(0),x(t)(0))\to0$, so it remains to control other terms.}
Choose a countable set $\Pi \subset [0,T]$ containing all non-stationary 
points of $x,x^1,x^2,\dots$, and set $\Pi_t := \Pi \cap [t,T]$. Furthermore, we introduce the short notation $x^{\delta}_{[t]}$ for the
  $\delta$-extension $x^{\delta, \Pi_t} \in D ([t, T + \delta])$, analogously
  also $x^{n, \delta}_{[t]}$ and $c^{\delta}_{[t]} = c^{n, \delta}_{[t]} \in C
  ([t, T + \delta])$ for all $t \in [0, T]$.
  
  By Lemma~\ref{alternative-a-metric} there exists for any $\delta > 0$ a sequence $(\lambda^{n, \delta}_{[T]})_{n \in \mathbb{N}}
  \Lambda_{[0, T + \delta]}$ such that
  \begin{equation}
    \lim_{n \rightarrow \infty} \| x^{\delta}_{[T]} \circ \lambda^{n,
    \delta}_{[T]} - x^{n, \delta}_{[T]} \|_{p ; [0, T + \delta]} + \|
    c^{\delta}_{[T]} \circ \lambda^{n, \delta}_{[T]} - c^{\delta}_{[T]}
    \|_{\infty ; [0, T + \delta]} = 0. \label{T-convergence}
  \end{equation}
  The goal is to construct a sequence $(\lambda^{n, \delta}_{[t]})_n
  \subset \Lambda_{[t, T + \delta]}$ such that
  \[ \lim_{n \rightarrow \infty} \| x^{\delta}_{[t]} \circ \lambda^{n,
     \delta}_{[t]} - x^{n, \delta}_{[t]} \|_{p ; [t, T + \delta]} + \|
     c^{\delta}_{[t]} \circ \lambda^{n, \delta}_{[t]} - c^{\delta}_{[t]}
     \|_{\infty ; [t, T + \delta]} = 0. \]
  We start by deriving a useful estimate for $\lambda^{n, \delta}_{[T]}$. It
  follows from \eqref{T-convergence} and the continuity of $c^{\delta}_{[T]}$
  that for every $\varepsilon > 0$ there exists an $N \in \mathbb{N}$ such that
  for all $n \geqslant N$ it holds
  
  \begin{align}
    | c^{\delta}_{[T]} \circ \lambda^{n, \delta}_{[T]} (\tau^{\delta} (t)) -
    c^{\delta}_{[T]} (\tau^{\delta} (t)) | & < \varepsilon \nonumber\\
    | c^{\delta}_{[T]} \circ \lambda^{n, \delta}_{[T]} (\tau^{\delta} (t) + 1
    / n) - c^{\delta}_{[T]} (\tau^{\delta} (t) + 1 / n) | & < \varepsilon / 2,
    \nonumber\\
    c^{\delta}_{[T]} (\tau^{\delta} (t) + 1 / n) - c^{\delta}_{[T]}
    (\tau^{\delta} (t)) & < \varepsilon / 2, \nonumber
  \end{align}
  Combining them yields$| c^{\delta}_{[T]} \circ \lambda^{n, \delta}_{[T]}
  (\tau^{\delta} (t)) - t | < \varepsilon$ and $| c^{\delta}_{[T]} \circ
  \lambda^{n, \delta}_{[T]} (\tau^{\delta} (t) + 1 / n) - t | < \varepsilon$,
  which implies that
  \begin{equation}
    \lambda^{n, \delta}_{[T]} (\tau^{\delta} (t) + 1 / n) \leqslant
    \tau^{\delta} (t + \varepsilon) . \label{lambda-property}
  \end{equation}
  Notice that because we choose the same $\Pi$ for all $\delta$-extensions, we have
  that $c^{\delta}_{[t]}$ (resp. $x^{n, \delta}_{[t]}$ or $x^{\delta}_{[t]}$)
  compared to $c^{\delta}_{[T]} \mid_{[\tau^{\delta} (t), T + \delta]}$(resp.
  $x^{n, \delta}_{[T]}$ or $x^{\delta}_{[T]}$) only differs by the length of
  the ``excursions'', in particular, there exists some $\rho \in
  \Lambda_{[\nobracket \tau^{\delta} (t), T + \delta] ; [t, T + \delta]
  \nobracket}$ such that
  \begin{equation}
    c^{\delta}_{[T]} (s) = c^{\delta}_{[t]} \circ \rho (s), \quad x^{n,
    \delta}_{[T]} (s) = x^{n, \delta}_{[t]} \circ \rho (s), \quad
    x^{\delta}_{[T]} (s) = x^{\delta}_{[t]} \circ \rho (s) \quad \forall s \in
    [\tau^{\delta} (t), T + \delta] . \label{c-timechange}
  \end{equation}
  We distinguish between three different cases:
  \begin{enumerateroman}
    \item If $\lambda^{n, \delta}_{[T]} (\tau^{\delta} (t)) > \tau^{\delta}
    (t)$, then let $t_n = \tau^{\delta} (t) + 1 / n$;
    
    \item if $\lambda^{n, \delta}_{[T]} (\tau^{\delta} (t)) = \tau^{\delta}
    (t)$, then let $t_n = \tau^{\delta} (t)$;
    
    \item if $\lambda^{n, \delta}_{[T]} (\tau^{\delta} (t)) < \tau^{\delta}
    (t)$, then let $t_n = (\lambda^{n, \delta}_{[T]})^{- 1} (\tau^{\delta} (t)
    + 1 / n)$.
  \end{enumerateroman}
  Through this construction, we can ensure that $\lambda^{n, \delta}_{[T]} (t_n)
  \geqslant \tau^{\delta} (t)$ and $t_n \geqslant \tau^{\delta} (t)$. We
  define $\tilde{\lambda}^{n, \delta}_{[\tau^{\delta} (t)]} : [\tau^{\delta}
  (t), T + \delta] \rightarrow [\tau^{\delta} (t), T + \delta]$ to be equal to
  $\lambda^{n, \delta}_{[T]}$ on $[t_n, T + \delta]$, while on $[\tau^{\delta}
  (t), t_n]$ define $\tilde{\lambda}^{n, \delta}_{[\tau^{\delta} (t)]}$ to be
  the linear interpolation between $(\tau^{\delta} (t), \tau^{\delta} (t))$
  and $(t_n, \lambda^{n, \delta}_{[T]} (t_n))$. By construction we have
  $\tilde{\lambda}^{n, \delta}_{[\tau^{\delta} (t)]} \in
  \Lambda_{[\tau^{\delta} (t), T + \delta]}$. Finally, we introduce the
  reparametrization
  \begin{equation}
    \lambda^{n, \delta}_{[t]} = \rho  \circ \tilde{\lambda}^{n,
    \delta}_{[\tau^{\delta} (t)]} {\circ \rho^{- 1}}  \in \Lambda_{[t, T +
    \delta]} . \label{lambda-restricted}
  \end{equation}
  It holds by combining \eqref{c-timechange} and \eqref{lambda-restricted}
  that
  \begin{align}
    \| c^{\delta}_{[t]} \circ \lambda^{n, \delta}_{[t]} - c^{\delta}_{[t]}
    \|_{\infty ; [t, T + \delta]} = & \| c^{\delta}_{[T]} \circ
    \tilde{\lambda}^{n, \delta}_{[\tau^{\delta} (t)]} - c^{\delta}_{[T]}
    \|_{\infty ; [\tau^{\delta} (t), T + \delta]} ;  \label{t-convergence-c}\\
    \| x^{\delta}_{[t]} \circ \lambda^{n, \delta}_{[t]} - x^{n, \delta}_{[t]}
    \|_{p ; [t, T + \delta]} = & \| x^{\delta}_{[T]} \circ
    \tilde{\lambda}^{n, \delta}_{[\tau^{\delta} (t)]} - x^{n, \delta}_{[T]}
    \|_{p ; [\tau^{\delta} (t), T + \delta]} .  \label{t-convergence-x}
  \end{align}
  We only show the convergence of \eqref{t-convergence-x} to zero as $n
  \rightarrow \infty$. The convergence of \eqref{t-convergence-c} is simpler
  and can be derived analogously. The triangle inequality gives us
  
  \begin{align*}
    &\| x^{\delta}_{[T]} \circ \tilde{\lambda}^{n, \delta}_{[\tau^{\delta}
    (t)]} - x^{n, \delta}_{[T]} \|_{p ; [\tau^{\delta} (t), T + \delta]}
    \\
    &\quad\leqslant \| x^{\delta}_{[T]} \circ \tilde{\lambda}^{n,
    \delta}_{[\tau^{\delta} (t)]} - x^{\delta}_{[T]} \circ \lambda^{n,
    \delta}_{[T]} \|_{p ; [\tau^{\delta} (t), t_n]}\\
    & + \| x^{\delta}_{[T]} \circ \lambda^{n, \delta}_{[T]} - x^{n,
    \delta}_{[T]} \|_{p ; [\tau^{\delta} (t), t_n]}\\
    & + \| x^{\delta}_{[T]}
    \circ \tilde{\lambda}^{n, \delta}_{[\tau^{\delta} (t)]} - x^{n,
    \delta}_{[T]} \|_{p ; [t_n, T + \delta]} .
  \end{align*}
  The convergence of the latter two follows from the fact that
  $\tilde{\lambda}^{n, \delta}_{[\tau^{\delta} (t)]} = \lambda^{n,
  \delta}_{[T]}$ on $[t_n, T + \delta]$ and \eqref{T-convergence}. For the first term, we use the invariance of $p$-variation under 
reparametrization, as
  \begin{align*}
    &\| x^{\delta}_{[T]} \circ \tilde{\lambda}^{n, \delta}_{[\tau^{\delta}
    (t)]} - x^{\delta}_{[T]} \circ \lambda^{n, \delta}_{[T]} \|_{p ;
    [\tau^{\delta} (t), t_n]}\\
    &\quad= \| x^{\delta}_{[T]} - x^{\delta}_{[T]}
    \circ \lambda^{n, \delta}_{[T]} \circ (\tilde{\lambda}^{n,
    \delta}_{[\tau^{\delta} (t)]})^{- 1} \|_{p ; [\tau^{\delta} (t),
    \lambda^{n, \delta}_{[\tau^{\delta} (t)]} (t_n)]}\\
    &\quad= \| x^{\delta}_{[T]} \|_{p ; [\tau^{\delta} (t), \lambda^{n,
    \delta}_{[\tau^{\delta} (t)]} (t_n)]}\\
    &\qquad+ \| x^{\delta}_{[T]} \|_{p ;
    [\lambda^{n, \delta}_{[T]} (\tau^{\delta} (t)), \lambda^{n, \delta}_{[T]}
    (t_n)]} .
  \end{align*}
  Consider $n$ large enough such that $\tau^{\delta} (t) + 1 / n \leqslant
  \tau^{\delta} (t +)$ and study the three different cases of $t_n$
  separately.\\
  \textit{For $n$ satisfying (i)} and sufficiently large to yield
  \eqref{lambda-property}, it holds
  \begin{align}
    &\| x^{\delta}_{[T]} \circ \tilde{\lambda}^{n, \delta}_{[\tau^{\delta}
    (t)]} - x^{\delta}_{[T]} \circ \lambda^{n, \delta}_{[T]} \|_{p ;
    [\tau^{\delta} (t), t_n]}\nonumber\\
    &\quad\leqslant 2 \| x^{\delta}_{[T]} \|_{p ;
    [\tau^{\delta} (t), \lambda^{n, \delta}_{[\tau^{\delta} (t)]}
    (\tau^{\delta} (t) + 1 / n)]}\nonumber\\
    &\quad\leqslant 2 \| x^{\delta}_{[T]} \|_{p ;
    [\tau^{\delta} (t +), \nobracket \tau^{\delta} (t \nobracket +
    \varepsilon)]}, \nonumber
  \end{align}
  where for the last inequality, we used $\| x^{\delta}_{[T]} \|_{p ;
  [\tau^{\delta} (t), \nobracket \tau^{\delta} (t \nobracket +)]} = 0$ (since
  $x (t) \equiv \tmop{const}$).\\
  \textit{For $n$ satisfying ii)}, it holds trivially that $\| x^{\delta}_{[T]}
  \circ \tilde{\lambda}^{n, \delta}_{[\tau^{\delta} (t)]} - x^{\delta}_{[T]}
  \circ \lambda^{n, \delta}_{[T]} \|_{p ; [\tau^{\delta} (t), t_n]} = 0$.\\
  \textit{For $n$ satisfying iii)} , it holds that
  \begin{align}
    \| x^{\delta}_{[T]} \circ \tilde{\lambda}^{n, \delta}_{[\tau^{\delta}
    (t)]} - x^{\delta}_{[T]} \circ \lambda^{n, \delta}_{[T]} \|_{p ;
    [\tau^{\delta} (t), t_n]} & \leqslant 2 \| x^{\delta}_{[T]} \|_{p ;
    [\lambda^{n, \delta}_{[T]} (\tau^{\delta} (t)), \tau^{\delta} (t) + 1 /
    n]} \nonumber\\
    & \leqslant 2 \| x^{\delta}_{[T]} \|_{p ; [\lambda^{n, \delta}_{[T]}
    (\tau^{\delta} (t)), \tau^{\delta} (t)]} + 2 \| x^{\delta}_{[T]} \|_{p ;
    [\tau^{\delta} (t), \tau^{\delta} (t) + 1 / n]} . \nonumber
  \end{align}
  It follows from \eqref{T-convergence} that for every $\varepsilon > 0$ there
  exists an $N \in \mathbb{N}$ such that for all $n \geqslant N$ we have
  \[ c^{\delta}_{[T]} (\tau^{\delta} (t)) - c^{\delta}_{[T]} \circ
     \lambda^{n, \delta}_{[T]} (\tau^{\delta} (t)) = t - c^{\delta}_{[T]}
     \circ \lambda^{n, \delta}_{[T]} (\tau^{\delta} (t)) < \varepsilon, \]
  this implies $\lambda^{n, \delta}_{[T]} (\tau^{\delta} (t)) \geqslant
  \tau^{\delta} (t - \varepsilon)$.\\
  Combining the estimates for the three cases gives us the following generous
  bound: For any $\varepsilon > 0$, there exists an $N \in \mathbb{N}$ such
  that for all $n \geqslant N$ it holds
  \begin{align}
    \| x^{\delta}_{[T]} \circ \tilde{\lambda}^{n, \delta}_{[\tau^{\delta}
    (t)]} - x^{\delta}_{[T]} \circ \lambda^{n, \delta}_{[T]} \|_{p ;
    [\tau^{\delta} (t), t_n]} \leqslant & 2 \| x^{\delta}_{[T]} \|_{p ;
    [\tau^{\delta} (t +), \nobracket \tau^{\delta} (t \nobracket +
    \varepsilon)]} + 2 \| x^{\delta}_{[T]} \|_{p ; [\tau^{\delta} (t -
    \varepsilon), \tau^{\delta} (t)]} \nonumber\\
    & + 2 \| x^{\delta}_{[T]} \|_{p ; [\tau^{\delta} (t), \tau^{\delta} (t) +
    1 / n]} . \nonumber
  \end{align}
  Since the choice of $\varepsilon$ is arbitrary, we are done with the proof.
\end{proof}

\section{Main Results} \label{section-mainresult}
In this section, we clarify our notion of solution to RPDEs with discontinuous
drivers and state our first main results in Theorem~\ref{main-result}. We now
return to the RPDE \eqref{semilinear-RPDE}, which we restate for convenience:
\[
  d u_i + (\mathcal{L}_t u_i + {f_i}(t,x,u,\sigma^{\top} \nabla_x u_i)) \, dt + {h_i}(t,x,u) \diamond dW = 0, 
  \quad u(T,x) = \xi(x), \quad 1 \leqslant i \leqslant k.
\]
and for a smooth approximation $W^n$ of $W$, the corresponding approximating PDE
\begin{equation} 
  \frac{\partial u^n_i}{\partial t} +\mathcal{L}_t u^n_i + {f_i} (t, x, u^n,
  \sigma^{\top} \nabla_x u^n_i) + {h_i} (t, x, u^n) \dot{W}^n = 0, \quad u^n (T, x)
  = \xi (x), \quad 1 \leqslant i \leqslant k.
  \label{semilinear-smoothPDE-main}
\end{equation}
The RPDE \eqref{semilinear-RPDE} is understood in the Marcus sense:
at every jump time of $W$, the solution satisfies
\[
  u(t,x)=\varphi\bigl(-h(t,x,\cdot)\,\Delta W_t,u(t+,x),0\bigr),
\]
where $\varphi$ is defined through the ODE \eqref{Marusjump-ODE}. This suggests the
following lift of a c\`agl\`ad solution path to the space of decorated paths.
\begin{definition}
  \label{Def-Marcus-lift}
  Let $u\in D([0,T];C(\mathbb{R}^d;\mathbb{R}^k))$, let
  $W\in D([0,T];\mathbb{R}^e)$, and let $h$ be a vector field acting on the
  value variable of $u$. We define the \textbf{Marcus lift}
  $\kappa_{h,W}(u)$ by
  \[
    \bigl(\kappa_{h,W}(u)\bigr)(t)(s)(x)
    \assign
      \begin{cases}
        u(t)(x), & s=0,\\
        \varphi\bigl(h(t,x,\cdot)\,\Delta W_t,u(t+)(x),s\bigr),
        & s\in(0,1].
      \end{cases}
  \]
\end{definition}
In order to verify that the lift above is indeed a lift to the decorated path
space with values in $C(\mathbb{R}^d;\mathbb{R}^k)$, we record the following
lemma.
\begin{lemma}
  \label{Lemma-Marcus-lift}
  Let $u\in D([0,T];C(\mathbb{R}^d;\mathbb{R}^k))$ and
  $W\in D([0,T];\mathbb{R}^e)$. Assume that the vector field $h(t,x,\cdot)$ is
  complete, continuous in $(t,x)$, and uniformly Lipschitz continuous in the
  value variable. Then
  \[
    \kappa_{h,W}(u)
    \in \mathfrak{D}([0,T];C(\mathbb{R}^d;\mathbb{R}^k)).
  \]
\end{lemma}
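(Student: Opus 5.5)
We have to check the three conditions of Definition~\ref{Def-decorated} for $\Phi:=\kappa_{h,W}(u)$ with $E=C(\mathbb{R}^d;\mathbb{R}^k)$ and the compact-open topology. We metrize this topology by $d_E(f,g)=\sum_{j\ge1}2^{-j}\bigl(1\wedge\sup_{|x|\le j}|f(x)-g(x)|\bigr)$. We take $\Pi$ to be the set of discontinuity points of $W$ together with those of $u$. This set is at most countable because both are c\`agl\`ad paths in metric spaces.

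\textbf{Well-definedness of the excursions.} For fixed $t$, the vector field $y\mapsto h(t,x,y)\Delta W_t$ is globally Lipschitz in $y$, with constant $L|\Delta W_t|$ uniform in $x$. The flow $\varphi$ therefore exists on $[0,1]$. By Gronwall,
\[
  |\varphi(V_x,y,s)-\varphi(V_{x'},y',s)|
  \le e^{L|\Delta W_t|}\Bigl(|y-y'|+\sup_{|z|\le R}|V_x(z)-V_{x'}(z)|\Bigr),
\]
where $R$ bounds the trajectories. Continuity of $h$ in $x$ and of $u(t+)$ then give that $x\mapsto\Phi(t)(s)(x)$ is continuous, so $\Phi(t)(s)\in E$. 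The flow is Lipschitz in $s$ locally uniformly in $x$, so $s\mapsto\Phi(t)(s)$ is continuous on $(0,1]$. At $s=0$ it takes the value $u(t)$, so the path is c\`agl\`ad on $[0,1]$: it may jump at $0$ if the Marcus condition fails, but $D([0,1];E)$ allows this. Condition (i) holds because $t\mapsto\Phi(t)(0)=u(t)$ lies in $D([0,T];E)$ by assumption.

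\textbf{Condition (ii).} Let $t\notin\Pi$. Then $\Delta W_t=0$, so $\Phi(t)(s)=u(t+)=u(t)$ for all $s\in[0,1]$, and $t$ is stationary. Hence every non-stationary point lies in $\Pi$.

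\textbf{Condition (iii).} This is the main step. For $t\in\Pi$ and $s\in(0,1]$ we split
\[
  d_E(\Phi(t)(s),\Phi(t)(0))\le d_E(\varphi(\cdot,u(t+),s),u(t+))+d_E(u(t+),u(t)).
\]
For the second term: a c\`agl\`ad path in a metric space has, for each $\varepsilon>0$, only finitely many jumps with $d_E(u(t+),u(t))>\varepsilon/2$. For the first term, the ODE gives on each compact $K_j=\{|x|\le j\}$
\[
  |\varphi(s)-u(t+)(x)|\le |\Delta W_t|\int_s^1 |h(t,x,\varphi(r))|\,dr
  \le C_j|\Delta W_t|\,e^{L|\Delta W_t|}.
\]
Here $C_j$ comes from linear growth, $|h(t,x,y)|\le |h(t,x,0)|+L|y|$. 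We also use that $\sup_{t,|x|\le j}|h(t,x,0)|<\infty$ by continuity of $h$ on the compact set $[0,T]\times K_j$, and that $\sup_t\sup_{K_j}|u(t+)|<\infty$ because c\`agl\`ad paths are bounded. Since $|\Delta W_t|\le\|W\|_\infty$, the factor $e^{L|\Delta W_t|}$ is bounded. Given $\varepsilon$, we choose $J$ with $2^{-J}<\varepsilon/4$. Then the first term exceeds $\varepsilon/2$ only if $|\Delta W_t|>c(\varepsilon,J)$ for some $c(\varepsilon,J)>0$. This happens for only finitely many $t$, again because $W$ is c\`agl\`ad. Combining the two estimates shows that only finitely many $t\in\Pi$ satisfy $\sup_s d_E(\Phi(t)(s),\Phi(t)(0))>\varepsilon$.

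The expected obstacle is making the local bounds uniform in $t$ for the compact-open metric, and this is exactly what the continuity of $h$ in $(t,x)$ and the boundedness of c\`agl\`ad paths provide. Passing to the equivalence class then gives $\kappa_{h,W}(u)\in\mathfrak{D}([0,T];E)$.
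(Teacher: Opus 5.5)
Your proof is correct, and it is more direct and more complete than the paper's. The paper fixes $x$ and notes that $\kappa_{h^x,W}$, with $h^x_t=h(t,x,\cdot)$, is exactly the finite-dimensional Marcus embedding of Example~\ref{Marcus-embedding}. It concludes that one obtains a decorated path pointwise in $x$. It then only checks that each excursion value $x\mapsto\varphi(h(t,x,\cdot)\Delta W_t,u(t+)(x),s)$ is continuous, by continuous dependence of ODEs on parameters. You instead verify (i)--(iii) of Definition~\ref{Def-decorated} directly in $E$ with an explicit compact-open metric. Your key additional ingredient is the bound $\sup_{s\in(0,1]}\sup_{|x|\le j}|\varphi(s)-u(t+)(x)|\le C_j|\Delta W_t|$, with $C_j$ independent of $t$. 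Together with the locally-in-$x$ uniform Lipschitz continuity in $s$, this yields condition (iii) and the $E$-valued c\`agl\`ad property of each excursion uniformly on compacts. The paper's pointwise reduction leaves exactly this implicit: having finitely many large excursions for each fixed $x$ does not by itself give finitely many $t$ whose excursion is large somewhere on $\{|x|\le j\}$. The paper's argument is shorter because it defers to the RDE literature, while yours is self-contained and supplies the missing uniformity.

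Two cosmetic points. Since $d_E\le 1$, the phrase ``c\`agl\`ad paths are bounded'' should be read as relative compactness of the range of $u$ in $E$: every sequence $u(t_n)$ has a subsequence converging to a one-sided limit. This is what actually gives $\sup_t\sup_{|x|\le j}|u(t+)(x)|<\infty$. Also, $|\Delta W_t|\le 2\|W\|_\infty$ rather than $\|W\|_\infty$, which changes nothing.
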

\begin{proof}
  For fixed $x\in\mathbb{R}^d$, set $h^x_t\assign h(t,x,\cdot)$. Then
  $\kappa_{h^x,W}$ is precisely the Marcus embedding from
  Example~\ref{Marcus-embedding}. Hence, pointwise in $x$, the expression in
  Definition~\ref{Def-Marcus-lift} gives a decorated path. It remains to check
  that the decoration takes values in $C(\mathbb{R}^d;\mathbb{R}^k)$. For
  fixed $t\in[0,T]$ and $s\in(0,1]$, the map
  \[
    x\longmapsto
    \varphi\bigl(h(t,x,\cdot)\Delta W_t,u(t+)(x),s\bigr)
  \]
  is continuous by the continuous dependence of ODE solutions on parameters,
  using the continuity of $x\mapsto u(t+)(x)$ and of $h$ in $(t,x)$ together
  with the uniform Lipschitz bound in the value variable; see
  \cite[Theorem~III.13.II]{walter_ordinary_1998}. The case $s=0$ follows from
  $u(t)\in C(\mathbb{R}^d;\mathbb{R}^k)$. Thus the lift belongs to
  $\mathfrak{D}([0,T];C(\mathbb{R}^d;\mathbb{R}^k))$.
\end{proof}
We next specify the space of discontinuous driving signals $W$. In the spirit
of Wong--Zakai approximation, the natural path space should allow approximation
by smooth paths. To this end, we denote by
\label{smooth-path-closures}
$C^{0,p}(I;\mathbb{R}^e)$ the closure of the smooth paths in
$C^p(I;\mathbb{R}^e)$, the space of continuous paths of finite $p$-variation,
with respect to the $p$-variation metric. Similarly, we define its discontinuous
counterpart $D^{0,p}(I;\mathbb{R}^e)$ as the closure of the smooth paths in
$D^p(I;\mathbb{R}^e)$ with respect to the $p$-variation-type $M1$ metric
$\sigma^p_{M1}$ introduced in \cite{chevyrev_canonical_2019}; see also
p.~\pageref{embedding-i}. For every $\varepsilon>0$, these spaces satisfy the strict
inclusions
\[
  C^{0,p}(I;\mathbb{R}^e) \subsetneq C^p(I;\mathbb{R}^e)
  \subsetneq C^{0,p+\varepsilon}(I;\mathbb{R}^e),
  \qquad
  D^{0,p}(I;\mathbb{R}^e) \subsetneq D^p(I;\mathbb{R}^e)
  \subsetneq D^{0,p+\varepsilon}(I;\mathbb{R}^e).
\]
Thus, at the cost of an arbitrarily small loss of variation regularity, every
continuous (respectively, c\`agl\`ad) path of finite $p$-variation admits a
sequence of smooth approximations converging in the
$(p+\varepsilon)$-variation metric (respectively, the
$(p+\varepsilon)$-variation-type $M1$ metric).

We write $\mathrm{BUC}(\mathbb{R}^d;\mathbb{R}^k)$ for the space of bounded
uniformly continuous functions from $\mathbb{R}^d$ to $\mathbb{R}^k$,
equipped with the uniform norm.

We are now ready to define robust viscosity solutions. For readers familiar
with the continuous setting (see \cite{lions_fully_1998, caruana_partial_2009,
caruana_rough_2011, friz_rough_2014, diehl_backward_2017,
liang_multidimensional_2023}), we note that the following definition is
compatible with the classical formulation whenever
$W\in C^{0,q}([0,T];\mathbb{R}^e)$.
\begin{definition}\label{def-robust-viscosity-solution}
  Let $q<2$, $\xi\in \mathrm{BUC}(\mathbb{R}^d;\mathbb{R}^k)$, and
  $W\in D^{0,q}([0,T];\mathbb{R}^e)$. We call
  \[
    u^W=(u_i^W)_{i=1,\ldots,k}\assign\mathcal{U}(\xi,W)
  \]
  a
  \textbf{robust (viscosity) solution} to the RPDE
  \eqref{semilinear-RPDE} with terminal condition $\xi$ and driven by $W$
  if
  \[
    \mathcal{U}\colon \mathrm{BUC}(\mathbb{R}^d;\mathbb{R}^k)
    \times D^{0,q}([0,T];\mathbb{R}^e)
    \longrightarrow D([0,T];C(\mathbb{R}^d;\mathbb{R}^k))
  \]
  satisfies:
  \begin{enumerateroman}
    \item For every $W\in C^\infty([0,T];\mathbb{R}^e)$, we have that $u^W = \mathcal{U}(\xi,W)$ 
    is the unique viscosity
    solution to \eqref{semilinear-RPDE}, with $\diamond dW$ replaced by $\dot{W} dt$,
    in the classical sense of Definition~\ref{Def-viscosity}.
    \item For each
    $(\xi,W)\in \mathrm{BUC}(\mathbb{R}^d;\mathbb{R}^k)
    \times D^{0,q}([0,T];\mathbb{R}^e)$, set $\mathbf{u}^{W}
      \assign \kappa_{-h,W}(u^W)$.
    The map $(\xi,W)\mapsto \mathbf{u}^{W}$ is continuous from
    \[
      \mathrm{BUC}(\mathbb{R}^d;\mathbb{R}^k)
      \times
      \bigl(D^{0,q}([0,T];\mathbb{R}^e),\sigma^q_{M1}\bigr)
      \quad\text{to}\quad
      \bigl(\mathfrak{D}([0,T];C(\mathbb{R}^d;\mathbb{R}^k)),
      \alpha_\infty\bigr).
    \]
    
  \end{enumerateroman}
\end{definition}

\begin{remark}\label{remark-unique-continuous-extension}
  The restriction to $D^{0,q}([0,T];\mathbb R^e)$ makes uniqueness intrinsic
  to Definition~\ref{def-robust-viscosity-solution}. Indeed, if two solution
  maps satisfy (i) and (ii), then they agree on smooth drivers by (i), and hence on all of $D^{0,q}$, since smooth paths are dense in
$D^{0,q}$ and both maps are continuous by (ii). Smooth paths are not dense in the whole space $D^q$ in
  the $\sigma^q_{M1}$ topology, so the two conditions alone would not
  determine a solution map on $D^q\setminus D^{0,q}$.  
  The rough FBSDE representation established below nevertheless selects a canonical and
  unique solution for every $W\in D^q$. We do not include this additional
  generality in the definition, since its uniqueness would then rely on the
  subsequent rough FBSDE representation rather than following directly from the definition.
\end{remark}

The next proposition shows that the natural requirements (i) and (ii) yield a
Wong--Zakai-type approximation result of the kind illustrated in
Example~\ref{example} and, in turn, force the Marcus jump structure of the
RPDE. This also justifies the notation $\diamond\,dW$ used in
\eqref{semilinear-RPDE}.
\begin{proposition}\label{prop-robust-solution-marcus-jumps}
  Let $u^W=\mathcal{U}(\xi,W)$ be a robust viscosity solution to
  \eqref{semilinear-RPDE}. For any sequence
  $(W^n)_{n\in\mathbb{N}}\subset C^\infty([0,T];\mathbb{R}^e)$ such that
  \[
    \sigma^q_{M1}(W^n,W)\longrightarrow0,
    \qquad\text{equivalently}\qquad
    \alpha_q(\jmath W^n,\jmath W)\longrightarrow0,
  \]
  the viscosity solutions
  $u^n$ to
  \eqref{semilinear-smoothPDE-main} satisfy
  \[
    \alpha_\infty
    \bigl(\jmath u^n,\kappa_{-h,W}(u^W)\bigr)\longrightarrow 0
  \]
  in $\mathfrak{D}([0,T];C(\mathbb{R}^d;\mathbb{R}^k))$. In particular, for
  every $t\in[0,T]$ and every $x\in\mathbb{R}^d$,
  {
  \[
    \Delta u^W_t(x)
    \assign u^W(t+)(x)-u^W(t)(x)
    =
    u^W(t+)(x)
    -\varphi\bigl(-h(t,x,\cdot)\,\Delta W_t,u^W(t+)(x),0\bigr).
  \]
  }
\end{proposition}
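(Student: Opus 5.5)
The plan is to read off the convergence directly from properties (i) and (ii) of Definition~\ref{def-robust-viscosity-solution}. Then we extract the jump identity from the fact that the limit of continuous paths in $\alpha_\infty$ must have continuous $\delta$-extension.

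\emph{Step 1 (identification for smooth drivers).} Fix smooth $W^n$ with $\sigma^q_{M1}(W^n,W)\to0$. By (i), $u^{W^n}=\mathcal U(\xi,W^n)$ is the unique classical viscosity solution of \eqref{semilinear-smoothPDE-main}, so $u^{W^n}=u^n$. Since $W^n$ is continuous, $\Delta W^n_t=0$ for all $t$. Hence the Marcus excursion $\varphi(0,u^n(t+),s)$ is constant and equal to $u^n(t+)=u^n(t)$. Therefore $\kappa_{-h,W^n}(u^n)$ and $\jmath u^n$ both have only stationary points and are $\sim$-equivalent; they define the same element of $\mathfrak D$. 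The continuity in (ii), with $\xi$ fixed, now gives
\[
\alpha_\infty\bigl(\jmath u^n,\kappa_{-h,W}(u^W)\bigr)=\alpha_\infty\bigl(\kappa_{-h,W^n}(u^{W^n}),\kappa_{-h,W}(u^W)\bigr)\to0 .
\]
The equivalence of $\sigma^q_{M1}$ and $\alpha_q\circ\jmath$ convergence is the homeomorphism recalled in Section~\ref{Chap-decorated-paths}.

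\emph{Step 2 (continuity of the limit extension).} Fix $\delta>0$. By Lemma~\ref{alternative-a-metric} with $p=\infty$, there are $\lambda^{n,\delta}\in\Lambda_{[0,T+\delta]}$ with
\[
\|\mathbf u^{\delta}-(\jmath u^n)^{\delta}\circ\lambda^{n,\delta}\|_\infty\to0,
\]
where $\mathbf u=\kappa_{-h,W}(u^W)$. Each $(\jmath u^n)^\delta$ is a continuous path in $C(\mathbb R^d;\mathbb R^k)$: $u^n$ is continuous in $t$ and its excursions are constant. Composition with the homeomorphism $\lambda^{n,\delta}$ preserves continuity. So $\mathbf u^\delta$ is a uniform limit of continuous paths, for the metric of the compact-open topology, and is therefore continuous on $[0,T+\delta]$.

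\emph{Step 3 (reading off the jump).} Let $t$ be a jump time, i.e.\ $t\in\Pi$. On the excursion interval $(\tau^\delta_t,\tau^\delta_{t+}]$ the path $\mathbf u^\delta$ equals $s\mapsto\varphi(-h(t,\cdot,\cdot)\Delta W_t,u^W(t+),s)$, reparametrized. Its value at $\tau^\delta_t$ is $u^W(t)$. Continuity at $\tau^\delta_t$ from the right forces
\[
u^W(t)=\lim_{s\downarrow0}\varphi\bigl(-h(t,x,\cdot)\Delta W_t,u^W(t+)(x),s\bigr)=\varphi\bigl(-h(t,x,\cdot)\Delta W_t,u^W(t+)(x),0\bigr),
\]
using continuity of the flow in $s$ (Lemma~\ref{Lemma-Marcus-lift}). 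At the right end, $\varphi(\cdot,u^W(t+),1)=u^W(t+)$, which is consistent with continuity into the next piece. If $\Delta W_t=0$, the flow is trivial. Continuity of $\mathbf u^\delta$ at $\tau^\delta_t$ then gives $u^W(t)=u^W(t+)$, which is again the claimed identity, and this covers all remaining $t$. Convergence in the compact-open metric gives the identity pointwise in $x$.

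\emph{Main obstacle.} The delicate point is Step 2. Uniform closeness after a time change must transfer continuity across the inserted excursion intervals. One has to make sure $\Pi$ may be chosen to contain all jump times of $W$ and $u^W$ (allowed by Definition~\ref{Def-decorated}(ii)), so that the $s\downarrow0$ limit is genuinely approached inside $\mathbf u^\delta$. One also needs the compact-open topology to be metrizable, so that uniform limits of continuous paths are continuous; this follows from its standard metric via seminorms on compacts.
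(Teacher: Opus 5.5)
Your proposal is correct and follows the paper's proof essentially step for step. Property (i) identifies $\mathcal U(\xi,W^n)$ with $u^n$, and (ii) gives the $\alpha_\infty$ convergence. Lemma~\ref{alternative-a-metric} then turns this into uniform closeness after reparametrization, so the $\delta$-extension of $\kappa_{-h,W}(u^W)$ is a uniform limit of continuous paths. Continuity at the initial point of each inserted excursion yields the Marcus jump identity.

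Your extra checks only make explicit what the paper leaves implicit: that $\kappa_{-h,W^n}(u^n)=\jmath u^n$ for smooth drivers, and that stationary points and $\Delta W_t=0$ are handled by a suitable choice of $\Pi$. One small correction: continuity of $s\mapsto\varphi(\cdot,\cdot,s)$ is plain ODE continuity, not the content of Lemma~\ref{Lemma-Marcus-lift}, which concerns continuity in $x$.
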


\begin{proof}
  The asserted convergence follows immediately from (ii), while (i)
  identifies $\mathcal{U}(\xi,W^n)$ with the unique viscosity solution $u^n$
  of the corresponding smooth PDE. By
  Lemma~\ref{alternative-a-metric}, for every $\delta>0$ there exist
  reparametrizations $\lambda^{n,\delta}$ such that
  \[
    \bigl\|u^\delta-u^n\circ\lambda^{n,\delta}\bigr\|_\infty
    \longrightarrow0,
  \]
  where $u^\delta$ is the $\delta$-extension of
  $\kappa_{-h,W}(u^W)$. Since $u^n\circ\lambda^{n,\delta}$ is continuous,
  so is $u^\delta$. In particular, every inserted excursion is continuous.
  Thus, by continuity at its initial point and
  Definition~\ref{Def-Marcus-lift}, for every $t\in[0,T]$,
  \[
    u^W(t)(x)
    =
    \lim_{s\searrow0}
    \varphi\bigl(-h(t,x,\cdot)\,\Delta W_t,u^W(t+)(x),s\bigr)
    =
    \varphi\bigl(-h(t,x,\cdot)\,\Delta W_t,u^W(t+)(x),0\bigr),
  \]
  which is equivalent to the asserted Marcus jump relation.
\end{proof}

\begin{remark}\label{remark-marcus-lift-not-circular}
  The preceding proposition shows that, in the discontinuous setting, every robust
  viscosity solution necessarily follows the Marcus jump dynamics. At first
  sight, this conclusion may appear circular, since the Marcus lift of $u^W$
  already enters the continuity requirement in (ii). The role of this lift,
  however, is not to force the limiting solution to have Marcus jumps. Rather,
  the solution has Marcus jump structure, and the lift records the
  corresponding excursions so that the solution map is continuous. Indeed,
  the pointwise limit of the smooth approximations at the continuity points
  of $W$ does not depend on the Marcus lift. This limit coincides with the
  projection of the decorated Skorokhod limit from the space of decorated paths 
  onto the standard path space and
  therefore has the same Marcus jump structure.
\end{remark}

We now fix a filtered probability space $(\Omega,
\mathcal{F}, (\mathcal{F}^B_t)_{t \in [0, T]}, \mathbb{P})$, which supports a
$d$-dimensional Brownian motion $B$. The filtration $(\mathcal{F}^B_t)_{t
\in [0, T]}$ is given by the usual filtration of $B$.

Throughout this paper, we work under various combinations of the following assumptions. The assumptions marked with primes are stronger than their unprimed counterparts. In general, weaker assumptions on the SDE require stronger conditions on the RBSDE, and vice versa. \\

\noindent\hypertarget{A:W}{}\textbf{Assumption $(W)$:} 
\textit{The rough path $W$ belongs to ${D^{0,q}}([0,T]; \mathbb{R}^e)$ for some $q \in [1,2)$, and we fix $p>2$ such that $\frac{1}{p} + \frac{1}{q} > 1$. Throughout the paper, we work on the space of c\`agl\`ad paths, as the PDEs are formulated backward in time.}

\medskip

\noindent\hypertarget{A:b-sigma}{}\textbf{Assumption $(b,\sigma)$:} 
\textit{There exists a constant $C_{b,\sigma} > 0$ such that the functions 
$b : [0,T]\times \mathbb{R}^d \to \mathbb{R}^d$ and 
$\sigma : [0,T]\times \mathbb{R}^d \to \mathbb{R}^{d\times d}$ are jointly continuous in $(t,x)$ and, for all $t,x,x'$,}
\begin{align*}
|b(t,x) - b(t,x')| + |\sigma(t,x) - \sigma(t,x')|
\le C_{b,\sigma} |x-x'|.
\end{align*}

\medskip

\noindent\hypertarget{A:b-sigma'}{}\textbf{Assumption $(b',\sigma')$:} 
\textit{There exists a constant $C_{b,\sigma} > 0$ such that the functions 
$b, \sigma$ are jointly continuous in $(t,x)$ and, for all $t,x,x'$,}
\begin{align*}
|b(t,x)| + |\sigma(t,x)| &\le C_{b,\sigma}, \\
|b(t,x) - b(t,x')| + |\sigma(t,x) - \sigma(t,x')|
&\le C_{b,\sigma} |x-x'|.
\end{align*}

\medskip

\noindent\hypertarget{A:xi-f}{}\textbf{Assumption $(\xi,f)$:} 
\textit{The function
$\xi:\mathbb{R}^d\to\mathbb{R}^k$ is bounded and uniformly continuous, and
$f:[0,T]\times\mathbb{R}^d\times\mathbb{R}^k
\times\mathbb{R}^{{k}\times d}\to\mathbb{R}^k$
is continuous. Moreover, there exists a constant $C_{\xi,f}>0$ such that,
for all $t,x,x',y,y',z,z'$,}
\begin{align*}
|\xi(x)| + |f(t,x,0,0)| &\le C_{\xi,f}, \\
|f(t,x,y,z) - f(t,x',y',z')|
&\le C_{\xi,f} (|x-x'| + |y-y'| + |z-z'|).
\end{align*}
\textit{Moreover, for each $i=1,\dots,{k}$, the component $f_i(t,x,y,z)$ depends only on the $i$-th row of $z$.}

\medskip

\noindent\hypertarget{A:h}{}\textbf{Assumption $(h)$:} 
\textit{There exist a constant $C_h > 0$ and an exponent $\rho\in(2,p)$ such that for
$h : [0,T]\times \mathbb{R}^d \times \mathbb{R}^k \to \mathcal{L}(\mathbb{R}^e,\mathbb{R}^k)$,
the paths \(h_\cdot(x,y)\) and
\(\mathrm D_yh_\cdot(x,y)\) are continuous for every \((x,y)\), and}
\[
\sup_{t,x} |h_t(x,\cdot)|_{C^2_b}
+
\sup_{t,x,y}
\bigl(
|\mathrm{D}_xh_t(x,y)|
+|\mathrm{D}_x^2h_t(x,y)|
+|\mathrm{D}_x\mathrm{D}_yh_t(x,y)|
\bigr)
\le C_h,
\]
\[
\|h\|_{\rho;[0,T];C_b(\mathbb R^d\times\mathbb R^k)}
+\|\mathrm D_xh\|_{\rho;[0,T];C_b(\mathbb R^d\times\mathbb R^k)}
+\|\mathrm D_yh\|_{\rho;[0,T];C_b(\mathbb R^d\times\mathbb R^k)}
\le C_h.
\]
Here, $C_b(\mathbb R^d\times\mathbb R^k)$ denotes the relevant space of
bounded continuous operator-valued maps, equipped with the supremum norm
over $(x,y)$. For $\mathrm D_xh$ and $\mathrm D_yh$, the absolute value
denotes the corresponding operator norm.

\medskip

\noindent\hypertarget{A:h'}{}\textbf{Assumption $(h')$:} 
\textit{There exist a constant $C_h > 0$ and an exponent $\rho\in(2,p)$ such that for
$h : [0,T]\times \mathbb{R}^k \to \mathcal{L}(\mathbb{R}^e,\mathbb{R}^k)$
(independent of $x$), the paths \(h_\cdot(y)\) and
\(\mathrm D_yh_\cdot(y)\) are continuous for every \(y\), and}
\[
\sup_{t \in [0,T]} |h_t(\cdot)|_{C^2_b} \le C_h, \quad
\|h\|_{\rho;[0,T];C_b(\mathbb R^k)} \le C_h, \quad
\|\mathrm{D}_y h\|_{\rho;[0,T];C_b(\mathbb R^k)} \le C_h.
\]

\medskip

\noindent\hypertarget{A:hC}{}\textbf{Assumption $(h^C)$:} 
\textit{The paths $h_\cdot(x,y)$ and $\mathrm{D}h_\cdot(x,y)$ are uniformly continuous in $t$, uniformly in $(x,y)$.}

\medskip

When referring to these assumptions, we use the tuple notation: 
Assumptions $(W,b,\sigma,\xi,f)$ are satisfied if and only if
Assumptions $(W)$, $(b,\sigma)$, and $(\xi,f)$ are satisfied.

\begin{theorem}
  \label{main-result}Suppose that either Assumptions 
$(\hyperlink{A:W}{W}, \hyperlink{A:b-sigma}{b,\sigma}, 
\hyperlink{A:xi-f}{\xi,f}, \hyperlink{A:h'}{h'}, \hyperlink{A:hC}{h^C})$
or Assumptions \linebreak
$(\hyperlink{A:W}{W}, \hyperlink{A:b-sigma'}{b',\sigma'}, 
\hyperlink{A:xi-f}{\xi,f},
\hyperlink{A:h}{h}, \hyperlink{A:hC}{h^C})$ hold.
Then the RPDE
\eqref{semilinear-RPDE} admits a unique robust viscosity solution
$u \in D
  ([0, T] ; C (\mathbb{R}^d; \mathbb{R}^k))$, such that $u (t) \in \mathrm{BUC}(\mathbb{R}^d; \mathbb{R}^k)$ for all $t \in [0,
  T]$. Moreover, the solution satisfies the following
  properties:
  \begin{enumeratealpha}
    \item \textbf{Stochastic representation.} For all $(t,x) \in [0,T] \times \mathbb{R}^d$,
\[
u(t,x) = Y_t^{t,x},
\]
where $Y^{t,x}$ is the solution of the rough FBSDE 
\eqref{rough-FBSDE}.
    
    \item \textbf{Backward flow property.} To emphasize the dependence on the terminal
    condition $u (T, \cdot) = \xi(\cdot)$, we write $(t,x) \mapsto u (t ; T, \xi) (x)$ for the
    unique robust viscosity solution to RPDE \eqref{semilinear-RPDE}. Then,
    \[
      u (r ; t, \xi) (x) = u (r ; s, u (s ; t, \xi)) (x)
    \]
    for all $0 \leqslant r \leqslant s \leqslant t \leqslant T$ and $x \in
    \mathbb{R}^d$.
  \end{enumeratealpha}
\end{theorem}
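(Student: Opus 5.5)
We will construct the solution map explicitly through the rough FBSDE and then verify properties (i) and (ii) of Definition~\ref{def-robust-viscosity-solution}; uniqueness then follows from Remark~\ref{remark-unique-continuous-extension}.

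\textbf{Construction.} For $W\in D^{0,q}$ and $(t,x)\in[0,T]\times\mathbb R^d$, we let $(X^{t,x},Y^{t,x},Z^{t,x})$ solve \eqref{rough-FBSDE}. Well-posedness of the backward equation with the discontinuous Young driver and Marcus jumps is taken from \cite{becherer_rough_2026}; the integrability $\frac1p+\frac1q>1$ and Assumption $(h)$ or $(h')$ are exactly what is needed there. We set $\mathcal U(\xi,W)(t,x)\assign Y^{t,x}_t$, which is deterministic by the Blumenthal $0$--$1$ law.

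\textbf{Regularity.} Boundedness of $u(t)$ comes from the a priori bounds for rough BSDEs. Uniform continuity in $x$ comes from a Lipschitz-type estimate in the forward variable. Under $(b,\sigma)$ with $x$-independent $h$, we compare $Y^{t,x}$ and $Y^{t,x'}$ using the pairwise stability estimate (Proposition~\ref{prop:uniform-pairwise-stability}) together with $\mathbb E\sup|X^{t,x}-X^{t,x'}|^2\lesssim|x-x'|^2$. Under $(b',\sigma')$ we use the $x$-derivative bounds in $(h)$ instead.

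\textbf{Flow property and c\`agl\`ad behaviour.} The backward flow property (b) comes from the Markov property of $X$ and pathwise uniqueness of the rough BSDE. Specifically, $Y^{t,x}_s=u(s,X^{t,x}_s)$, so restarting at time $s$ with terminal value $u(s,\cdot)$ reproduces the same solution. With (b) in hand, continuity of $t\mapsto u(t)$ at continuity points of $W$, and the existence of left and right limits in the compact-open topology, follow from time regularity of $Y$ and $X$. At jump times, the Marcus jump of $Y$ transfers to $u$.

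\textbf{Property (i).} For smooth $W$, the equation is a classical multidimensional BSDE with generator $f+h\dot W$. This generator is Lipschitz, but $h\dot W$ depends on $y$ in all components. We therefore apply Pardoux's result \cite{decreusefond_backward_1998}, which needs only that $f_i$ depends on the $i$-th row of $z$. It shows that $Y^{t,x}_t$ is a viscosity solution, and uniqueness is given by Theorem~\ref{multidimentional-visco}.

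\textbf{Property (ii), continuity, which is the main obstacle.} Let $\xi^n\to\xi$ and $\sigma^q_{M1}(W^n,W)\to0$. The steps are as follows.

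\begin{enumerate}[1.]
\item \emph{Pointwise convergence.} Stability of rough BSDEs under the $M1$-type metric \cite{becherer_rough_2026} gives $u^n(t,x)\to u(t,x)$ for $t\in\mathcal C(W)$. The pairwise estimate makes this locally uniform in $x$.
\item \emph{Compactness.} We fix $\delta>0$ and use the reparametrizations $\lambda^{n,\delta}$ from Lemma~\ref{alternative-a-metric} for the drivers. We then show that $\mathbf u^{n,\delta}\circ\lambda^{n,\delta}$, the $\delta$-extension of the Marcus lift of $u^n$, is equicontinuous in time, locally uniformly in $x$. Off the excursions, this uses the BSDE increment estimate controlled by $\|W^n\|_{q}$ on short intervals, which is uniformly small by Lemma~\ref{equicontinuity-p-var}. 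On the excursions, it uses the Lipschitz flow of $-h\Delta W^n$. Equicontinuity in $x$ comes from the regularity step. Arzel\`a--Ascoli then yields subsequential limits $v^\delta$ in $C([0,T+\delta];C(\mathbb R^d;\mathbb R^k))$.
\item \emph{Identifying the limit off excursions.} Outside the fictitious intervals, $v^\delta$ equals $u^\delta$ by step 1 and Lemma~\ref{Lemma-localuniform}.
\item \emph{Identifying the excursions (the delicate step).} On the fictitious interval of a jump at time $t$, we use the flow property to restart at times $t_n^\pm$ bracketing the region where $W^n$ approximates the jump. On this short window, the drift, $f$, and the martingale part of the BSDE vanish in the limit. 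This is because $X$ barely moves and the $Z\,dB$ term is small in $L^2$. What survives is the ODE $dy=-h(t,x,y)\,dW^n$ along the reparametrized linear segment, whose limit is exactly $\varphi(-h(t,x,\cdot)\Delta W_t,u(t+,x),\cdot)$. This identification uses $(h^C)$ to freeze $h$ in time, and the restriction result Lemma~\ref{restriction-alpha-metric} to localize.
\end{enumerate}

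Uniqueness of the limit turns subsequential convergence into convergence of the whole sequence, which gives (ii) via Lemma~\ref{alternative-a-metric}.
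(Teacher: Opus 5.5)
Your outline follows the paper's architecture closely:
\begin{itemize}
\item the FBSDE definition $u(t,x)=Y^{t,x}_t$;
\item Pardoux's existence result plus Theorem~\ref{multidimentional-visco} for (i);
\item Arzel\`a--Ascoli for $u^{n,\delta}\circ\lambda^{n,\delta}$, with the spatial modulus from Proposition~\ref{prop:uniform-pairwise-stability} and the time modulus from BSDE increments and Lemma~\ref{equicontinuity-p-var};
\item identification off the excursions via pointwise convergence and Lemma~\ref{Lemma-localuniform}, and on them by restarting with the flow property;
\item uniqueness by density.
\end{itemize}

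\textbf{Where the routes differ.} The genuine difference is where the c\`agl\`ad regularity and the Marcus jumps of $u$ come from. You derive them directly, for every driver, from three ingredients: the representation $Y^{t,x}_s=u(s,X^{t,x}_s)$, the Marcus jump of $Y$ supplied by \cite{becherer_rough_2026}, and the identity $Y^{t,x}_{t+}=u(t+,x)$. The paper instead proves Theorem~\ref{theorem-Skorokhod} under the hypothesis that the approximating $\delta$-extensions are continuous. Continuity of $u^\delta$ is then an output of the compactness argument, and Corollary~\ref{Cor-Skorokhod-general-drivers} removes the hypothesis by a bootstrap through smooth approximants in $D^{0,q'}$. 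Your route avoids that bootstrap. The paper's route has the conceptual payoff that smooth approximation itself selects the Marcus decoration.

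\textbf{What your route needs.} It requires, already at the regularity stage, a spatial modulus that is uniform in $t$ (and in $n$ for step~2). Since $\xi$ is only BUC, no Lipschitz-type estimate exists. This uniformity has to come from the contradiction argument with moving initial times $r_n$ in Proposition~\ref{prop:uniform-pairwise-stability}, exactly as in Part~1 of the paper's proof, and your regularity step should say so. Treating $(\xi^n,W^n)$ jointly, rather than splitting them by the triangle inequality as the paper does, is harmless.

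\textbf{Step 4.} The smallness of the $Z\,dB$ term on the window is the substance of the argument, not a consequence of the short clock: BMO bounds give no smallness on short intervals. It holds because the window's terminal value $u^{n,\delta}(r_n,\zeta_n)$ converges in $L^2$ to the deterministic $u(t+,x)$. Since $Y$ and $Z$ are coupled through the Young term and the equations live on moving intervals, the paper establishes this level by level in the double Picard scheme of Lemma~\ref{Lemma-excursion-Y-convergence}.
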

We prove the above theorem in {Section~\ref{section-proof}}. But first, we revisit Example~\ref{example} from the introduction.

\subsection{Example \ref{example} revisited} \label{section-example-revisited}
Recall that in Example~\ref{example} we established that the correct candidate for the robust viscosity solution to \eqref{example-RPDE} is given by
\[
u(t,x) = e^{t-T} \sin(x)\, (\cos(W_t),\; -\sin(W_t))^{\top}.
\]
For any sequence of smooth approximations $(W^n)_{n \in \mathbb{N}}$ converging pointwise to the c\`agl\`ad path $W$ at the continuity points of $W$, the corresponding approximating PDE has solution
\[
u^n(t,x) = e^{t-T} \sin(x)\, (\cos(W^n_t),\; -\sin(W^n_t))^{\top},
\]
and $u^n$ converges pointwise to $u$ at the continuity points of $W$. However, such pointwise convergence is not satisfactory, especially when we later consider L\'evy processes, whose sample paths may have infinite activity and typically different jump times. This makes it necessary to work with a Skorokhod-type topology.

For simplicity, and to emphasize that the need for a Skorokhod-type metric on decorated paths is not due to a lack of path regularity, let us assume that $W = \frac{\pi}{2}\,\mathds{1}_{\{t\mathrel{\leq}0.5\}}$, and that $(W^n)_{n\in \mathbb{N}}$ are continuous linear approximations; see Figure~\ref{fig:example-rpde}. While the convergence of $W^n$ to $W$ can already be described by the classical Skorokhod $M_1$ metric, the convergence of $u^n$ to $u$ is no longer captured by $M_1$ and really requires the more general framework of Skorokhod-type metrics on decorated paths\footnote{We refer the reader to \cite[Example 1.4]{chevyrev_superdiffusive_2020} for a similar example in the case of ODEs.}.

This is visible in the plots. At $x=\pi/2$, even for large $n$, the graph of $u^n(t,\pi/2)$ near the jump time $t=0.5$ follows a curved path along the rotating field $\mathcal{R}$. This indicates that the limiting object in the space of decorated paths should exhibit a curved excursion at $t=0.5$, rather than a linear excursion, as would be implied by convergence in the classical Skorokhod $M_1$ topology.

\begin{figure}[H]
    \centering
    \begin{minipage}[t]{0.35\textwidth}
        \centering
        \includegraphics[width=\textwidth]{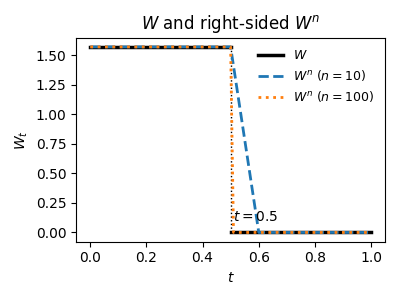}
    \end{minipage}
    \hfill
    \begin{minipage}[t]{0.64\textwidth}
        \centering
        \includegraphics[width=\textwidth]{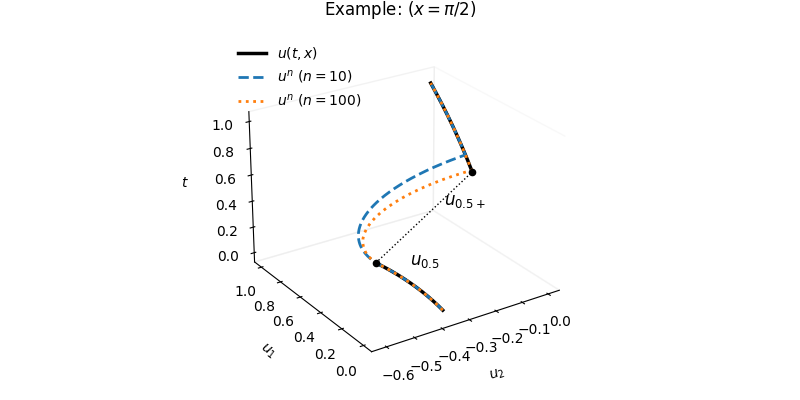}
    \end{minipage}
    \caption{c\`agl\`ad path $W$ and its continuous approximations $W^n$ (left). 
    Trajectories of $u^n(t,\pi/2)$ (for $n=10$ and $n=100$) and the limiting path $u(t,\pi/2)$ (right), illustrating the curved excursion near the jump time.}
    \label{fig:example-rpde}
\end{figure}

\section{Proof of Theorem \ref{main-result}} \label{section-proof}

The rough FBSDE is the central tool in the proof of
Theorem~\ref{main-result}. We therefore begin in
Section~\ref{Rough-FSBDEs} by establishing its well-posedness and stability,
as well as its Markovian representation and backward flow property. This
allows us to define the candidate solution directly by
\[
  u(t,x)\assign Y_t^{t,x},
\]
where $(Y^{t,x},Z^{t,x})$ solves the rough FBSDE
\eqref{rough-FBSDE}. Once properties~(i) and (ii) in the definition of a
robust viscosity solution have been verified, the stochastic representation
in Theorem~\ref{main-result} follows immediately from this construction,
while its backward flow property is already contained in
Proposition~\ref{Prop-RFBSDE-Markov-flow}.

In Section~\ref{Chap-ApproxPDE}, we verify property~(i): when $W$ is smooth,
the function $u(t,x)=Y_t^{t,x}$ is the unique viscosity solution of the RPDE
restricted to
the smooth driver $W$. For this purpose, we work within the framework of
multidimensional viscosity solutions introduced in
\cite{decreusefond_backward_1998} and complement the existing existence
results with a simple proof of uniqueness in
Theorem~\ref{multidimentional-visco}.

We turn to property~(ii) in Section~\ref{Skorokhod-limit}. Rather than proving
continuity of the solution map directly, we first show that the Marcus lift of
$u(t,x)=Y_t^{t,x}$ is the decorated Skorokhod limit of the solutions
corresponding to smooth drivers. By
Proposition~\ref{prop-robust-solution-marcus-jumps}, this implies in particular that
the $\delta$-extension of $u$ is continuous and that $u$ follows the Marcus
jump dynamics. This preliminary step is needed because the continuity
argument uses continuity of the $\delta$-extensions of the approximating
solutions, which is not available a priori when their drivers are
discontinuous. Section~\ref{Skorokhod-limit} provides precisely this input by
identifying each such extension as a limit of solutions corresponding to
smooth drivers. With this in hand, we reuse the same argument in
Section~\ref{continuity-solution-map} to prove property~(ii).

\subsection{Rough FBSDEs }\label{Rough-FSBDEs}

In this subsection, we develop a solution theory for the rough FBSDE
\eqref{rough-FBSDE}. Our main goal is to adapt the purely backward results of
\cite{becherer_rough_2026} to the forward--backward setting. We establish
well-posedness and stability of such rough FBSDEs in
Propositions~\ref{Prop-RFBSDE-wellposedness},
\ref{prop:uniform-pairwise-stability}, and
\ref{Prop-RFBSDE-stability}. Their proofs require estimates for the forward
SDE and for the random coefficients obtained by composing $f$ and $h$ with
its solution. For better readability, these estimates are collected in
Appendix~\ref{appendix-forward-SDE}. We also show that standard results from
the Markovian FBSDE theory continue to hold in the presence of a
discontinuous Young driver. In particular, the Markovian representation of
$Y$ and the backward flow property are collected in
Proposition~\ref{Prop-RFBSDE-Markov-flow}.

The stability analysis below is carried out in Skorokhod-type metrics on the
space of decorated paths. This requires inserting additional time into the rough
FBSDE. It is therefore convenient to work with a slightly more general
clocked rough FBSDE that contains both \eqref{rough-FBSDE} and its
time-extended version.

Following the setup of \cite[Section~2]{becherer_rough_2026}, fix $S>0$ and
a filtered probability space
$(\Omega,\mathcal F,(\mathcal F_t^M)_{t\in[0,S]},\mathbb P)$ supporting a
$d$-dimensional continuous martingale $M$. The filtration is the usual
filtration of $M$. We assume that $M=B\circ c$, where $B$ is a
$d$-dimensional Brownian motion and
$c:[0,S]\to[0,\infty)$ is deterministic, continuous and non-decreasing,
with $c_0=0$. In particular, $M$ has the martingale representation property:
for every square-integrable $(\mathcal F_t^M)$-martingale $L$, there exists a
predictable process $H\in L^2(dc\otimes\mathbb P)$ such that
\[
L_t=L_0+\int_0^t H_r\,dM_r.
\]
{For the rough FBSDE results, let
$W\in D^q([0,S];\mathbb R^e)$, $q\in[1,2)$,
and fix $p>2$ such that $\frac{1}{p}+\frac{1}{q}>1$. Notice that unlike in the 
rough PDE results later, we do not require $W$ to be in $D^{0,q}$.}
For $t\in[0,S]$ and
$\zeta\in L^2(\Omega,\mathcal F_t^M;\mathbb R^d)$, we consider the rough FBSDE
\begin{equation}
\label{extended-rough-FBSDE}
\begin{aligned}
X_s^{t,\zeta}
&=\zeta+\int_t^s b(r,X_r^{t,\zeta})\,dc_r
  +\int_t^s\sigma(r,X_r^{t,\zeta})\,dM_r,\\
Y_s^{t,\zeta}
&=\xi(X_S^{t,\zeta})
  +\int_s^S f(r,X_r^{t,\zeta},Y_r^{t,\zeta},Z_r^{t,\zeta})\,dc_r\\
&\quad
  +\int_s^S h(r,X_r^{t,\zeta},Y_r^{t,\zeta})\diamond dW_r
  -\int_s^S Z_r^{t,\zeta}\,dM_r,
\qquad s\in[t,S].
\end{aligned}
\end{equation}
The original rough FBSDE \eqref{rough-FBSDE} is recovered by taking $S=T$,
$c_t=t$, $M=B$, and $\zeta=x$. Its time-extended version is obtained
by taking $S=T+\delta$, $c=c^\delta$, and $M=B^\delta$, together with
the time-extended driver and coefficients. This construction is explained
after Proposition~\ref{prop:uniform-pairwise-stability}; see
\eqref{pairwise-time-stretched-FBSDE}.

We next recall the solution space for rough BSDEs in this setting.
\begin{definition}
  \label{YZ-statespace}For $p > 2$, define $\mathcal{B}_p ([0, S],
  \mathbb{R}^k)$ to be the space of adapted
  c{\`a}gl{\`a}d processes $Y : \Omega \times [0, S] \rightarrow \mathbb{R}^k$
  with
  \[ \|Y\|_{p, 2 ; [0, S]} \assign \esssup_{\omega \in \Omega} \sup_{t \in [0, S]} \mathbb{E}_t [\|Y\|^2_{p ; [t, S]}]^{1 / 2} <
     \infty . \]
  Denote by $\tmop{BMO} ([0, S], \mathbb{R}^{k \times d}) $ the space of all progressively measurable $Z : \Omega \times [0,
  S] \rightarrow \RR^{k \times d}$ with
  \[ \|Z\|_{\tmop{BMO} ; [0, S]} \assign \esssup_{\omega \in \Omega} \sup_{t \in [0, S]} \mathbb{E}_t \bigg[ \int_t^S |Z_r |^2
     \tmop{dc}_r \bigg]^{1 / 2} < \infty, \]
  where $| Z_r |$ denotes the Frobenius norm $| Z_r | \assign \sqrt{\tmop{tr} (Z_r^{\top} Z_r)}$.
\end{definition}

We can now state the well-posedness result for the rough FBSDE. We consider two sets of assumptions. Under stronger assumptions on the SDE, one obtains bounds for the classical solution $X$ in the $\|\cdot\|_{p,2}$ norm, which is essential when $h$ depends on $x$. Under weaker assumptions, such bounds are no longer available, and one therefore restricts to the case where $h$ is independent of $x$.

Although deterministic initial states are sufficient for the rough FBSDE
representation of the RPDE, we allow the forward SDE, and consequently the
rough FBSDE, to start at any random variable
$\zeta\in L^2(\Omega,\mathcal F_t^M;\mathbb R^d)$. This extension is needed
below for stability with respect to the initial state and to justify restarting
the FBSDE at the random state $X_v^{t,x}$, which in turn yields the Markovian
representation and the backward flow property. The corresponding forward-SDE
estimates are collected in Appendix~\ref{appendix-forward-SDE}.

\begin{proposition}
  \label{Prop-RFBSDE-wellposedness}Let $W\in D^q([0,S];\mathbb R^e)$ with $q\in[1,2)$.
Suppose that the coefficients on $[0,S]$
satisfy either Assumptions
$(\hyperlink{A:b-sigma}{b}, \hyperlink{A:b-sigma}{\sigma},
\hyperlink{A:xi-f}{\xi}, \hyperlink{A:xi-f}{f}, \hyperlink{A:h'}{h'})$
or Assumptions
$(\hyperlink{A:b-sigma'}{b'}, \hyperlink{A:b-sigma'}{\sigma'},
\hyperlink{A:xi-f}{\xi}, \hyperlink{A:xi-f}{f}, \hyperlink{A:h}{h})$
with the same constants, and let $p>2$ satisfy
$\frac{1}{p}+\frac{1}{q}>1$. Then, for
every $t\in[0,S]$ and every
$\zeta\in L^2(\Omega,\mathcal F_t^M;\mathbb R^d)$, the rough FBSDE
\eqref{extended-rough-FBSDE} on $[t,S]$
admits a unique solution
$(X^{t,\zeta},Y^{t,\zeta},Z^{t,\zeta})$. Moreover,
  \begin{equation}
    \sup_{\substack{t\in[0,S]\\
      \zeta\in L^2(\Omega,\mathcal F_t^M;\mathbb R^d)}}
    \left(
    \|Y^{t,\zeta}\|_{p,2;[t,S]}
    +\|Z^{t,\zeta}\|_{\mathrm{BMO};[t,S]}
    \right)<
    \infty . \label{bound-XY-p2}
  \end{equation}
  If Assumption $(b',\sigma')$ holds, then also
  \begin{equation}
    \sup_{\substack{t\in[0,S]\\
      \zeta\in L^2(\Omega,\mathcal F_t^M;\mathbb R^d)}}
    \|X^{t,\zeta}\|_{p,2;[t,S]}<
    \infty . \label{boundXp2}
  \end{equation}
\end{proposition}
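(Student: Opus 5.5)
The plan is to decouple the system: solve the forward SDE first, then treat the backward equation as a rough BSDE with random, adapted coefficients and apply the purely backward well-posedness theory of \cite{becherer_rough_2026}. Fix $t\in[0,S]$ and $\zeta\in L^2(\Omega,\mathcal F^M_t;\mathbb R^d)$. Under either $(b,\sigma)$ or $(b',\sigma')$, the forward equation driven by $dc$ and $dM=d(B\circ c)$ has Lipschitz coefficients. Classical SDE theory, applied after the time change by $c$, therefore gives a unique continuous adapted solution $X^{t,\zeta}$. We extend it by $X^{t,\zeta}_s=\zeta$ for $s\le t$. Then we set
\[
\xi^\omega:=\xi(X^{t,\zeta}_S),\qquad
f^\omega(r,y,z):=f(r,X^{t,\zeta}_r,y,z),\qquad
h^\omega(r,y):=h(r,X^{t,\zeta}_r,y).
\]
The backward part of \eqref{extended-rough-FBSDE} is then a rough BSDE on $[t,S]$ with bounded terminal value $\xi^\omega$ and Lipschitz generator satisfying $|f^\omega(r,0,0)|\le C_{\xi,f}$. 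Existence and uniqueness of $(Y,Z)\in\mathcal B_p\times\mathrm{BMO}$ follow from \cite{becherer_rough_2026}, provided the random Young vector field $h^\omega$ meets its hypotheses: uniform $C^2_b$ bounds in $y$, and a conditional $(p,2)$-type control of the time regularity, $\esssup\sup_s\mathbb E_s[\|h^\omega\|^2_{\rho;[s,S]}]^{1/2}<\infty$, and likewise for $\mathrm D_yh^\omega$. Uniqueness of the triple follows because $X$ is unique and the BSDE solution is unique given $X$.

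Verifying those hypotheses on $h^\omega$ is where the two assumption sets differ. Under $(h')$, $h$ does not depend on $x$, so $h^\omega=h$ is deterministic and its bounds are exactly those in $(h')$; no control of $X$ is needed. Under $(h)$ we split the increment:
\[
h^\omega_{s,r}(y)=\bigl(h_r-h_s\bigr)(X_r,y)+\bigl(h_s(X_r,y)-h_s(X_s,y)\bigr).
\]
The first term is bounded by $\|h\|_{\rho;C_b}$. The second is bounded by $C_h|X_r-X_s|$, and the same split works for $\mathrm D_yh^\omega$ using $\mathrm D_x\mathrm D_yh$. This gives $\|h^\omega\|_{p;[s,S]}\lesssim\|h\|_{\rho}+C_h\|X\|_{p;[s,S]}$, since $\rho<p$. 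It therefore suffices to prove \eqref{boundXp2}, i.e.\ $\sup_{t,\zeta}\esssup\sup_s\mathbb E_s[\|X\|^2_{p;[s,S]}]\le C$ under $(b',\sigma')$, uniformly in $\zeta$. Boundedness of $b,\sigma$ is what makes this possible. Write $X_r-X_s=\int_s^r b\,dc+\int_s^r\sigma\,dM$. The drift has $1$-variation at most $C(c_S-c_s)$. The martingale part is a continuous martingale with quadratic variation at most $C(c_S-c_s)$. A conditional BDG-type $p$-variation estimate for martingales ($p>2$, Lépingle) then bounds $\mathbb E_s[\|\int\sigma\,dM\|_{p;[s,S]}^2]$ by $C\,\mathbb E_s[\langle\cdot\rangle_{s,S}]\le C c_S$. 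This bound is deterministic and independent of $\zeta$. These forward estimates are the ones collected in Appendix~\ref{appendix-forward-SDE}.

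It remains to obtain the uniform bound \eqref{bound-XY-p2}. The a priori estimate in \cite{becherer_rough_2026} controls $\|Y\|_{p,2}+\|Z\|_{\mathrm{BMO}}$ only through the constants of $\xi^\omega,f^\omega,h^\omega$: the $L^\infty$ bound of $\xi^\omega$, the Lipschitz constant and the bound of $f^\omega(\cdot,0,0)$, the $C^2_b$ bound of $h^\omega$ and its $(p,2)$ time-regularity norm, together with $\|W\|_q$ and $S$. All of these were just bounded independently of $(t,\zeta)$, so taking the supremum gives \eqref{bound-XY-p2}.

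I expect the main obstacle to be checking that the random-coefficient setting of \cite{becherer_rough_2026} genuinely covers $h^\omega$. Its conditional regularity norm must be uniformly bounded in $\omega$, not merely in expectation; this is exactly why the $\esssup$-type bound \eqref{boundXp2} is needed under $(h)$. We also need the Marcus-jump term $h^\omega(t,y)\,\Delta W$ to be well-defined, which requires $h^\omega$ to be evaluated at the continuous path $X$. My first attempt would be the conditional Lépingle/BDG estimate combined with the splitting above, checking each hypothesis of the cited well-posedness theorem in turn.
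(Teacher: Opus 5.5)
Your proposal is correct and follows essentially the same route as the paper: you solve the forward SDE, substitute $X^{t,\zeta}$ into $\xi$, $f$ and $h$, check the hypotheses of the rough BSDE theory of \cite{becherer_rough_2026} (directly under $(h')$, and under $(b',\sigma',h)$ via the increment splitting of Lemma~\ref{Lemma-hX-Assumption-Ad} together with the conditional $p$-variation BDG bound \eqref{boundXp2}), and then read off \eqref{bound-XY-p2} from the a priori estimates there, whose constants are uniform in $(t,\zeta)$. The only slip is that you first state the required conditional time-regularity control of $h^\omega$ and $\mathrm D_yh^\omega$ in $\rho$-variation; your splitting (like the paper's lemma) actually delivers the $p$-variation control, which is the version the paper verifies.
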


\begin{proof}
  The forward equation has a unique strong solution by
  Proposition~\ref{Prop-random-initial-SDE-estimates}. We first prove the
  estimate \eqref{boundXp2} under Assumptions $(b',\sigma')$. For
  $s\in[t,S]$, boundedness of the coefficients gives
  \begin{eqnarray*}
    \mathbb{E}_s \left[ \left\| \int_s^{\cdot} b (v, X^{t,\zeta}_v) dc_v
    \right\|_{p ; [s, S]}^2 \right]^{1 / 2} & \leqslant & \mathbb{E}_s \left[
    \left( \int_s^S | b (v, X^{t,\zeta}_v) | dc_v \right)^2 \right]^{1 / 2}
    \leqslant C_{b, \sigma} c_S,\\
    \mathbb{E}_s \left[ \left\| \int_s^{\cdot} \sigma (v, X^{t,\zeta}_v)
    dM_v \right\|_{p ; [s, S]}^2 \right]^{1 / 2} & \leqslant &
    C\mathbb{E}_s \left[ \int_s^S | \sigma (v, X^{t,\zeta}_v) |^2 dc_v
    \right]^{1 / 2} \leqslant C_{S,p} C_{b, \sigma} c_S^{1 / 2},
  \end{eqnarray*}
  where the second line follows from the Burkholder--Davis--Gundy inequality
  in \cite[Theorem~14.12]{friz_multidimensional_2010}. The constants do not
  depend on $t$ or $\zeta$, and hence \eqref{boundXp2} follows.\\
  Define $\xi^X : \Omega \rightarrow \mathbb{R}^k$, $f^X : [t, S] \times
  \Omega \times \mathbb{R}^k \times \mathbb{R}^{{k} \times d} \rightarrow
  \mathbb{R}^k$, $h^X : [t, S] \times \Omega \times \mathbb{R}^k
  \rightarrow \mathcal L(\mathbb R^e,\mathbb R^k)$ as
  \[
  \xi^X(\omega)\assign\xi(X_S^{t,\zeta}(\omega)),\qquad
  f^X(s,\omega,\cdot)\assign f(s,X_s^{t,\zeta}(\omega),\cdot),\qquad
  h^X(s,\omega,\cdot)\assign h(s,X_s^{t,\zeta}(\omega),\cdot).
\]
  We verify Assumption~A of \cite{becherer_rough_2026} for these data.
  Since $X^{t,\zeta}$ is adapted and continuous, all three data are
  measurable in the required sense. Assumption $(\xi,f)$ implies that
  $\xi^X$ is uniformly bounded, that $f^X$ is uniformly Lipschitz in
  $(y,z)$, and that $f^X(\cdot,0,0)$ is uniformly bounded. It also preserves
  the componentwise dependence of $f$ on the rows of $z$.

  It remains to verify condition~A.d for $h^X$. Under Assumption $(h')$,
  $h$ is independent of $x$, so that $h^X(s,\omega,y)=h(s,y)$, and the
  condition follows directly from $(h')$. Under Assumptions
  $(b',\sigma',h)$, estimate \eqref{boundXp2} and
  Lemma~\ref{Lemma-hX-Assumption-Ad}, applied on $[t,S]$, give the required
  conditional $p$-variation bounds for $h^X$ and $\mathrm D_yh^X$, uniformly
  in $t$ and $\zeta$. Thus all parts of Assumption~A hold, and
  {\cite[Theorems~3.2 and~3.5]{becherer_rough_2026}} {yield} existence and uniqueness of
  the backward equation as well as \eqref{bound-XY-p2}. Together with
  uniqueness of the forward equation, this proves the result.
\end{proof}

The next proposition is one of the main technical lemmas of this section.
The general formulation allows the time changes used in the proof of
Theorem~\ref{theorem-Skorokhod}. For the present subsection,
including the stability argument used in the following proposition, only the
special case $r_n=r$ and $W^n=W$ is needed. In that case, the result reduces
to the standard stability of the rough FBSDE with respect to the initial
condition of the forward SDE.

\begin{proposition}\label{prop:uniform-pairwise-stability}
Let $S>0$, let $q\in[1,2)$ and $p>2$ satisfy
$\frac{1}{p}+\frac{1}{q}>1$, and, for every
$n\in\mathbb N$, let $r_n\in[0,S]$. Let
$c^n\colon[0,S]\to[0,\infty)$ be deterministic, continuous and non-decreasing
with $c_0^n=0$, and let $M^n=B\circ c^n$ be a time-changed
Brownian motion equipped with its usual filtration. Then $M^n$ has the
martingale representation property in this filtration. {Let
$W^n$ be a continuous path of finite $q$-variation.} Assume that
\begin{equation}\label{eq:uniform-controls-pairwise}
 \sup_{n\in\mathbb N}c^n_S<\infty,
 \qquad
 \sup_{n\in\mathbb N}\lVert W^n\rVert_{q;[0,S]}<\infty.
\end{equation}
Suppose that either Assumptions
$(\hyperlink{A:b-sigma}{b,\sigma},\hyperlink{A:xi-f}{\xi,f},
\hyperlink{A:h'}{h'})$ or Assumptions
$(\hyperlink{A:b-sigma'}{b',\sigma'},\hyperlink{A:xi-f}{\xi,f},
\hyperlink{A:h}{h})$ hold. For $i\in\{1,2\}$, let
$\zeta_n^i\in L^2(\Omega,\mathcal F^{M^n}_{r_n};\mathbb R^d)$, and let
$(X^{n,i},Y^{n,i},Z^{n,i})$ be the solution on $[r_n,S]$ of
\begin{align}
 X_t^{n,i}
 &=\zeta_n^i
   +\int_{r_n}^t b(s,X_s^{n,i})\,\mathrm dc_s^n
   +\int_{r_n}^t \sigma(s,X_s^{n,i})\,\mathrm dM_s^n,
 \nonumber\\
 Y_t^{n,i}
 &=\xi(X_S^{n,i})
   +\int_t^S f(s,X_s^{n,i},Y_s^{n,i},Z_s^{n,i})\,\mathrm dc_s^n
   +\int_t^S h(s,X_s^{n,i},Y_s^{n,i})\,\mathrm dW_s^n
   -\int_t^S Z_s^{n,i}\,\mathrm dM_s^n.
 \label{eq:pairwise-backward}
\end{align}
If $\mathbb E\left[|\zeta_n^1-\zeta_n^2|^2\right]\longrightarrow0$, then
it holds that
\begin{align}
 \lVert Y^{n,1}-Y^{n,2}\rVert_{p;[r_n,S]}
 +\lVert Y^{n,1}-Y^{n,2}\rVert_{\infty;[r_n,S]}
 &\longrightarrow0
 &&\text{in probability},
 \label{eq:pairwise-Y-convergence}\\
 \mathbb E\left[
   \int_{r_n}^S|Z_s^{n,1}-Z_s^{n,2}|^2\,\mathrm dc_s^n
 \right]
 &\longrightarrow0.
 \label{eq:pairwise-Z-convergence}
\end{align}
The same conclusions hold if $\zeta_n^1=\zeta_n^2$ and the terminal
condition $\xi(X_S^{n,i})$ in \eqref{eq:pairwise-backward} is replaced by
$\xi_n^i(X_S^{n,i})$, $i\in\{1,2\}$, where
$\xi_n^i\in\mathrm{BUC}(\mathbb R^d;\mathbb R^k)$ satisfy
\[
 \sup_{n\in\mathbb N}\sup_{i\in\{1,2\}}\|\xi_n^i\|_\infty<\infty,
 \qquad
 \|\xi_n^1-\xi_n^2\|_\infty\longrightarrow0.
\]
\end{proposition}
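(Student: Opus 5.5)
The plan is to reduce the pairwise statement to an a priori stability estimate for the difference of two rough BSDEs with random coefficients, uniformly in $n$, and then feed in convergence of the data. Set $\delta Y^n\assign Y^{n,1}-Y^{n,2}$ and $\delta Z^n\assign Z^{n,1}-Z^{n,2}$. On $[r_n,S]$ they solve a linear-type rough BSDE
\[
\delta Y^n_t=\delta\xi^n+\int_t^S\bigl(A^n_s\delta Y^n_s+\Gamma^n_s\delta Z^n_s+\delta f^n_s\bigr)\,\mathrm dc^n_s+\int_t^S\bigl(H^n_s\delta Y^n_s+\delta h^n_s\bigr)\,\mathrm dW^n_s-\int_t^S\delta Z^n_s\,\mathrm dM^n_s .
\]
Here $A^n,\Gamma^n,H^n$ are the difference quotients of $f$ and $h$ in $(y,z)$ along the segment between the two solutions; $\Gamma^n$ is row-diagonal thanks to the row structure in Assumption $(\xi,f)$. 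The source terms are $\delta\xi^n=\xi(X^{n,1}_S)-\xi(X^{n,2}_S)$, $\delta f^n_s=f(s,X^{n,1}_s,Y^{n,2}_s,Z^{n,2}_s)-f(s,X^{n,2}_s,Y^{n,2}_s,Z^{n,2}_s)$, and $\delta h^n$ analogously. Under $(h')$ we have $\delta h^n\equiv0$. I would invoke the stability estimate of \cite{becherer_rough_2026} (the one behind Theorems~3.2 and~3.5 there), whose constants depend only on the Lipschitz constants, $C_h$, $\sup_n c^n_S$ and $\sup_n\|W^n\|_q$. By \eqref{eq:uniform-controls-pairwise} these are uniform in $n$, and Proposition~\ref{Prop-RFBSDE-wellposedness} gives uniform bounds on $\|Y^{n,i}\|_{p,2}$ and $\|Z^{n,i}\|_{\mathrm{BMO}}$. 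That estimate should take the form
\[
\mathbb E\Bigl[\|\delta Y^n\|_{p;[r_n,S]}^2+\|\delta Y^n\|^2_{\infty}+\int_{r_n}^S|\delta Z^n|^2\,\mathrm dc^n\Bigr]\le C\,\mathbb E\Bigl[|\delta\xi^n|^2+\Bigl(\int_{r_n}^S|\delta f^n|\,\mathrm dc^n\Bigr)^2+\mathcal R^n\Bigr],
\]
where $\mathcal R^n$ is a Young-type remainder controlled by $\|\delta h^n\|_{p}$ and $\|\delta h^n\|_\infty$.

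The main obstacle is that for multidimensional $Y$ such an $L^2$-Lipschitz estimate is exactly what the introduction says is not readily available. The reason is that the Young integral against $W^n$ requires conditional $p$-variation of the coefficient $H^n$, which involves $Y^{n,1},Y^{n,2}$ themselves, so one typically only gets estimates in a weaker, localised sense. My first attempt would be to localise. Split $[r_n,S]$ into finitely many intervals on which $\|W^n\|_q$ is small; the number of intervals is uniform in $n$ by superadditivity of $\|W^n\|_q^q$. On each piece I would run a contraction-type estimate for $\delta Y^n$ in the $\|\cdot\|_{p,2}$ norm, using BMO bounds and energy inequalities to absorb the $\Gamma^n\delta Z^n$ term via Girsanov or a direct Itô computation of $|\delta Y^n|^2$, then paste backward. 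If conditional $L^2$ bounds fail because $\mathcal R^n$ only converges in probability, I would work on events $\{\|X^{n,i}\|_{p}\le K\}$ of probability close to one. This is why the conclusion \eqref{eq:pairwise-Y-convergence} is stated in probability only.

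It remains to show that the right-hand side tends to zero. By Proposition~\ref{Prop-random-initial-SDE-estimates}, $\mathbb E|\zeta^1_n-\zeta^2_n|^2\to0$ gives $\mathbb E\sup_t|X^{n,1}_t-X^{n,2}_t|^2\to0$ uniformly in the clocks. Then $\delta\xi^n\to0$ in $L^2$ by bounded uniform continuity of $\xi$ together with dominated convergence along subsequences. Likewise $\delta f^n\to0$ by the Lipschitz property in $x$. For $\delta h^n$ under $(b',\sigma',h)$, the $\infty$-norm goes to zero via the bound on $\mathrm D_xh$. For the $p$-variation, interpolate between this vanishing sup-norm and the uniform $\rho$-variation bounds, using $\rho<p$, \eqref{boundXp2} and Lemma~\ref{Lemma-hX-Assumption-Ad}; this yields $\|\delta h^n\|_{p}\to0$ in probability. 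In the variant with $\zeta^1_n=\zeta^2_n$, the forward processes coincide, so $\delta f^n=\delta h^n=0$ and $|\delta\xi^n|\le\|\xi^1_n-\xi^2_n\|_\infty\to0$ deterministically. The same estimate then concludes, with the uniform sup bound on $\xi^i_n$ feeding the a priori bounds.
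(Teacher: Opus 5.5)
Your proposal has a genuine gap at its central step. Everything rests on an a priori estimate that bounds $\delta Y^n,\delta Z^n$ in unconditional $L^2$ by the $L^2$ or in-probability sizes of $\delta\xi^n,\delta f^n,\delta h^n$, uniformly in $n$, and you attribute it to \cite{becherer_rough_2026}. No such estimate is available there; as you yourself suspect, this is exactly the difficulty the paper points out.

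The theory in \cite{becherer_rough_2026} works in the norm $\|\cdot\|_{p,2}$, an essential supremum over $\omega$ of conditional second moments. The reason is that the Young term produces products such as $(\|Y^{n,1}\|_p+\|Y^{n,2}\|_p)\|\delta Y^n\|_\infty$ (in your linearisation, $\|H^n\|_p\|\delta Y^n\|_\infty$). These close only when $\|Y^{n,i}\|_p$ is controlled conditionally and $\delta Y^n$ is controlled in $L^\infty$. In unconditional $L^2$ they do not close, since $\|Y^{n,i}\|_p$ is not bounded in $L^\infty$; H\"older pushes you to $L^4$, then $L^8$, and so on.

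A contraction estimate in $\|\cdot\|_{p,2}$, on the other hand, needs the data differences to be small in the same conditional sense, e.g.\ $\|\delta\xi^n\|_{L^\infty}$ and $[[\delta h^n]]_{p,2}$. These do not tend to zero. The difference $\zeta^1_n-\zeta^2_n$ is only $L^2$-small, the forward processes are unbounded, and $\mathbb E_s[\|\Delta X^n\|^2_{p;[s,S]}]$ is only controlled by $C|\Delta X^n_s|^2$, which is not small uniformly in $\omega$.

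Your fallbacks do not remove this mismatch. Splitting $[r_n,S]$ into intervals where $\|W^n\|_q$ is small makes the contraction constant small. It does not change the norm in which the data must be small, and each backward step inherits a terminal difference that is only $L^2$-small. Restricting to the events $\{\|X^{n,i}\|_p\le K\}$ is incompatible with a backward equation. $\delta Y^n_t$ is a conditional expectation over the whole future, so there is no BSDE on an $\mathcal F_S$-measurable event. Stopping-time localisation instead leaves an uncontrolled terminal difference at the stopping time.

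The missing idea is the paper's double Picard iteration, which separates the two requirements.
\begin{itemize}
\item The contraction of \cite{becherer_rough_2026} is applied only to each single equation. There it needs uniform bounds on the data, not small differences. This gives $\lim_{m\to\infty}\sup_{n,i}\|Y^{n,i,m}-Y^{n,i}\|_{p,2}=0$, together with the corresponding BMO statement for $Z$.
\item Smallness of the differences is then propagated by induction at each fixed Picard level $m$, where nothing has to be absorbed. Suppose $\|\Delta Y^{n,m}\|_p\to0$ in probability and $\mathbb E\int|\Delta Z^{n,m}|^2\,\mathrm dc^n\to0$. Then the $\mathrm dc^n$-integral and the Young integral at level $m+1$ converge in probability, by the composition and Young estimates.
\item The terminal Picard input $\Gamma^{n,i,m+1}$ also converges in probability. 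It is bounded in $L^4$, using the uniform $L^\infty$ bound on $Y^{n,i,m}$, the BMO energy inequality and BDG. Vitali and It\^o's isometry then give $\mathbb E\int|\Delta Z^{n,m+1}|^2\,\mathrm dc^n\to0$, and the $p$-variation BDG inequality handles the martingale term.
\item Letting first $n\to\infty$ and then $m\to\infty$ in the three-term split concludes.
\end{itemize}
Your treatment of the data is fine and would feed directly into this scheme. This covers the forward stability estimate and $\delta\xi^n,\delta f^n$. It also covers the interpolation between $\|\delta h^n\|_\infty\to0$ and uniform $\rho$-variation bounds of $X^{n,i}$, which is a legitimate substitute for Lemma~\ref{Lemma-hX-Assumption-Bg}. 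Your handling of the terminal-function variant is fine as well.
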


\medskip
\noindent\textit{Remark.}
We follow the double Picard iteration from
\cite[proof of Theorem~4.11]{becherer_rough_2026}. The same idea is used
later in Lemma~\ref{Lemma-excursion-Y-convergence} to overcome the
adaptedness problem caused by reparametrization when working with
Skorokhod-type metrics. No such reparametrization, and hence no corresponding
adaptedness issue, is present here. Nevertheless, the double Picard scheme is
preferable to a direct Gronwall-type argument, since the latter would
typically require stronger convergence in
\eqref{eq:terminal-pairwise-data}, \eqref{eq:f-pairwise-data}, and
\eqref{eq:g-pairwise-data}, for instance convergence in
$L^\infty$ rather than in $L^2$ or in probability. Such convergence
is not available because the solutions of the forward SDE are unbounded.
\medskip

\begin{proof}
We start by defining the data of the two rough BSDEs by
\begin{equation*}
 \xi^{n,i}:=\xi(X_S^{n,i}),\qquad
 f_s^{n,i}(y,z):=f(s,X_s^{n,i},y,z),\qquad
 g_s^{n,i}(y):=h(s,X_s^{n,i},y).
\end{equation*}
We introduce the notation $\Delta X^n:=X^{n,1}-X^{n,2}$ and use analogous notation for all
other pairwise differences.
For every $\rho>2$, Proposition~\ref{Prop-random-initial-SDE-stability}
gives
\begin{equation}\label{eq:quantitative-forward-pairwise}
 \mathbb E\left[
   \lVert\Delta X^n\rVert_{\infty;[r_n,S]}^2
   +\lVert\Delta X^n\rVert_{\rho;[r_n,S]}^2
 \right]
 \le C_\rho\,\mathbb E\left[|\zeta_n^1-\zeta_n^2|^2\right].
\end{equation}
The constant depends only on $\rho$, $S$, the Lipschitz constants of $b$ and
$\sigma$, and the bound in \eqref{eq:uniform-controls-pairwise}; in particular,
it is independent of $n$ and $r_n$. Consequently, the left-hand side of
\eqref{eq:quantitative-forward-pairwise} converges to zero.
Assumption~\hyperlink{A:xi-f}{\((\xi,f)\)} and
\eqref{eq:quantitative-forward-pairwise} imply
\begin{equation}\label{eq:terminal-pairwise-data}
 \mathbb E\left[|\xi^{n,1}-\xi^{n,2}|^2\right]
 \longrightarrow0,
 \qquad
 \sup_{n,i}\lVert\xi^{n,i}\rVert_{L^\infty}<\infty.
\end{equation}
Indeed, the terminal difference converges to zero in probability by uniform
continuity, and its uniform boundedness upgrades the convergence to $L^2$.
Moreover, the same assumption gives
\begin{equation}\label{eq:f-pairwise-data}
 \varepsilon_{f,n}
 :=\sup_{s\in[r_n,S]}\sup_{y,z}
   |f_s^{n,1}(y,z)-f_s^{n,2}(y,z)|
 \le C_{\xi,f}\lVert\Delta X^n\rVert_\infty
 \longrightarrow0
\end{equation}
in $L^2$, hence in probability.

Suppose first that Assumption~\hyperlink{A:h}{\((h)\)} holds. Let
$\rho\in(2,p)$ be the stronger variation exponent appearing in that
assumption.
Proposition~\ref{Prop-random-initial-SDE-estimates} yields
\begin{equation}\label{eq:uniform-individual-forward-rho}
\sup_{n\in\mathbb N}\sup_{i\in\{1,2\}}
 \lVert X^{n,i}\rVert_{\rho,2;[r_n,S]}<\infty.
\end{equation}
Since $\rho<p$, this implies the corresponding uniform $p,2$-bound.
Lemma~\ref{Lemma-hX-Assumption-Ad} therefore gives all individual
admissibility bounds for $g^{n,i}$ and $D_yg^{n,i}$, uniformly in $n,i$.
Lemma~\ref{Lemma-hX-Assumption-Bg} gives
\begin{align}
 \varepsilon_{g,n}
 &:=\sup_y\lVert g^{n,1}(y)-g^{n,2}(y)\rVert_{p;[r_n,S]}
 \longrightarrow0
 &&\text{in probability},
 \label{eq:g-pairwise-data}\\
 \varepsilon_{Dg,n}
 &:=\sup_{s\in[r_n,S]}
   |D_yg_s^{n,1}-D_yg_s^{n,2}|_\infty
 \longrightarrow0
 &&\text{in }L^2,
 \label{eq:Dg-pairwise-data}\\
 \varepsilon_{g,n}^{\infty}
 &:=\sup_{s\in[r_n,S]}\sup_y
   |g_s^{n,1}(y)-g_s^{n,2}(y)|
 \longrightarrow0
 &&\text{in }L^2.
 \label{eq:g-uniform-pairwise-data}
\end{align}
The use of the $p$-variation topology in \eqref{eq:g-pairwise-data} is
legitimate because \eqref{eq:quantitative-forward-pairwise} holds for every
exponent larger than $2$.
If Assumption~\hyperlink{A:h'}{\((h')\)} holds instead, the individual
admissibility bounds follow directly from that assumption, while
$g^{n,1}=g^{n,2}=h$. In this case, set
$\varepsilon_{g,n}=\varepsilon_{Dg,n}=\varepsilon_{g,n}^{\infty}=0$, so
\eqref{eq:g-pairwise-data}--\eqref{eq:g-uniform-pairwise-data} hold
trivially.

For $i\in\{1,2\}$, set $Y^{n,i,0}=Z^{n,i,0}=0$ and define recursively
\begin{align}
 Y_t^{n,i,m+1}
 ={}&\xi^{n,i}
 +\int_t^S f_s^{n,i}(Y_s^{n,i,m},Z_s^{n,i,m})\,\mathrm dc_s^n
 +\int_t^S g_s^{n,i}(Y_s^{n,i,m})\,\mathrm dW_s^n \nonumber \\
 &-\int_t^S Z_s^{n,i,m+1}\,\mathrm dM_s^n.
 \nonumber
\end{align}
For every $m\in\mathbb N_0$, we can write
\begin{align}
 \lVert\Delta Y^n\rVert_{p;[r_n,S]}
 \le
 \lVert Y^{n,1}-Y^{n,1,m}\rVert_{p;[r_n,S]}
 +\lVert\Delta Y^{n,m}\rVert_{p;[r_n,S]}
 +\lVert Y^{n,2,m}-Y^{n,2}\rVert_{p;[r_n,S]}.
 \label{eq:pairwise-picard-three-term}
\end{align}
We first show that the Picard approximations converge uniformly in $n$ and
$i$, and then prove convergence of $\Delta Y^{n,m}$ at every fixed Picard
level.

{For the uniform convergence of the Picard schemes, we first
bound the data.} Writing $C^{\mathrm{Lip}}_{f^{n,i}}$ for the Lipschitz constant of
$f^{n,i}$ in $(y,z)$, we have the following bound:
\begin{align}
 \sup_{n,i}\Bigg(&
 \lVert\xi^{n,i}\rVert_{L^\infty}
 +\lVert f^{n,i}(\cdot,0,0)\rVert_
   {L^\infty(\Omega\times[r_n,S])}
 +C^{\mathrm{Lip}}_{f^{n,i}}
 +\sup_{s\in[r_n,S]}
   \bigl\||g_s^{n,i}|_{C_b^2}\bigr\|_{L^\infty}
 \notag\\
 &+\sup_{a\in[r_n,S]}\mathop{\rm ess\,sup}_{\omega\in\Omega}
   \mathbb E_a\left[
     \lVert g^{n,i}\rVert_{p;[a,S];C_b(\mathbb R^k)}^2
   \right]^{1/2}
 \label{eq:uniform-picard-data-bounds}\\
 &+\sup_{a\in[r_n,S]}\mathop{\rm ess\,sup}_{\omega\in\Omega}
   \mathbb E_a\left[
     \lVert D_yg^{n,i}\rVert_{p;[a,S];C_b(\mathbb R^k)}^2
   \right]^{1/2}
 \Bigg)
 +\sup_n\left(c_S^n+\lVert W^n\rVert_{q;[0,S]}\right)
 <\infty.
 \notag
\end{align}
Here the bounds for $\xi^{n,i}$, $f^{n,i}$, $c^n$, and
$W^n$ follow from Assumption~\hyperlink{A:xi-f}{\((\xi,f)\)} and
\eqref{eq:uniform-controls-pairwise}. Under
Assumption~\hyperlink{A:h}{\((h)\)}, the bounds for $g^{n,i}$ and
$D_yg^{n,i}$ follow from Lemma~\ref{Lemma-hX-Assumption-Ad} and
\eqref{eq:uniform-individual-forward-rho}; under
Assumption~\hyperlink{A:h'}{\((h')\)}, they follow directly from that
assumption.
By the same argument as in the proof of
\cite[Theorem~4.11]{becherer_rough_2026}, more precisely the argument leading
to \cite[(4.16)]{becherer_rough_2026}, this common bound yields
\begin{equation}\label{eq:uniform-picard-bounds}
 \sup_{n,i,m}\left(
   \lVert Y^{n,i,m}\rVert_{p,2;[r_n,S]}
   +\lVert Z^{n,i,m}\rVert_{\mathrm{BMO};[r_n,S]}
 \right)<\infty
\end{equation}
and
\begin{equation}\label{eq:uniform-picard-convergence}
 \lim_{m\to\infty}\sup_{n,i}\left(
   \lVert Y^{n,i,m}-Y^{n,i}\rVert_{p,2;[r_n,S]}
   +\lVert Z^{n,i,m}-Z^{n,i}\rVert_{\mathrm{BMO};[r_n,S]}
 \right)=0.
\end{equation}

{It remains to show, for every fixed $m\in\mathbb N_0$, that}
\begin{equation}
 \lVert\Delta Y^{n,m}\rVert_{p;[r_n,S]}
 \longrightarrow0
 \qquad\text{in probability}.
 \label{eq:fixed-picard-Y}
\end{equation}
We establish this iteratively in $m$, together with
\begin{equation}
 \mathbb E\left[
   \int_{r_n}^S|\Delta Z_s^{n,m}|^2\,\mathrm dc_s^n
 \right]
 \longrightarrow0.
 \label{eq:fixed-picard-Z}
\end{equation}
Both assertions are immediate for $m=0$. Assume that they hold at level $m$.
Since $Y_S^{n,i,m}=\xi^{n,i}$ for $m\ge1$, while $Y^{n,i,0}=0$, we also have
\begin{equation}\label{eq:fixed-picard-Y-sup}
 \lVert\Delta Y^{n,m}\rVert_{\infty;[r_n,S]}
 \le |\Delta Y_S^{n,m}|+
      \lVert\Delta Y^{n,m}\rVert_{p;[r_n,S]}
 \longrightarrow0
\end{equation}
in probability. For easier notation, set
\begin{align*}
 F_s^{n,m}
 &:={f_s^{n,1}(Y_s^{n,1,m},Z_s^{n,1,m})
     -f_s^{n,2}(Y_s^{n,2,m},Z_s^{n,2,m})},\\
 G_s^{n,m}
 &:={g_s^{n,1}(Y_s^{n,1,m})
     -g_s^{n,2}(Y_s^{n,2,m})}.
\end{align*}

{We first control the finite-variation term.}
By the common Lipschitz constant of the generators, it holds
\begin{equation*}
 \int_{r_n}^S|F_s^{n,m}|\,\mathrm dc_s^n
 \le C\lVert\Delta Y^{n,m}\rVert_\infty
 +C\left(\int_{r_n}^S|\Delta Z_s^{n,m}|^2\,\mathrm dc_s^n\right)^{1/2}
 +C\varepsilon_{f,n}.
\end{equation*}
It follows from the induction hypothesis and
\eqref{eq:f-pairwise-data} that
\begin{equation}\label{eq:F-integral-convergence}
 \left\lVert\int_{\cdot}^S F_s^{n,m}\,\mathrm dc_s^n
 \right\rVert_{p;[r_n,S]}
 \le\int_{r_n}^S|F_s^{n,m}|\,\mathrm dc_s^n
 \longrightarrow0 \qquad \text{in probability.}
\end{equation}

{For the Young term, we use the composition-difference estimate}
\cite[Lemma~2.3]{becherer_rough_2026}, together with the uniform
bounds for $g^{n,i}$ in \eqref{eq:uniform-picard-data-bounds}, {to obtain}
\begin{align}
 \lVert G^{n,m}\rVert_{p;[r_n,S]}
 \le C\bigl(&\lVert\Delta Y^{n,m}\rVert_p
 +(\lVert Y^{n,1,m}\rVert_p+\lVert Y^{n,2,m}\rVert_p)
      \lVert\Delta Y^{n,m}\rVert_\infty \nonumber\\
 &+\varepsilon_{g,n}
 +\varepsilon_{Dg,n}\lVert Y^{n,2,m}\rVert_p\bigr).
 \nonumber
\end{align}
The $p$-variation norms on the right-hand side are bounded in
probability, uniformly in $n$, by \eqref{eq:uniform-picard-bounds}. Thus
\eqref{eq:fixed-picard-Y}, \eqref{eq:fixed-picard-Y-sup},
\eqref{eq:g-pairwise-data} and \eqref{eq:Dg-pairwise-data} show that
\begin{equation*}
 \lVert G^{n,m}\rVert_{p;[r_n,S]}\longrightarrow0
 \qquad\text{in probability}.
\end{equation*}
Moreover, by \eqref{eq:terminal-pairwise-data} and
\eqref{eq:g-uniform-pairwise-data},
\begin{equation*}
 |G_S^{n,m}|
 \le C|\Delta Y_S^{n,m}|+\varepsilon_{g,n}^{\infty}
 \longrightarrow0
 \qquad\text{in probability}.
\end{equation*}
The Young integral estimate
\cite[Proposition~A.7]{becherer_rough_2026} and
\eqref{eq:uniform-controls-pairwise} now imply
\begin{equation}\label{eq:G-young-integral-convergence}
 \begin{aligned}
 &\left\lVert\int_{\cdot}^S G_s^{n,m}\,\mathrm dW_s^n
 \right\rVert_{p;[r_n,S]}\\
 &\quad\le C_{p,q}\bigl(|G_S^{n,m}|+\lVert G^{n,m}\rVert_p\bigr)
                 \lVert W^n\rVert_q
 \longrightarrow0
 \qquad\text{in probability}.
 \end{aligned}
\end{equation}

{To control the martingale terms, define the terminal Picard input}
\begin{equation*}
 \Gamma^{n,i,m+1}
 :=\xi^{n,i}
   +\int_{r_n}^S f_s^{n,i}(Y_s^{n,i,m},Z_s^{n,i,m})\,\mathrm dc_s^n
   +\int_{r_n}^S g_s^{n,i}(Y_s^{n,i,m})\,\mathrm dW_s^n.
\end{equation*}
Equations \eqref{eq:terminal-pairwise-data},
\eqref{eq:F-integral-convergence} and
\eqref{eq:G-young-integral-convergence} yield
\begin{equation}\label{eq:Gamma-probability-convergence}
 \Gamma^{n,1,m+1}-\Gamma^{n,2,m+1}\longrightarrow0
 \qquad\text{in probability}.
\end{equation}
We next upgrade this to $L^2$-convergence. By construction,
\begin{equation}\label{eq:Gamma-martingale-decomposition}
 \Gamma^{n,i,m+1}
 =Y_{r_n}^{n,i,m+1}
  +\int_{r_n}^S Z_s^{n,i,m+1}\,\mathrm dM_s^n.
\end{equation}
The uniform terminal bound \eqref{eq:terminal-pairwise-data},
\eqref{eq:uniform-picard-bounds}, and
\cite[Lemma~2.2(a)]{becherer_rough_2026} give
\[
 \sup_{n,i,m}
 \lVert Y^{n,i,m}\rVert_{L^\infty(\Omega\times[r_n,S])}<\infty.
\]
The first inequality below follows from the
Burkholder--Davis--Gundy inequality, while the second follows from the
standard BMO energy estimate:
\begin{align*}
 \mathbb E\left[
   \left|\int_{r_n}^S Z_s^{n,i,m+1}\,\mathrm dM_s^n\right|^4
 \right]
 &\le C\mathbb E\left[
   \left(\int_{r_n}^S|Z_s^{n,i,m+1}|^2\,\mathrm dc_s^n\right)^2
 \right]\\
 &\le C\lVert Z^{n,i,m+1}\rVert_{\mathrm{BMO};[r_n,S]}^4.
\end{align*}
The uniform BMO bound in \eqref{eq:uniform-picard-bounds} therefore makes
the right-hand side uniform in $n$, $i$, and $m$. This is the same argument
as in the proof of \cite[(4.24)]{becherer_rough_2026}.
Consequently,
\begin{equation}\label{eq:Gamma-L4-bound}
 \sup_{n,i,m}\mathbb E\left[|\Gamma^{n,i,m+1}|^4\right]<\infty,
 \qquad
 \sup_n\mathbb E\left[
   |\Gamma^{n,1,m+1}-\Gamma^{n,2,m+1}|^4
 \right]<\infty.
\end{equation}
Combining \eqref{eq:Gamma-probability-convergence} and
\eqref{eq:Gamma-L4-bound}, Vitali's theorem yields
\begin{equation*}
\mathbb E\left[
  |\Gamma^{n,1,m+1}-\Gamma^{n,2,m+1}|^2
\right]\longrightarrow0.
\end{equation*}
By It\^o's isometry, \eqref{eq:Gamma-martingale-decomposition}, and the
orthogonality of the martingale increment to
$\mathcal F^{M^n}_{r_n}$, using that
$Y_{r_n}^{n,1,m+1}-Y_{r_n}^{n,2,m+1}$ is
$\mathcal F^{M^n}_{r_n}$-measurable, we obtain
\begin{align}
 \mathbb E\left[
   \int_{r_n}^S|\Delta Z_s^{n,m+1}|^2\,\mathrm dc_s^n
 \right]
 &=\mathbb E\left[
   \left|
     \int_{r_n}^S\Delta Z_s^{n,m+1}\,\mathrm dM_s^n
   \right|^2
 \right]
 \nonumber\\
 &\leq
 \mathbb E\left[
   |Y_{r_n}^{n,1,m+1}-Y_{r_n}^{n,2,m+1}|^2
   +\left|
     \int_{r_n}^S\Delta Z_s^{n,m+1}\,\mathrm dM_s^n
   \right|^2
 \right]
 \nonumber\\
 &=\mathbb E\left[
   |\Gamma^{n,1,m+1}-\Gamma^{n,2,m+1}|^2
 \right]
 \longrightarrow0.
 \label{eq:next-picard-Z}
\end{align}
This proves \eqref{eq:fixed-picard-Z} at level $m+1$.

Finally, subtracting the two equations at Picard level $m+1$ yields
\[
 \Delta Y_t^{n,m+1}
 =\Delta\xi^n
  +\int_t^S F_s^{n,m}\,\mathrm dc_s^n
  +\int_t^S G_s^{n,m}\,\mathrm dW_s^n
  -\int_t^S\Delta Z_s^{n,m+1}\,\mathrm dM_s^n.
\]
The first three terms converge in $p$-variation in probability by
\eqref{eq:terminal-pairwise-data}, \eqref{eq:F-integral-convergence} and
\eqref{eq:G-young-integral-convergence}. For the martingale term, the
$p$-variation BDG inequality
\cite[Theorem~14.12]{friz_multidimensional_2010}, valid because $p>2$, and
\eqref{eq:next-picard-Z} give
\begin{equation*}
 \mathbb E\left[
   \left\lVert
   \int_{r_n}^{\cdot}\Delta Z_s^{n,m+1}\,\mathrm dM_s^n
 \right\rVert_{p;[r_n,S]}^2
 \right]
 \le C_p\mathbb E\left[
   \int_{r_n}^S|\Delta Z_s^{n,m+1}|^2\,\mathrm dc_s^n
 \right]
 \longrightarrow0.
\end{equation*}
Thus \eqref{eq:fixed-picard-Y} also holds at level $m+1$, completing the
induction.

Returning to \eqref{eq:pairwise-picard-three-term}, we first let
$n\to\infty$ and use \eqref{eq:fixed-picard-Y}. We then let
$m\to\infty$ and use \eqref{eq:uniform-picard-convergence}. This proves
the $p$-variation convergence in \eqref{eq:pairwise-Y-convergence}. Similarly,
\begin{align*}
 \mathbb E\left[\int_{r_n}^S|\Delta Z_s^n|^2\,\mathrm dc_s^n\right]
 \le{}&3\mathbb E\left[
   \int_{r_n}^S|Z_s^{n,1}-Z_s^{n,1,m}|^2\,\mathrm dc_s^n
 \right]
 +3\mathbb E\left[
   \int_{r_n}^S|\Delta Z_s^{n,m}|^2\,\mathrm dc_s^n
 \right]\\
 &+3\mathbb E\left[
   \int_{r_n}^S|Z_s^{n,2,m}-Z_s^{n,2}|^2\,\mathrm dc_s^n
 \right].
\end{align*}
The same order of limits, using \eqref{eq:fixed-picard-Z} and the BMO part of
\eqref{eq:uniform-picard-convergence}, proves
\eqref{eq:pairwise-Z-convergence}. Finally,
\[
 \lVert\Delta Y^n\rVert_\infty
 \le |\xi^{n,1}-\xi^{n,2}|+\lVert\Delta Y^n\rVert_p,
\]
so \eqref{eq:terminal-pairwise-data} and
the $p$-variation convergence in \eqref{eq:pairwise-Y-convergence} imply its
uniform-convergence assertion.

For the final assertion, uniqueness of the forward SDE gives
$X^{n,1}=X^{n,2}$. Hence the composed coefficients $f^{n,i}$ and $g^{n,i}$
coincide. With $\xi^{n,i}:=\xi_n^i(X_S^{n,i})$, the terminal data satisfy
\[
 \left|
 \xi_n^1(X_S^{n,1})-\xi_n^2(X_S^{n,2})
 \right|
 \leq \|\xi_n^1-\xi_n^2\|_\infty\longrightarrow0.
\]
Thus \eqref{eq:terminal-pairwise-data} holds and all coefficient-difference
terms in \eqref{eq:f-pairwise-data}--\eqref{eq:g-uniform-pairwise-data}
vanish. The double Picard argument above applies without change and proves
the claim.
\end{proof}

\phantomsection\label{pairwise-Marcus-extension}
Although Proposition~\ref{prop:uniform-pairwise-stability} is stated for
continuous drivers $W^n$, it also applies to rough FBSDEs with
discontinuous Young drivers integrated in the Marcus sense, by passing to
their time-extended versions. To explain this, let
$W^n\in D^q([0,T];\mathbb R^e)$, let $r_n\in[0,T]$, and, for
$i\in\{1,2\}$, consider the solution $(X^{n,i},Y^{n,i},Z^{n,i})$ of
\begin{equation}\label{pairwise-Marcus-FBSDE}
\begin{aligned}
 X_t^{n,i}
 &=\zeta_n^i+\int_{r_n}^t b(s,X_s^{n,i})\,ds
   +\int_{r_n}^t\sigma(s,X_s^{n,i})\,dB_s,\\
 Y_t^{n,i}
 &=\xi(X_T^{n,i})
   +\int_t^T f(s,X_s^{n,i},Y_s^{n,i},Z_s^{n,i})\,ds\\
 &\quad+\int_t^T h(s,X_s^{n,i},Y_s^{n,i})\diamond dW_s^n
   -\int_t^T Z_s^{n,i}\,dB_s,
 \qquad t\in[r_n,T],
\end{aligned}
\end{equation}
where $\zeta_n^i\in L^2(\Omega,\mathcal F^B_{r_n};\mathbb R^d)$.
Fix $\delta>0$ and a common countable set $\Pi$ containing all jump times
of all $W^n$. Let $\tau^\delta$ and $c^\delta$ be defined as in Section~\ref{section-decorated-path}. Set
\[
 W^{\delta,n}=(\jmath W^n)^\delta,\qquad
 B^\delta=B\circ c^\delta,\qquad
 r_n^\delta=\tau^\delta(r_n),
\]
and define the time-extended coefficients by
\[
\begin{aligned}
 b^\delta(s,x)&=b(c_s^\delta,x),&
 \sigma^\delta(s,x)&=\sigma(c_s^\delta,x),\\
 f^\delta(s,x,y,z)&=f(c_s^\delta,x,y,z),&
 h^\delta(s,x,y)&=h(c_s^\delta,x,y).
\end{aligned}
\]
We then consider the solution
$(X^{\delta,n,i},Y^{\delta,n,i},Z^{\delta,n,i})$ of the time-stretched
rough FBSDE
\begin{equation}\label{pairwise-time-stretched-FBSDE}
\begin{aligned}
 X_t^{\delta,n,i}
 &=\zeta_n^i+\int_{r_n^\delta}^t
     b^\delta(s,X_s^{\delta,n,i})\,dc_s^\delta
   +\int_{r_n^\delta}^t
     \sigma^\delta(s,X_s^{\delta,n,i})\,dB_s^\delta,\\
 Y_t^{\delta,n,i}
 &=\xi(X_{T+\delta}^{\delta,n,i})
   +\int_t^{T+\delta}
     f^\delta(s,X_s^{\delta,n,i},Y_s^{\delta,n,i},Z_s^{\delta,n,i})
     \,dc_s^\delta\\
 &\quad+\int_t^{T+\delta}
     h^\delta(s,X_s^{\delta,n,i},Y_s^{\delta,n,i})\,dW_s^{\delta,n}
   -\int_t^{T+\delta}Z_s^{\delta,n,i}\,dB_s^\delta,
 \qquad t\in[r_n^\delta,T+\delta].
\end{aligned}
\end{equation}
Notice that $W^{\delta,n}$ is continuous: its jumps have been replaced by
linear excursions. Although $B^\delta$ is generally no longer a Brownian
motion in the extended time variable, it is a continuous martingale in
its usual filtration $\mathcal F_t^\delta=\mathcal F^B_{c_t^\delta}$,
with quadratic covariation
$\langle B^\delta\rangle_t=c_t^\delta I_d$. Thus
\eqref{pairwise-time-stretched-FBSDE} is covered by our general setting
$M=B\circ c$, with horizon $T+\delta$, clock $c^\delta$, and
martingale $M=B^\delta$.

By construction, $X_s^{\delta,n,i}=X_{c_s^\delta}^{n,i}$.
And by {\cite[Theorem~4.7]{becherer_rough_2026}}, we can see that
\eqref{pairwise-time-stretched-FBSDE} is indeed the time-stretched version of
\eqref{pairwise-Marcus-FBSDE}, with
\begin{equation}\label{delta-extension-Y}
 Y_s^{\delta,n,i}=
 \begin{cases}
   Y_r^{n,i},
     & s=\tau^\delta(r),\quad r\in[r_n,T],\\[1ex]
   \displaystyle\varphi\!\left(
     -h(r,X_r^{n,i},\cdot)\Delta W_r^n,
     Y_{r+}^{n,i},
     \frac{s-\tau^\delta(r)}{\tau^\delta(r+)-\tau^\delta(r)}
   \right),
     & \begin{gathered}
         s\in(\tau^\delta(r),\tau^\delta(r+)],\\
         r\in\Pi\cap[r_n,T),
       \end{gathered}
 \end{cases}
\end{equation}
and $Z_s^{\delta,n,i}=Z_{c_s^\delta}^{n,i}$ holds $dc_s^\delta\otimes\mathbb P$-almost everywhere.
Conversely, the original process is recovered simply by
$Y^{\delta,n,i}\circ\tau^\delta=Y^{n,i}$ on $[r_n,T]$.

Now we can apply Proposition~\ref{prop:uniform-pairwise-stability} to the
time-stretched equations.
The coefficient assumptions are preserved under the common
non-decreasing time change, and the linear extensions $W^{\delta,n}$
have uniformly bounded $q$-variation whenever the original drivers do.
Proposition~\ref{prop:uniform-pairwise-stability} consequently applies to
\eqref{pairwise-time-stretched-FBSDE}. Composition via $\tau^\delta$
cannot increase either the uniform norm or the $p$-variation seminorm of
the difference of the two $Y$-solutions, while change of variables gives
\[
 \int_{r_n}^T |Z_s^{n,1}-Z_s^{n,2}|^2\,ds
 =\int_{r_n^\delta}^{T+\delta}
   |Z_s^{\delta,n,1}-Z_s^{\delta,n,2}|^2\,dc_s^\delta.
\]
Thus the stability conclusions of the proposition hold for
\eqref{pairwise-Marcus-FBSDE}, including stability with respect to the
terminal function.

The same argument applies if the original equations already have clocks
$c^n$ and martingales $M^n=B\circ c^n$, as in
\eqref{eq:pairwise-backward}. Their extended clocks and martingales are
$c^n\circ c^\delta$ and $M^n\circ c^\delta=B\circ(c^n\circ c^\delta)$,
respectively. The terminal clock values are unchanged, and the preceding
change-of-variables identity holds with $dc_s^n$ on the original interval
and $d(c^n\circ c^\delta)_s$ on the extended interval.
\medskip

When we wish to emphasize the terminal time and terminal function of the
backward equation, for $0\leq r\leq s\leq S$ and
$\zeta\in L^2(\Omega,\mathcal F_r^M;\mathbb R^d)$ we write
$(X^{r,\zeta},Y^{r,\zeta;s,\xi},Z^{r,\zeta;s,\xi})$ for the solution on
$[r,s]$ of the rough FBSDE \eqref{extended-rough-FBSDE}, with initial
condition $X_r^{r,\zeta}=\zeta$ and terminal condition
$Y_s^{r,\zeta;s,\xi}=\xi(X_s^{r,\zeta})$.

\begin{proposition}
  \label{Prop-RFBSDE-Markov-flow}Let $W\in D^q([0,S];\mathbb R^e)$ with $q\in[1,2)$.
Suppose that the coefficients on $[0,S]$
satisfy either Assumptions
$(\hyperlink{A:b-sigma}{b}, \hyperlink{A:b-sigma}{\sigma},
\hyperlink{A:xi-f}{\xi}, \hyperlink{A:xi-f}{f}, \hyperlink{A:h'}{h'})$
or Assumptions
$(\hyperlink{A:b-sigma'}{b'}, \hyperlink{A:b-sigma'}{\sigma'},
\hyperlink{A:xi-f}{\xi}, \hyperlink{A:xi-f}{f}, \hyperlink{A:h}{h})$.
Define $u(r;s,\xi)(x)\assign Y_r^{r,x;s,\xi}$, and for $s=S$ simply write
$u(r,x)\assign u(r;S,\xi)(x)$.
Then:
\begin{enumerateroman}
  \item $u(r;s,\xi)(x)$ is deterministic;
  \item for $0\leq r\leq v\leq s\leq S$,
  \[
    Y_v^{r,x;s,\xi}
    =u(v;s,\xi)\bigl(X_v^{r,x}\bigr);
  \]
  \item for $0\leq r\leq v\leq s\leq t\leq S$,
  \begin{equation}
    Y_v^{r,x;t,\xi}
    =
    Y_v^{r,x;s,Y_s^{s,\cdot;t,\xi}}
    \label{flow-RBSDE}
  \end{equation}
  and, in particular,
  \[
    u(r;t,\xi)(x)
    =
    u\bigl(r;s,u(s;t,\xi)\bigr)(x).
  \]
\end{enumerateroman}
\end{proposition}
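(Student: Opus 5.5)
We adapt the classical Markovian FBSDE arguments (Pardoux--Peng, El Karoui--Peng--Quenez) to the rough setting. The key inputs are uniqueness from Proposition~\ref{Prop-RFBSDE-wellposedness} and stability in the initial state from Proposition~\ref{prop:uniform-pairwise-stability}, with $r_n=r$ and $W^n=W$. For a discontinuous $W$ we apply the latter through the time-stretched equation \eqref{pairwise-time-stretched-FBSDE}. We treat (i), (ii), (iii) in that order.

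\emph{(i).} The forward solution $X^{r,x}$ on $[r,s]$ is adapted to the filtration generated by the increments $(M_v-M_r)_{v\ge r}$, which is independent of $\mathcal F^M_r$. The data $\xi(X^{r,x}_s)$, $f(\cdot,X^{r,x},\cdot,\cdot)$ and $h(\cdot,X^{r,x},\cdot)$ are measurable with respect to this increment filtration. We solve the rough BSDE in that smaller filtration, where the martingale representation property still holds because $M-M_r$ is again a time-changed Brownian motion. This gives a solution that is also a solution in the full filtration. By uniqueness from Proposition~\ref{Prop-RFBSDE-wellposedness}, the two solutions coincide. Hence $Y^{r,x;s,\xi}_r$ is measurable with respect to $\sigma(M_v-M_r:v\ge r)$ restricted to time $r$, i.e.\ a trivial $\sigma$-algebra, so it is deterministic.

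\emph{(ii).} Fix $r\le v\le s$. By pathwise uniqueness of the forward SDE, $X^{r,x}_w=X^{v,X^{r,x}_v}_w$ for $w\ge v$. Restricting the equation for $(Y^{r,x;s,\xi},Z^{r,x;s,\xi})$ to $[v,s]$ then yields a solution of the rough FBSDE started at time $v$ from the random state $\zeta=X^{r,x}_v\in L^2(\mathcal F_v)$. By uniqueness, $Y^{r,x;s,\xi}_v=Y^{v,\zeta;s,\xi}_v$. It remains to show $Y^{v,\zeta;s,\xi}_v=u(v;s,\xi)(\zeta)$.
\begin{enumerate}
\item For simple $\zeta=\sum_j x_j\mathds 1_{A_j}$ with $A_j\in\mathcal F_v$ a finite partition, the process $\sum_j\mathds 1_{A_j}(X^{v,x_j},Y^{v,x_j;s,\xi},Z^{v,x_j;s,\xi})$ solves the equation. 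This uses that the Marcus/Young integral against deterministic $W$ and the stochastic integral both commute with multiplication by $\mathcal F_v$-measurable indicators, and that the nonlinearities act pointwise in $\omega$. Uniqueness then gives the identity, combined with (i).
\item For general $\zeta$, approximate by simple $\zeta_n$ with $\mathbb E|\zeta_n-\zeta|^2\to0$. Proposition~\ref{prop:uniform-pairwise-stability} (pairwise version) gives $Y^{v,\zeta_n}_v-Y^{v,\zeta}_v\to0$ in probability.
\item For the other side we need $u(v;s,\xi)(\zeta_n)\to u(v;s,\xi)(\zeta)$. Continuity of $x\mapsto u(v;s,\xi)(x)$ follows from the same stability proposition with deterministic initial data, so this convergence holds in probability (along a subsequence, almost surely).
\end{enumerate}

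\emph{(iii).} Put $\eta:=u(s;t,\xi)=Y^{s,\cdot;t,\xi}_s$. By (ii) we have $Y^{r,x;t,\xi}_s=\eta(X^{r,x}_s)$. Then $(Y^{r,x;t,\xi},Z^{r,x;t,\xi})$ restricted to $[r,s]$ solves the rough FBSDE on $[r,s]$ with terminal value $\eta(X^{r,x}_s)$. Uniqueness gives \eqref{flow-RBSDE}, and evaluating at $v=r$ gives the flow identity for $u$.

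To invoke well-posedness we need $\eta$ to be admissible terminal data, i.e.\ bounded and uniformly continuous. Boundedness follows from \eqref{bound-XY-p2}. Uniform continuity follows from \eqref{eq:quantitative-forward-pairwise} and the pairwise stability: if $|x-x'|\to0$, uniformly in the base point, then $|\eta(x)-\eta(x')|\to0$, since $\eta(x)-\eta(x')$ is deterministic and converges in probability.

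\emph{Main obstacle.} The step I expect to be hardest is this regularity of $\eta$, together with step 3 of (ii). Both require stability of $Y_r$ in the initial state that is uniform in the base point. The constants in Proposition~\ref{prop:uniform-pairwise-stability} are uniform, so I would argue by contradiction with sequences $x_n,x'_n$, $|x_n-x'_n|\to0$, applying the proposition with $\zeta^1_n=x_n$, $\zeta^2_n=x'_n$.
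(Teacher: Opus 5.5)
Your proposal is correct and follows the paper's proof essentially step by step. Part (i) uses determinism via solving in the filtration of increments after $r$ plus uniqueness. Part (ii) establishes the representation first for simple $\mathcal F_v$-measurable states and then by $L^2$-approximation, using Proposition~\ref{prop:uniform-pairwise-stability} (through the time-stretched equation when $W$ jumps) together with continuity of $u(v;s,\xi)$. Part (iii) applies (ii) at $v=s$ with terminal time $t$ and then uses uniqueness on $[r,s]$.

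Your explicit check that $\eta=u(s;t,\xi)$ is bounded and uniformly continuous, and hence admissible terminal data, is a detail the paper leaves implicit at this point. It is the same argument as Part~3 of the proof of Theorem~\ref{theorem-Skorokhod}. Note also that step 3 of (ii) only needs pointwise continuity of $u(v;s,\xi)$. Uniformity in the base point is required only for that admissibility check.
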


\begin{proof}
Fix $r$, $x$, and $s$, and let
\[
  \mathcal{F}^{r,M}_v
  \assign
  \sigma\{M_q-M_r:r\leq q\leq v\}\vee\mathcal{N},
  \qquad v\in[r,s],
\]
where $\mathcal{N}$ denotes the null sets. Since $W$, $\xi$, and all
coefficients are deterministic, the FBSDE started from $(r,x)$ can be solved
in the shifted filtration $(\mathcal{F}^{r,M}_v)_{v\in[r,s]}$. Uniqueness
implies that this solution agrees with the one constructed in the original
filtration. Hence $Y_r^{r,x;s,\xi}$ is measurable with respect to the trivial
$\sigma$-field $\mathcal{F}^{r,M}_r$ and is therefore deterministic. This
proves (i). \\
We next establish the representation for a random initial state. If
$\zeta=\sum_{i=1}^m x_i\mathbf{1}_{A_i}$ is a simple
$\mathcal{F}_v^M$-measurable random variable, then uniqueness of solutions and (i)
give
\[
  Y_v^{v,\zeta;s,\xi}
  =
  \sum_{i=1}^m\mathbf{1}_{A_i}Y_v^{v,x_i;s,\xi}
  =
  \sum_{i=1}^m\mathbf{1}_{A_i}u(v;s,\xi)(x_i)
  =
  u(v;s,\xi)(\zeta).
\]
For general $\zeta\in L^2(\Omega,\mathcal F_v^M;\mathbb R^d)$, choose simple
$\zeta_n\to\zeta$ in $L^2$. Proposition~\ref{prop:uniform-pairwise-stability},
using the Marcus time-extension argument on
p.~\pageref{pairwise-Marcus-extension} when $W$ is discontinuous,
applied on $[v,s]$ with the constant sequences $c^n=c$, $M^n=M$,
$W^n=W$, and $r_n=v$, yields
\[
  Y_v^{v,\zeta_n;s,\xi}
  \longrightarrow
  Y_v^{v,\zeta;s,\xi}
  \qquad\text{in probability}.
\]
Applied with deterministic initial states $x_n$ and $x$ satisfying
$x_n\to x$, the same proposition and (i) show that
$u(v;s,\xi)(x_n)\to u(v;s,\xi)(x)$, so $u(v;s,\xi)$ is continuous. Hence
$u(v;s,\xi)(\zeta_n)\to u(v;s,\xi)(\zeta)$ in probability, and passing to
the limit in the simple-state identity gives
\[
  Y_v^{v,\zeta;s,\xi}=u(v;s,\xi)(\zeta).
\]

Taking $\zeta=X_v^{r,x}$, the forward flow property yields
\[
  X_q^{v,X_v^{r,x}}=X_q^{r,x},
  \qquad q\in[v,s].
\]
Consequently, the restriction of $(Y^{r,x;s,\xi},Z^{r,x;s,\xi})$ to
$[v,s]$ and $(Y^{v,X_v^{r,x};s,\xi},Z^{v,X_v^{r,x};s,\xi})$ solve the same
rough BSDE. Uniqueness therefore gives
\[
  Y_v^{r,x;s,\xi}
  =
  Y_v^{v,X_v^{r,x};s,\xi}
  =
  u(v;s,\xi)\bigl(X_v^{r,x}\bigr),
\]
which proves (ii). Finally, applying (ii) with terminal time $t$ gives
\[
  Y_s^{r,x;t,\xi}
  =
  u(s;t,\xi)\bigl(X_s^{r,x}\bigr).
\]
Thus the restriction of $(Y^{r,x;t,\xi},Z^{r,x;t,\xi})$ to $[r,s]$
solves the rough BSDE with terminal function $u(s;t,\xi)$. Uniqueness on
$[r,s]$ proves \eqref{flow-RBSDE}, and evaluating it at $v=r$ gives the
asserted flow identity for $u$.
\end{proof}

To formulate stability in the Skorokhod topology, we lift the rough BSDE
solutions to the space of decorated paths. Let
$W^n,W\in D^q([0,T];\mathbb R^e)$ and fix $(t,x)\in[0,T]\times\mathbb R^d$.
Write $(X^{t,x},Y^{n,t,x},Z^{n,t,x})$ for the solution of
\eqref{pairwise-Marcus-FBSDE} with $r_n=t$ and $\zeta_n^i=x$,
suppressing the index $i$, and write $(X^{t,x},Y^{t,x},Z^{t,x})$ for the
solution driven by $W$. Use a common countable set containing the jump
times of $W$ and all $W^n$ for their time extensions. Following the
construction after Proposition~\ref{prop:uniform-pairwise-stability},
denote the corresponding extended solutions by
$(X^{\delta,\tau^\delta(t),x},Y^{\delta,n,\tau^\delta(t),x},
Z^{\delta,n,\tau^\delta(t),x})$ and
$(X^{\delta,\tau^\delta(t),x},Y^{\delta,\tau^\delta(t),x},
Z^{\delta,\tau^\delta(t),x})$, respectively. By
{\cite[Theorem~4.7]{becherer_rough_2026}}, these are the unique solutions of
\eqref{pairwise-time-stretched-FBSDE} with drivers $W^{\delta,n}$ and
$W^\delta=(\jmath W)^\delta$.

For each $s\in[t,T)$, define $\mathbf Y^{n,t,x}(s)$ as the linear
reparametrization onto $[0,1]$ of the restriction of
$Y^{\delta,n,\tau^\delta(t),x}$ to
$[\tau^\delta(s),\tau^\delta(s+)]$.
When no time is inserted, including at $T$, take the constant path with value
$Y_s^{n,t,x}$. By \eqref{delta-extension-Y}, this is precisely the Marcus
lift $\kappa_{-h,W^n}(Y^{n,t,x})$, with $h$ evaluated along the forward
process $X^{t,x}$, and its $\delta$-extension is
$Y^{\delta,n,\tau^\delta(t),x}$. Define $\mathbf Y^{t,x}$ in the same way
using the driver $W$.

We are now in a position to state the following stability result for the rough
FBSDE \eqref{rough-FBSDE} on the fixed interval $[t,T]$.
\begin{proposition}
 \label{Prop-RFBSDE-stability}Let $W\in D^q([0,T];\mathbb R^e)$ with $q\in[1,2)$.
Suppose that either Assumptions
$(\hyperlink{A:b-sigma}{b}, \hyperlink{A:b-sigma}{\sigma}, 
\hyperlink{A:xi-f}{\xi}, \hyperlink{A:xi-f}{f},
\hyperlink{A:h'}{h'}, \hyperlink{A:hC}{h^C})$ 
or Assumptions 
$(\hyperlink{A:b-sigma'}{b'}, \hyperlink{A:b-sigma'}{\sigma'}, 
\hyperlink{A:xi-f}{\xi}, \hyperlink{A:xi-f}{f},
\hyperlink{A:h}{h}, \hyperlink{A:hC}{h^C})$ are satisfied. Fix
  \(t\in[0,T]\) and \(x\in\mathbb R^d\), and assume that $\lim_{n
  \rightarrow \infty} \alpha_{q;[t,T]} (\jmath W^n, \jmath W) = 0$ for some sequence of
  $(W^n)_{n \in \mathbb{N}} \subset D^q  ([0, T]; \mathbb{R}^e)$, and let
  $(X^{t,x},Y^{n,t,x},Z^{n,t,x})$ denote the solution to the rough FBSDE
  \eqref{pairwise-Marcus-FBSDE}. Then
  \[ \lim_{n \rightarrow \infty} \mathbb{P} (\alpha_{p ; [t, T]}
     (\mathbf{Y}^{n,t,x}, \mathbf{Y}^{t,x}) >
     \varepsilon) = 0 \quad \text{and} \quad \lim_{n \rightarrow \infty}
     \mathbb{E} \left[ \int_t^T (Z^{n,t,x}_r-Z^{t,x}_r)^2\,dr
     \right] = 0. \]
\end{proposition}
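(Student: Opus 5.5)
We would prove the statement by reducing it to a stability result for rough FBSDEs with \emph{continuous} drivers on a common extended time interval, and then converting back with Lemma~\ref{alternative-a-metric}. Fix $\delta>0$ and a common countable set $\Pi$ containing the jump times of $W$ and of all $W^n$. By Lemma~\ref{alternative-a-metric}, applied on $[t,T]$ to $\jmath W^n\to\jmath W$ in $\alpha_q$, there are $\lambda^n=\lambda^{n,\delta}\in\Lambda_{[\tau^\delta(t),T+\delta]}$ with
\[
\|W^\delta-W^{\delta,n}\circ\lambda^n\|_q+\|c^\delta-c^{\delta}\circ\lambda^n\|_\infty\to0 .
\]
By the footnote to Lemma~\ref{alternative-a-metric}, it then suffices to show, for this single $\delta$, that
\[
\|Y^{\delta,\tau^\delta(t),x}-Y^{\delta,n,\tau^\delta(t),x}\circ\lambda^n\|_p\to0
\]
in probability; the clock term is already controlled. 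Recall that by \eqref{delta-extension-Y} the $\delta$-extensions of $\mathbf Y^{n,t,x}$ and $\mathbf Y^{t,x}$ are exactly $Y^{\delta,n,\tau^\delta(t),x}$ and $Y^{\delta,\tau^\delta(t),x}$. For the $Z$-assertion we would use the change-of-variables identity after Proposition~\ref{prop:uniform-pairwise-stability}, which transfers $dc^\delta$-integrals back to $dr$-integrals on $[t,T]$.

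The key observation is that $\tilde Y^n:=Y^{\delta,n}\circ\lambda^n$ (with $\tilde X^n$, $\tilde Z^n$ defined analogously) solves a rough FBSDE of the form \eqref{eq:pairwise-backward}. Its clock is $c^n:=c^\delta\circ\lambda^n$, its martingale is $M^n=B\circ c^n$, and its driver is the continuous path $\tilde W^n:=W^{\delta,n}\circ\lambda^n$. The coefficients are evaluated at times $c^n_s$ rather than $c^\delta_s$. The limit equation has clock $c^\delta$, martingale $B^\delta$ and driver $W^\delta$. The uniform controls \eqref{eq:uniform-controls-pairwise} hold because $\alpha_q$-convergence bounds $\|\tilde W^n\|_q$ and $c^n_{T+\delta}=T$. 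For the forward parts, $\tilde X^n_s=X^{t,x}_{c^n_s}$ and $X^\delta_s=X^{t,x}_{c^\delta_s}$. Since $c^n\to c^\delta$ uniformly and $X^{t,x}$ is continuous, with $\rho$-variation moments controlled by Proposition~\ref{Prop-random-initial-SDE-estimates}, we get $\|\tilde X^n-X^\delta\|_\infty\to0$ in $L^2$. The $\rho$-variation of the difference is bounded uniformly in $n$, and interpolation then gives convergence in $p$-variation. Assumption $(h^C)$ makes $h(c^n_s,\cdot)-h(c^\delta_s,\cdot)$ uniformly small, and continuity of $f$ with the Lipschitz bounds handles $f$. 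Hence the coefficient differences corresponding to \eqref{eq:terminal-pairwise-data}--\eqref{eq:g-uniform-pairwise-data} tend to zero, and the Young-integral difference is controlled by \cite[Proposition~A.7]{becherer_rough_2026} together with $\|W^\delta-\tilde W^n\|_q\to0$.

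We would then run the double Picard scheme from the proof of Proposition~\ref{prop:uniform-pairwise-stability}. The uniform data bounds \eqref{eq:uniform-picard-data-bounds} yield Picard convergence uniformly in $n$. At each fixed Picard level we would show convergence of the $Y$-differences in $p$-variation in probability, and of the $Z$-parts, by induction. The terminal Picard inputs $\Gamma$ converge in probability and are bounded in $L^4$, so Vitali gives $L^2$ convergence. The martingale parts are then handled by BDG in $p$-variation.

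The main obstacle is this martingale step. Here the two equations live on \emph{different} filtrations ($M^n=B\circ c^n$ versus $B^\delta$), so the It\^o-isometry argument of \eqref{eq:next-picard-Z} cannot be applied to $\Delta Z$ directly. This is the adaptedness issue caused by reparametrization. We would try to handle it by working on the original clock $[t,T]$ via $c^\delta$ and $c^n$, where both martingales are time changes of the same Brownian motion $B$. Writing the martingale difference as $\int\tilde Z^n\,dM^n-\int Z^\delta\,dB^\delta$, we would identify each stochastic integral as a conditional expectation of the respective $\Gamma$ given $\mathcal F^B$ at nearby times. Because $c^n\to c^\delta$ uniformly, these filtrations nearly coincide, and the discrepancy can be bounded by the uniform BMO bounds combined with continuity of Brownian conditional expectations in time. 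This plays the role of Lemma~\ref{Lemma-excursion-Y-convergence}. Finally, passing $n\to\infty$ and then $m\to\infty$ gives both assertions.
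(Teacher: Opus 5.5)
Your argument is sound in outline, but it takes a genuinely different and much longer route than the paper. The paper's proof is a two-line reduction. Since the forward process $X^{t,x}$ does not depend on $n$, the rough FBSDE is simply a rough BSDE with fixed random data $\xi(X^{t,x}_T)$, $f(\cdot,X^{t,x}_\cdot,\cdot)$, $h(\cdot,X^{t,x}_\cdot,\cdot)$ and varying driver $W^n$. The individual admissibility bounds come from $(h')$, or, under $(h)$, from Proposition~\ref{Prop-RFBSDE-wellposedness} and Lemma~\ref{Lemma-hX-Assumption-Ad}. The coefficient-stability hypotheses are trivial for a constant sequence, and the decorated-Skorokhod stability theorem \cite[Theorem~4.11]{becherer_rough_2026} then gives both assertions at once. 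Your plan is time extension with a common $\Pi$, reparametrization by $\lambda^{n,\delta}$, and a double Picard scheme in which clock, filtration and driver all vary with $n$. This is in effect a re-proof of that theorem in the present setting, in the spirit of Lemma~\ref{Lemma-excursion-Y-convergence}. It is self-contained and shows exactly where $(h^C)$ and $\|c^\delta-c^\delta\circ\lambda^n\|_\infty\to0$ enter, but it repeats the adaptedness analysis that the cited theorem already contains.

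If you carry it out, two steps need to be stated differently.

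\textbf{The $f$-term.} This is not a coefficient difference in the sense of \eqref{eq:f-pairwise-data}. The two integrals run against different clocks $dc^n$ and $dc^\delta$, and continuity of $f$ in $t$ alone does not make $\sup_{y,z}|f(c^n_s,\cdot)-f(c^\delta_s,\cdot)|$ small. Work in physical time instead, where the time arguments agree. The reparametrized integral is $\phi^n\circ c^n$ with $\phi^n(v)=\int_v^T f(u,X_u,Y^{n,m}_u,Z^{n,m}_u)\,du$. Then $(\phi^n-\phi)\circ c^n$ is controlled by $\int_t^T(|Y^{n,m}_u-Y^m_u|+|Z^{n,m}_u-Z^m_u|)\,du$, using convergence at continuity points as in Lemma~\ref{Lemma-localuniform}, dominated convergence, and the $Z$-induction. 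Meanwhile $\phi\circ c^n-\phi\circ c^\delta\to0$ uniformly with bounded $1$-variation.

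\textbf{The $Z$-assertion.} Contrary to what you say, the It\^o-isometry argument of \eqref{eq:next-picard-Z} does give it. In physical time, $Z^{n,m+1}$ and $Z^{m+1}$ are both integrated against $B$ over all of $[t,T]$. The initial difference is $\mathcal F^B_t$-measurable, and the full-interval inputs $\Gamma$ are invariant under reparametrization. Hence $\mathbb E\int_t^T|Z^{n,m+1}_r-Z^{m+1}_r|^2\,dr\le\mathbb E|\Gamma^{n,m+1}-\Gamma^{m+1}|^2$.

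The reparametrization only obstructs the $p$-variation of the martingale parts at intermediate times. There the following split works: $\mathbb E[\Gamma^{n,m+1}-\Gamma^{m+1}\mid\mathcal F^B_{c^n_\cdot}]+(N\circ c^n-N\circ c^\delta)$, with $N_r=\mathbb E[\Gamma^{m+1}\mid\mathcal F^B_r]$ continuous. The $p$-variation BDG inequality handles the first piece. For the second, use pathwise uniform continuity plus interpolation against the $\rho$-variation of $N$. What closes this step is the $L^2$-convergence of the $\Gamma$'s from Vitali; the BMO bounds only supply the uniform integrability.
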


\begin{proof}
Define
\begin{align*}
  \xi^X&\assign\xi(X_T^{t,x}),&
  f^X(s,\cdot)&\assign f(s,X_s^{t,x},\cdot),&
  h^X(s,\cdot)&\assign h(s,X_s^{t,x},\cdot).
\end{align*}
The forward process, and hence the terminal condition and coefficients
\(\xi^X,f^X,h^X\), are independent of \(n\). Their uniform bounds and
regularity follow directly from Assumption \((h')\) in the
forward-independent case, and from
Proposition~\ref{Prop-RFBSDE-wellposedness} and
Lemma~\ref{Lemma-hX-Assumption-Ad} under Assumption \((h)\). Their
stability requirements are immediate because the coefficient sequence
is constant. The result therefore follows from
\cite[Theorem~4.11]{becherer_rough_2026}.
\end{proof}

\subsection{Smooth drivers}\label{Chap-ApproxPDE}

We start by recalling the notion of viscosity solutions introduced in
\cite{decreusefond_backward_1998} for the following system of PDEs
\begin{equation}
  \frac{\partial u_i}{\partial t} +\mathcal{L}_t u_i + f_i (t, x, u,
  \sigma^{\top} \nabla_x u_i) = 0, \quad u (T, x) = \xi (x), \quad 1 \leqslant i
  \leqslant {k}. \label{system-PDE}
\end{equation}
\begin{definition}
  \label{Def-viscosity}The function $u \in C ([0, T] \times \mathbb{R}^d ;
  \mathbb{R}^k)$ satisfying the polynomial growth condition
  \begin{equation}
    |u (t, x)| \leqslant C (1 + | x |^\ell),
    \qquad \text{for all } t\in[0,T],\, x\in \RR^d,
    \label{polynomial-growth}
  \end{equation}
  for some $\ell,C>0$, is called a \textbf{viscosity subsolution} of
  \eqref{system-PDE} if $u_i (T, x) \leqslant \xi_i (x), x \in \mathbb{R}^d, {1} \leqslant
  i \leqslant {k}$, and for any $1 \leqslant i \leqslant {k},$ $\varphi \in C^{1, 2} 
  ([0, T] \times \mathbb{R}^d)$ and $(t, x) \in [0, T) \times \mathbb{R}^d$
  at which $u_i - \varphi$ attains a local maximum, it holds that
  \[ \frac{\partial \varphi}{\partial t} (t, x) +\mathcal{L}_t \varphi (t, x)
     + f_i (t, x, u (t, x), (\sigma^{\top} \nabla_x \varphi) (t, x)) \geq 0.
  \]
  Similarly, such a function $u$ is called a
  \textbf{viscosity supersolution} of \eqref{system-PDE} if $u_i (T, x) \geq
  \xi_i (x), x \in \mathbb{R}^d, {1} \leqslant i \leqslant {k}$, and for any $1 \leqslant
  i \leqslant {k},$ $\varphi \in C^{1, 2}  ([0, T] \times \mathbb{R}^d)$ and $(t, x)
  \in [0, T) \times \mathbb{R}^d$ at which $u_i - \varphi$ attains a local minimum,
  it holds that
  \[ \frac{\partial \varphi}{\partial t} (t, x) +\mathcal{L}_t \varphi (t, x)
     + f_i (t, x, u (t, x), (\sigma^{\top} \nabla_x \varphi) (t, x)) \leqslant 0.
  \]
  Such a function $u$ is called a
  \textbf{viscosity solution} of \eqref{system-PDE} if it is both a
  viscosity subsolution and a viscosity supersolution.
\end{definition}

\begin{remark}
  \label{Rmk-growth-condition}A growth condition is essential to ensure the uniqueness of solutions 
to \eqref{system-PDE}, although it is not always stated explicitly 
in the literature. See, for example, the discussion after  {\cite[Theorem~3.5]{barles_backward_1997}}. In particular, the classical counterexample of Tychonoff shows that even the heat equation may fail to admit a unique solution without an appropriate growth restriction.
  
\end{remark}

\begin{remark}
  Viscosity solutions are most commonly formulated in the $1$-dimensional case. In the multidimensional setting, it is standard to impose the structural condition that the $i$-th component of 
$f(t,x,y,z)$ depends only on the $i$-th row of the matrix $z$ 
(see \cite{decreusefond_backward_1998, barles_backward_1997}). This restriction is required for the PDE theory, 
but it is not necessary at the level of the associated FBSDE.
\end{remark}
Under Assumptions 
$(\hyperlink{A:b-sigma}{b,\sigma}, \hyperlink{A:xi-f}{\xi,f})$, the existence of a viscosity
solution to \eqref{system-PDE} was shown in
{\cite{decreusefond_backward_1998}}. In terms of uniqueness, the existing results appear to be limited to the
time-homogeneous case for $b$ and $\sigma$ 
(see \cite{barles_backward_1997, pardoux_probabilistic_1997}). For completeness, we therefore provide a short proof of uniqueness 
in the time-inhomogeneous setting using the BSDE theory.
Consider the classical FBSDE
\begin{align}
  X^{t,x}_s
  ={}&x+\int_t^s b(r,X^{t,x}_r)\,dr
      +\int_t^s \sigma(r,X^{t,x}_r)\,dB_r,\nonumber\\
  Y^{t,x}_s
  ={}&\xi(X^{t,x}_T)
      +\int_s^T f(r,X^{t,x}_r,Y^{t,x}_r,Z^{t,x}_r)\,dr
      -\int_s^T Z^{t,x}_r\,dB_r.
  \label{classical-FBSDE}
\end{align}
\begin{theorem}
  \label{multidimentional-visco}Under Assumptions 
$(\hyperlink{A:b-sigma}{b,\sigma}, \hyperlink{A:xi-f}{\xi,f})$,
  the function given by $u(t,x):=Y_t^{t,x}$, where $(X^{t,x},Y^{t,x},Z^{t,x})$
  denotes the solution of FBSDE \eqref{classical-FBSDE}, is the unique viscosity
  solution to \eqref{system-PDE}.
\end{theorem}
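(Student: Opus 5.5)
Existence will be taken from \cite{decreusefond_backward_1998}: under Assumptions $(b,\sigma,\xi,f)$, the function $u(t,x)=Y^{t,x}_t$ is continuous, bounded (since $\xi$ and $f(\cdot,\cdot,0,0)$ are bounded, standard BSDE estimates give $|Y^{t,x}_t|\le C$), and is a viscosity solution of \eqref{system-PDE} in the sense of Definition~\ref{Def-viscosity}. Time inhomogeneity of $b,\sigma$ plays no role in that existence argument; it relies only on the Markov property (Proposition~\ref{Prop-RFBSDE-Markov-flow} with $h=0$) and continuity of $(t,x)\mapsto Y^{t,x}_t$. The substance of the proof is therefore uniqueness.

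For uniqueness I will show that any viscosity solution $v$ with polynomial growth coincides with $u$. My approach decouples the system. Fix $v$ and define the \emph{scalar} generators
\[
g_i(t,x,z_i)\assign f_i\bigl(t,x,v(t,x),z_i\bigr),\qquad 1\le i\le k,
\]
which are continuous, Lipschitz in $z_i$ (using the row-wise structure of $f$), and of polynomial growth in $x$. Definition~\ref{Def-viscosity} then says exactly that each component $v_i$ is a scalar viscosity solution of the linear-in-$v$ equation $\partial_t w+\mathcal L_t w+g_i(t,x,\sigma^\top\nabla_x w)=0$, $w(T)=\xi_i$. For scalar equations with continuous coefficients and polynomial growth, uniqueness holds in the time-inhomogeneous setting through the standard comparison principle for parabolic equations (Crandall--Ishii--Lions, with the growth condition from Remark~\ref{Rmk-growth-condition} handled by a penalisation $e^{-\lambda t}(1+|x|^2)^{\ell'}$). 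Meanwhile, the scalar BSDE $\tilde Y^{i}_s=\xi_i(X_T^{t,x})+\int_s^T g_i(r,X_r,\tilde Z^i_r)dr-\int_s^T\tilde Z^i_r dB_r$ yields a viscosity solution via the scalar Pardoux--Peng theorem. Hence $v_i(t,x)=\tilde Y^{i,t,x}_t$, and by the Markov property $v_i(s,X_s^{t,x})=\tilde Y^{i,t,x}_s$.

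This last identity is the step I expect to be the main obstacle: the Markov identification needs care because $g_i$ contains $v$, which is only continuous. I will obtain it from flow and uniqueness of the scalar FBSDE started at $(s,X_s^{t,x})$, exactly as in the proof of Proposition~\ref{Prop-RFBSDE-Markov-flow}(ii), using continuity of $v$ and the simple-function approximation given there.

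It then follows that $(\tilde Y_s,\tilde Z_s)=(v(s,X_s),\tilde Z_s)$ solves the multidimensional BSDE
\[
\tilde Y_s=\xi(X_T)+\int_s^T f(r,X_r,\tilde Y_r,\tilde Z_r)\,dr-\int_s^T\tilde Z_r\,dB_r .
\]
This is the FBSDE \eqref{classical-FBSDE}, and by uniqueness of its solution (Lipschitz BSDE theory, with square integrability supplied by polynomial growth of $v$ and moment bounds on $X$) we get $\tilde Y=Y^{t,x}$. Evaluating at $s=t$ gives $v(t,x)=u(t,x)$.
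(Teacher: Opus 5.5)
Your argument is correct, but after the shared first step it follows a different route from the paper.

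Both proofs freeze the solution in the generator and use the row-wise structure of $f$ to obtain scalar equations. Both then identify $v_i(s,y)$ with the value of the frozen scalar BSDE started at $(s,y)$, via Pardoux's representation \cite{decreusefond_backward_1998} and scalar comparison. One caution on this common step: $g_i$ inherits only \emph{local} uniform continuity in $x$ from $v$. That is why the paper compares on cylinders $[0,T]\times B_R$ and adds $\varepsilon e^{A(T-t)}(1+|x|^2)^m$ to order the lateral boundary. Your penalisation must be localised in the same way.

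The paper uses this identification only at the starting point. It compares two viscosity solutions $u,\tilde u$ through the a priori estimate for the two frozen scalar BSDEs,
$|u_i(t,x)-\tilde u_i(t,x)|^2\le C\,\mathbb E[\int_t^T|u(r,X^{t,x}_r)-\tilde u(r,X^{t,x}_r)|^2\,dr]$.
It then closes with a backward Gronwall inequality for a polynomially weighted supremum. This needs uniform moment bounds on $X$ but no Markov property of the frozen equation.

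You instead propagate the identification along the forward path. You reassemble the rows into a solution $(v(\cdot,X),\tilde Z)$ of the coupled Lipschitz BSDE \eqref{classical-FBSDE} and conclude by its uniqueness. This avoids weights and Gronwall, and gives the representation $v(s,X^{t,x}_s)=Y^{t,x}_s$ directly.

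The price is the Markov step, and the reference ``exactly as in Proposition~\ref{Prop-RFBSDE-Markov-flow}(ii)'' does not literally apply. That proof relies on Proposition~\ref{prop:uniform-pairwise-stability}, which assumes $(\xi,f)$: Lipschitz in $x$ and bounded at $(y,z)=(0,0)$. Both fail for $g_i$, which is merely continuous in $x$ with polynomial growth. Replace it by the classical $L^2$ stability estimate for BSDEs that are Lipschitz in $z$, using:
\begin{itemize}
\item the bound $|g_i(r,x,z)-g_i(r,x',z)|\le C(|x-x'|+|v(r,x)-v(r,x')|)$, uniform in $z$;
\item continuity of $v$;
\item uniform integrability from the polynomial growth of $v$ together with uniform higher moments of $X^{s,\zeta_n}$ (take dyadic truncations, so that $|\zeta_n|\le|\zeta|+\sqrt d$).
\end{itemize}
Continuity of $y\mapsto\tilde Y^{i,s,y}_s$ is automatic, since it equals $v_i(s,\cdot)$.
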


\begin{proof}
  Existence of a viscosity solution follows from {\cite[Theorem~2.4]{decreusefond_backward_1998}}. For uniqueness, let $u, \tilde{u}$ be two viscosity solutions of \eqref{system-PDE} and define
  \[ f_i^u (t, x, z_i) \assign f_i (t, x, u (t, x), z_i) \quad \tmop{and}
     \quad f_i^{\tilde{u}} (t, x, z_i) \assign f_i (t, x, \tilde{u} (t, x),
     z_i) . \]
 Consider the FBSDE
  \begin{eqnarray}
    X^{t, x}_s & = & x + \int_t^s b (r, X^{t, x}_r) dr + \int_t^s
    \sigma (r, X^{t, x}_r) dB_r, \nonumber\\
    \mathcal{Y}^{u, t, x}_s & = & \xi_i (X^{t, x}_T) + \int^T_s f^u_i (r,
    X^{t, x}_r, \mathcal{Z}^{u, t, x}_r) dr - \int_s^T \mathcal{Z}^{u,
    t, x}_r dB_r. \nonumber
  \end{eqnarray}
  {
  The frozen generator $f_i^u$ is continuous, uniformly Lipschitz in $z_i$,
  and independent of the scalar backward unknown. By
  \eqref{polynomial-growth}, it satisfies
  $|f_i^u(t,x,z_i)|\le C(1+|x|^\ell+|z_i|)$.
  Hence \cite[Theorem~2.4]{decreusefond_backward_1998} shows that
  $\omega(t,x):=\mathcal Y_t^{u,t,x}$ is a continuous viscosity solution
  of polynomial growth to the scalar PDE
  \[ \frac{\partial \omega}{\partial t} +\mathcal{L}_t \omega + f^u_i (t, x,
     \sigma^{\top} \nabla_x \omega) = 0, \quad \omega (T, x) = \xi_i (x) . \]
  By construction, $u_i$ solves the same scalar PDE. On each compact
  cylinder $[0,T]\times\overline B_R$, $u$ is uniformly continuous.
  Together with the estimate
  $|f_i^u(t,x,z)-f_i^u(t,x',z)|
  \le L(|x-x'|+|u(t,x)-u(t,x')|)$
  and the spatial Lipschitz bounds on $b,\sigma$, this verifies the
  structure condition for parabolic comparison in
  \cite[Theorem~8.2]{crandall_users_1992}, after reversing time.
  We may therefore compare sub- and supersolutions on
  $[0,T]\times B_R$ whenever they are ordered at time $T$ and on
  $[0,T]\times\partial B_R$.

  To prove $u_i\le\omega$, we first arrange this boundary ordering by
  adding a positive term to $\omega$. Choose an integer $m$ such
  that $2m$ exceeds the growth exponents of $u_i$ and $\omega$, and put
  $\psi(t,x)=e^{A(T-t)}(1+|x|^2)^m$.
  Since $b,\sigma$ have at most linear growth, for sufficiently large $A$,
  \[
  \partial_t\psi+\mathcal L_t\psi
  +L|\sigma^{\top}\nabla_x\psi|\le0.
  \]
  Thus $\omega+\varepsilon\psi$ remains a supersolution of the scalar
  PDE for every $\varepsilon>0$.
  Fix $\varepsilon>0$. Since $\psi$ grows faster than $u_i-\omega$,
  there exists $R_\varepsilon$ such that
  $u_i\le\omega+\varepsilon\psi$ on $[0,T]\times\partial B_R$
  for every $R\ge R_\varepsilon$.
  The same inequality holds at time $T$, since $u_i(T,\cdot)=\omega(T,\cdot)$.
  Comparison on $[0,T]\times B_R$ therefore gives
  $u_i\le\omega+\varepsilon\psi$ inside each such cylinder.
  Every fixed point $(t,x)$ belongs to one of these cylinders by taking
  $R>\max\{R_\varepsilon,|x|\}$, so the inequality holds throughout
  $[0,T]\times\mathbb R^d$.
  Now let $\varepsilon\downarrow0$ at each fixed $(t,x)$ to obtain
  $u_i\le\omega$. Interchanging $u_i$ and $\omega$ gives the reverse
  inequality, hence $u_i(t,x)=\mathcal Y_t^{u,t,x}$.
  The same argument gives
  $\tilde u_i(t,x)=\mathcal Y_t^{\tilde u,t,x}$.

  Applying \cite[Theorem~1.5]{decreusefond_backward_1998} and the
  Lipschitz continuity of $f$ in the full vector $y$, we obtain
  \[
  |u_i(t,x)-\tilde u_i(t,x)|^2
  \le C\mathbb E\!\left[
  \int_t^T |u(r,X_r^{t,x})-\tilde u(r,X_r^{t,x})|^2\,dr
  \right].
  \]
  To control the difference of the full vectors $u$ and $\tilde u$
  uniformly in space despite their possible polynomial growth, we use
  a weighted supremum. Choose an integer $m$ larger than the growth
  exponents of $u$ and $\tilde u$, and define
  \[
  D(t):=\sup_{x\in\mathbb R^d}
  \frac{|u(t,x)-\tilde u(t,x)|^2}{(1+|x|^2)^m}.
  \]
  The weight makes $D$ finite and uniformly bounded in time. By definition,
  \[
  |u(r,X_r^{t,x})-\tilde u(r,X_r^{t,x})|^2
  \le D(r)(1+|X_r^{t,x}|^2)^m.
  \]
  Summing the component estimates over $i=1,\dots,k$ and inserting
  this bound gives
  \[
  |u(t,x)-\tilde u(t,x)|^2
  \le C\int_t^T D(r)\,
  \mathbb E[(1+|X_r^{t,x}|^2)^m]\,dr.
  \]
  By \cite[Theorem~1.3.15]{pham_continuous-time_2009} with $p=2m$, the
  forward moment estimate
  $\mathbb E[(1+|X_r^{t,x}|^2)^m]\le C(1+|x|^2)^m$
  is uniform in $t\le r\le T$. Dividing by $(1+|x|^2)^m$ therefore yields
  \[
  \frac{|u(t,x)-\tilde u(t,x)|^2}{(1+|x|^2)^m}
  \le C\int_t^T D(r)\,dr.
  \]
  Since the right-hand side is independent of $x$, taking the
  supremum over $x$ gives $D(t)\le C\int_t^T D(r)\,dr$.
  Backward Gronwall's inequality yields $D(t)=0$ for all $t\in[0,T]$, hence
  $u\equiv\tilde u$.
  }
\end{proof}

For a smooth path $W^n$, consider the PDE
\begin{equation}
  \partial_t u^n_i+\mathcal{L}_t u^n_i
  +{f_i}(t,x,u^n,\sigma^{\top}\nabla_xu^n_i)
  +{h_i}(t,x,u^n)\,\dot W^n=0,
  \qquad u^n(T,x)=\xi(x),\quad 1\leq i\leq k,
  \label{semilinear-smoothPDE}
\end{equation}
and the associated FBSDE
\begin{align}
  X^{t,x}_s
  ={}&x+\int_t^s b(r,X^{t,x}_r)\,dr
      +\int_t^s \sigma(r,X^{t,x}_r)\,dB_r,\nonumber\\
  Y^{n,t,x}_s
  ={}&\xi(X^{t,x}_T)
      +\int_s^T f(r,X^{t,x}_r,Y^{n,t,x}_r,Z^{n,t,x}_r)\,dr
      \nonumber\\
      &+\int_s^T h(r,X^{t,x}_r,Y^{n,t,x}_r)\,dW^n_r
      -\int_s^T Z^{n,t,x}_r\,dB_r.
  \nonumber
\end{align}

\begin{corollary}
  Under Assumptions 
$(\hyperlink{A:b-sigma}{b,\sigma}, \hyperlink{A:xi-f}{\xi,f})$
{and either \hyperlink{A:h}{$(h)$} or
\hyperlink{A:h'}{$(h')$}}, the PDE
\eqref{semilinear-smoothPDE} admits a unique viscosity solution 
given by
\[
u^n(t,x) = Y_t^{n,t,x},
\qquad (t,x) \in [0,T] \times \mathbb R^d.
\]
In particular, for each $(t,x)$, the random variable 
$Y_t^{n,t,x}$ is almost surely deterministic.
\end{corollary}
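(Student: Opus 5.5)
The plan is to absorb the smooth driver into the generator and then apply Theorem~\ref{multidimentional-visco} directly. Since $W^n$ is smooth, $dW^n_r=\dot W^n_r\,dr$, and the Young integral in the FBSDE is an ordinary Lebesgue integral. We therefore define the modified generator
\[
  \tilde f_i(t,x,y,z_i)\assign f_i(t,x,y,z_i)+h_i(t,x,y)\,\dot W^n_t,\qquad 1\le i\le k.
\]
With this choice, the FBSDE driven by $W^n$ is exactly the classical FBSDE \eqref{classical-FBSDE} with generator $\tilde f$. Likewise, the PDE \eqref{semilinear-smoothPDE} is exactly \eqref{system-PDE} with $f$ replaced by $\tilde f$. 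The structural condition that $\tilde f_i$ depends only on the $i$-th row of $z$ is preserved, because the added term does not involve $z$.

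The main step is to check that $\tilde f$ satisfies Assumption $(\xi,f)$; the conditions on $\xi$ are unchanged. First, $\dot W^n$ is continuous on the compact interval $[0,T]$, so $K\assign\sup_t|\dot W^n_t|<\infty$.

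\emph{Continuity.} $h$ is continuous in $y$ (it is $C^2_b$ in $y$) and continuous in $t$ for fixed $(x,y)$. Under $(h)$ it is also Lipschitz in $x$, since $D_xh$ is bounded. Hence $\tilde f$ is jointly continuous. If joint continuity in $t$ needs care, I would add $(h^C)$, or note that the uniform Lipschitz bounds in $(x,y)$ together with continuity in $t$ for each fixed $(x,y)$ already give joint continuity.

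\emph{Boundedness.} $|\tilde f(t,x,0,0)|\le C_{\xi,f}+C_hK$.

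\emph{Lipschitz bounds.} $\tilde f$ is Lipschitz in $y$ with constant $C_{\xi,f}+C_hK$, using $\sup_{t,x}|h_t(x,\cdot)|_{C^2_b}\le C_h$. Under $(h)$ it is also Lipschitz in $x$, with constant $C_{\xi,f}+C_hK$ from the bound on $D_xh$. Under $(h')$ there is no $x$-dependence in $h$, so nothing further is needed.

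Thus Assumptions $(b,\sigma,\xi,\tilde f)$ hold. Theorem~\ref{multidimentional-visco} then gives that $u^n(t,x)=Y^{n,t,x}_t$ is the unique viscosity solution of \eqref{semilinear-smoothPDE}.

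It remains to identify the process $Y^{n,t,x}$ in the statement with the solution of the classical FBSDE with generator $\tilde f$. Both solve the same equation. By the uniqueness part of Proposition~\ref{Prop-RFBSDE-wellposedness}, or alternatively by classical Lipschitz BSDE uniqueness, they coincide.

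The fact that $Y^{n,t,x}_t$ is almost surely deterministic follows from Proposition~\ref{Prop-RFBSDE-Markov-flow}(i), applied with $S=T$, $c_t=t$ and $M=B$. Alternatively, it follows from the classical Blumenthal $0$–$1$ argument in the shifted filtration.

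I expect the only delicate point to be this verification of joint continuity and the Lipschitz-in-$x$ property of $h_i(t,x,y)\dot W^n_t$. The step is routine but must be carried out under each of the two alternative sets of assumptions.
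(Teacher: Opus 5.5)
Your proposal is correct and follows the paper's proof: you rewrite the PDE with the generator $f+h\,\dot W^n$, check that this generator satisfies Assumption $(\xi,f)$ (continuity, Lipschitz in $(x,y,z)$, boundedness at $(0,0)$, and row-wise dependence on $z$), and then apply Theorem~\ref{multidimentional-visco}. One caution: Propositions~\ref{Prop-RFBSDE-wellposedness} and~\ref{Prop-RFBSDE-Markov-flow} are stated only under $(b',\sigma')$ when $h$ depends on $x$, so under the corollary's hypotheses $(b,\sigma)$ with $(h)$ you should rely on your classical alternatives (Lipschitz BSDE uniqueness and the shifted-filtration argument for determinism), which is the classical framework behind Theorem~\ref{multidimentional-visco}.
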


\begin{proof}
  Observe that the PDE \eqref{semilinear-smoothPDE} can be rewritten as
  \begin{equation*}
    \frac{\partial u^n}{\partial t} +\mathcal{L}u^n + f^n(t,x,u^n,
    \sigma^{\top} \nabla_x u^n) = 0, \qquad u^n (T, x) = \xi (x),
  \end{equation*}
  where $f^n(t,x,y,z)\assign f(t,x,y,z)+h(t,x,y)\dot W^n_t$.
  {Under either assumption on $h$, the modified generator
  $f^n$ is continuous in $(t,x,y,z)$, globally Lipschitz in $(x,y,z)$,
  and bounded at $(y,z)=(0,0)$, since $\dot W^n$ is continuous and
  bounded. The added term is independent of $z$, so the row-wise
  dependence required in Assumption $(\xi,f)$ is preserved.}
  The rest follows by Theorem \ref{multidimentional-visco}.
\end{proof}

\subsection{Skorokhod Limit in the Space of Decorated Paths}\label{Skorokhod-limit}
Assume that $\lim_{n \rightarrow \infty} \alpha_q (\jmath W^n, \jmath W) = 0$
for some sequence $(W^n)_{n \in \mathbb{N}} \subset D^q([0,T];\mathbb R^e)$.
For each $n$, define
\[
  u^n(t,x)\assign Y_t^{n,t,x},
  \qquad
  u(t,x)\assign Y_t^{t,x},
\]
where $Y^{n,t,x}$ denotes the solution to the rough FBSDE
\eqref{pairwise-Marcus-FBSDE} driven by $W^n$, and $Y^{t,x}$ denotes the
solution to \eqref{rough-FBSDE} driven by $W$. By
Proposition~\ref{Prop-RFBSDE-Markov-flow}, these random variables are
deterministic. We start by showing the easy pointwise convergence of $u^n$ to
$u$.
\begin{proposition} \label{Prop-pointwise-conv}
Under Assumptions 
$(\hyperlink{A:b-sigma}{b}, \hyperlink{A:b-sigma}{\sigma}, 
\hyperlink{A:xi-f}{\xi}, \hyperlink{A:xi-f}{f},
\hyperlink{A:h'}{h'}, \hyperlink{A:hC}{h^C})$ 
or Assumptions 
$(\hyperlink{A:b-sigma'}{b'}, \hyperlink{A:b-sigma'}{\sigma'}, 
\hyperlink{A:xi-f}{\xi}, \hyperlink{A:xi-f}{f},
\hyperlink{A:h}{h}, \hyperlink{A:hC}{h^C})$, assume that $\lim_{n
  \rightarrow \infty} \alpha_q (\jmath W^n, \jmath W) = 0$ for some sequence
  $(W^n)_{n \in \mathbb{N}} \subset D^q([0,T];\mathbb R^e)$. Then
  $u^n(t,x)$ converges to $u(t,x)=Y^{t,x}_t$ pointwise for all
  $t\in\mathcal C(W)$ and $x\in\mathbb R^d$.
\end{proposition}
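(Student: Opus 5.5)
The plan is to deduce the pointwise statement from the stochastic stability result, Proposition~\ref{Prop-RFBSDE-stability}, together with the local uniform convergence of Lemma~\ref{Lemma-localuniform}. Fix $t\in\mathcal C(W)$ and $x\in\mathbb R^d$.

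\textbf{Step 1: restrict the driver convergence to $[t,T]$.} Since $W$ is continuous at $t$, the decorated path $\jmath W$ has a constant excursion at $t$. Lemma~\ref{restriction-alpha-metric} then upgrades $\alpha_{q;[0,T]}(\jmath W^n,\jmath W)\to0$ to $\alpha_{q;[t,T]}(\jmath W^n,\jmath W)\to0$. This is exactly the hypothesis needed for Proposition~\ref{Prop-RFBSDE-stability} on the interval $[t,T]$ with initial point $(t,x)$, and it yields
\[
\alpha_{p;[t,T]}(\mathbf Y^{n,t,x},\mathbf Y^{t,x})\to0
\qquad\text{in probability}.
\]

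\textbf{Step 2: pass to evaluation at the initial time $t$.} By construction of the metric $\alpha_p$ in \eqref{Skorokhod-metric}, the initial-value term is $d_E(\Phi^1(t)(0),\Phi^2(t)(0))$. Here that term equals $|Y^{n,t,x}_t-Y^{t,x}_t|$, and it is dominated by $\alpha_{p;[t,T]}$. Hence
\[
|Y^{n,t,x}_t-Y^{t,x}_t|\le \alpha_{p;[t,T]}(\mathbf Y^{n,t,x},\mathbf Y^{t,x})\to0
\qquad\text{in probability}.
\]
There is one subtlety: the definition takes the limit as $\delta\to0$ of an infimum. The initial term, however, does not depend on $\delta$ or on $\lambda$, so the bound holds as stated. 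Alternatively, one can argue through Lemma~\ref{alternative-a-metric}, using that every $\lambda\in\Lambda_{[t,T+\delta]}$ fixes the left endpoint $\tau^\delta(t)$.

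\textbf{Step 3: conclude using determinism.} By Proposition~\ref{Prop-RFBSDE-Markov-flow}(i), both $Y^{n,t,x}_t=u^n(t,x)$ and $Y^{t,x}_t=u(t,x)$ are deterministic. Convergence in probability of deterministic quantities is ordinary convergence, so $u^n(t,x)\to u(t,x)$.

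\textbf{Main obstacle.} The delicate point is Step 1. The stability result is stated on $[t,T]$, while the driver convergence is given on $[0,T]$, and restriction is only valid at continuity points. This is precisely why the statement is limited to $t\in\mathcal C(W)$. A further check is needed: the $\delta$-extension in Proposition~\ref{Prop-RFBSDE-stability} is built from a countable set $\Pi$ containing the jumps of all the $W^n$. Because of the independence result of \cite[Lemma~8.12]{chevyrev_superdiffusive_2024}, this choice is compatible with the restricted metric.
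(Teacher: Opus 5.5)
Your proposal is correct and follows essentially the same route as the paper: restrict the driver convergence to $[t,T]$ via Lemma~\ref{restriction-alpha-metric}, apply Proposition~\ref{Prop-RFBSDE-stability}, bound $|Y^{n,t,x}_t-Y^{t,x}_t|$ by the initial-value term of $\alpha_{p;[t,T]}$, and conclude via determinism from Proposition~\ref{Prop-RFBSDE-Markov-flow}. Your opening reference to Lemma~\ref{Lemma-localuniform} is superfluous, since that lemma enters only inside the proof of the restriction lemma.
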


\begin{proof}
Fix $t\in\mathcal C(W)$ and $x\in\mathbb R^d$. By
Lemma~\ref{restriction-alpha-metric}, we have
\[
\lim_{n\to\infty}\alpha_{q;[t,T]}(\jmath W^n,\jmath W)=0.
\]
Proposition~\ref{Prop-RFBSDE-stability} therefore yields
\[
\lim_{n \to \infty}\alpha_{p;[t,T]}(\mathbf Y^{n,t,x},\mathbf Y^{t,x})
= 0
\qquad\text{in probability}.
\]
{The initial-value term in the definition of
$\alpha_{p;[t,T]}$ gives}
\[
|Y_t^{n,t,x}-Y_t^{t,x}|
\leq
\alpha_{p;[t,T]}(\mathbf Y^{n,t,x},\mathbf Y^{t,x}).
\]
Since the left-hand side is
deterministic by Proposition~\ref{Prop-RFBSDE-Markov-flow}, we get for all
$t\in\mathcal C(W)$ and $x\in\mathbb R^d$ that
\begin{equation}
  \lim_{n \rightarrow \infty} | u^n (t, x) - u (t, x) | = \lim_{n \to \infty}
  | Y^{n, t, x}_t - Y^{t, x}_t | = 0. \label{eq-pointwise-limit}
\end{equation}
\end{proof}

In order to have the convergence in the Skorokhod metric on decorated paths,
we first need to consider the corresponding lifts. Set
$\mathbf{u}\assign\kappa_{-h,W}(u)$ and
$\mathbf{u}^n\assign\kappa_{-h,W^n}(u^n)$; see
Definition~\ref{Def-Marcus-lift}. The following theorem is first proved under
the additional assumption that the $\delta$-extensions of the approximating
lifts are continuous. The corollary below then shows, by smooth
approximation, that this assumption is automatic.

\begin{theorem}
  \label{theorem-Skorokhod}{Let $q\in[1,2)$ and $p>2$
  satisfy $1/p+1/q>1$, and let $W\in D^q([0,T];\mathbb R^e)$.}
  {Suppose that} either Assumptions 
$(\hyperlink{A:b-sigma}{b}, \hyperlink{A:b-sigma}{\sigma}, 
\hyperlink{A:xi-f}{\xi}, \hyperlink{A:xi-f}{f},
\hyperlink{A:h'}{h'}, \hyperlink{A:hC}{h^C})$ 
or Assumptions \linebreak
$(\hyperlink{A:b-sigma'}{b'}, \hyperlink{A:b-sigma'}{\sigma'}, 
\hyperlink{A:xi-f}{\xi}, \hyperlink{A:xi-f}{f},
\hyperlink{A:h}{h}, \hyperlink{A:hC}{h^C})$ hold.
  {Let $(W^n)_{n\in\mathbb N}\subset D^q([0,T];\mathbb R^e)$ satisfy}
  \[
    \lim_{n\to\infty}\alpha_q(\jmath W^n,\jmath W)=0.
  \]
  Suppose, in addition, that, for every $\delta>0$ and $n\in\mathbb N$, the
  $\delta$-extension $u^{\delta,n}$ of $\mathbf u^n$ is continuous. Then the
  Marcus lift $\mathbf{u}:= \kappa_{-h,W}(u) \in \mathfrak{D} ([0, T] ; C (\mathbb{R}^d;\mathbb{R}^k))$ of $u$ is the
  limit of $\mathbf{u}^n$ in the metric $\alpha_{\infty}$. Moreover, its
  $\delta$-extension $u^\delta$ is continuous and
  \begin{equation}
    u^{\delta}(t,x)=Y_t^{\delta,t,x},\qquad
    u^{\delta,n}(t,x)=Y_t^{\delta,n,t,x},
    \label{representation-delta-extension-u}
  \end{equation}
  where the processes on the right-hand side solve
  \eqref{pairwise-time-stretched-FBSDE} with drivers $W^\delta$ and
  $W^{\delta,n}$, respectively, starting from $(t,x)$ in extended time. Furthermore,
  $u(t)\in\mathrm{BUC}(\mathbb{R}^d;\mathbb{R}^k)$ for every $t\in[0,T]$.
\end{theorem}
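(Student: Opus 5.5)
The plan is to work in the time-extended picture and reduce everything to an Arzel\`a--Ascoli argument for the continuous functions $(s,x)\mapsto u^{\delta,n}(s,x)$ on $[0,T+\delta]\times\mathbb R^d$, composed with reparametrizations. Fix $\delta>0$ and a common countable set $\Pi$ containing the jump times of $W$ and of all $W^n$. First I establish the representation \eqref{representation-delta-extension-u}. By \eqref{delta-extension-Y} and Proposition~\ref{Prop-RFBSDE-Markov-flow}(ii) applied to the time-stretched FBSDE \eqref{pairwise-time-stretched-FBSDE} (whose driver $W^{\delta,n}$ is continuous), the value of $Y^{\delta,n,\tau^\delta(t),x}$ on an excursion interval $[\tau^\delta(r),\tau^\delta(r+)]$ is a deterministic function of $X_r$. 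Hence the function $(s,x)\mapsto Y^{\delta,n,s,x}_s$ coincides with the $\delta$-extension of the Marcus lift $\mathbf u^n$. The same holds for $W$. Uniform boundedness of $u^{\delta,n}$ in $(n,s,x)$ follows from \eqref{bound-XY-p2} together with \cite[Lemma~2.2(a)]{becherer_rough_2026}.

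Next I choose reparametrizations. Lemma~\ref{alternative-a-metric} applied to $\jmath W^n\to\jmath W$ gives $\lambda^{n,\delta}\in\Lambda_{[0,T+\delta]}$ with $\|W^\delta-W^{\delta,n}\circ\lambda^{n,\delta}\|_q+\|c^\delta-c^{\delta,n}\circ\lambda^{n,\delta}\|_\infty\to0$. Set $v^n(s,x):=u^{\delta,n}(\lambda^{n,\delta}(s),x)$. I then prove equicontinuity of $(v^n)$ on $[0,T+\delta]\times K$ for every compact $K$.

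Spatial equicontinuity, uniform in $n$ and $s$, comes from the pairwise stability Proposition~\ref{prop:uniform-pairwise-stability}. Suppose it failed. Then there would be sequences $s_n$, and $x_n,x_n'\in K$ with $|x_n-x_n'|\to0$, along which $|v^n(s_n,x_n)-v^n(s_n,x_n')|\ge\varepsilon$. Apply the proposition to the time-stretched equations with clocks $c^{\delta,n}\circ\lambda^{n,\delta}$, drivers $W^{\delta,n}\circ\lambda^{n,\delta}$ (these have uniformly bounded $q$-variation), $r_n=s_n$, $\zeta^1_n=x_n$ and $\zeta^2_n=x'_n$. It gives a deterministic difference tending to $0$, a contradiction. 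Temporal equicontinuity comes from the Markov representation $Y^{s,x}_{s'}=u(s',X^{s,x}_{s'})$ of Proposition~\ref{Prop-RFBSDE-Markov-flow}(ii), combined with Lemma~\ref{equicontinuity-p-var}. On short time windows, the $q$-variation of the reparametrized driver and the increment of the reparametrized clock are uniformly small. Therefore $\mathbb E[Y^{s,x}_{s'}]-Y^{s,x}_s$ is small, using the BSDE on $[s,s']$. Moreover $X^{s,x}_{s'}$ is close to $x$ in $L^2$, and the spatial modulus just obtained then gives $|v^n(s',x)-v^n(s,x)|$ small, uniformly in $n$.

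Arzel\`a--Ascoli in $C([0,T+\delta];C(\mathbb R^d;\mathbb R^k))$ with the compact-open topology then yields a subsequence $v^n\to v$ locally uniformly, with $v$ continuous. It remains to identify $v=u^\delta$. At points $s=\tau^\delta(t)$ with $t\in\mathcal C(W)$, Proposition~\ref{Prop-pointwise-conv} gives $v(s,\cdot)=u(t,\cdot)$, because $\lambda^{n,\delta}(s)$ is close to $\tau^{\delta,n}(t)$. These points are dense outside excursion intervals, and $v$ is continuous. I expect the \textbf{main obstacle} to be identification on the excursion intervals $(\tau^\delta(r),\tau^\delta(r+)]$, $r\in\Pi$. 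There I plan to use the backward flow property \eqref{flow-RBSDE} on the extended time axis. On such an interval the limiting clock $c^\delta$ is constant, so the $dc$ and $dM$ terms vanish in the limit. Passing to the limit in the flow identity $v^n(s)=u^{\delta,n}(\lambda^{n,\delta}(s);\lambda^{n,\delta}(s'),v^n(s'))$, via the double-Picard stability (as in Lemma~\ref{Lemma-excursion-Y-convergence}), shows that $v$ solves the ODE $dv=-h(r,x,v)\,dW^\delta$ on the interval. This ODE has a unique solution with terminal value $u(r+)$, so $v$ equals the Marcus trajectory, and hence $v=u^\delta$.

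Uniqueness of the limit upgrades subsequential convergence to convergence of the whole sequence. The convergence $\|u^\delta-u^{\delta,n}\circ\lambda^{n,\delta}\|_\infty\to0$ on compacts, together with the clock convergence, is exactly criterion \eqref{alternativ-alpha} with $p=\infty$, which gives $\alpha_\infty(\mathbf u^n,\mathbf u)\to0$. Continuity of $u^\delta$ was obtained along the way. Finally, $u(t)\in\mathrm{BUC}$: boundedness follows from the uniform bounds, and uniform continuity in $x$ follows from the spatial pairwise-stability argument, run without any compact restriction, since the constant in \eqref{eq:quantitative-forward-pairwise} does not depend on $x$.
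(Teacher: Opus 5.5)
Your overall architecture matches the paper's. You use the same reparametrizations from Lemma~\ref{alternative-a-metric}, spatial equicontinuity by contradiction via Proposition~\ref{prop:uniform-pairwise-stability}, and temporal equicontinuity via the Markov representation together with Lemma~\ref{equicontinuity-p-var}. The identification also follows the paper: pointwise convergence and density at physical times, and on excursions a restart at $\tau^\delta(t+)$ followed by the double Picard scheme of Lemma~\ref{Lemma-excursion-Y-convergence}.

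\textbf{The gap is in your first step.} It sits exactly where the paper uses the extra hypothesis that $u^{\delta,n}$ is continuous, and your proposal never invokes that hypothesis.
\begin{itemize}
\item \emph{What your two tools give.} Proposition~\ref{Prop-RFBSDE-Markov-flow} on the extended axis and \eqref{delta-extension-Y} only show the following. On the excursion interval at a jump time $t$, the function $s\mapsto Y^{\delta,n,s,x}_s$ is the Marcus flow started at the right end from $Y^{n,t,x}_{t+}=Y^{\delta,n,\tau^\delta(t+),x}_{\tau^\delta(t+)}$.
\item \emph{What the Marcus lift needs.} The lift $\kappa_{-h,W^n}(u^n)$ uses instead the time right limit $u^n(t+,x)=\lim_{r\downarrow t}u^n(r,x)$. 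All you know is $Y^{n,t,x}_r=u^n(r,X^{t,x}_r)$ for $r>t$. Equating the two therefore requires joint regularity of $u^n$ in $(r,x)$ as $r\downarrow t$, and your ``Hence'' skips this. The paper obtains it from continuity of $u^{\delta,n}$ together with $X^{t,x}_r\to x$.
\item \emph{The limit driver.} Your claim that ``the same holds for $W$'' is unjustified at that stage. No continuity is available for the limit. The paper gets $u^\delta(t,x)=Y^{\delta,t,x}_t$ only at the end, from \eqref{excursion-identification}, once $\bar u^\delta=u^\delta$ is known.
\item \emph{Knock-on effect.} Your $v^n:=u^{\delta,n}\circ\lambda^{n,\delta}$ is estimated through the FBSDE representation, so the equicontinuity step presupposes this unproved identification.
\end{itemize}

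\textbf{Two ways to close the gap.} The first is to invoke the continuity hypothesis exactly as the paper does. The second is to reorder the argument:
\begin{enumerate}
\item Define $\tilde u^{\delta,n}(s,x):=Y^{\delta,n,s,x}_s$ directly.
\item Your spatial and temporal moduli use only the extended-axis Markov property, pairwise stability, and continuity of $c^\delta$ and of the $q$-variation control of $W^{\delta,n}$. They never use the Marcus-lift identification, so run for fixed $n$ they prove continuity of $\tilde u^{\delta,n}$. The same holds for $\tilde u^\delta$, since $W^\delta$ is continuous too.
\item Right-continuity at $\tau^\delta(t+)$, applied along $\tau^\delta(r)\downarrow\tau^\delta(t+)$, then gives $Y^{n,t,x}_{t+}=u^n(t+,x)$ and hence the identification with the $\delta$-extension of the Marcus lift.
\end{enumerate}
Done this way, the extra continuity hypothesis becomes redundant, and you would obtain Corollary~\ref{Cor-Skorokhod-general-drivers} without the smooth-approximation detour.

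\textbf{A minor point.} Your spatial modulus is stated only for $x,x'\in K$, but the temporal estimate applies it at $X^{s,x}_{s'}$, which may leave $K$. Use the global modulus instead; your pairwise-stability argument gives it, since the constants there do not depend on the initial points. Alternatively, use a compact neighbourhood of $K$.
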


\begin{proof}
 We first verify the second identity in
 \eqref{representation-delta-extension-u}. The Markov representation gives,
 for $s>t$,
 \[
   Y_s^{n,t,x}=u^n(s,X_s^{t,x}).
 \]
 Since $X_s^{t,x}\to x$ as $s\searrow t$, continuity of $u^{\delta,n}$
 implies, in probability,
 \[
   Y_{t+}^{n,t,x}=u^n(t+,x).
 \]
 {Now consider an inserted interval
 $[\tau^\delta(t),\tau^\delta(t+)]$ and let $s$ belong to this interval.}
 By the definition of the
 Marcus lift,
 \[
   u^{\delta,n}(s,x)
   =\varphi\left(
     {-h(t,x,\cdot)\Delta W_t^n},u^n(t+,x),
     \frac{s-\tau^\delta(t)}{\tau^\delta(t+)-\tau^\delta(t)}
   \right).
 \]
 while \eqref{delta-extension-Y}, applied to the FBSDE started from $(t,x)$,
 gives the same expression with $Y_{t+}^{n,t,x}$ in place of $u^n(t+,x)$.
 Hence
 \[
   u^{\delta,n}(s,x)=Y_s^{\delta,n,\tau^\delta(t),x}.
 \]
 Since the time-changed forward process is constant on the inserted interval,
 the backward-flow property for the time-stretched FBSDE yields
 $Y_s^{\delta,n,\tau^\delta(t),x}=Y_s^{\delta,n,s,x}$.
 Thus $u^{\delta,n}(s,x)=Y_s^{\delta,n,s,x}$ on all of $[0,T+\delta]$.

 Fix a countable set $\Pi \subset [0,T]$ containing all non-stationary points of $W, W^1, W^2, \ldots$.
 {By Lemma~\ref{alternative-a-metric}, for every $\delta>0$
 there exists a sequence $(\lambda^n_\delta)_{n\in\mathbb N}
 \subset\Lambda_{[0,T+\delta]}$ such that}
  \begin{align}
      \lim_{n \rightarrow \infty}
      {|W_0-W_0^n|}
      +\| W^{\delta} - W^{n, \delta} \circ
  \lambda^n_{\delta} \|_q + \| c^{\delta, \Pi} - c^{\delta, \Pi} \circ
  \lambda^n_{\delta} \|_{\infty} = 0. \label{property-lambda-n}
  \end{align}
  Due to the isomorphism $C ([0, T +
  \delta] \times \mathbb{R}^d) \simeq C ([0, T + \delta] ; C (\mathbb{R}^d))$
  under the compact-open topology, our aim is to prove
  \[
    \lim_{n \rightarrow \infty}
    \sup_{(t,x)\in[0,T+\delta]\times K}
    \left|u^{\delta}(t,x)
    -u^{n,\delta}(\lambda^n_{\delta}(t),x)\right|=0
  \]
  for every compact set $K \subset \mathbb{R}^d$ under Assumptions
  $(W,b,\sigma,\xi,f,h')$. Fix an arbitrary compact set
  $K\subset\mathbb R^d$. We first obtain a continuous subsequential limit
  $\bar u^\delta$ on $[0,T+\delta]\times K$ and then identify it with
  $u^\delta$ there; in particular, the continuity of $u^\delta$ is a
  conclusion of the argument.
We proceed as follows.  \\
In \textit{part~1)}, we apply the Arzelà--Ascoli theorem to extract a
subsequence converging in $C([0,T+\delta]\times K)$ to a limit denoted
by $\bar u^\delta$.\\
In \textit{part~2)}, we prove that $\bar u^\delta=u^\delta$ on
$[0,T+\delta]\times K$ and use uniqueness of the cluster point to obtain
convergence of the full sequence on this set. \\
In \textit{part~3)}, we show that $u(t) \in \mathrm{BUC} (\mathbb{R}^d)$ for all $t \in [0,T]$. \\

\noindent \textbf{Part 1:}  \\
We first record the uniform bound that will also be used in the
equicontinuity argument. By
{\cite[Theorem~3.2]{becherer_rough_2026}}, together with
Proposition~\ref{Prop-RFBSDE-wellposedness}, there exists a constant
$C_\delta>0$ such that
\begin{equation}
\sup_{n\in\mathbb N}
\sup_{a\in[0,T+\delta]}
\sup_{x\in\mathbb R^d}
\left(
\|Y^{n,\delta,a,x}\|_{p,2;[a,T+\delta]}
+
\|Z^{n,\delta,a,x}\|_{\mathrm{BMO};[a,T+\delta]}
\right)
\leq C_\delta.
\label{uniform-YZ-bound}
\end{equation}
Indeed, under Assumption $(h')$, the Young coefficient is independent
of the forward process, whereas under Assumptions $(b',\sigma',h)$ the
required uniform forward estimate follows from
Proposition~\ref{Prop-random-initial-SDE-estimates}.
Moreover, for every $(t,x)\in[0,T+\delta]\times\mathbb R^d$, the
stochastic representation gives
\[
\left|u^{n,\delta}(\lambda_\delta^n(t),x)\right|
=
\left|
Y_{\lambda_\delta^n(t)}^{
n,\delta,\lambda_\delta^n(t),x}
\right|
\leq
\left\|
Y^{n,\delta,\lambda_\delta^n(t),x}
\right\|_{p,2;[\lambda_\delta^n(t),T+\delta]}
+\|\xi\|_\infty.
\]
Hence the preceding estimate implies
\begin{equation}
M_\delta
:=
\sup_{n\in\mathbb N}
\sup_{(t,x)\in[0,T+\delta]\times\mathbb R^d}
|u^{n,\delta}(\lambda_\delta^n(t),x)|
<\infty.
\label{uniform-bound-vn}
\end{equation}
To show equicontinuity of
$\bigl(u^{n,\delta}(\cdot,x)\circ\lambda^n_\delta\bigr)_{n\in\mathbb N}$,
we first prove spatial equicontinuity, uniformly in time. For $\rho>0$,
define
\[
\omega^x(\rho)
:=
\sup_{n\in\mathbb N}
\sup_{t\in[0,T+\delta]}
\sup_{\substack{x,y\in\mathbb R^d\\|x-y|\leq\rho}}
\left|
u^{n,\delta}(\lambda_\delta^n(t),x)
-
u^{n,\delta}(\lambda_\delta^n(t),y)
\right|.
\]
We claim that
\begin{equation}
\lim_{\rho\downarrow0}\omega^x(\rho)=0.
\label{spatial-modulus-vn}
\end{equation}
Suppose otherwise. Then there exist $\varepsilon>0$ and sequences
$n_k\in\mathbb N$, $t_k\in[0,T+\delta]$, and
$x_k,y_k\in\mathbb R^d$ such that
$|x_k-y_k|\to0$, but
\[
\left|
u^{n_k,\delta}(\lambda_\delta^{n_k}(t_k),x_k)
-
u^{n_k,\delta}(\lambda_\delta^{n_k}(t_k),y_k)
\right|
\geq\varepsilon.
\]
By the stochastic representation and
Proposition~\ref{prop:uniform-pairwise-stability}, however,
\[
\begin{aligned}
\left|
u^{n_k,\delta}(\lambda_\delta^{n_k}(t_k),x_k)
-
u^{n_k,\delta}(\lambda_\delta^{n_k}(t_k),y_k)
\right| =
\left|
Y_{\lambda_\delta^{n_k}(t_k)}^{
n_k,\delta,\lambda_\delta^{n_k}(t_k),x_k}
-
Y_{\lambda_\delta^{n_k}(t_k)}^{
n_k,\delta,\lambda_\delta^{n_k}(t_k),y_k}
\right|
\longrightarrow0,
\end{aligned}
\]
which is a contradiction and proves \eqref{spatial-modulus-vn}.

We now prove time equicontinuity (uniformly in space), that is,
\begin{equation}
\lim_{\eta\downarrow0}
\sup_{n\in\mathbb N}
\sup_{x\in K}
\sup_{\substack{s,t\in[0,T+\delta]\\|t-s|\leq\eta}}
\left|
u^{n,\delta}(\lambda_\delta^n(t),x)
-u^{n,\delta}(\lambda_\delta^n(s),x)
\right|
=0.
\label{time-modulus-vn}
\end{equation}
Fix $0\leq s\leq t\leq T+\delta$ and $x\in K$.
Using the Markovian representation in
Proposition~\ref{Prop-RFBSDE-Markov-flow} at times
$\lambda_\delta^n(s)$ and $\lambda_\delta^n(t)$, we obtain
\begin{align*}
&|u^{n,\delta}(\lambda_\delta^n(t),x)-u^{n,\delta}(\lambda_\delta^n(s),x)| \\
&\quad\leq
\left|
\mathbb E\left[
u^{n,\delta}(\lambda_\delta^n(t),x)
-
u^{n,\delta}\bigl(\lambda_\delta^n(t),
X_{\lambda_\delta^n(t)}^{\delta,\lambda_\delta^n(s),x}\bigr)
\right]
\right|\\
&\qquad+
\left|
\mathbb E\left[
Y_{\lambda_\delta^n(t)}^{n,\delta,\lambda_\delta^n(s),x}
-Y_{\lambda_\delta^n(s)}^{n,\delta,\lambda_\delta^n(s),x}
\right]
\right|\quad
=:\mathrm I+\mathrm{II}.
\end{align*}
For term~$\mathrm I$, Proposition~\ref{Prop-random-initial-SDE-estimates} yields
\[
\mathbb E\left[
\left|
X_{\lambda_\delta^n(t)}^{\delta,\lambda_\delta^n(s),x}-x
\right|^2
\right]
\leq
C_K\bigl(
c^\delta(\lambda_\delta^n(t))
-c^\delta(\lambda_\delta^n(s))
\bigr),
\]
uniformly in $n$, $s,t$, and $x\in K$. Under Assumptions
$(b',\sigma')$, the constant may be chosen independently of $K$. For every $\rho\in(0,1]$, splitting according to
$\{|X_{\lambda_\delta^n(t)}^{\delta,\lambda_\delta^n(s),x}-x|\leq\rho\}$ and using
\eqref{uniform-bound-vn}, \eqref{spatial-modulus-vn}, and Markov's
inequality, we obtain
\begin{align}
\mathrm I
&\leq
\omega^x(\rho)
+
2M_\delta
\mathbb P\left(
\left|X_{\lambda_\delta^n(t)}^{\delta,\lambda_\delta^n(s),x}-x\right|>\rho
\right)
\nonumber\\
&\leq
\omega^x(\rho)
+
\frac{2M_\delta C_K}{\rho^2}
\bigl(
c^\delta(\lambda_\delta^n(t))
-c^\delta(\lambda_\delta^n(s))
\bigr).
\label{term-I-estimate}
\end{align}
For term~$\mathrm{II}$, from the BSDE integral equation we have
\begin{align*}
\mathrm{II}
\leq{} &
\mathbb E\left[
\int_{\lambda_\delta^n(s)}^{\lambda_\delta^n(t)}
\left|f^\delta\left(
r,X_r^{\delta,\lambda_\delta^n(s),x},
Y_r^{n,\delta,\lambda_\delta^n(s),x},Z_r^{n,\delta,\lambda_\delta^n(s),x}
\right)\right|\,dc_r^\delta
\right] \\ &+
\mathbb E\left[
\left|
\int_{\lambda_\delta^n(s)}^{\lambda_\delta^n(t)}
\widehat h^\delta\left(
r,X_r^{\delta,\lambda_\delta^n(s),x},Y_r^{n,\delta,\lambda_\delta^n(s),x}
\right)\,dW_r^{n,\delta}
\right|
\right].
\end{align*}
Since $f^\delta(\cdot,\cdot,0,0)$ is bounded and $f^\delta$ is
uniformly Lipschitz in $(y,z)$,
\[
\begin{aligned}
&\left|f^\delta\left(
r,X_r^{\delta,\lambda_\delta^n(s),x},
Y_r^{n,\delta,\lambda_\delta^n(s),x},Z_r^{n,\delta,\lambda_\delta^n(s),x}
\right)\right|
\\
&\quad\leq
C\left(1+
\left|Y_r^{n,\delta,\lambda_\delta^n(s),x}\right|
+\left|Z_r^{n,\delta,\lambda_\delta^n(s),x}\right|
\right).
\end{aligned}
\]
The uniform bounds in \eqref{uniform-YZ-bound} give
\begin{align}
&\mathbb E\left[
\int_{\lambda_\delta^n(s)}^{\lambda_\delta^n(t)}
\left|f^\delta\left(
r,X_r^{\delta,\lambda_\delta^n(s),x},
Y_r^{n,\delta,\lambda_\delta^n(s),x},Z_r^{n,\delta,\lambda_\delta^n(s),x}
\right)\right|\,dc_r^\delta
\right]
\nonumber\\
&\quad\leq
C\left(
c^\delta(\lambda_\delta^n(t))
-c^\delta(\lambda_\delta^n(s))
\right)\nonumber\\
&\qquad+
C\left(
c^\delta(\lambda_\delta^n(t))
-c^\delta(\lambda_\delta^n(s))
\right)^{1/2}
\left(
\mathbb E\left[
\int_{\lambda_\delta^n(s)}^{\lambda_\delta^n(t)}
\left|Z_r^{n,\delta,\lambda_\delta^n(s),x}\right|^2\,dc_r^\delta
\right]
\right)^{1/2}
\nonumber\\
&\quad\leq
C\left(
c^\delta(\lambda_\delta^n(t))
-c^\delta(\lambda_\delta^n(s))
+
\left(
c^\delta(\lambda_\delta^n(t))
-c^\delta(\lambda_\delta^n(s))
\right)^{1/2}
\right).
\label{f-time-estimate}
\end{align}
For the Young integral, put
\[
H_r^{n,s,x}
:=
\widehat h^\delta\left(
r,X_r^{\delta,\lambda_\delta^n(s),x},Y_r^{n,\delta,\lambda_\delta^n(s),x}
\right).
\]
The Young estimate {\cite[Proposition~A.1]{becherer_rough_2026}} gives
\[
\begin{aligned}
&\left|
\int_{\lambda_\delta^n(s)}^{\lambda_\delta^n(t)}H_r^{n,s,x}\,dW_r^{n,\delta}
\right|
\\
&\quad\leq
C_{p,q}
\left(
|H_{\lambda_\delta^n(s)}^{n,s,x}|
+
\|H^{n,s,x}\|_{p\text{-var};[\lambda_\delta^n(s),\lambda_\delta^n(t)]}
\right)
\|W^{n,\delta}\|_{q\text{-var};[\lambda_\delta^n(s),\lambda_\delta^n(t)]}.
\end{aligned}
\]
The uniform bounds in \eqref{uniform-YZ-bound}, together
with Assumption $(h')$ in the state-independent case and
Lemma~\ref{Lemma-hX-Assumption-Ad} and
Proposition~\ref{Prop-random-initial-SDE-estimates} under Assumptions
$(b',\sigma',h)$, imply
\[
\sup_{n,s,x}
\left(
\mathbb E\left[
|H_{\lambda_\delta^n(s)}^{n,s,x}|^2
\right]
+
\mathbb E\left[
\|H^{n,s,x}\|_{p\text{-var};[\lambda_\delta^n(s),T+\delta]}^2
\right]
\right)^{1/2}
<\infty.
\]
Taking expectations in the Young estimate therefore gives
\begin{equation}
\mathbb E\left[
\left|
\int_{\lambda_\delta^n(s)}^{\lambda_\delta^n(t)}H_r^{n,s,x}\,dW_r^{n,\delta}
\right|
\right]
\leq
C
\|W^{n,\delta}\|_{q\text{-var};[\lambda_\delta^n(s),\lambda_\delta^n(t)]}.
\label{h-time-estimate}
\end{equation}
Combining \eqref{f-time-estimate} and \eqref{h-time-estimate}, we obtain
\[
\mathrm{II}
\leq
C\left(
c^\delta(\lambda_\delta^n(t))
-c^\delta(\lambda_\delta^n(s))
+
\left(
c^\delta(\lambda_\delta^n(t))
-c^\delta(\lambda_\delta^n(s))
\right)^{1/2}
+
\|W^{n,\delta}\|_{q\text{-var};[\lambda_\delta^n(s),\lambda_\delta^n(t)]}
\right).
\]
Together with \eqref{term-I-estimate}, this yields, for every
$\eta>0$ and $\rho\in(0,1]$,
\begin{align}
&\sup_{n\in\mathbb N}
\sup_{x\in K}
\sup_{\substack{0\leq s\leq t\leq T+\delta\\t-s\leq\eta}}
|u^{n,\delta}(\lambda_\delta^n(t),x)-u^{n,\delta}(\lambda_\delta^n(s),x)|
\nonumber\\
&\quad\leq
\omega^x(\rho)
+
\frac{2M_\delta C_K}{\rho^2}\alpha_\delta(\eta)
+
C\left(
\alpha_\delta(\eta)
+
\alpha_\delta(\eta)^{1/2}
+
\beta_\delta(\eta)
\right), \label{final-time-equicontinuity-estimate}
\end{align}
where we used the notation
\begin{align*}
\alpha_\delta(\eta)
&:=
\sup_{n\in\mathbb N}
\sup_{\substack{0\leq s\leq t\leq T+\delta\\t-s\leq\eta}}
\left[
c^\delta\bigl(\lambda_\delta^n(t)\bigr)
-
c^\delta\bigl(\lambda_\delta^n(s)\bigr)
\right],\\
\beta_\delta(\eta)
&:=
\sup_{n\in\mathbb N}
\sup_{\substack{0\leq s\leq t\leq T+\delta\\t-s\leq\eta}}
\|W^{n,\delta}\|_{q\text{-var};
[\lambda_\delta^n(s),\lambda_\delta^n(t)]}.
\end{align*}
Lemma~\ref{equicontinuity-p-var} gives the required convergence
uniformly for all sufficiently large $n$. For each of the finitely many
remaining indices, the continuity of $c^\delta\circ\lambda_\delta^n$
and the continuity of the $q$-variation control of
$W^{n,\delta}\circ\lambda_\delta^n$ give the same conclusion.
Consequently,
\begin{equation}
\alpha_\delta(\eta)+\beta_\delta(\eta)
\longrightarrow0
\qquad\text{as }\eta\downarrow0.
\label{clock-driver-moduli}
\end{equation}
Letting first $\eta\downarrow0$ and then $\rho\downarrow0$ in \eqref{final-time-equicontinuity-estimate}, using
\eqref{spatial-modulus-vn} and \eqref{clock-driver-moduli}, proves
\eqref{time-modulus-vn}.

Finally, the following elementary inequality
\begin{align*}
|u^{n,\delta}(\lambda_\delta^n(t),x)-u^{n,\delta}(\lambda_\delta^n(s),y)|
\leq{}&
|u^{n,\delta}(\lambda_\delta^n(t),x)-u^{n,\delta}(\lambda_\delta^n(t),y)|\\
&+
|u^{n,\delta}(\lambda_\delta^n(t),y)-u^{n,\delta}(\lambda_\delta^n(s),y)|,
\end{align*}
together with the spatial and time equicontinuity, implies joint equicontinuity on
$[0,T+\delta]\times K$. Together with the uniform bound in
\eqref{uniform-bound-vn}, the general Arzelà--Ascoli theorem
\cite[p.~236]{kelley_general_1975} shows that
$\bigl(u^{n,\delta}(\lambda_\delta^n(\cdot),\cdot)\bigr)_n$ is relatively compact in
$C([0,T+\delta]\times K)$. Consequently, there exists a function $\bar u^\delta
\in
C([0,T+\delta]\times K)$
such that (possibly after passing to a subsequence)
\begin{equation}
\lim_{k\to\infty}
\sup_{(t,x)\in[0,T+\delta]\times K}
\left|
\bar u^\delta(t,x)
-
u^{n,\delta}
\bigl(\lambda_\delta^{n}(t),x\bigr)
\right|
=0.
\label{arzela-ascoli-result}
\end{equation}

  \noindent \textbf{Part 2:} 
  We show that $\bar u^\delta=u^\delta$ on
  $[0,T+\delta]\times K$. \\
  Define $\bar{u} \in D ([0, T] ; C(K))$ by $\bar{u} (t, x)
  \assign \bar{u}^{\delta} (\tau^{\delta} (t), x)$. By
  Lemma~\ref{Lemma-localuniform} and \eqref{arzela-ascoli-result}, for every
  $t\in\mathcal C(W)\subset\mathcal C(\bar u)$, it holds that
  \[
  \sup_{x\in K}|u^n(t,x)-\bar u(t,x)|\longrightarrow0.
  \]
  Combining this with \eqref{eq-pointwise-limit}, we derive that $\bar{u} (t,
  x) = u (t, x) = Y^{t, x}_t$ for every $t \in \mathcal{C} (W)$ and
  $x\in K$. For a c{\`a}gl{\`a}d path, it is known that for any $t \in [0, T]$, there exists a sequence $(t^+_m)_{m \in
  \mathbb{N}} \subset \mathcal{C} (W)$ such that $t^+_m \searrow t$. Hence,
  \begin{equation*}
    \bar{u} (t +, x) = \lim_{m \rightarrow \infty} \bar{u} (t^+_m, x) =
    \lim_{m \rightarrow \infty} u (t^+_m, x) = u (t +, x) .
  \end{equation*}
  Similarly, one can find a sequence $(t^-_m)_{m \in \mathbb{N}} \subset
  \mathcal{C} (W)$ such that $t^-_m \nearrow t$, which yields
  \[ \bar{u} (t, x) = \lim_{m \rightarrow \infty} \bar{u} (t^-_m, x) = \lim_{m
     \rightarrow \infty} u (t^-_m, x) = u (t, x) \]
  for any $t \in [0,T]$ and $x\in K$. It therefore remains to prove equality on every
  inserted interval $[\tau^\delta(t),\tau^\delta(t+)]$. \\
  Fix \(t\notin\mathcal C(W)\),
  \(\bar t\in[\tau^\delta(t),\tau^\delta(t+)]\), and
  \(x\in K\). By \eqref{arzela-ascoli-result}, we have
  \begin{equation}
  u^{n,\delta}\bigl(\lambda^{n,\delta}(\bar t);T,\xi\bigr)(x)
  \longrightarrow \bar u^\delta(\bar t;T,\xi)(x).
  \label{excursion-arzela-convergence}
  \end{equation}
  At the right endpoint of the excursion interval, the same convergence
  and the identity
  \(\bar u^\delta(\tau^\delta(t+),x)=u(t+,x)\) give
  \begin{equation}
  u^{n,\delta}\bigl(
  \lambda^{n,\delta}(\tau^\delta(t+));T,\xi
  \bigr)(x)
  \longrightarrow u(t+,x),
  \label{un-convergence-tplus}
  \end{equation}
  uniformly for \(x\in K\). Moreover,
  Proposition~\ref{Prop-random-initial-SDE-estimates} gives
  \[
  \mathbb E\left[
  \left|
  X_{\lambda^{n,\delta}(\tau^\delta(t+))}^{
  \delta,\lambda^{n,\delta}(\bar t),x}-x
  \right|^2
  \right]^{1/2}
  \leq
  C(1+|x|)
  \left(
  c^\delta(\lambda^{n,\delta}(\tau^\delta(t+)))
  -c^\delta(\lambda^{n,\delta}(\bar t))
  \right)^{1/2}
  \xrightarrow{n\to\infty}0.
  \]
  Set
  \[
  r_n:=\lambda^{n,\delta}(\tau^\delta(t+)),\qquad
  \zeta_n:=
  X_{r_n}^{\delta,\lambda^{n,\delta}(\bar t),x}.
  \]
  Apply Proposition~\ref{prop:uniform-pairwise-stability} on
  $[r_n,T+\delta]$ to the two time-stretched FBSDEs starting from
  $\zeta_n$ and $x$. The preceding estimate gives
  $\mathbb E\left[|\zeta_n-x|^2\right]\to0$, and hence
  \[
  \left|
  u^{n,\delta}(r_n;T,\xi)(\zeta_n)
  -u^{n,\delta}(r_n;T,\xi)(x)
  \right|\longrightarrow0
  \qquad\text{in probability}.
  \]
  The two terms are uniformly bounded, so this convergence also holds in
  $L^2$. Combining it with \eqref{un-convergence-tplus} yields
  \begin{equation}
  \mathbb E\left[\left|
  u^{n,\delta}(r_n;T,\xi)(\zeta_n)-u(t+,x)
  \right|^2\right]^{1/2}
  \xrightarrow{n\to\infty}0.
  \label{excursion-terminal-L2-convergence}
  \end{equation}
  We postpone the remaining technical step
  \begin{equation*}
  \left\|
  Y_{\lambda^{n,\delta}(\cdot)}^{
  n,\delta,\lambda^{n,\delta}(\bar t),x}
  -Y_\cdot^{\delta,\bar t,x}
  \right\|_{p;[\bar t,\tau^\delta(t+)]}
  \xrightarrow{n\to\infty}0
  \qquad\text{in probability}
  \end{equation*}
  to Lemma~\ref{Lemma-excursion-Y-convergence}. Combining this result
  with \eqref{excursion-terminal-L2-convergence}, we obtain
  \begin{align*}
  &\left|
  Y_{\lambda^{n,\delta}(\bar t)}^{
  n,\delta,\lambda^{n,\delta}(\bar t),x}
  -Y_{\bar t}^{\delta,\bar t,x}
  \right|\\
  &\quad\leq
  \left|
  Y_{\lambda^{n,\delta}(\tau^\delta(t+))}^{
  n,\delta,\lambda^{n,\delta}(\bar t),x}
  -Y_{\tau^\delta(t+)}^{\delta,\bar t,x}
  \right|
  +
  \left\|
  Y_{\lambda^{n,\delta}(\cdot)}^{
  n,\delta,\lambda^{n,\delta}(\bar t),x}
  -Y_\cdot^{\delta,\bar t,x}
  \right\|_{p;[\bar t,\tau^\delta(t+)]}
  \xrightarrow{n\to\infty}0
  \end{align*}
  in probability. Since
  \[
  Y_{\lambda^{n,\delta}(\bar t)}^{
  n,\delta,\lambda^{n,\delta}(\bar t),x}
  =
  u^{n,\delta}\bigl(\lambda^{n,\delta}(\bar t);T,\xi\bigr)(x),
  \]
  comparison with \eqref{excursion-arzela-convergence} gives
  \begin{equation}
  \bar u^\delta(\bar t;T,\xi)(x)
  =
  Y_{\bar t}^{\delta,\bar t,x}.
  \label{excursion-identification}
  \end{equation}
  Since \(\bar t\) was arbitrary, \eqref{excursion-identification} holds
  throughout \([\tau^\delta(t),\tau^\delta(t+)]\). On this interval,
  \(c^\delta\) and \(B^\delta\) are constant and the forward process
  remains equal to \(x\). By the backward flow property, the values on
  the right-hand side of \eqref{excursion-identification} form the
  unique solution \(V\) of
  \[
  V_s
  =
  u(t+,x)
  +
  \int_s^{\tau^\delta(t+)}
  h^\delta(r,x,V_r)\,dW_r^\delta,
  \qquad
  s\in[\tau^\delta(t),\tau^\delta(t+)].
  \]
  Since \(W^\delta\) is linear and
  \(h^\delta(r,\cdot,\cdot)=h(t,\cdot,\cdot)\) on the inserted interval,
  \(V\) is precisely the Marcus excursion associated with the jump at
  \(t\). Consequently,
  \[
  \bar u^\delta(s,x)=u^\delta(s,x),
  \qquad s\in[\tau^\delta(t),\tau^\delta(t+)].
  \]
  Together with \(\bar u(t,x)=u(t,x)\) for every physical time \(t\), this
  proves
  \[
  \bar u^\delta(s,x)=u^\delta(s,x),
  \qquad (s,x)\in[0,T+\delta]\times K.
  \]
  Since the preceding compactness and identification argument applies to
  every subsequence, every subsequence of
  $\bigl(u^{n,\delta}(\lambda_\delta^n(\cdot),\cdot)\bigr)_n$ has a further
  subsequence converging uniformly on $[0,T+\delta]\times K$ to
  $u^\delta$. Consequently,
  \[
  \lim_{n\to\infty}
  \sup_{(s,x)\in[0,T+\delta]\times K}
  \left|
  u^{n,\delta}(\lambda_\delta^n(s),x)-u^\delta(s,x)
  \right|
  =0.
  \]
  Since \(K\) was arbitrary, this proves the desired locally uniform
  convergence and the continuity of \(u^\delta\). Together with
  \eqref{property-lambda-n} and the definition of $\alpha_\infty$, it also
  proves that $\mathbf u^n\to\mathbf u$ in $\alpha_\infty$. Moreover,
  \(\eqref{excursion-identification}\), together with the
  identity $u(t,x)=Y_t^{t,x}$ at physical times, gives the first identity in
  \eqref{representation-delta-extension-u}.

  \noindent \textbf{Part 3:} By Proposition~\ref{Prop-RFBSDE-wellposedness}, we obtain
\[
\sup_{x} |u(t,x)|
= \sup_{x} |Y^{t,x}_t|
\le \sup_{x} |\xi(X^{t,x}_T)|
   + \sup_{x} \|Y^{t,x}\|_{p,2}
< \infty.
\]
To prove uniform continuity, fix $t\in[0,T]$ and let
$x_n,y_n\in\mathbb R^d$ satisfy $|x_n-y_n|\to0$. Apply
Proposition~\ref{prop:uniform-pairwise-stability} with the constant driver
$W$, the fixed initial time $t$, and the deterministic initial states
$x_n,y_n$. It follows that
\[
|u(t,x_n)-u(t,y_n)|\longrightarrow0
\qquad\text{in probability}.
\]
The quantity on the left is deterministic, and hence converges to zero.
The sequential characterization of uniform continuity therefore shows that
$u(t)\in\mathrm{BUC}(\mathbb R^d;\mathbb R^k)$.
\end{proof}

\begin{corollary}
  \label{Cor-Skorokhod-general-drivers}The continuity assumption on the
  extensions $u^{\delta,n}$ in Theorem~\ref{theorem-Skorokhod} is automatic.
  Consequently, the conclusion of that theorem holds for every sequence
  $(W^n)_{n\in\mathbb N}\subset D^q([0,T];\mathbb R^e)$ satisfying
  \[
    \alpha_q(\jmath W^n,\jmath W)\longrightarrow0.
  \]
\end{corollary}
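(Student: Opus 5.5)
We will show that for each fixed $n$ and each $\delta>0$ the $\delta$-extension $u^{\delta,n}$ of $\mathbf u^n=\kappa_{-h,W^n}(u^n)$ is continuous. The idea is to realise $W^n$ itself as a limit of smooth drivers and apply Theorem~\ref{theorem-Skorokhod} to that auxiliary sequence. Continuity of the extensions is trivially true for smooth drivers, so the theorem is available there. Once $u^{\delta,n}$ is known to be continuous for every $n$, the theorem applies verbatim to the original sequence $(W^n)$, which gives the corollary.

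First fix $n$ and choose an approximating sequence $(W^{n,m})_{m\in\mathbb N}$ of continuous, piecewise linear (or smooth) paths with $\alpha_q(\jmath W^{n,m},\jmath W^n)\to0$ as $m\to\infty$. If $W^n\in D^{0,q}$, this is the definition of the space. For a general $W^n\in D^q$, we would instead use the time-extension picture. The path $W^{n,\delta}=(\jmath W^n)^\delta$ is continuous with finite $q$-variation. Composing it with the piecewise linear reparametrizations $\rho_m$ from the proof of Lemma~\ref{alternative-a-metric}, which squeeze $[0,T+\delta]$ onto $[0,T]$, gives continuous paths $W^{n,m}$ of uniformly bounded $q$-variation. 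These converge to $\jmath W^n$ in $\alpha_q$ by the special case \eqref{alternativ-alpha-special}.

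Since $W^{n,m}$ is continuous, it has no jumps. The Marcus lift $\kappa_{-h,W^{n,m}}(u^{n,m})$ therefore inserts only constant excursions. Its $\delta$-extension is continuous as soon as $u^{n,m}$ itself is continuous in $t$ with values in $C(\mathbb R^d;\mathbb R^k)$. For a smooth driver this follows from Theorem~\ref{multidimentional-visco} and the corollary after it, since $u^{n,m}$ is then a continuous viscosity solution. For merely continuous drivers of finite $q$-variation, we would redo the time-equicontinuity estimate \eqref{time-modulus-vn} for a single fixed driver. There, terms I and II are controlled by $c$ and by $\|W^{n,m}\|_{q;[s,t]}$, both of which are continuous, so $u^{n,m}$ is continuous. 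Hence the additional hypothesis of Theorem~\ref{theorem-Skorokhod} holds for the sequence $(W^{n,m})_m$ with limit $W^n$.

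Applying Theorem~\ref{theorem-Skorokhod} with $W^n$ in the role of the limit driver yields that the $\delta$-extension of $\kappa_{-h,W^n}(u^n)$ is continuous, where $u^n(t,x)=Y^{n,t,x}_t$. This is exactly the missing hypothesis. The theorem then applies to $(W^n)$ and $W$ and gives the full conclusion.

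The main obstacle is the approximation step for $W^n\notin D^{0,q}$. One must check that the squeezed continuous paths converge in $\alpha_q$ and not only in $\alpha_\infty$, with the $c^{\delta}$-term handled as in \eqref{alternativ-alpha-special}. One must also check that their $q$-variation stays uniformly bounded, which \eqref{eq:uniform-controls-pairwise} requires. Reparametrization invariance of $q$-variation gives the bound $\|W^{n,\delta}\|_q$, and convergence follows because $W^{n,\delta}\circ\rho_m^{-1}$ is literally a reparametrization of $W^{n,\delta}$. The second point to watch is that the theorem's proof used continuity of $u^{\delta,n}$ only in the first identity of \eqref{representation-delta-extension-u}. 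For continuous drivers that identity is immediate, because there are no inserted excursions.
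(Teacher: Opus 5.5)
Your bootstrap structure is the same as the paper's: approximate each $W^n$ by drivers for which continuity of the extension is known, apply Theorem~\ref{theorem-Skorokhod} with $W^n$ as the limit, then apply it again to the original sequence. However, the approximation step for $W^n\in D^q\setminus D^{0,q}$ fails as written. With $\delta$ fixed, the paths $W^{n,m}=W^{n,\delta}\circ\rho_m^{-1}$ do not converge to $\jmath W^n$ in $\alpha_q$. Being an exact reparametrization of $W^{n,\delta}$ only removes the first term in \eqref{alternativ-alpha-special}. The second term requires the reparametrizations to converge uniformly to $c^{\delta,\Pi}$, i.e.\ the time the approximating path spends traversing each jump must shrink to zero. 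Your $\rho_m$ approximate $c^{\delta,\emptyset}$, so $\|c^{\delta,\Pi}-\rho_m\|_\infty\to\|c^{\delta,\Pi}-c^{\delta,\emptyset}\|_\infty>0$ as soon as $W^n$ has a jump. Concretely, $W^{n,m}\to W^{n,\delta}$ pointwise on $[0,T)$: each jump is traversed along a linear ramp of fixed duration $r_k$, and the path is delayed afterwards. So $W^{n,m}$ has the wrong values at continuity points of $W^n$, which Lemma~\ref{Lemma-localuniform} forbids for an $\alpha_q$-convergent sequence.

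The idea is repairable. Choose $\lambda_m\in\Lambda_{[0,T+\delta];[0,T]}$ with $\|\lambda_m-c^{\delta,\Pi}\|_\infty\to0$, e.g.\ $\lambda_m=(1-\frac1m)c^{\delta,\Pi}+\frac1m\frac{T}{T+\delta}\mathrm{id}$, and set $W^{n,m}:=W^{n,\delta}\circ\lambda_m^{-1}$. Then $W^{n,m}\circ\lambda_m=W^{n,\delta}$, $\|W^{n,m}\|_q=\|W^{n,\delta}\|_q$, and the clock term now vanishes; you can check this directly in Lemma~\ref{lemma-alternative-alpha-p}, using $\rho=\lambda_m$ except on a short final interval. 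Your second step is sound if you run it in original time. There $u^{n,m}(t,x)=Y^{n,m,t,x}_t$ holds by definition, the Markov property of Proposition~\ref{Prop-RFBSDE-Markov-flow} applies, and the time-modulus estimate only needs continuity of $s\mapsto\|W^{n,m}\|_{q;[s,t]}$. Note that the common set $\Pi$ still inserts constant excursions for these drivers, so you genuinely need this continuity. It enters through the second identity in \eqref{representation-delta-extension-u}, not the first. With these corrections you get a proof without loss of exponent.

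The paper takes a shorter route. Since $D^q\subset D^{0,q'}$ for every $q'>q$, it picks $q'$ with $1/p+1/q'>1$, approximates $W^n$ by smooth paths in $\alpha_{q'}$, and applies Theorem~\ref{theorem-Skorokhod} with exponent $q'$. Continuity of the extension does not depend on the exponent, and for smooth drivers it comes for free from the classical viscosity theory. So neither your reparametrization construction nor your second step is needed.
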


\begin{proof}
  If the drivers $W^n$ are smooth, then the corresponding extensions
  $u^{\delta,n}$ are continuous, and
  Theorem~\ref{theorem-Skorokhod} applies. Now fix
  $\widetilde W\in D^q([0,T];\mathbb R^e)$. Choose $q'>q$ sufficiently close
  to $q$ that $1/p+1/q'>1$. By the inclusions on
  p.~\pageref{smooth-path-closures}, there are smooth paths
  $\widetilde W^m$ converging to $\widetilde W$ in $\alpha_{q'}$. Applying
  the smooth case with limit $\widetilde W$ shows that the extension
  associated with $\widetilde W$ is continuous and satisfies
  \eqref{representation-delta-extension-u}. In particular, this holds for
  every $W^n$. Theorem~\ref{theorem-Skorokhod} can therefore be applied once
  more to the original sequence $(W^n)$, which proves the assertion.
\end{proof}

We next state the technical lemma needed in the preceding proof. Its
conclusion is analogous to the stability result
\cite[(4.14)]{becherer_rough_2026}. A direct application of that result
is, however, not possible because the RBSDEs underlying
\(Y^{n,\delta,\lambda^{n,\delta}(\bar t),x}\) are posed on the varying
intervals \([\lambda^{n,\delta}(\bar t),T+\delta]\). There is a
canonical way to extend these equations to the whole interval
\([0,T+\delta]\) by modifying \(f\) and \(W\), but the resulting
generators no longer satisfy the assumptions required by the stability
theorem in \cite{becherer_rough_2026}. We therefore follow the same
double Picard argument that underlies \cite[(4.14)]{becherer_rough_2026}
and refer to the corresponding arguments there whenever the necessary
adaptation is immediate.

\begin{lemma}
\label{Lemma-excursion-Y-convergence}
{Let $q\in[1,2)$ and $p>2$ satisfy $1/p+1/q>1$, and let
$W,W^n\in D^q([0,T];\mathbb R^e)$ for $n\in\mathbb N$.}
Suppose that Assumptions
$(\hyperlink{A:b-sigma'}{b'},
\hyperlink{A:b-sigma'}{\sigma'},\hyperlink{A:xi-f}{\xi},
\hyperlink{A:xi-f}{f},\hyperlink{A:h}{h},
\hyperlink{A:hC}{h^C})$ {or Assumptions
$(\hyperlink{A:b-sigma}{b},
\hyperlink{A:b-sigma}{\sigma},\hyperlink{A:xi-f}{\xi},
\hyperlink{A:xi-f}{f},\hyperlink{A:h'}{h'},
\hyperlink{A:hC}{h^C})$} hold. Fix \(\delta>0\), and let
\((\lambda^{n,\delta})_{n\in\mathbb N}\) be the reparametrizations
satisfying \eqref{property-lambda-n}. Then, for every jump time
\(t\notin\mathcal C(W)\), every
\(\bar t\in[\tau^\delta(t),\tau^\delta(t+)]\), and every
\(x\in\mathbb R^d\),
\[
\lim_{n\to\infty}\left\|
Y_{\lambda^{n,\delta}(\cdot)}^{n,\delta,
\lambda^{n,\delta}(\bar t),x}
-Y_\cdot^{\delta,\bar t,x}
\right\|_{p;[\bar t,\tau^\delta(t+)]}
=0
\qquad\text{in probability}.
\]
\end{lemma}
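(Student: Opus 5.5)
The idea is to transport everything onto the fixed interval $J:=[\bar t,\tau^\delta(t+)]$ by composing with the deterministic, strictly increasing map $\lambda^{n,\delta}$, and then to repeat the double Picard scheme from the proof of Proposition~\ref{prop:uniform-pairwise-stability}. Because $\lambda^{n,\delta}$ is deterministic and increasing, the process $\widetilde Y^n_s:=Y^{n,\delta,\lambda^{n,\delta}(\bar t),x}_{\lambda^{n,\delta}(s)}$, $s\in J$, is adapted to $\mathcal F_{\lambda^{n,\delta}(s)}$. It solves a clocked rough BSDE on $J$ of the form \eqref{eq:pairwise-backward}, with the following data:
\begin{itemize}
\item[(a)] clock $c^n:=c^\delta\circ\lambda^{n,\delta}$ and martingale $M^n=B\circ c^n$;
\item[(b)] driver $\widetilde W^n:=W^{n,\delta}\circ\lambda^{n,\delta}$;
\item[(c)] forward process $\widetilde X^n_s:=X^{\delta,\lambda^{n,\delta}(\bar t),x}_{\lambda^{n,\delta}(s)}$;
\item[(d)] coefficients evaluated at time $\lambda^{n,\delta}(s)$;
\item[(e)] terminal value $\Gamma^n:=u^{n,\delta}(r_n;T,\xi)(\zeta_n)$ at $s=\tau^\delta(t+)$, by the Markov representation of Proposition~\ref{Prop-RFBSDE-Markov-flow}.
\end{itemize}
The limit $Y^{\delta,\bar t,x}$ on $J$ solves the degenerate equation with constant clock, $Z\equiv0$, forward state $x$, coefficient $h(t,x,\cdot)$, driver $W^\delta$ and terminal value $u(t+,x)$; this is the deterministic Marcus ODE.

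I then check that the approximating data converge to the limiting data in the sense required by the fixed-level Picard induction.
\begin{itemize}
\item[(i)] Terminal values: $\mathbb E|\Gamma^n-u(t+,x)|^2\to0$ is exactly \eqref{excursion-terminal-L2-convergence}.
\item[(ii)] Clock: $c^n_{\tau^\delta(t+)}-c^n_{\bar t}\to0$, since by \eqref{property-lambda-n} $c^\delta\circ\lambda^{n,\delta}\to c^\delta$ uniformly and $c^\delta$ is constant on $J$. Hence the $f$-integral is bounded by $C(c^n_J+(c^n_J)^{1/2}\|Z\|_{\mathrm{BMO}})\to0$, using the uniform bounds \eqref{uniform-YZ-bound}. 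The same argument together with Proposition~\ref{Prop-random-initial-SDE-estimates} gives $\sup_J|\widetilde X^n-x|\to0$ in $L^2$.
\item[(iii)] Driver: $\|\widetilde W^n-W^\delta\|_{q;J}\to0$ by \eqref{property-lambda-n}.
\item[(iv)] Coefficients in physical time: the times $c^\delta(\lambda^{n,\delta}(s))$, $s\in J$, converge uniformly to $t$.
\end{itemize}
For (iv), Assumption $(h^C)$ together with the spatial Lipschitz bounds gives $\sup_{s\in J,y}|h(c^\delta\lambda^{n,\delta}(s),\widetilde X^n_s,y)-h(t,x,y)|\to0$ in $L^2$, and the same for $\mathrm D_yh$. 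To upgrade this to $p$-variation I interpolate: $\|g\|_{p}\le 2\|g\|_\infty^{1-\rho/p}\|g\|_\rho^{\rho/p}$. The $\rho$-variation of the composed coefficient is bounded in $L^2$ by Lemma~\ref{Lemma-hX-Assumption-Ad}, or trivially under $(h')$. This is exactly where $\rho<p$ in Assumption $(h)$ is used.

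With these ingredients the Picard argument runs as in Proposition~\ref{prop:uniform-pairwise-stability}. Let $Y^{n,m}$ and $Y^m$ denote the Picard iterates of the approximating equation and of the limiting (deterministic) equation, respectively, and write $\Delta Y^{n,m}:=Y^{n,m}-Y^m$.
\begin{itemize}
\item[1.] The common data bounds \eqref{eq:uniform-picard-data-bounds} yield Picard convergence uniform in $n$, as in \eqref{eq:uniform-picard-convergence}.
\item[2.] At a fixed level $m$, I prove $\|\Delta Y^{n,m}\|_{p;J}\to0$ in probability and $\mathbb E\int_J|Z^{n,m}|^2dc^n\to0$ by induction.
\item[3.] The Young term is split as
\[
\int\bigl(g^n(Y^{n,m})-h(t,x,Y^m)\bigr)\,d\widetilde W^n+\int h(t,x,Y^m)\,d(\widetilde W^n-W^\delta).
\]
The first part is controlled by \cite[Lemma~2.3]{becherer_rough_2026} and the coefficient convergence in (iv); the second by the Young estimate and (iii).
\item[4.] The $Z$-part is handled by the same $L^4$/Vitali upgrade and It\^o-isometry argument as in \eqref{eq:next-picard-Z}, now comparing with the limiting $Z\equiv0$.
\end{itemize}
Letting $n\to\infty$ and then $m\to\infty$ gives the claim.

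I expect the main obstacle to be step (iv), the $p$-variation convergence of the composed coefficients. The time arguments $\lambda^{n,\delta}(s)$ do not converge to a fixed point in extended time, only after applying $c^\delta$, and the forward process is random. This is why both $(h^C)$ and the interpolation with the stronger exponent $\rho$ are needed rather than a direct stability theorem. A secondary point to verify is that the time-changed filtration still supports martingale representation for $M^n$, which holds because $M^n=B\circ c^n$ with $c^n$ deterministic and continuous.
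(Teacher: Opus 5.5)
Your proposal is correct and follows essentially the same route as the paper: a double Picard scheme comparing the time-changed approximating BSDE on $[\bar t,\tau^\delta(t+)]$ with the deterministic limiting Marcus ODE, using the same three-term decomposition, the same splitting of the Young term, and the same It\^o-isometry/Vitali upgrade for the $Z$-part. The only difference is minor: you obtain the $p$-variation convergence of the composed coefficients by an explicit interpolation between uniform convergence and bounded $\rho$-variation, whereas the paper cites \cite[Lemma~4.9]{becherer_rough_2026} for the time part and Lemma~\ref{Lemma-hX-Assumption-Bg} for the state part.
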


\begin{proof}
Fix \(t\notin\mathcal C(W)\),
\(\bar t\in[\tau^\delta(t),\tau^\delta(t+)]\), and
\(x\in\mathbb R^d\), and set
\[
I_{\bar t}:=[\bar t,\tau^\delta(t+)],
\qquad
I_k:=\bigl[\lambda^{k,\delta}(\bar t),
\lambda^{k,\delta}(\tau^\delta(t+))\bigr],\quad k\in\mathbb N.
\]
For \(k\in\mathbb N\), abbreviate
\[
X_r^{k,\delta}
:=
X_r^{\delta,\lambda^{k,\delta}(\bar t),x},
\qquad
(Y_r^{k,\delta},Z_r^{k,\delta})
:=
(Y_r^{k,\delta,\lambda^{k,\delta}(\bar t),x},
Z_r^{k,\delta,\lambda^{k,\delta}(\bar t),x}).
\]
Also write
\[
f_r^{k,\delta}(y,z):=f^\delta(r,X_r^{k,\delta},y,z),
\qquad
g_r^{k,\delta}(y):=h^\delta(r,X_r^{k,\delta},y),
\]
\[
\zeta_{\bar t}^{k,\delta}
:=
Y_{\lambda^{k,\delta}(\tau^\delta(t+))}^{k,\delta}
=
u^{k,\delta}\bigl(
\lambda^{k,\delta}(\tau^\delta(t+));T,\xi
\bigr)
\bigl(X_{\lambda^{k,\delta}(\tau^\delta(t+))}^{k,\delta}\bigr).
\]
By the backward flow property, for \(r\in I_k\), we have
\begin{align}
Y_r^{k,\delta}
={}&
\zeta_{\bar t}^{k,\delta}
+\int_r^{\lambda^{k,\delta}(\tau^\delta(t+))}
f_q^{k,\delta}(Y_q^{k,\delta},Z_q^{k,\delta})\,dc_q^\delta
\notag\\
&+\int_r^{\lambda^{k,\delta}(\tau^\delta(t+))}
g_q^{k,\delta}(Y_q^{k,\delta})\,dW_q^{k,\delta}
-\int_r^{\lambda^{k,\delta}(\tau^\delta(t+))}
Z_q^{k,\delta}\,dB_q^\delta.
\label{excursion-local-approximating-BSDE}
\end{align}
On \(I_{\bar t}\), \(c^\delta\) and \(B^\delta\) are constant and the
limiting forward process equals \(x\). Thus
\begin{equation}
Y_s^{\delta,\bar t,x}
=
u(t+,x)
+\int_s^{\tau^\delta(t+)}
h^\delta(r,x,Y_r^{\delta,\bar t,x})\,dW_r^\delta,
\qquad s\in I_{\bar t}.
\label{excursion-local-limiting-ODE}
\end{equation}
This is a deterministic ODE because \(W^\delta\) is linear on the
inserted interval.

Write \(Y^{\infty,\delta}:=Y^{\delta,\bar t,x}\), and let
\((Y^{k,m,\delta},Z^{k,m,\delta})_{m\in\mathbb N_0}\) and
\((Y^{\infty,m,\delta})_{m\in\mathbb N_0}\) be the Picard
iterations of \eqref{excursion-local-approximating-BSDE} and
\eqref{excursion-local-limiting-ODE}, respectively. They are initialized
by
\[
Y^{k,0,\delta}=Z^{k,0,\delta}=Y^{\infty,0,\delta}=0.
\]
For \(m\geq0\), their defining equations are
\begin{align}
Y_r^{k,m+1,\delta}
={}&
\zeta_{\bar t}^{k,\delta}
+\int_r^{\lambda^{k,\delta}(\tau^\delta(t+))}
f_q^{k,\delta}(Y_q^{k,m,\delta},Z_q^{k,m,\delta})\,dc_q^\delta
\notag\\
&+\int_r^{\lambda^{k,\delta}(\tau^\delta(t+))}
g_q^{k,\delta}(Y_q^{k,m,\delta})\,dW_q^{k,\delta}\notag\\
&-\int_r^{\lambda^{k,\delta}(\tau^\delta(t+))}
Z_q^{k,m+1,\delta}\,dB_q^\delta,
\label{excursion-local-approximating-Picard}
\end{align}
for \(r\in I_k\), and
\begin{equation}
Y_s^{\infty,m+1,\delta}
=
u(t+,x)
+\int_s^{\tau^\delta(t+)}
h^\delta(r,x,Y_r^{\infty,m,\delta})\,dW_r^\delta, \quad s\in I_{\bar t}.
\label{excursion-local-limiting-Picard}
\end{equation}
For every \(m\in\mathbb N_0\), invariance of \(p\)-variation under
increasing reparametrizations gives
\begin{align}
\left\|Y_{\lambda^{k,\delta}(\cdot)}^{k,\delta}
-Y_\cdot^{\infty,\delta}\right\|_{p;I_{\bar t}}
&\leq
\|Y^{k,\delta}-Y^{k,m,\delta}\|_{p;I_k}
\notag\\
&\quad+
\left\|Y_{\lambda^{k,\delta}(\cdot)}^{k,m,\delta}
-Y_\cdot^{\infty,m,\delta}\right\|_{p;I_{\bar t}}
+\|Y^{\infty,m,\delta}-Y^{\infty,\delta}\|_{p;I_{\bar t}}.
\label{excursion-Picard-three-term}
\end{align}

{We first establish uniform convergence of the Picard schemes.}
The third term in \eqref{excursion-Picard-three-term} vanishes as
\(m\to\infty\) by the usual Picard convergence for the limiting ODE.
For the first term, the terminal conditions are uniformly bounded by
Proposition~\ref{Prop-RFBSDE-wellposedness}, the drivers by
\eqref{property-lambda-n}, and the coefficients \(f^{k,\delta}\) by
Assumption~\hyperlink{A:xi-f}{\((\xi,f)\)}. For \(g^{k,\delta}\), the
extension estimate \cite[Lemma~2.16]{chevyrev_canonical_2019},
Proposition~\ref{Prop-RFBSDE-wellposedness}, and
Lemma~\ref{Lemma-hX-Assumption-Ad} give the required coefficient bounds
uniformly in \(k\) {under Assumption~\hyperlink{A:h}{\((h)\)}}.
{Under Assumption~\hyperlink{A:h'}{\((h')\)}, these bounds
follow directly from \((h')\), since \(h\) is independent of \(X\).}
Hence the argument of
\cite[(4.16)]{becherer_rough_2026} yields, for every \(\varepsilon>0\),
\begin{align}
\sup_{m\in\mathbb N_0}\sup_{k\in\mathbb N}\left(
\|Y^{k,m,\delta}\|_{p,2;I_k}
+\|Z^{k,m,\delta}\|_{\mathrm{BMO};I_k}
\right)&<\infty,
\label{excursion-uniform-Picard-bounds}
\\
\lim_{m\to\infty}\sup_{k\in\mathbb N}
\mathbb P\left(
\|Y^{k,m,\delta}-Y^{k,\delta}\|_{p;
I_k}>\varepsilon
\right)&=0.
\label{excursion-uniform-Picard-convergence}
\end{align}

{It remains to prove, for every fixed \(m\in\mathbb N_0\),}
\begin{equation}
\left\|Y_{\lambda^{k,\delta}(\cdot)}^{k,m,\delta}
-Y_\cdot^{\infty,m,\delta}\right\|_{p;I_{\bar t}}
\xrightarrow{k\to\infty}0
\qquad\text{in probability}.
\label{excursion-fixed-Picard-convergence}
\end{equation}
We prove this by induction on \(m\), simultaneously with
\begin{equation}
\mathbb E\left[
\int_{I_k}|Z_r^{k,m,\delta}|^2\,dc_r^\delta
\right]
\xrightarrow{k\to\infty}0.
\label{excursion-fixed-Picard-Z-convergence}
\end{equation}
Both induction assertions are immediate for \(m=0\). The terminal
convergence is precisely \eqref{excursion-terminal-L2-convergence} by
the definition of \(\zeta_{\bar t}^{k,\delta}\).
Suppose that \eqref{excursion-fixed-Picard-convergence} and
\eqref{excursion-fixed-Picard-Z-convergence} hold at level \(m\). For
\(s\in I_{\bar t}\), set
\begin{equation*}
F_s^{k,m}
:=
\int_{\lambda^{k,\delta}(s)}^{
\lambda^{k,\delta}(\tau^\delta(t+))}
f_r^{k,\delta}(Y_r^{k,m,\delta},Z_r^{k,m,\delta})\,dc_r^\delta,
\end{equation*}
\begin{equation*}
H_s^{k,m}
:=
\int_s^{\tau^\delta(t+)}
g_{\lambda^{k,\delta}(r)}^{k,\delta}
\bigl(Y_{\lambda^{k,\delta}(r)}^{k,m,\delta}\bigr)
\,d(W^{k,\delta}\circ\lambda^{k,\delta})_r,
\end{equation*}
\begin{equation*}
H_s^{\infty,m}
:=
\int_s^{\tau^\delta(t+)}
h^\delta(r,x,Y_r^{\infty,m,\delta})\,dW_r^\delta,
\end{equation*}
\begin{equation*}
M_s^{k,m+1}
:=
\int_{\lambda^{k,\delta}(s)}^{
\lambda^{k,\delta}(\tau^\delta(t+))}
Z_r^{k,m+1,\delta}\,dB_r^\delta.
\end{equation*}
Subtracting \eqref{excursion-local-limiting-Picard} from the time-changed
version of \eqref{excursion-local-approximating-Picard} gives
\begin{align}
&Y_{\lambda^{k,\delta}(s)}^{k,m+1,\delta}
-Y_s^{\infty,m+1,\delta}
\notag\\
&\quad=
\zeta_{\bar t}^{k,\delta}-u(t+,x)
+F_s^{k,m}
+H_s^{k,m}-H_s^{\infty,m}
-M_s^{k,m+1}.
\label{excursion-local-Picard-error-decomposition}
\end{align}
The terminal difference is constant in \(s\) and hence has zero
\(p\)-variation. It remains to control the finite-variation, Young, and
martingale terms and to prove
\eqref{excursion-fixed-Picard-Z-convergence} at level \(m+1\).

{To control the finite-variation term, put}
\[
\Delta c_k:=c^\delta(\lambda^{k,\delta}(\tau^\delta(t+)))
-c^\delta(\lambda^{k,\delta}(\bar t)),
\]
which converges to zero by \eqref{property-lambda-n}. The growth bound
on \(f\), the finite-variation estimate, and Cauchy--Schwarz give
\[
\|F^{k,m}\|_{p;I_{\bar t}}
\leq
C_f\Delta c_k\left(1+|\zeta_{\bar t}^{k,\delta}|
+\|Y^{k,m,\delta}\|_{p;I_k}
\right)
+
C_f\Delta c_k^{1/2}
\left(
\int_{I_k}|Z_r^{k,m,\delta}|^2\,dc_r^\delta
\right)^{1/2}.
\]
The bound \eqref{excursion-uniform-Picard-bounds} and the induction
hypothesis therefore give
\begin{equation}
\|F^{k,m}\|_{p;I_{\bar t}}
\xrightarrow{k\to\infty}0
\qquad\text{in probability}.
\label{excursion-local-F-convergence}
\end{equation}

{We next turn to the Young term and show}
\begin{equation}
\|H^{k,m}-H^{\infty,m}\|_{p;I_{\bar t}}
\xrightarrow{k\to\infty}0
\qquad\text{in probability}.
\label{excursion-local-H-convergence}
\end{equation}
For \(r\in I_{\bar t}\), we set
\[
G_r^{k,m}:=g_{\lambda^{k,\delta}(r)}^{k,\delta}
\bigl(Y_{\lambda^{k,\delta}(r)}^{k,m,\delta}\bigr), \quad
G_r^{\infty,m}:=h^\delta(r,x,Y_r^{\infty,m,\delta}).
\]
The Young integral estimate
\cite[Proposition~A.7]{becherer_rough_2026}, applied on
\(I_{\bar t}\), yields
\begin{align}
&\|H^{k,m}-H^{\infty,m}\|_{p;I_{\bar t}}
\notag\\
&\quad\leq
C\left(
\|G^{k,m}-G^{\infty,m}\|_{p;I_{\bar t}}
+|G_{\tau^\delta(t+)}^{k,m}-G_{\tau^\delta(t+)}^{\infty,m}|
\right)
\|W^{k,\delta}\circ\lambda^{k,\delta}\|_{q;I_{\bar t}}
\notag\\
&\qquad+
C\left(
\|G^{\infty,m}\|_{p;I_{\bar t}}
+|G_{\tau^\delta(t+)}^{\infty,m}|
\right)
\|W^{k,\delta}\circ\lambda^{k,\delta}-W^\delta
\|_{q;I_{\bar t}}.
\label{excursion-local-H-Young-estimate}
\end{align}
By \eqref{property-lambda-n}, we have
\begin{equation}
\|W^{k,\delta}\circ\lambda^{k,\delta}-W^\delta
\|_{q;I_{\bar t}}
\xrightarrow{k\to\infty}0,
\qquad
\sup_k
\|W^{k,\delta}\circ\lambda^{k,\delta}\|_{q;I_{\bar t}}
<\infty.
\label{excursion-local-driver-convergence}
\end{equation}
By construction of the time-stretched forward process, we have
\[
X_r^{k,\delta}
=X_{c^\delta(r)}^{c^\delta(\lambda^{k,\delta}(\bar t)),x},
\qquad r\geq\lambda^{k,\delta}(\bar t).
\]
Hence, invariance of \(p\)-variation under increasing
reparametrizations and Proposition~\ref{Prop-random-initial-SDE-estimates}
give
\begin{equation}
\mathbb E\!\left[\left\|X_{\lambda^{k,\delta}(\cdot)}^{k,\delta}-x
\right\|_{p;I_{\bar t}}^2\right]^{1/2}
\leq C(1+|x|)\Delta c_k^{1/2}\xrightarrow{k\to\infty}0.
\label{excursion-local-X-convergence}
\end{equation}
Together with Assumption~\hyperlink{A:hC}{\((h^C)\)},
\eqref{property-lambda-n}, and Lemma~\ref{Lemma-hX-Assumption-Bg}, this
implies
\begin{align}
&\sup_y\left\|g_{\lambda^{k,\delta}(\cdot)}^{k,\delta}(y)
-h^\delta(\cdot,x,y)\right\|_{p;I_{\bar t}}
+\sup_{r\in I_{\bar t}}\left|
\mathrm D_yg_{\lambda^{k,\delta}(r)}^{k,\delta}
-\mathrm D_yh^\delta(r,x,\cdot)
\right|_\infty
\notag\\
&\quad+
\sup_y\left|
g_{\lambda^{k,\delta}(\tau^\delta(t+))}^{k,\delta}(y)
-h^\delta(\tau^\delta(t+),x,y)
\right|
\xrightarrow{k\to\infty}0
\notag
\end{align}
in probability. More specifically, the time difference is controlled by
Assumption~\hyperlink{A:hC}{\((h^C)\)}, \eqref{property-lambda-n}, and
\cite[Lemma~4.9]{becherer_rough_2026}, while the state difference is
controlled by \eqref{excursion-local-X-convergence} and
Lemma~\ref{Lemma-hX-Assumption-Bg}.
Under Assumption~\hyperlink{A:h'}{\((h')\)}, \(h\) is independent
of \(x\), so the same argument applies without a state-difference term.
Combining this coefficient
convergence with the induction hypothesis
\eqref{excursion-fixed-Picard-convergence} and the terminal convergence
\eqref{excursion-terminal-L2-convergence}, we may apply
\cite[Lemma~2.3]{becherer_rough_2026} to obtain
\begin{equation}
\|G^{k,m}-G^{\infty,m}\|_{p;I_{\bar t}}
+|G_{\tau^\delta(t+)}^{k,m}-G_{\tau^\delta(t+)}^{\infty,m}|
\xrightarrow{k\to\infty}0
\label{excursion-local-G-convergence}
\end{equation}
in probability. For a detailed computation of this composition
estimate, we refer the reader to
\cite[(4.19)--(4.22)]{becherer_rough_2026}. Moreover,
\[
\|G^{\infty,m}\|_{p;I_{\bar t}}
+|G_{\tau^\delta(t+)}^{\infty,m}|<\infty.
\]
Combining
\eqref{excursion-local-G-convergence} and
\eqref{excursion-local-driver-convergence} in
\eqref{excursion-local-H-Young-estimate} proves
\eqref{excursion-local-H-convergence}.

{It remains to prove convergence of the martingale terms.} Set
\[
\Gamma_k^{m+1}
:=
\zeta_{\bar t}^{k,\delta}
+F_{\bar t}^{k,m}
+H_{\bar t}^{k,m}
=Y_{\lambda^{k,\delta}(\bar t)}^{k,m+1,\delta}
+\int_{I_k}Z_r^{k,m+1,\delta}\,dB_r^\delta,
\]
\[
\Gamma_\infty^{m+1}
:=
u(t+,x)+H_{\bar t}^{\infty,m}
=Y_{\bar t}^{\infty,m+1,\delta}.
\]
The limiting random variable \(\Gamma_\infty^{m+1}\) is deterministic.
By It\^o's isometry and the fact that
\(Y_{\lambda^{k,\delta}(\bar t)}^{k,m+1,\delta}
-Y_{\bar t}^{\infty,m+1,\delta}\) is
\(\mathcal F_{\lambda^{k,\delta}(\bar t)}^\delta\)-measurable, we get
\begin{align}
\mathbb E\left[
\int_{I_k}|Z_r^{k,m+1,\delta}|^2\,dc_r^\delta
\right]
&=
\mathbb E\left[
\left|
\int_{I_k}Z_r^{k,m+1,\delta}\,dB_r^\delta
\right|^2
\right]
\notag\\
&\leq
\mathbb E\left[
\left|Y_{\lambda^{k,\delta}(\bar t)}^{k,m+1,\delta}
-Y_{\bar t}^{\infty,m+1,\delta}\right|^2
+\left|\int_{I_k}Z_r^{k,m+1,\delta}\,dB_r^\delta\right|^2
\right]
\notag\\
&=
\mathbb E\left[
\left|\Gamma_k^{m+1}-\Gamma_\infty^{m+1}\right|^2
\right].
\label{excursion-local-Z-estimate}
\end{align}
The convergences \eqref{excursion-terminal-L2-convergence},
\eqref{excursion-local-F-convergence}, and
\eqref{excursion-local-H-convergence} imply
\[
\Gamma_k^{m+1}\xrightarrow{k\to\infty}\Gamma_\infty^{m+1}
\qquad\text{in probability}.
\]
The uniform terminal bound, \eqref{excursion-uniform-Picard-bounds}, and
\cite[Lemma~2.2(a)]{becherer_rough_2026} give
\[
\sup_{k\in\mathbb N}\sup_{m\in\mathbb N_0}
\|Y^{k,m,\delta}\|_{L^\infty(\Omega\times I_k)}<\infty.
\]
Together with the BMO bound
\eqref{excursion-uniform-Picard-bounds}, the BMO energy estimate and the
Burkholder--Davis--Gundy inequality therefore give, exactly as in the proof of
\cite[(4.24)]{becherer_rough_2026},
\[
\sup_k\mathbb E\left[|\Gamma_k^{m+1}|^4\right]<\infty.
\]
Thus \((|\Gamma_k^{m+1}|^2)_k\) is uniformly integrable, and Vitali's
convergence theorem gives
\[
\mathbb E\left[
|\Gamma_k^{m+1}-\Gamma_\infty^{m+1}|^2
\right]
\xrightarrow{k\to\infty}0.
\]
It then follows from \eqref{excursion-local-Z-estimate} that
\eqref{excursion-fixed-Picard-Z-convergence} holds at level \(m+1\).
Finally, the \(p\)-variation BDG inequality
\cite[Theorem~14.12]{friz_multidimensional_2010} yields
\[
\|M^{k,m+1}\|_{p;I_{\bar t}}
\xrightarrow{k\to\infty}0
\qquad\text{in probability}.
\]
Together with \eqref{excursion-local-Picard-error-decomposition},
\eqref{excursion-local-F-convergence}, and
\eqref{excursion-local-H-convergence}, this proves
\eqref{excursion-fixed-Picard-convergence} at level \(m+1\) and completes
the induction.

Returning to \eqref{excursion-Picard-three-term}, we first let
\(k\to\infty\) and use
\eqref{excursion-fixed-Picard-convergence}. We then let \(m\to\infty\)
and use \eqref{excursion-uniform-Picard-convergence}, together with the
convergence of the deterministic limiting Picard scheme. This yields
\[
\left\|
Y_{\lambda^{k,\delta}(\cdot)}^{k,\delta}
-Y_\cdot^{\infty,\delta}
\right\|_{p;I_{\bar t}}
\xrightarrow{k\to\infty}0
\qquad\text{in probability},
\]
which is the desired assertion.
\end{proof}

\subsection{Continuity of the solution map and backward flow property}
\label{continuity-solution-map}

We now extend the fixed-terminal stability established in
Theorem~\ref{theorem-Skorokhod} and
Corollary~\ref{Cor-Skorokhod-general-drivers} to simultaneous perturbations
of the rough driver and the terminal function in
Proposition~\ref{Prop-Continuity}. We then use the rough-FBSDE representation
to establish the backward flow property of the solution in
Corollary~\ref{Cor-backward-flow-property}.

\begin{proposition}
  \label{Prop-Continuity}{Let $q\in[1,2)$ and $p>2$
  satisfy $1/p+1/q>1$. We consider the FBSDE solution map for
  $W\in D^q([0,T];\mathbb R^e)$.}
  Suppose that either Assumptions 
$(\hyperlink{A:b-sigma}{b}, \hyperlink{A:b-sigma}{\sigma}, 
\hyperlink{A:xi-f}{\xi}, \hyperlink{A:xi-f}{f},
\hyperlink{A:h'}{h'}, \hyperlink{A:hC}{h^C})$ 
or Assumptions 
$(\hyperlink{A:b-sigma'}{b'}, \hyperlink{A:b-sigma'}{\sigma'}, 
\hyperlink{A:xi-f}{\xi}, \hyperlink{A:xi-f}{f},
\hyperlink{A:h}{h}, \hyperlink{A:hC}{h^C})$ hold. Then the map
  \begin{eqnarray*}
      \left( \mathfrak{D}^{q \text{-var}} ([0, T];\mathbb{R}^e), \alpha_q \right) \times
      \mathrm{BUC}(\mathbb{R}^d; \mathbb{R}^k) & \rightarrow & (\mathfrak{D} ([0, T] ; C (\mathbb{R}^d; \mathbb{R}^k)), \alpha_\infty),\\
      (\jmath W, \xi) & \mapsto & \mathbf{u},
    \end{eqnarray*}
  is continuous.
\end{proposition}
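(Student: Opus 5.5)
Since both spaces are metric, it suffices to prove sequential continuity. Let $\alpha_q(\jmath W^n,\jmath W)\to0$ and $\|\xi_n-\xi\|_\infty\to0$. Write $\mathbf u^n$ for the Marcus lift of $u^n(t,x)=Y_t^{n,t,x}$, the solution with data $(\xi_n,W^n)$, and $\mathbf u$ for the lift with data $(\xi,W)$.

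We introduce an intermediate solution $\tilde u^n$ with data $(\xi,W^n)$ and split the problem into two parts. Corollary~\ref{Cor-Skorokhod-general-drivers} gives $\alpha_\infty(\tilde{\mathbf u}^n,\mathbf u)\to0$. Concretely, for each $\delta>0$ there are $\lambda^n_\delta$ satisfying \eqref{property-lambda-n} such that
\[
\sup_{[0,T+\delta]\times K}\bigl|\tilde u^{n,\delta}(\lambda^n_\delta(s),x)-u^\delta(s,x)\bigr|\to0
\]
for every compact $K$. By Lemma~\ref{alternative-a-metric}, with these same $\lambda^n_\delta$, it therefore suffices to show
\[
\sup_{(s,x)\in[0,T+\delta]\times\mathbb R^d}\bigl|u^{n,\delta}(s,x)-\tilde u^{n,\delta}(s,x)\bigr|\to0.
\]
This is a uniform statement with no reparametrization involved. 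The $c^\delta$-term in \eqref{alternativ-alpha} is unchanged, since both lifts are built on the same driver $W^n$ and a common set $\Pi$.

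\textbf{Terminal perturbation.} By Theorem~\ref{theorem-Skorokhod} together with Corollary~\ref{Cor-Skorokhod-general-drivers}, both extensions have the time-stretched FBSDE representation \eqref{representation-delta-extension-u}:
\[
u^{n,\delta}(s,x)=Y_s^{\delta,n,s,x;\xi_n},\qquad \tilde u^{n,\delta}(s,x)=Y_s^{\delta,n,s,x;\xi}.
\]
We argue by contradiction. Suppose there are $\varepsilon>0$ and points $(s_n,x_n)$ with $|u^{n,\delta}(s_n,x_n)-\tilde u^{n,\delta}(s_n,x_n)|\ge\varepsilon$. Apply the final assertion of Proposition~\ref{prop:uniform-pairwise-stability} with the following data: $r_n=s_n$, common initial state $\zeta_n^1=\zeta_n^2=x_n$, the continuous drivers $W^{n,\delta}$ (which have uniformly bounded $q$-variation by \eqref{property-lambda-n}), the clock $c^\delta$, and terminal functions $\xi_n^1=\xi_n$, $\xi_n^2=\xi$. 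This is exactly the Marcus time-extension setting on p.~\pageref{pairwise-Marcus-extension}.

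The proposition yields
\[
\|Y^{\delta,n,s_n,x_n;\xi_n}-Y^{\delta,n,s_n,x_n;\xi}\|_{\infty;[s_n,T+\delta]}\to0
\]
in probability. The values at time $s_n$ are deterministic by Proposition~\ref{Prop-RFBSDE-Markov-flow}, so this difference converges to zero deterministically, which is a contradiction. This gives the uniform estimate over all $(s,x)$ at once, hence in particular over compacts.

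\textbf{Combination.} By the triangle inequality,
\[
\sup_{[0,T+\delta]\times K}\bigl|u^{n,\delta}\circ\lambda^n_\delta-u^\delta\bigr|\to0,
\]
and the initial-value and clock terms vanish. Lemma~\ref{alternative-a-metric}, applied in the compact-open topology as in the end of Part~2 of the proof of Theorem~\ref{theorem-Skorokhod}, then gives $\alpha_\infty(\mathbf u^n,\mathbf u)\to0$.

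\textbf{Main obstacle.} The delicate point is the terminal perturbation step. Proposition~\ref{prop:uniform-pairwise-stability} must be applied uniformly over varying start times $s_n$ and varying states $x_n$, including $s_n$ lying inside inserted excursion intervals. We rely on the fact that its constants depend only on the uniform bounds \eqref{eq:uniform-controls-pairwise} and \eqref{eq:uniform-picard-data-bounds}. One must also confirm that the representation \eqref{representation-delta-extension-u} holds with a perturbed terminal function $\xi_n$. This follows by rerunning Theorem~\ref{theorem-Skorokhod} and Corollary~\ref{Cor-Skorokhod-general-drivers} for each fixed $\xi_n\in\mathrm{BUC}$.
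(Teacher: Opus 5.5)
Your proposal is correct and follows essentially the same route as the paper. The paper also introduces the intermediate solution with data $(\xi,W^n)$ and handles the driver part with Corollary~\ref{Cor-Skorokhod-general-drivers}. For the terminal part it proves the same uniform estimate over $[0,T+\delta]\times\mathbb R^d$, by contradiction, using the terminal-function version of Proposition~\ref{prop:uniform-pairwise-stability} together with determinism of the values.

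The only cosmetic difference is in combining the two pieces. You compose with the common reparametrizations $\lambda^n_\delta$, whereas the paper applies the triangle inequality for $\alpha_\infty$ and uses the identity reparametrization for the terminal-perturbation term. The two are equivalent.
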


\begin{proof}
It is enough to prove sequential continuity. Let
\[
 \alpha_q(\jmath W^n,\jmath W)\longrightarrow0,
 \qquad
 \|\xi^n-\xi\|_\infty\longrightarrow0,
\]
and let $\mathbf u^n$ and $\mathbf u$ be the corresponding Marcus lifts.
Introduce the intermediate solution $\widehat{\mathbf u}^{\,n}$ driven by
$W^n$ but with terminal function $\xi$. The triangle inequality gives
\begin{equation}
 \alpha_\infty(\mathbf u^n,\mathbf u)
 \leq
 \alpha_\infty(\mathbf u^n,\widehat{\mathbf u}^{\,n})
 +\alpha_\infty(\widehat{\mathbf u}^{\,n},\mathbf u).
 \label{solution-map-triangle}
\end{equation}
By Corollary~\ref{Cor-Skorokhod-general-drivers}, applied with the fixed
terminal function $\xi$,
\begin{equation}
 \alpha_\infty(\widehat{\mathbf u}^{\,n},\mathbf u)
 \longrightarrow0.
 \label{fixed-terminal-driver-convergence}
\end{equation}

We claim that the first term in \eqref{solution-map-triangle} also converges
to zero. Fix $\delta>0$. By
Corollary~\ref{Cor-Skorokhod-general-drivers}, the $\delta$-extensions
$u^{n,\delta}$ and $\widehat u^{n,\delta}$ are continuous and satisfy the
rough-FBSDE representation \eqref{representation-delta-extension-u}. We show
that
\begin{equation}
 \sup_{(t,x)\in[0,T+\delta]\times\mathbb R^d}
 \left|u^{n,\delta}(t,x)-\widehat u^{n,\delta}(t,x)\right|
 \longrightarrow0.
 \label{uniform-terminal-stability-solution-map}
\end{equation}
Suppose otherwise. Then, along a subsequence, there are
$t_n\in[0,T+\delta]$ and $x_n\in\mathbb R^d$ for which the difference in
\eqref{uniform-terminal-stability-solution-map} is bounded away from zero.
Apply the terminal-stability assertion of
Proposition~\ref{prop:uniform-pairwise-stability} to the two time-extended
rough FBSDEs starting from $(t_n,x_n)$, driven by $W^{n,\delta}$, and with
terminal functions $\xi^n$ and $\xi$. Its assumptions follow from
$\|\xi^n-\xi\|_\infty\to0$ and the uniform bounds on the time-extended
drivers. We obtain
\[
 \left|u^{n,\delta}(t_n,x_n)
 -\widehat u^{n,\delta}(t_n,x_n)\right|
 \longrightarrow0
 \qquad\text{in probability}.
\]
Both quantities are deterministic by
Proposition~\ref{Prop-RFBSDE-Markov-flow}, so the convergence is
deterministic. This contradiction proves
\eqref{uniform-terminal-stability-solution-map}.

Taking the identity
reparametrization in Lemma~\ref{alternative-a-metric} and using
\eqref{uniform-terminal-stability-solution-map} gives
\[
 \alpha_\infty(\mathbf u^n,\widehat{\mathbf u}^{\,n})
 \longrightarrow0.
\]
Combining this with \eqref{solution-map-triangle} and
\eqref{fixed-terminal-driver-convergence} proves the assertion.
\end{proof}

\begin{corollary}[Backward flow property]
\label{Cor-backward-flow-property}
{Under the coefficient assumptions of Theorem~\ref{main-result},
let $W\in D^q([0,T];\mathbb R^e)$ with $q\in[1,2)$ and
$p>2$ satisfying $1/p+1/q>1$, and define $u$ by the FBSDE
representation. Then, for every}
$0\leqslant r\leqslant s\leqslant t\leqslant T$, $x\in\mathbb R^d$, and
$\xi\in\mathrm{BUC}(\mathbb R^d;\mathbb R^k)$, it holds
\[
  u(r;t,\xi)(x)
  =
  u\bigl(r;s,u(s;t,\xi)\bigr)(x).
\]
\end{corollary}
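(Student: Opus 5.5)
This is a direct consequence of Proposition~\ref{Prop-RFBSDE-Markov-flow}(iii), applied with $S=T$, $c_t=t$, $M=B$ and the given driver $W$. The only work is to check that the objects appearing in the statement are the ones that proposition is about.

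First, I would confirm that $u(r;s,\xi)(x)=Y_r^{r,x;s,\xi}$ is the FBSDE definition used in the corollary. It makes sense for every $0\le r\le s\le T$ and every $\xi\in\mathrm{BUC}$: the coefficient assumptions on $[0,T]$ restrict to $[0,s]$, and Proposition~\ref{Prop-RFBSDE-wellposedness}, applied on $[0,s]$ with terminal function $\xi$, gives existence and uniqueness.

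Second, I would check that $u(s;t,\xi)$ is an admissible terminal function for the FBSDE on $[r,s]$. Concretely, Assumption $(\xi,f)$ requires it to be bounded and uniformly continuous. Boundedness comes from the uniform estimate \eqref{bound-XY-p2}, since $|u(s;t,\xi)(x)|\le\|\xi\|_\infty+\|Y^{s,x;t,\xi}\|_{p,2}$. Uniform continuity comes from the argument of Part~3 of the proof of Theorem~\ref{theorem-Skorokhod}, run with terminal time $t$ in place of $T$. Take $x_n,y_n$ with $|x_n-y_n|\to0$. Proposition~\ref{prop:uniform-pairwise-stability}, via the Marcus time-extension when $W$ jumps, gives $|u(s;t,\xi)(x_n)-u(s;t,\xi)(y_n)|\to0$ in probability. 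Since this quantity is deterministic by Proposition~\ref{Prop-RFBSDE-Markov-flow}(i), the convergence is deterministic. Hence $u(s;t,\xi)\in\mathrm{BUC}$ and $u(r;s,u(s;t,\xi))$ is well defined.

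Third, I would invoke \eqref{flow-RBSDE} with $v=r$, which gives $Y_r^{r,x;t,\xi}=Y_r^{r,x;s,Y_s^{s,\cdot;t,\xi}}$. The terminal function here is $y\mapsto Y_s^{s,y;t,\xi}=u(s;t,\xi)(y)$. This is the identity used in the proof of (iii): by (ii), $Y^{r,x;t,\xi}_s=u(s;t,\xi)(X^{r,x}_s)$, and uniqueness on $[r,s]$ then applies. Evaluating the resulting equality gives the claimed identity $u(r;t,\xi)(x)=u(r;s,u(s;t,\xi))(x)$.

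The main obstacle is the admissibility check in the second step. Everything else is bookkeeping that Proposition~\ref{Prop-RFBSDE-Markov-flow} already contains. Once $u(s;t,\xi)$ is known to lie in $\mathrm{BUC}$, the uniqueness arguments are valid.
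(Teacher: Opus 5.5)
Your proposal is correct and follows the paper's proof. The paper's proof is the one-line chain $u(r;t,\xi)(x)=Y_r^{r,x;t,\xi}=Y_r^{r,x;s,Y_s^{s,\cdot;t,\xi}}=u\bigl(r;s,u(s;t,\xi)\bigr)(x)$ read off from Proposition~\ref{Prop-RFBSDE-Markov-flow}(iii); your extra check that $u(s;t,\xi)\in\mathrm{BUC}(\mathbb R^d;\mathbb R^k)$ is left implicit there, but it is valid and is simply Part~3 of the proof of Theorem~\ref{theorem-Skorokhod} run with terminal time $t$.
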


\begin{proof}
By the definition of $u$ and
Proposition~\ref{Prop-RFBSDE-Markov-flow}(iii),
\[
  u(r;t,\xi)(x)
  =Y_r^{r,x;t,\xi}
  =Y_r^{r,x;s,Y_s^{s,\cdot;t,\xi}}
  =Y_r^{r,x;s,u(s;t,\xi)}
  =u\bigl(r;s,u(s;t,\xi)\bigr)(x).
\]
\end{proof}

\begin{remark}
One could alternatively try to establish the flow property first for the
approximating solutions $u^n$ and then pass to the limit. In the presence of
jumps, however, the reparametrizations $\lambda^{n,\delta}$ used in the proof
of Theorem~\ref{theorem-Skorokhod} are not explicit, which makes this passage
to the limit less direct. The rough FBSDE representation yields the flow
property immediately from uniqueness and thereby avoids this difficulty.
\end{remark}

\section{Solutions to SPDEs as Markov Processes} \label{section-markov-process}

{
Throughout this section, we equip
$E=\mathrm{BUC}(\mathbb R^d;\mathbb R^k)$ with the Borel $\sigma$-algebra $\mathcal E$ generated by the compact-open
topology on $C(\mathbb R^d;\mathbb R^k)$.
By \cite[Lemma~A5.1]{kallenberg_foundations_2021} and continuity of
the functions in $E$, we have
\[
 \mathcal E=\sigma\{\eta\mapsto\eta(x):x\in\mathbb Q^d\}.
\]
All measurability
statements for $E$-valued maps in this section refer to $\mathcal E$ and
we write $B_b(E,\mathcal E)$ for the bounded $\mathcal E$-measurable
real-valued functions. We choose this topology, since the measurability of 
the solution map $u$ is easier to establish than for the uniform topology on $E$.
}

In this section, we introduce a completed probability space
$(\Omega, \mathcal{F}, \mathbb{P})$ supporting an \linebreak $e$-dimensional stochastic
process $L$ of finite $q$-variation, with $q < 2$. We denote by
$(\mathcal{F}^L_{t, T})_{t \in [0, T]}$ the backward filtration
generated by $L$, where the $\sigma$-algebra is defined by \linebreak $\mathcal{F}^L_{s, t}
\assign \sigma \left( L_u - L_v, \, s \leqslant u \leqslant v \leqslant t
\right)$ for all $0 \leqslant s \leqslant t \leqslant T$. Let $D ([s, t] ;
\mathbb{R}^e)$ denote the space of c{\`a}gl{\`a}d paths, and let
$\mathcal{D}_{s, t}$ be the $\sigma$-algebra generated by the coordinate
projections $\left\{ \pi_{t_1, \ldots, t_k} \, \mid \, t_1, \ldots, t_k \in
{[s, t]}, k \in \mathbb{N} \right\}$. In \cite[Theorem~11.5.2]{whitt_stochastic-process_2002} it is shown that the Borel-$\sigma$-algebra
generated by the classical Skorokhod $J1$ or $M1$ topology coincides with
$\mathcal{D}_{s, t}$. 

Stochastic processes with finite $q$-variation for $q<2$ form a rich class. Typical examples include fractional Brownian motion with Hurst parameter $H>1/2$ in the continuous case, as well as a large class of L\'evy processes. Besides L\'evy processes of bounded variation such as compound Poisson or Gamma processes, a sufficient condition for a L\'evy process without Gaussian part, with characteristic triplet $(0,\gamma,\nu)$, to have finite $q$-variation is stated in \cite{monroe_gamma-variation_1972}. We briefly recall it here.

\begin{theorem}\cite[Theorem 2]{monroe_gamma-variation_1972}
Given a L\'evy process $L$ with characteristic triplet $(0,\gamma,\nu)$, we define the so-called Blumenthal--Getoor index $\beta$ as
\begin{align*}
\beta := \inf \left\{ \alpha>0 \,\middle|\, \int_{|x| < 1} |x|^{\alpha}\,\nu(dx) < \infty \right\}.
\end{align*}
Then $L$ has finite $q$-variation for every $q>\beta$.
\end{theorem}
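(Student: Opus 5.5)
This is Monroe's result, cited from the literature. A self-contained argument would control the $q$-variation separately for the small-jump part and the large-jump part of $L$. Fix $q>\beta$, so that $q>0$. Without loss of generality we also take $q<2$; the case $q\ge 2$ follows from the case of some $q'\in(\beta,2)$ when $\beta<2$, by monotonicity of $p$-variation. By the L\'evy--It\^o decomposition,
\[
L_t=\gamma t+\int_0^t\int_{|x|\ge 1}x\,N(ds,dx)+\int_0^t\int_{|x|<1}x\,\tilde N(ds,dx).
\]
The drift has bounded variation. Since $\nu(\{|x|\ge1\})<\infty$, the large-jump part is a compound Poisson process with finitely many jumps on $[0,T]$, so it also has bounded variation. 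By the triangle inequality for $q$-variation (with the $2^{q-1}$ constant used in Lemma~\ref{equicontinuity-p-var}), it suffices to treat the compensated small-jump martingale $M_t$.

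Next, we split $M$ further. If $q\ge1$ and $\int_{|x|<1}|x|\,\nu(dx)<\infty$, then $M$ is of bounded variation and we are done. In the genuinely interesting case $1\le\beta<q<2$, we decompose $M$ dyadically by jump size:
\[
M=\sum_{j\ge0}M^{(j)},\qquad M^{(j)}\ \text{the compensated jumps with } 2^{-j-1}\le|x|<2^{-j}.
\]
Each $M^{(j)}$ is a compensated compound Poisson process. Its rate is $\lambda_j=\nu(2^{-j-1}\le|x|<2^{-j})$, and $\sum_j\lambda_j2^{-jq}<\infty$ because $\int_{|x|<1}|x|^q\,\nu(dx)<\infty$. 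The key estimate is a bound on $\mathbb E\|M^{(j)}\|_{q;[0,T]}^{q}$ or, more robustly, on $\mathbb E\|M^{(j)}\|_{q}$. We would use the Lépingle/BDG-type $p$-variation inequality for martingales, in the spirit of \cite[Theorem~14.12]{friz_multidimensional_2010}. That result is stated for $p>2$, however, so for $q<2$ it must be replaced by a direct estimate. Bounding the jump part gives
\[
\sum|\Delta|^q\approx\lambda_jT\,2^{-jq}.
\]
The compensator drift $\lambda_j\mathbb E[x;\text{band}]\,t$ has total variation $\lesssim\lambda_j2^{-j}T$. This term is the problematic one, and it must be handled by a chaining/blocking argument rather than crude bounds.

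The main obstacle is precisely this: the $q$-variation with $q<2$ of a compensated pure-jump martingale is not controlled by a square-function inequality. We would follow Monroe's strategy and embed the small-jump martingale into Brownian motion via Skorokhod embedding, $M_t=\tilde B_{\theta_t}$, with an increasing family of stopping times $\theta_t$. A key feature is that the embedding of a pure-jump martingale makes $\tilde B$ visit only jump endpoints. Then estimate the $q$-variation of $\tilde B$ sampled along the stopping-time sequence. Monroe's argument shows that the $q$-variation is governed by $\sum_{s\le T}|\Delta L_s|^q$, which is finite a.s. exactly when $\int_{|x|<1}|x|^q\,\nu(dx)<\infty$. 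Since everything here is classical and external to the paper's framework, in the paper itself we would simply cite \cite[Theorem~2]{monroe_gamma-variation_1972} rather than reproduce this embedding argument. The consequence we actually need is that $\alpha$-stable Lévy processes with $\alpha<2$ (for which $\beta=\alpha$) have paths in $D^q$ for any $q\in(\alpha,2)$.
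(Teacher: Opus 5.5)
Your bottom line, which is to cite Monroe's Theorem~2, is exactly what the paper does; the paper gives no proof of its own. The self-contained sketch you put alongside it, however, has a step that fails. The reduction ``the drift and the large-jump part have bounded variation, so it suffices to treat $M$'' is valid only for $q\ge1$. For $q<1$, bounded variation does not imply finite $q$-variation, and your case analysis then never treats $\beta<q<1$. In fact the statement as quoted is false in that range. Take $\nu=0$ and $\gamma\neq0$. Then $\beta=0$, yet for $L_t=\gamma t$ the uniform partition of $[0,T]$ into $n$ pieces gives $\sum_i|L_{t_{i+1}}-L_{t_i}|^q=|\gamma|^qT^qn^{1-q}\to\infty$ for every $q<1$. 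More generally, if $\beta<1$ then $L_t=\gamma_0t+J_t$ with $\gamma_0=\gamma-\int_{|x|<1}x\,\nu(dx)$ and jump part $J_t=\sum_{s\le t}\Delta L_s$. For $q\le1$ one has $\|J\|_{q;[0,T]}^q\le\sum_{s\le T}|\Delta L_s|^q<\infty$. The inequality $|a+b|^q\le|a|^q+|b|^q$ then shows that $L$ can have finite $q$-variation for some $q\in(\beta,1)$ only if $\gamma_0=0$. So the cited result has to be read with $q\ge1$, or with vanishing effective drift when $q<1$. This is harmless for the paper, which only uses $q\in[1,2)$, but no argument can prove the statement exactly as written.

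In the case that matters, $1\le\beta<q<2$, your sketch does not go beyond the citation. The dyadic route stops at the compensator drift. The embedding route asserts that the $q$-variation is ``governed by'' $\sum_{s\le T}|\Delta L_s|^q$, which is precisely the nontrivial content of the theorem. Jump summability alone cannot control $q$-variation for $q<2$, as Brownian motion shows; what is really used is the absence of a Gaussian part. If you want the dyadic route to close, exploit the slack in $q>\beta$. For $q'\in(\beta,q)$ one has $\lambda_j\lesssim 2^{jq'}$. Now block time into intervals of length $1/\lambda_j$: bound the contributions inside and at the ends of blocks by the total variation there, and bound the skeleton random walk across blocks by its modulus of continuity. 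This gives $\mathbb E\|M^{(j)}\|_q^q\lesssim(1+\lambda_jT)^{1+\varepsilon}2^{-jq}$, which decays geometrically in $j$ for small $\varepsilon$. Minkowski's inequality (valid for $q\ge1$) and lower semicontinuity of $q$-variation then give $\|M\|_q<\infty$ almost surely. Finally, your reduction to $q<2$ needs $\beta<2$. If $\beta=2$, then $q>2$ and the small-jump martingale is handled directly by L\'epingle's $p$-variation inequality for c\`adl\`ag martingales.
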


The above theorem implies, in particular, that for $\alpha<2$, $\alpha$-stable L\'evy processes (see \cite[Chapter 3.7]{cont_financial_2004}) have Blumenthal--Getoor index $\beta=\alpha$ and therefore finite $q$-variation for every $q>\alpha$. More generally, (generalized) tempered stable L\'evy processes, which often appear in financial modeling and in physics (see \cite[Chapter 3.7 and references therein]{cont_financial_2004}), also belong to this class. Their L\'evy measure admits a density of the form
\[
\nu(x) = \frac{c_+}{x^{1+\alpha_+}} e^{-\lambda_+ x} \mathbf{1}_{\{x>0\}} 
+ \frac{c_-}{|x|^{1+\alpha_-}} e^{-\lambda_- |x|} \mathbf{1}_{\{x<0\}},
\]
for some $\alpha_\pm \in (0,2)$, $c_\pm >0$ and $\lambda_\pm >0$. In particular, the Blumenthal--Getoor index is $\beta= \max \{\alpha_+, \alpha_- \} <2$. We also note that any independent sum of the above-mentioned L\'evy processes or fractional Brownian motions again has finite $q$-variation for $q<2$.

In Section~\ref{Section-measurability}, we show how one can rigorously randomize the RPDE solution to obtain an adapted stochastic process, which we interpret as the solution to the associated SPDE. In Section~\ref{Section-SPDE-Markov}, we then prove that, in the special case where $L$ is a L\'evy process of finite $q$-variation, the SPDE solution is a Markov process on $\mathrm{BUC}(\mathbb{R}^d; \mathbb{R}^k)$.

\subsection{A Notion of SPDE Solution}\label{Section-measurability}

Let $\eta : \Omega \rightarrow \mathrm{BUC}(\mathbb{R}^d; \mathbb{R}^k)$ be
$\mathcal{F}^L_{t, T}$-measurable. We consider a semilinear backward SPDE of the form
\begin{equation}
  \frac{\partial u}{\partial s} + \frac{1}{2} \mathrm{Tr} (\sigma \sigma^T
  \nabla_x^2 u) + b (s, x) \cdot \nabla_x u + f (s, x, u, \sigma^T \nabla_x u)
  + h (s, x, u) \diamond \tmop{dL} = 0, \label{SPDE}
\end{equation}
with terminal condition $u (t) = \eta$. We interpret \eqref{SPDE} pathwise as an RPDE. More precisely, for fixed $\omega\in\Omega$, we consider

\begin{align*}
  \frac{\partial u^{L (\omega)}}{\partial s} & + \frac{1}{2} \mathrm{Tr}
  (\sigma \sigma^T \nabla_x^2 u^{L (\omega)}) + b (s, x) \cdot \nabla_x u^{L
  (\omega)} \nonumber\\
  & + f (s, x, u^{L (\omega)}, \sigma^T \nabla_x u^{L (\omega)}) + h (s, x,
  u^{L (\omega)}) \diamond \tmop{dL} (\omega) = 0,
\end{align*}
with terminal condition $u^{L(\omega)}(t)=\eta(\omega)\in \mathrm{BUC}(\mathbb{R}^d;\mathbb{R}^k)$. We write $u^L(s; t,\eta)$ for the robust viscosity solution to emphasize its dependence on the terminal condition. For almost every $\omega$, there exists a solution $u^{L (\omega)} (s ; t, \eta
(\omega)) (x)$ (or simply $u^{L (\omega)}$ when there is no need to emphasize
the dependency on the terminal condition) to the above RPDE by Theorem
\ref{main-result}.

We then define
\[ u^L (s ; t, \eta) : \, \omega \mapsto u^{L (\omega)} (s ; {t}, \eta (\omega))
\]
and refer to $u^L$ as the solution to the semilinear SPDE \eqref{SPDE}. 

{
To justify this construction as an $E$-valued stochastic process, we
establish joint measurability of the solution in the driver and terminal
function. We use the rough-FBSDE representation and the
measurable selection result in \cite[Theorem~5.4]{becherer_rough_2026}.
}

\begin{proposition}
  \label{Prop-randomise}Suppose that either Assumptions 
$(\hyperlink{A:W}{W}, \hyperlink{A:b-sigma}{b}, \hyperlink{A:b-sigma}{\sigma},
\hyperlink{A:xi-f}{\xi}, \hyperlink{A:xi-f}{f},
\hyperlink{A:h'}{h'}, \hyperlink{A:hC}{h^C})$ 
or Assumptions \linebreak
$(\hyperlink{A:W}{W}, \hyperlink{A:b-sigma'}{b'}, \hyperlink{A:b-sigma'}{\sigma'},
\hyperlink{A:xi-f}{\xi}, \hyperlink{A:xi-f}{f},
\hyperlink{A:h}{h}, \hyperlink{A:hC}{h^C})$ hold{.
  For every $0\le s\le t\le T$, the map
  \[
   (W,\zeta)\longmapsto u^W(s;t,\zeta)
  \]
  from $(D^q([s,t];\mathbb R^e)\times E,
  \mathcal D_{s,t}\otimes\mathcal E)$ to $(E,\mathcal E)$ is
  measurable, where $\mathcal D_{s,t}$ is restricted to $D^q$ and
  $u^W$ is defined by the rough-FBSDE representation in
  Proposition~\ref{Prop-RFBSDE-Markov-flow}, with driver $W$ and
  terminal function $\zeta$ at time $t$.

  Consequently, let $t\in[0,T]$ and let $\eta:\Omega\to E$ be
  $\mathcal F^L_{t,T}$-measurable. Then the process
  $(u^L(s;t,\eta))_{s\in[0,t]}$ is adapted to the backward filtration
  $(\mathcal F^L_{s,T})_{s\in[0,t]}$. If $\eta\in E$ is deterministic,
  this process is adapted to $(\mathcal F^L_{s,t})_{s\in[0,t]}$.}
\end{proposition}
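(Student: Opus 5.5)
Since $\mathcal E=\sigma\{\eta\mapsto\eta(x):x\in\mathbb Q^d\}$, it suffices to show that for each fixed $x\in\mathbb Q^d$ the real map $(W,\zeta)\mapsto u^W(s;t,\zeta)(x)=Y_s^{s,x;t,\zeta}$ is $\mathcal D_{s,t}\otimes\mathcal E$-measurable. My plan is to obtain this by writing the map as a pointwise limit of maps that are clearly measurable, using the continuity results already proved.

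\textbf{Step 1: $\mathcal D_{s,t}$ agrees with the Borel $\sigma$-algebra of the metric used below.} By \cite[Theorem~11.5.2]{whitt_stochastic-process_2002}, $\mathcal D_{s,t}$ coincides with the Borel $\sigma$-algebra of the $M1$ topology. On $D^q$ I will compare this with the Borel $\sigma$-algebra of $\alpha_{q'}(\jmath\cdot,\jmath\cdot)$ for some $q'>q$ with $1/p+1/q'>1$.

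\textbf{Step 2: use continuity where it is available, and handle the terminal function separately.} The most direct route would be Proposition~\ref{Prop-Continuity}. It shows that $(\jmath W,\zeta)\mapsto\mathbf u$ is continuous for the uniform norm on $\zeta$. By Lemma~\ref{Lemma-localuniform}, the value at the initial point $s$ (a continuity point after restriction, cf.\ Lemma~\ref{restriction-alpha-metric}) depends continuously on the data.

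\emph{Obstacle in $\zeta$.} $\mathcal E$ is generated by the compact-open topology, not the uniform one. So I will approximate $\zeta$ through a measurable sequence of BUC functions that converge in a way Proposition~\ref{prop:uniform-pairwise-stability} can handle. A natural choice is mollified, cut-off versions $\zeta_m$ built from finitely many rational-point evaluations; each $\zeta_m$ is then measurable in $\zeta$. For fixed $W$, $Y_s^{s,x;t,\zeta_m}\to Y_s^{s,x;t,\zeta}$. This holds because $\zeta_m(X_t^{s,x})\to\zeta(X_t^{s,x})$ almost surely with uniform bounds, and the same double Picard argument as in Proposition~\ref{prop:uniform-pairwise-stability} gives stability under $L^2$-convergence of the terminal values. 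A pointwise limit of measurable maps is measurable.

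\textbf{Step 3: the main obstacle, measurability in $W$.} Proposition~\ref{Prop-Continuity} gives continuity in $W$ for the $\alpha_q$ topology. However, $D^q$ need not be separable in that topology, and its Borel sets need not coincide with $\mathcal D_{s,t}$.

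\emph{What I will try first.} Invoke the measurable selection result \cite[Theorem~5.4]{becherer_rough_2026}. It should provide $\mathcal D_{s,t}$-measurable approximating drivers $W^m$, for example piecewise linear interpolations on dyadic partitions, which depend only on finitely many coordinates and are therefore measurable. These satisfy $\alpha_{q'}(\jmath W^m,\jmath W)\to0$ for $q'>q$.

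Continuity of the solution map (Corollary~\ref{Cor-Skorokhod-general-drivers}, which holds in $D^{q'}$ with $q'$ close to $q$) then gives convergence of $u^{W^m}(s)(x)$ to $u^W(s)(x)$. This requires that $s$ be a continuity point, or else that we pass through $s_j\searrow s$ and $s_j\nearrow s$ and use the càglàd structure. Each $u^{W^m}$ is a continuous function of finitely many values of $W$, so it is measurable. Together with Step 2, this yields joint measurability.

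\textbf{Step 4: adaptedness.} For $s\le t$, the path $L|_{[s,t]}$ (with increments measured from a fixed point) is $\mathcal F^L_{s,t}$-measurable as a $D^q([s,t])$-valued variable, and $\eta$ is $\mathcal F^L_{t,T}$-measurable. Composing with the jointly measurable map above shows that $u^L(s;t,\eta)$ is $\mathcal F^L_{s,t}\vee\mathcal F^L_{t,T}\subset\mathcal F^L_{s,T}$-measurable. When $\eta$ is deterministic, it is $\mathcal F^L_{s,t}$-measurable.

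Since the solution depends on $W$ only through its increments, I will note that $u^W$ is invariant under adding constants to $W$. This makes measurability with respect to increment $\sigma$-algebras legitimate.
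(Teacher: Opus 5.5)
Your proposal is essentially sound, but it takes a different route from the paper.

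\textbf{The paper's route.} The paper never approximates. It applies the parametric measurable-selection theorem \cite[Theorem~5.4]{becherer_rough_2026} directly to the rough FBSDE, with parameter $(W,\zeta)\in D^q([s,t];\mathbb R^e)\times E$. The forward process does not depend on the parameter, and $\zeta(X_t^{s,x})$ is jointly measurable because evaluation $(\zeta,y)\mapsto\zeta(y)$ is jointly continuous in the compact-open topology. This yields a version $\widetilde Y^{W,\zeta}$ jointly measurable in $(\omega^B,W,\zeta)$. Since $Y_s^{s,x;t,\zeta}$ is a.s.\ deterministic, $u^W(s;t,\zeta)(x)=\mathbb E_B[\widetilde Y^{W,\zeta}_s]$, which is measurable by Fubini.

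That is what Theorem~5.4 actually provides. It does not supply approximating drivers, so your attribution of the $W^m$ to it is a misreading. Your dyadic piecewise-linear interpolations do not need it anyway.

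\textbf{Your route.} You realise the map as $\lim_m\lim_{m'}u^{W^{m'}}(s;t,\zeta_m)(x)$. Each approximant is a jointly continuous function of finitely many coordinates of $(W,\zeta)$, by Proposition~\ref{Prop-Continuity}. The inner limit uses Corollary~\ref{Cor-Skorokhod-general-drivers} with some $q'\in(q,p/(p-1))$, and the outer limit uses terminal stability.

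This buys something: it uses only results proved in the paper and gives a more constructive description of the solution map. The cost is that it leans on the full Skorokhod continuity theory, including $(h^C)$, which the paper's proof does not need. It also leaves three items for you to supply.
\begin{enumerate}[(i)]
\item You need a lemma that dyadic piecewise-linear interpolations of an arbitrary $W\in D^q$ converge in $\alpha_{q'}(\jmath\cdot,\jmath\cdot)$. The paper only asserts that some smooth approximation exists. For your specific measurable scheme you need $M1$-type uniform convergence, the bound $\sup_m\|W^m\|_q\lesssim\|W\|_q$, and interpolation between the two.
\item You need to note that the proof of Proposition~\ref{prop:uniform-pairwise-stability} uses only \eqref{eq:terminal-pairwise-data}. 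It therefore covers bounded pointwise convergence $\zeta_m\to\zeta$. Partition-of-unity interpolations on rational grids are a good choice, since they depend sup-norm continuously on finitely many evaluations.
\item You need to make the iterated limit explicit. Separate measurability in $W$ and in $\zeta$, as your Steps~2 and~3 are phrased, would not give joint measurability.
\end{enumerate}

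Two parts of your outline are unnecessary. Step~1 is superfluous, and the identification of Borel $\sigma$-algebras there could fail because of non-separability. You also need not worry whether $s$ is a continuity point. Reparametrizations in $\Lambda_{[s,t+\delta]}$ fix $s$ and $\tau^\delta(s)=s$, so $\alpha_\infty$-convergence gives convergence of initial values directly, as in Proposition~\ref{Prop-pointwise-conv}.

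Your adaptedness step coincides with the paper's.
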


\begin{proof}
{
Fix $0\le s<t\le T$. Since $\mathcal E$ is generated by spatial
evaluations, it suffices to show that
$(W,\zeta)\mapsto u^W(s;t,\zeta)(x)$ is jointly measurable for each
fixed $x\in\mathbb Q^d$. We prove this using the rough-FBSDE representation in 
Proposition~\ref{Prop-RFBSDE-Markov-flow} and a measurable selection argument 
for this rough-FBSDE in
\cite[Theorem~5.4]{becherer_rough_2026}, where we treat $(W,\zeta)$ as a parameter in
$D^q([s,t];\mathbb R^e)\times E$ equipped with
$\mathcal D_{s,t}\otimes\mathcal E$.

We now verify the assumptions of that theorem. The forward process does
not depend on $(W,\zeta)$. Since the evaluation
map $(\zeta,y)\mapsto\zeta(y)$ is jointly continuous for the
compact-open topology, the terminal value $\zeta(X_t^{s,x}(\omega^B))$ is jointly
measurable in $(\omega^B,W,\zeta)$, where $\omega^B \in \Omega^B$ is from the
underlying probability space in the FBSDE construction. The generators
$f(r,X_r^{s,x},\cdot,\cdot)$ and $h(r,X_r^{s,x},\cdot)$ have the
required measurability, and the remaining assumptions were verified
in the proof of Proposition~\ref{Prop-RFBSDE-wellposedness}.
The theorem therefore gives a version
$\widetilde Y^{W,\zeta}$ jointly measurable in $(\omega^B,W,\zeta)$.

With the Brownian filtration started at $s$, the initial value is
almost surely equal to the deterministic quantity $u^W(s;t,\zeta)(x)$
by Proposition~\ref{Prop-RFBSDE-Markov-flow}. Hence
\[
 u^W(s;t,\zeta)(x)
 =\mathbb E_B[\widetilde Y_s^{W,\zeta}]
\]
is jointly measurable in $(W,\zeta)$, where $\mathbb E_B$ denotes
expectation over the Brownian space of the FBSDE. This proves the
first assertion for $s<t$; for $s=t$, the map is simply
$(W,\zeta)\mapsto\zeta$.

To obtain adaptedness, we now substitute the random driver and terminal
function. Since the equation depends only on driver increments, its
solution is unchanged when we subtract $L_s$ from the driver. The
resulting path $(L_r-L_s)_{r\in[s,t]}$ is
$\mathcal F^L_{s,t}$-measurable. Together with the
$\mathcal F^L_{t,T}$-measurable terminal function $\eta$, this gives an
$\mathcal F^L_{s,T}$-measurable pair of inputs. Joint measurability of
the solution map therefore shows that
$\omega\mapsto u^L(s;t,\eta)(\omega)
=u^{L(\omega)}(s;t,\eta(\omega))$ is
$\mathcal F^L_{s,T}$-measurable. If $\eta$ is deterministic, the pair
of inputs, and hence the solution at $s$, are already
$\mathcal F^L_{s,t}$-measurable.
}
\end{proof}

\subsection{SPDE Solution as Markov Process} \label{Section-SPDE-Markov}
In this section, we restrict our attention to the case where $L$ is a L\'evy process of finite $q$-variation for $q<2$, since we will rely on the fact that $L$ has independent increments. We also define the time-reversed process $\tilde{L}_s \assign L_T - L_{T - s}$. One can easily verify that $(\tilde{L}_s)_{s \in [0, T]}$ is a L\'evy process adapted to the filtration $(\mathcal{F}^{\tilde{L}}_{0, s})_{s \in [0,T]}$, where $\mathcal{F}^{\tilde{L}}_{s, t} \assign \sigma \big( \tilde{L}_u - \tilde{L}_v, \, s \leqslant u \leqslant v \leqslant t \big)$, $0 \leqslant s \leqslant t \leqslant T$, denotes the $\sigma$-algebra generated by the increments of $\tilde{L}$ on $[s,t]$. By construction, $\mathcal{F}^{\tilde{L}}_{s, t} = \mathcal{F}^L_{T - t, T - s}$. 

Next, define the forward-in-time process $v^L (t ; s, \eta) \assign u^L (T - t
; T - s, \eta)$. Then by Proposition~\ref{Prop-randomise}, if
$\eta$ is
$\mathcal{F}^{\tilde{L}}_{0,s}$-measurable, the process
\(
\big(v^{L}(t;s,\eta)\big)_{t\in[s,T]}
\)
is adapted to the (forward) filtration
$
(\mathcal{F}^{\tilde{L}}_{0,t})_{t\in[s,T]}$. If, moreover, $\eta$ is deterministic, then
\(
\big(v^{L}(t;s,\eta)\big)_{t\in[s,T]}
\)
is adapted to the filtration
$
(\mathcal{F}^{\tilde{L}}_{s,t})_{t\in[s,T]}.
$
In this section we prove that the process $v^L (s ; 0, {\xi})$, $0 \leqslant s \leqslant T$,
is a Markov process on ${(E,\mathcal E)}$ with respect to the
filtration $(\mathcal{F}^{\tilde{L}}_{0, s})_{s \in [0, T]}$. For $\varphi \in
{B_b(E,\mathcal E)}$, $\xi \in \mathrm{BUC}(\mathbb{R}^d; \mathbb{R}^k)$
and $0 \leqslant s \leqslant t \leqslant T$ we define the transition operators
\[ P_{s, t} \varphi (\xi) \assign \mathbb{E} [\varphi (v^L (t ; s, \xi))]. \]
To establish the Markov property, it suffices to show that
\[ \mathbb{E} [\varphi (v^L (t ; 0, \xi))  | \nobracket
   \mathcal{F}^{\tilde{L}}_{0, s}] = P_{s, t} \varphi (v^L (s ; 0, \xi)) \]
for all $\varphi \in {B_b(E,\mathcal E)}$ and $0 \leqslant
s \leqslant t \leqslant T$.

\begin{theorem}\label{Theorem-Markov}
  Suppose that either Assumptions 
$(\hyperlink{A:W}{W}, \hyperlink{A:b-sigma}{b}, \hyperlink{A:b-sigma}{\sigma}, 
\hyperlink{A:xi-f}{\xi}, \hyperlink{A:xi-f}{f},
\hyperlink{A:h'}{h'}, \hyperlink{A:hC}{h^C})$ 
or Assumptions \linebreak
$(\hyperlink{A:W}{W}, \hyperlink{A:b-sigma'}{b'}, \hyperlink{A:b-sigma'}{\sigma'}, 
\hyperlink{A:xi-f}{\xi}, \hyperlink{A:xi-f}{f},
\hyperlink{A:h}{h}, \hyperlink{A:hC}{h^C})$ hold. Then $v^L (s ; 0, \xi)$, $0 \leqslant s
  \leqslant T$, is a Markov process on ${(E,\mathcal E)}$ with
  respect to the filtration $(\mathcal{F}^{\tilde{L}}_{0, s})_{s \in [0, T]}$
  with transition operator $P_{s, t}$, $0 \leqslant s \leqslant t \leqslant
  T$.
\end{theorem}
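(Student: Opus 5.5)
The plan is the standard "flow property plus independent increments" argument, with the measurability from Proposition~\ref{Prop-randomise} used to freeze the past. Fix $0\le s\le t\le T$, $\varphi\in B_b(E,\mathcal E)$ and $\xi\in E$. By the backward flow property in Theorem~\ref{main-result}(b), applied pathwise for almost every $\omega$, we have
\[
u^{L(\omega)}(T-t;T,\xi)=u^{L(\omega)}\bigl(T-t;T-s,u^{L(\omega)}(T-s;T,\xi)\bigr).
\]
In forward notation this reads $v^L(t;0,\xi)=v^L(t;s,v^L(s;0,\xi))$ almost surely.

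Next I would write this composition as $\Psi(L|_{[T-t,T-s]},\eta)$, where $\eta:=v^L(s;0,\xi)$ and $\Psi(W,\zeta):=u^W(T-t;T-s,\zeta)$. By Proposition~\ref{Prop-randomise}, $\Psi$ is $\mathcal D_{T-t,T-s}\otimes\mathcal E$-measurable. Since the solution depends only on driver increments, I may replace the driver by $(L_r-L_{T-t})_{r\in[T-t,T-s]}$, which is $\mathcal F^L_{T-t,T-s}=\mathcal F^{\tilde L}_{s,t}$-measurable. On the other hand, $\eta$ is $\mathcal F^{\tilde L}_{0,s}=\mathcal F^L_{T-s,T}$-measurable, again by Proposition~\ref{Prop-randomise} with deterministic terminal condition $\xi$.

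The key probabilistic input is that $\mathcal F^{\tilde L}_{s,t}$ is independent of $\mathcal F^{\tilde L}_{0,s}$, because $\tilde L$ is a L\'evy process with independent increments. The freezing lemma for conditional expectations (a function jointly measurable in an independent variable and a measurable one) then gives
\[
\mathbb E\bigl[\varphi(\Psi(L,\eta))\,\big|\,\mathcal F^{\tilde L}_{0,s}\bigr]=g(\eta),
\qquad g(\zeta):=\mathbb E[\varphi(\Psi(L,\zeta))]=\mathbb E[\varphi(v^L(t;s,\zeta))]=P_{s,t}\varphi(\zeta).
\]
Here $g$ is $\mathcal E$-measurable by Fubini, since $\varphi\circ\Psi$ is bounded and jointly measurable. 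This yields the Markov identity. Two routine points remain: $P_{s,t}$ is well defined as a measurable operator, and the Chapman--Kolmogorov relation $P_{r,s}P_{s,t}=P_{r,t}$ follows from the same identity applied with $r\le s\le t$ and the tower property.

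The main obstacle is the careful justification of the measurability and null-set issues in the pathwise flow identity. The flow property holds for every $W\in D^q$ via Corollary~\ref{Cor-backward-flow-property}, which requires no $D^{0,q}$ assumption, so it holds on the full-measure set where $L(\omega)\in D^q$. One must also check that the path-space $\sigma$-algebra restricted to $D^q$ contains the law of the increment path, and that the increment path and $\eta$ are measurable with respect to the stated $\sigma$-algebras. I would handle this by modifying $L$ on a null set to lie in $D^q$ everywhere, using completeness of $(\Omega,\mathcal F,\mathbb P)$.
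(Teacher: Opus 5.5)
Your proposal is correct and follows the paper's proof essentially step by step: the pathwise flow identity $v^L(t;0,\xi)=v^L\bigl(t;s,v^L(s;0,\xi)\bigr)$ from Corollary~\ref{Cor-backward-flow-property}, joint measurability from Proposition~\ref{Prop-randomise} together with the reduction to driver increments, independence of $\mathcal F^{\tilde L}_{s,t}=\mathcal F^L_{T-t,T-s}$ from $\mathcal F^{\tilde L}_{0,s}$, and the freezing lemma, which the paper invokes as the conditioning formula \cite[Theorem~8.5(ii)]{kallenberg_foundations_2021}. Your additional remarks on Chapman--Kolmogorov and on handling the null set where $L\notin D^q$ are harmless refinements that the paper leaves implicit.
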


\begin{remark}
  By an analogous proof it is also possible to see $u^L (s ; T, \xi)$, $0
  \leqslant s \leqslant T$, as a backward Markov process (see \cite[Chapter 1.7]{kunita_stochastic_2019}) on
  ${(E,\mathcal E)}$ with respect to the backward filtration
  $(\mathcal{F}^L_{s, T})_{s \in [0, T]}$ with backward transition operator
  $Q_{s, t}$, $0 \leqslant s \leqslant t \leqslant T$ now defined as
  \[ \mathbb{E} [\varphi (u^L (s ; T, \xi))  | \nobracket \mathcal{F}^L_{t,
     T}] = Q_{s, t} \varphi (u^L (t ; T, \xi)), \quad \forall \varphi \in {B_b(E,\mathcal E)}, \, 0 \leqslant s \leqslant t \leqslant
     T. \]
\end{remark}

\begin{proof}
{
Fix $0\le s<t\le T$ and $\varphi\in B_b(E,\mathcal E)$.
By the backward flow property in
Corollary~\ref{Cor-backward-flow-property}, we have pathwise
\[
 v^L(t;0,\xi)=v^L\bigl(t;s,v^L(s;0,\xi)\bigr).
\]
Since $v^L(s;0,\xi)$ is $\mathcal F^{\tilde L}_{0,s}$-measurable by
Proposition~\ref{Prop-randomise}, it suffices to show that
\begin{equation}\label{step-markov-property}
 \mathbb E[\varphi(v^L(t;s,\eta))\mid\mathcal F^{\tilde L}_{0,s}]
 =P_{s,t}\varphi(\eta)
\end{equation}
for every $\mathcal F^{\tilde L}_{0,s}$-measurable $E$-valued random
variable $\eta$.

By Proposition~\ref{Prop-randomise} and the fact that the equation depends
only on driver increments, the map
\[
 (\omega,\zeta)\longmapsto\varphi\bigl(v^L(t;s,\zeta)(\omega)\bigr)
\]
is $\mathcal F^L_{T-t,T-s}\otimes\mathcal E$-measurable. Taking
expectation in $\omega$ therefore shows that
$P_{s,t}\varphi\in B_b(E,\mathcal E)$.

We apply the conditioning formula in
\cite[Theorem~8.5(ii)]{kallenberg_foundations_2021}.
In our setting, the kernel is the conditional law of the driver segment
$(L_r-L_{T-t})_{r\in[T-t,T-s]}$ given
$\mathcal F^{\tilde L}_{0,s}=\mathcal F^L_{T-s,T}$, which equals its
unconditional law by independence of increments. Together with the
joint measurability established above, this gives
\eqref{step-markov-property}. Taking $\eta=v^L(s;0,\xi)$ proves
the Markov property. The case $s=t$ is immediate, since
$v^L(s;s,\zeta)=\zeta$.
}
\end{proof}

\appendix

\section{Stability of the time-extended forward SDE}
\label{appendix-forward-SDE}

This appendix provides the moment bounds and stability estimates for the
forward SDE needed in Section~\ref{section-proof}; see
Propositions~\ref{Prop-random-initial-SDE-estimates} and
\ref{Prop-random-initial-SDE-stability}. We also derive bounds for the random
functions obtained by composing $h$ with the forward process $X$ in
Lemmas~\ref{Lemma-hX-Assumption-Ad} and
\ref{Lemma-hX-Assumption-Bg}.

Let $S$, $c$, and $M$ be as in Subsection~\ref{Rough-FSBDEs}. For
$t\in[0,S]$ and
$\zeta\in L^2(\Omega,\mathcal F_t^M;\mathbb R^d)$, consider the forward SDE
\begin{equation}
 X_s^{t,\zeta}
 =\zeta+\int_t^s b(r,X_r^{t,\zeta})\,dc_r
 +\int_t^s\sigma(r,X_r^{t,\zeta})\,dM_r,
 \qquad s\in[t,S].
 \label{extended-forward-SDE}
\end{equation}

\begin{proposition}\label{Prop-random-initial-SDE-estimates}
Suppose that Assumptions
$(\hyperlink{A:b-sigma}{b},\hyperlink{A:b-sigma}{\sigma})$ hold on $[0,S]$.
Then, for every $t\in[0,S]$,
$\zeta,\zeta'\in L^2(\Omega,\mathcal F_t^M;\mathbb R^d)$, and
$s\in[t,S]$, the corresponding solutions of \eqref{extended-forward-SDE}
satisfy
\begin{align}
  \mathbb{E}_t\left[
    \sup_{r\in[t,s]}|X^{t,\zeta}_r|^2
  \right]
  &\leqslant C_{S,c_S,b,\sigma}(1+|\zeta|^2),
  \label{random-X-moment}\\
  \mathbb{E}_t\left[
    \sup_{r\in[t,s]}|X^{t,\zeta}_r-\zeta|^2
  \right]
  &\leqslant
  C_{S,c_S,b,\sigma}(1+|\zeta|^2)(c_s-c_t),
  \label{random-X-increment}\\
  \mathbb{E}_t\left[
    \sup_{r\in[t,s]}
    |X^{t,\zeta}_r-X^{t,\zeta'}_r|^2
  \right]
  &\leqslant C_{S,c_S,b,\sigma}|\zeta-\zeta'|^2.
  \label{random-X-stability-sup}
\end{align}
For $p>2$, one also has
\begin{equation}
  \mathbb{E}_t\left[
    \|X^{t,\zeta}\|_{p;[t,s]}^2
  \right]^{1/2}
  \leqslant
  C_{S,c_S,p,b,\sigma}(1+|\zeta|)
  \bigl((c_s-c_t)+(c_s-c_t)^{1/2}\bigr).
  \label{random-X-p-variation}
\end{equation}
\end{proposition}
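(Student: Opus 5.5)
The approach is to reduce everything to the classical Lipschitz SDE theory by viewing \eqref{extended-forward-SDE} as an SDE driven by the continuous semimartingale $(c,M)$ with $\langle M\rangle=c\,I_d$. Since $c$ is continuous and non-decreasing, existence and uniqueness of a strong solution follow from the standard Picard argument, with $dr$ replaced by $dc_r$. Conditional estimates given $\mathcal F_t^M$ are then obtained by working on $\Omega$ with $\zeta$ frozen. Concretely, I would use the regular conditional probability given $\mathcal F_t^M$, or equivalently the independence of the increments of $M$ after $t$ from $\mathcal F^M_t$. This reduces each bound to the deterministic initial value $\zeta=x$, and the constants then enter as $C(1+|x|^2)$.

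\textbf{Moment bound and stability.} For \eqref{random-X-moment}, I write
\[
|X_r-\zeta|\le \int_t^r|b(v,X_v)|\,dc_v+\Bigl|\int_t^r\sigma(v,X_v)\,dM_v\Bigr|.
\]
Cauchy--Schwarz applied to the $dc$-integral gives a factor $c_S$. Conditional BDG bounds the martingale term by $\mathbb E_t\bigl[\int_t^s|\sigma(v,X_v)|^2\,dc_v\bigr]$. The linear growth that follows from Lipschitz continuity (together with continuity in $t$ on the compact interval, so that $|b(t,0)|+|\sigma(t,0)|$ is bounded) then yields
\[
\phi(s)\le C(1+|\zeta|^2)+C\int_t^s\phi(v)\,dc_v,\qquad \phi(s):=\mathbb E_t\Bigl[\sup_{r\le s}|X_r|^2\Bigr].
\]
A Gronwall inequality with respect to the continuous measure $dc$, which gives the factor $e^{Cc_S}$, closes the bound. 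I will need a localisation by stopping times to ensure $\phi$ is finite a priori, removed afterwards by Fatou. For \eqref{random-X-stability-sup} the same argument applies to the difference $X^{t,\zeta}-X^{t,\zeta'}$, using the Lipschitz bounds directly.

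\textbf{Increment bound.} For \eqref{random-X-increment} I reuse the first step. The drift contributes at most $(c_s-c_t)\int_t^s|b|^2\,dc$, and the martingale contributes $\int_t^s|\sigma|^2\,dc$. Each integrand is bounded by $C(1+\sup|X|^2)$, and taking $\mathbb E_t$ and using \eqref{random-X-moment} gives $C(1+|\zeta|^2)(c_s-c_t)(1+c_S)$.

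\textbf{$p$-variation.} For \eqref{random-X-p-variation} I split $X$ into its drift and martingale parts. The drift part has $p$-variation at most its total variation $\int_t^s|b|\,dc$. By Cauchy--Schwarz and \eqref{random-X-moment}, this has conditional $L^2$ norm at most $C(1+|\zeta|)(c_s-c_t)$. For the martingale part I apply the $p$-variation BDG inequality \cite[Theorem~14.12]{friz_multidimensional_2010}, valid since $p>2$, which gives
\[
\mathbb E_t\bigl[\|\textstyle\int\sigma\,dM\|_{p;[t,s]}^2\bigr]\le C_p\,\mathbb E_t\Bigl[\int_t^s|\sigma|^2\,dc\Bigr]\le C(1+|\zeta|^2)(c_s-c_t).
\]
Taking square roots and summing the two parts yields the claim. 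I expect the main obstacle to be the conditional, random-initial-value formulation, that is, making the conditional BDG and Gronwall steps rigorous uniformly in $\omega$. I would handle this via the freezing/regular-conditional-probability reduction to deterministic $x$, combined with the measurability of $x\mapsto X^{t,x}$ coming from the stability estimate.
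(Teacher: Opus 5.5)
Your proposal is correct and follows essentially the same route as the paper. Both arguments use linear growth from the Lipschitz and continuity assumptions, Cauchy--Schwarz on the $dc$-integral, and conditional BDG on the $dM$-integral, closed by a Stieltjes Gronwall inequality in $dc$ for \eqref{random-X-moment} and \eqref{random-X-stability-sup}; they then reuse \eqref{random-X-moment} for \eqref{random-X-increment} and split into drift and martingale parts, applying \cite[Theorem~14.12]{friz_multidimensional_2010}, for \eqref{random-X-p-variation}. The only difference is your preliminary reduction to deterministic initial values via independence of the increments of $M=B\circ c$ from $\mathcal F_t^M$, together with localisation; the paper skips this and applies conditional BDG and Gronwall directly, pointwise almost surely, with the random $\zeta$, so your extra step just makes that conditional Gronwall argument rigorous.
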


\begin{proof}
The Lipschitz and local boundedness assumptions imply the linear-growth
bound
\[
  |b(r,x)|+|\sigma(r,x)|
  \leqslant C_{b,\sigma}(1+|x|).
\]
By the conditional Burkholder--Davis--Gundy inequality, for $u\in[t,s]$,
\begin{align*}
  \mathbb{E}_t\left[
    \sup_{r\in[t,u]}|X^{t,\zeta}_r|^2
  \right]
  \leqslant{}&
  3|\zeta|^2
  +3(c_u-c_t)\int_t^u
    \mathbb{E}_t[|b(r,X^{t,\zeta}_r)|^2]\,dc_r\\
  &+C\int_t^u
    \mathbb{E}_t[|\sigma(r,X^{t,\zeta}_r)|^2]\,dc_r\\
  \leqslant{}&
  C(1+|\zeta|^2)
  +C\int_t^u
    \mathbb{E}_t\left[
      \sup_{v\in[t,r]}|X^{t,\zeta}_v|^2
    \right]\,dc_r.
\end{align*}
Applying the Stieltjes version of Gronwall's inequality pointwise almost
surely proves
\eqref{random-X-moment}. Applying
the same argument to
\[
  X^{t,\zeta}_r-\zeta
  =
  \int_t^r b(v,X^{t,\zeta}_v)\,dc_v
  +\int_t^r\sigma(v,X^{t,\zeta}_v)\,dM_v
\]
and using \eqref{random-X-moment} gives
\begin{align*}
  \mathbb{E}_t\left[
    \sup_{r\in[t,s]}|X^{t,\zeta}_r-\zeta|^2
  \right]
  &\leqslant
  C(c_s-c_t)\int_t^s
    \mathbb{E}_t[|b(r,X^{t,\zeta}_r)|^2]\,dc_r +C\int_t^s
    \mathbb{E}_t[|\sigma(r,X^{t,\zeta}_r)|^2]\,dc_r\\
  &\leqslant
  C_{S,c_S,b,\sigma}(1+|\zeta|^2)(c_s-c_t),
\end{align*}
which is \eqref{random-X-increment}. Finally, Lipschitz continuity yields
\begin{align*}
  \mathbb{E}_t\left[
    \sup_{r\in[t,u]}
    |X^{t,\zeta}_r-X^{t,\zeta'}_r|^2
  \right]
  \leqslant
  C|\zeta-\zeta'|^2
  +C_{b,\sigma}\int_t^u
    \mathbb{E}_t\left[
      \sup_{v\in[t,r]}
      |X^{t,\zeta}_v-X^{t,\zeta'}_v|^2
    \right]\,dc_r.
\end{align*}
Another pointwise application of Gronwall's inequality proves
\eqref{random-X-stability-sup}.

Finally,
\begin{align*}
  \|X^{t,\zeta}\|_{p;[t,s]}
  \leqslant{}&
  \left\|\int_t^\cdot b(r,X^{t,\zeta}_r)\,dc_r
  \right\|_{p;[t,s]}
  +
  \left\|\int_t^\cdot\sigma(r,X^{t,\zeta}_r)\,dM_r
  \right\|_{p;[t,s]}.
\end{align*}
The finite-variation bound and the $p$-variation version of the
Burkholder--Davis--Gundy inequality
\cite[Theorem~14.12]{friz_multidimensional_2010}, together with \eqref{random-X-moment}, imply
\begin{align*}
  \mathbb{E}_t\left[
    \|X^{t,\zeta}\|_{p;[t,s]}^2
  \right]^{1/2}
  &\leqslant
  \mathbb{E}_t\left[
    \left(\int_t^s|b(r,X^{t,\zeta}_r)|\,dc_r\right)^2
  \right]^{1/2}
  +C_{S,p}
  \mathbb{E}_t\left[
    \int_t^s|\sigma(r,X^{t,\zeta}_r)|^2\,dc_r
  \right]^{1/2} \\
  &\leqslant 
  C_{S,c_S,p,b,\sigma}(1+|\zeta|)
  \bigl((c_s-c_t)+(c_s-c_t)^{1/2}\bigr),
\end{align*}
which proves \eqref{random-X-p-variation}.
\end{proof}

\begin{proposition}\label{Prop-random-initial-SDE-stability}
Suppose that Assumptions
$(\hyperlink{A:b-sigma}{b},\hyperlink{A:b-sigma}{\sigma})$ hold on $[0,S]$.
Then, for every $t\in[0,S]$,
$\zeta,\zeta'\in L^2(\Omega,\mathcal F_t^M;\mathbb R^d)$, the corresponding
solutions of \eqref{extended-forward-SDE} satisfy, for every $p>2$,
\begin{align}
  \mathbb{E}\left[
    \|X^{t,\zeta}-X^{t,\zeta'}\|_{p;[t,S]}^2
  \right]^{1/2}
  &+
  \mathbb{E}\left[
    \|X^{t,\zeta}-X^{t,\zeta'}\|_{\infty;[t,S]}^2
  \right]^{1/2}
  \leq
  C_{S,c_S,p,b,\sigma}\|\zeta-\zeta'\|_{L^2}.
  \label{random-initial-X-quantitative}
\end{align}
The constant is independent of $t$. In particular, it is uniform over
families of clocks for which $c_S$ is uniformly bounded.
\end{proposition}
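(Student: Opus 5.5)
The plan is to set $\Delta_r := X^{t,\zeta}_r - X^{t,\zeta'}_r$. This process solves
\[
\Delta_s = (\zeta-\zeta') + \int_t^s \bigl(b(r,X^{t,\zeta}_r)-b(r,X^{t,\zeta'}_r)\bigr)\,dc_r + \int_t^s \bigl(\sigma(r,X^{t,\zeta}_r)-\sigma(r,X^{t,\zeta'}_r)\bigr)\,dM_r .
\]
The uniform part comes directly from \eqref{random-X-stability-sup} of Proposition~\ref{Prop-random-initial-SDE-estimates} with $s=S$. Taking unconditional expectations, via the tower property, gives
\[
\mathbb E\bigl[\|\Delta\|_{\infty;[t,S]}^2\bigr]\le C\,\mathbb E|\zeta-\zeta'|^2 .
\]
The constant is $C_{S,c_S,b,\sigma}$. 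In that proof it came from a Stieltjes Gronwall argument against $dc$, so it depends on $c$ only through $c_S$ and not on $t$.

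For the $p$-variation part, the constant initial term $\zeta-\zeta'$ drops out, and I would bound the two integrals separately. The drift integral has finite variation, so its $p$-variation is at most
\[
\int_t^S |b(r,X^{t,\zeta}_r)-b(r,X^{t,\zeta'}_r)|\,dc_r \le C_{b,\sigma}\, c_S\, \|\Delta\|_{\infty;[t,S]} .
\]
Its $L^2$ norm is then controlled by the uniform estimate. For the martingale integral, the $p$-variation BDG inequality \cite[Theorem~14.12]{friz_multidimensional_2010} applies because $p>2$. It gives
\[
\mathbb E\Bigl[\Bigl\|\int_t^\cdot(\sigma(r,X^{t,\zeta}_r)-\sigma(r,X^{t,\zeta'}_r))\,dM_r\Bigr\|_{p;[t,S]}^2\Bigr]\le C_p\,\mathbb E\Bigl[\int_t^S|\Delta_r|^2\,dc_r\Bigr]\le C_p C_{b,\sigma}^2 c_S\,\mathbb E\bigl[\|\Delta\|_{\infty}^2\bigr].
\]
Here the quadratic variation of the integral against $M=B\circ c$ is $\int |\cdot|^2\,dc$. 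Summing the two pieces and applying Minkowski yields \eqref{random-initial-X-quantitative}.

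The only point needing care is the uniformity of the constants. The BDG constant $C_p$ is universal: it depends neither on the clock nor on the interval, since $M$ is a continuous martingale and the inequality is stated for general continuous local martingales. All other constants involve only $c_S$, $S$ and the Lipschitz constants. They are therefore independent of $t$ and uniform over clocks with $\sup c_S<\infty$, which is exactly the uniformity needed in Proposition~\ref{prop:uniform-pairwise-stability}. I expect no real obstacle. The mildest subtlety is checking that the Gronwall step in \eqref{random-X-stability-sup} indeed produces a constant of the form $e^{C c_S}$, and this follows from the Stieltjes Gronwall inequality for the continuous nondecreasing integrator $c$.
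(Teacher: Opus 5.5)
Your proof is correct. Its first half matches the paper: the paper also bounds the drift part by $(c_u-c_t)\,\mathbb E[\|D\|_{\infty;[t,u]}^2]^{1/2}$ and the martingale part, via the $p$-variation BDG inequality, by $(c_u-c_t)^{1/2}\,\mathbb E[\|D\|_{\infty;[t,u]}^2]^{1/2}$, so the $p$-variation of the difference is controlled by its uniform norm.

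The two routes differ in how the uniform norm is then bounded. You import the conditional sup-norm estimate \eqref{random-X-stability-sup} from Proposition~\ref{Prop-random-initial-SDE-estimates} and take expectations. That estimate comes from Gronwall against $dc$, with a constant of the form $Ce^{Cc_S}$. The proof then finishes in one step, and uniformity in $t$ and over clocks with bounded $c_S$ is transparent.

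The paper does not use \eqref{random-X-stability-sup} here. It inserts $\|D\|_{\infty}\le|\zeta-\zeta'|+\|D\|_{p}$ into the $p$-variation-by-sup bound. It then partitions $[t,S]$ into $N$ pieces with clock increments at most a small $\delta$, so that $N$ depends only on $c_S$ and $\delta$. On each piece it absorbs the $p$-variation term and iterates.

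That local-to-global scheme is self-contained at the level of $p$-variation and gives the uniform bound as a by-product. It would also work where a direct sup-norm Gronwall argument is unavailable. The cost is extra bookkeeping: the basic estimate must be restarted at each $t_j$ with $|D_{t_j}|$ in place of $|\zeta-\zeta'|$, and the $L^2$ norms of $D_{t_j}$ must be chained. Since \eqref{random-X-stability-sup} is already proved in the preceding proposition, your route is the more economical one.
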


\begin{proof}
Set $D\assign X^{t,\zeta}-X^{t,\zeta'}$. For $s\in[t,S]$,
\[
  D_s
  =
  \zeta-\zeta'
  +\int_t^s
    \bigl(b(r,X^{t,\zeta}_r)-b(r,X^{t,\zeta'}_r)\bigr)\,dc_r
  +\int_t^s
    \bigl(\sigma(r,X^{t,\zeta}_r)
          -\sigma(r,X^{t,\zeta'}_r)\bigr)\,dM_r.
\]
For every $u\in[t,S]$, the finite-variation estimate and the
Lipschitz continuity of $b$ give
\begin{align*}
  \mathbb{E}\left[
    \left\|
      \int_t^\cdot
        \bigl(b(r,X^{t,\zeta}_r)-b(r,X^{t,\zeta'}_r)\bigr)\,dc_r
    \right\|_{p;[t,u]}^2
  \right]^{1/2}\leqslant
  C_{b,\sigma,S,p}(c_u-c_t)
  \mathbb{E}\left[
    \|D\|_{\infty;[t,u]}^2
  \right]^{1/2}.
\end{align*}
Similarly, the Burkholder--Davis--Gundy inequality and the Lipschitz
continuity of $\sigma$ yield
\begin{align*}
  \mathbb{E}\left[
    \left\|
      \int_t^\cdot
        \bigl(\sigma(r,X^{t,\zeta}_r)
              -\sigma(r,X^{t,\zeta'}_r)\bigr)\,dM_r
    \right\|_{p;[t,u]}^2
  \right]^{1/2} \leqslant
  C_{b,\sigma,S,p}(c_u-c_t)^{1/2}
  \mathbb{E}\left[
    \|D\|_{\infty;[t,u]}^2
  \right]^{1/2}.
\end{align*}
Consequently,
\begin{align}
  \mathbb{E}\left[
    \|D\|_{p;[t,u]}^2
  \right]^{1/2}
  \leqslant{}&
  C_{b,\sigma,S,p}
  \bigl((c_u-c_t)+(c_u-c_t)^{1/2}\bigr)
  \mathbb{E}\left[
    \|D\|_{\infty;[t,u]}^2
  \right]^{1/2}.
  \label{random-initial-X-basic-estimate}
\end{align}
Moreover, $\|D\|_{\infty;[t,u]}
  \leqslant
  |\zeta-\zeta'|+\|D\|_{p;[t,u]}$.
Choose $\delta>0$ such that
$C_{b,\sigma,S,p}(\delta+\delta^{1/2})$ is sufficiently small. By
continuity and monotonicity of $c$, the interval $[t,S]$ admits a partition
$t=t_0<\cdots<t_N=S$ such that
$c_{t_{j+1}}-c_{t_j}\leq\delta$, with $N$ bounded only in terms of
$c_S$ and $\delta$. Applying \eqref{random-initial-X-basic-estimate} on
each subinterval, absorbing the local variation term, and iterating yields
\[
  \mathbb E\left[
    \|D\|_{p;[t,S]}^2
  \right]^{1/2}
  \leq
  C_{S,c_S,p,b,\sigma}\|\zeta-\zeta'\|_{L^2}.
\]
Finally,
\[
  \|D\|_{\infty;[t,S]}
  \leq
  |\zeta-\zeta'|+\|D\|_{p;[t,S]},
\]
which proves \eqref{random-initial-X-quantitative}.
\end{proof}

In the rough FBSDE, the backward component is viewed as a rough BSDE
whose random Young vector field is obtained by composing \(h\) with the
forward process. The following two lemmas verify the corresponding
well-posedness and stability assumptions in
\cite{becherer_rough_2026}. More precisely,
Lemma~\ref{Lemma-hX-Assumption-Ad} verifies conditions~A.d and B.f,
while Lemma~\ref{Lemma-hX-Assumption-Bg} verifies condition~B.g.

\begin{lemma}
\label{Lemma-hX-Assumption-Ad}
Suppose that Assumption~\hyperlink{A:h}{\((h)\)} holds on $[0,S]$, let
$a\in[0,S]$, and let
\(X:\Omega\times[a,S]\to\mathbb R^d\) be an adapted continuous process
such that $\|X\|_{p,2;[a,S]}<\infty$. Define
$h_t^X(y)\assign h(t,X_t,y)$. Then
\begin{equation}
\label{hX-Assumption-Ad}
\sup_{t\in[a,S]}
\bigl\||h_t^X|_{C_b^2}\bigr\|_{L^\infty}
+
[[h^X]]_{p,2;[a,S]}
+
[[\mathrm D_yh^X]]_{p,2;[a,S]}
\leq
C_h\bigl(1+\|X\|_{p,2;[a,S]}\bigr)
<\infty,
\end{equation}
where, for finite-dimensional Banach spaces \(W,V\) and a progressively
measurable random map $g:\Omega\times[a,S]\longrightarrow C_b(W,V),$
the quantity $[[g]]_{p,2;[a,S]}$ is defined by
\[
[[g]]_{p,2;[a,S]}
\assign
\sup_{s\in[a,S]}\esssup_{\omega\in\Omega}
\mathbb E_s
\left[
\|g(\omega)\|_{p;[s,S];C_b(W;V)}^2
\right]^{1/2},
\]
where $\|g(\omega)\|_{p;[s,S];C_b(W;V)}
\assign
\left(
\sup_{(t_i)\in\mathcal P([s,S])}
\sum_i\sup_{w\in W}
|g_{t_{i+1}}(\omega,w)-g_{t_i}(\omega,w)|^p
\right)^{1/p}.$
\end{lemma}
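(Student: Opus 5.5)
The plan is to treat the three terms on the left of \eqref{hX-Assumption-Ad} separately. Each is controlled by the deterministic bounds of Assumption~\hyperlink{A:h}{\((h)\)} together with the conditional $p$-variation of $X$.

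\emph{Pointwise term.} For fixed $(t,\omega)$, $h_t^X(\cdot)=h(t,X_t(\omega),\cdot)$. Hence $|h_t^X|_{C_b^2}\le\sup_{t,x}|h_t(x,\cdot)|_{C_b^2}\le C_h$ directly, uniformly in $\omega$ and $t$.

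\emph{Time-increment terms.} For $a\le s\le u<v\le S$ I split the increment by the triangle inequality:
\[
\sup_y|h(v,X_v,y)-h(u,X_u,y)|
\le \sup_{x,y}|h(v,x,y)-h(u,x,y)| + \sup_{y}|h(u,X_v,y)-h(u,X_u,y)| .
\]
The first piece is bounded by $\sup_{x,y}$ of the time increment of $h$ in $C_b(\mathbb R^d\times\mathbb R^k)$. The second is bounded by $\sup|\mathrm D_xh|\,|X_v-X_u|\le C_h|X_v-X_u|$ via the mean value theorem in $x$.

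Raising to the power $p$ and summing over a partition of $[s,S]$, Minkowski's inequality in $\ell^p$ gives
\[
\|h^X\|_{p;[s,S];C_b}\le\|h\|_{p;[s,S];C_b(\mathbb R^d\times\mathbb R^k)}+C_h\|X\|_{p;[s,S]} .
\]
Here I use that the $\rho$-variation with $\rho<p$ dominates the $p$-variation, so $\|h\|_{p}\le\|h\|_\rho\le C_h$. This is a pathwise, deterministic-constant bound.

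Squaring, taking $\mathbb E_s$ and applying Minkowski gives $\mathbb E_s[\|h^X\|_{p;[s,S]}^2]^{1/2}\le C_h+C_h\mathbb E_s[\|X\|^2_{p;[s,S]}]^{1/2}$. Taking ess sup and sup over $s$ then yields $[[h^X]]_{p,2;[a,S]}\le C_h(1+\|X\|_{p,2;[a,S]})$.

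The same argument applies verbatim to $\mathrm D_yh^X$. The time increment is controlled by $\|\mathrm D_yh\|_{\rho;C_b}\le C_h$, and the spatial increment by $\sup|\mathrm D_x\mathrm D_yh|\le C_h$, again via the mean value theorem in $x$ for the operator-valued map.

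\emph{Main subtlety.} The only care needed is measurability, and in particular that the suprema over $y$ inside the partition sums are measurable. By continuity of $h$ and $\mathrm D_yh$ in $y$, these suprema can be taken over a countable dense set. Progressive measurability of $h^X$ then follows from that of $X$ and the joint continuity of $h$ in $(x,y)$ for fixed $t$. Collecting the three bounds and adjusting the constant $C_h$ gives \eqref{hX-Assumption-Ad}.
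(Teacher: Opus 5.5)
Your proposal is correct and follows essentially the same route as the paper: the same add-and-subtract splitting of each increment into a time part controlled by $\|h\|_{\rho;C_b}\le C_h$ (using $\rho<p$) and a state part controlled by $\sup|\mathrm D_xh|\,\|X\|_{p}$, then conditional $L^2$ norms, with the identical argument for $\mathrm D_yh$ via the bound on $\mathrm D_x\mathrm D_yh$. The only differences are cosmetic: you insert the intermediate point $h(u,X_v,y)$ rather than the paper's $h(t_{i+1},X_{t_i},y)$, and you add a measurability remark that the paper omits.
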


\begin{proof}
We have $\sup_{t\in[a,S]}
\bigl\||h_t^X|_{C_b^2}\bigr\|_{L^\infty}
\leq C_h$, which follows directly from
\(D_y^kh_t^X(y)=D_y^kh(t,X_t,y)\), \(k=0,1,2\) and 
\(\sup_{t,x}|h_t(x,\cdot)|_{C_b^2}\leq C_h\). For
\(s\in[a,S]\) and every partition \((t_i)\in\mathcal P([s,S])\), we have
\[
h_{t_{i+1}}^X(y)-h_{t_i}^X(y)
=
h(t_{i+1},X_{t_{i+1}},y)-h(t_{i+1},X_{t_i},y)
+h(t_{i+1},X_{t_i},y)-h(t_i,X_{t_i},y).
\]
This implies the following $p$-variation bound
\[
\|h^X\|_{p;[s,S];C_b(\mathbb R^k)}
\leq
\sup_{t,x,y}|D_xh_t(x,y)|\,\|X\|_{p;[s,S]}
+\|h\|_{\rho;[s,S];C_b(\mathbb R^d\times\mathbb R^k)},
\]
where we used $\rho<p$ and hence that the $\rho$-variation seminorm controls
the $p$-variation seminorm.
By Assumption~\hyperlink{A:h}{\((h)\)}, taking conditional
\(L^2\)-norms yields
\[
[[h^X]]_{p,2;[a,S]}
\leq
C_h\bigl(1+\|X\|_{p,2;[a,S]}\bigr).
\]
The same argument applied to \(D_yh\), using the uniform bound on
\(D_xD_yh\), yields
\[
[[D_yh^X]]_{p,2;[a,S]}
\leq
C_h\bigl(1+\|X\|_{p,2;[a,S]}\bigr).
\]
This proves \eqref{hX-Assumption-Ad}.
\end{proof}

\begin{lemma}
\label{Lemma-hX-Assumption-Bg}
Suppose that Assumption~\hyperlink{A:h}{\((h)\)} holds on $[0,S]$.
For every $n\in\mathbb N$, let $a_n\in[0,S]$ and let
$X^{n,1},X^{n,2}$ be adapted continuous processes on $[a_n,S]$ satisfying
\begin{equation*}
\mathbb E\left[
  \|X^{n,1}-X^{n,2}\|_{\infty;[a_n,S]}^2
  +
  \|X^{n,1}-X^{n,2}\|_{p;[a_n,S]}^2
\right]\longrightarrow0
\end{equation*}
and
\begin{equation*}
\sup_{n\in\mathbb N}\sup_{i\in\{1,2\}}
\|X^{n,i}\|_{p,2;[a_n,S]}<\infty.
\end{equation*}
For
\[
h_t^{n,i}(y):=h(t,X_t^{n,i},y),
\qquad i\in\{1,2\},
\]
one has
\begin{equation*}
\sup_y
\|h^{n,1}(y)-h^{n,2}(y)\|_{p;[a_n,S]}
\longrightarrow0
\qquad\text{in probability},
\end{equation*}
as well as
\begin{align*}
\sup_{t\in[a_n,S]}\sup_y
|h_t^{n,1}(y)-h_t^{n,2}(y)|
&\longrightarrow0
&&\text{in }L^2,
\\
\sup_{t\in[a_n,S]}
|D_yh_t^{n,1}-D_yh_t^{n,2}|_\infty
&\longrightarrow0
&&\text{in }L^2.
\end{align*}
\end{lemma}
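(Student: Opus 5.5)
The plan is to split $h_t^{n,1}(y)-h_t^{n,2}(y)$ exactly as in the proof of Lemma~\ref{Lemma-hX-Assumption-Ad}. There the time and state increments were separated; here we separate the two processes instead. Write $\Delta X^n:=X^{n,1}-X^{n,2}$ and use the integral representation
\[
h_t^{n,1}(y)-h_t^{n,2}(y)=\int_0^1 \mathrm D_xh\bigl(t,X_t^{n,2}+\theta\Delta X_t^n,y\bigr)\,d\theta\;\Delta X_t^n .
\]

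The two uniform assertions are straightforward. The uniform bound on $\mathrm D_xh$ gives
\[
\sup_{t\in[a_n,S]}\sup_y|h_t^{n,1}(y)-h_t^{n,2}(y)|\le C_h\|\Delta X^n\|_{\infty;[a_n,S]},
\]
and the bound on $\mathrm D_x\mathrm D_yh$ gives the analogous estimate for $\mathrm D_yh^{n,1}-\mathrm D_yh^{n,2}$. Both right-hand sides tend to zero in $L^2$ by hypothesis.

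For the $p$-variation assertion, set $A_t(y):=\int_0^1\mathrm D_xh(t,X_t^{n,2}+\theta\Delta X_t^n,y)\,d\theta$. Then the difference equals $A_t(y)\Delta X^n_t$, and the product rule for $p$-variation gives
\[
\sup_y\|A(y)\Delta X^n\|_{p}\le \sup_{t,y}|A_t(y)|\,\|\Delta X^n\|_p+\sup_y\|A(y)\|_p\,\|\Delta X^n\|_\infty .
\]
The first term is at most $C_h\|\Delta X^n\|_p$, which tends to zero in $L^2$.

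For the second term I need $\sup_y\|A(y)\|_{p;[a_n,S]}$ to be bounded in probability uniformly in $n$. Increments of $A$ split into two parts.
\begin{itemize}
\item The spatial part is bounded by $\sup|\mathrm D_x^2h|\,(\|X^{n,1}\|_p+\|X^{n,2}\|_p)$, which is bounded in $L^2$ by the uniform $p,2$ bound.
\item The temporal part is the variation of $t\mapsto\mathrm D_xh(t,x,y)$ at frozen arguments. This is controlled by $\|\mathrm D_xh\|_{\rho;C_b}\le C_h$ from Assumption~$(h)$, together with $\rho<p$.
\end{itemize}
Hence the second term is a tight sequence multiplied by $\|\Delta X^n\|_\infty\to0$, so it tends to zero in probability.

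The main obstacle is that splitting the increment of $A_{t}$ between $t_i$ and $t_{i+1}$ mixes the time change with two moving arguments $X^{n,2}$ and $\Delta X^n$. One has to insert intermediate points carefully so that each piece is controlled either by the frozen-argument time variation or by the spatial Lipschitz constant of $\mathrm D_xh$. If the integral form becomes awkward, I would fall back on the cruder estimate $|h_{t_{i+1}}^{n,1}-h_{t_{i+1}}^{n,2}-h_{t_i}^{n,1}+h_{t_i}^{n,2}|$. The four-point bound for $C^2$-in-$x$ functions then gives
\[
C\bigl(|\Delta X^n_{t_{i+1}}-\Delta X^n_{t_i}|+\|\Delta X^n\|_\infty(|X^{n,2}_{t_{i+1}}-X^{n,2}_{t_i}|)\bigr)
\]
plus the time-variation term of $\mathrm D_xh$ multiplied by $\|\Delta X^n\|_\infty$. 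Summing $p$-th powers yields the same conclusion.
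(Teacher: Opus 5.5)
Your proposal is correct and is essentially the paper's argument. The paper obtains exactly your three-term bound $\|\mathrm D_xh\|_\infty\|\Delta X^n\|_p+\|\Delta X^n\|_\infty\|\mathrm D_xh\|_{p;C_b}+\|\mathrm D_x^2h\|_\infty\|\Delta X^n\|_\infty\bigl(\|X^{n,1}\|_p+\|X^{n,2}\|_p\bigr)$ by citing the composition-difference estimate \cite[Lemma~2.3]{becherer_rough_2026} with $g^1=g^2=h(\cdot,\cdot,y)$, whereas you derive it by hand from the mean-value integral and the product rule; it then concludes with the same ``bounded in probability times $\|\Delta X^n\|_\infty\to0$'' step and the same direct Lipschitz bounds for the two uniform assertions, and your main argument already resolves the obstacle raised in your last paragraph, so the fallback is unnecessary.
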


\begin{proof}
Put $\Delta X^n:=X^{n,1}-X^{n,2}$.
For each fixed $y$, apply the pathwise difference estimate
\cite[Lemma~2.3, equation~(2.1)]{becherer_rough_2026} with
\[
g_t^1(x)=g_t^2(x)=h(t,x,y),
\qquad
x^1=X^{n,1},
\qquad
x^2=X^{n,2}.
\]
Since $g^1=g^2$, all coefficient-difference terms in that estimate
vanish. This pathwise estimate holds with constants uniform in $y$ by
Assumption~\hyperlink{A:h}{\((h)\)}, and hence taking the supremum over
$y$ gives
\[
\begin{aligned}
\sup_y
\|h^{n,1}(y)-h^{n,2}(y)\|_{p;[a_n,S]}
&\lesssim
\|D_xh\|_\infty
\|\Delta X^n\|_{p;[a_n,S]}
\\
&\quad+
\|\Delta X^n\|_{\infty;[a_n,S]}
\|D_xh\|_{p;[a_n,S];C_b}
\\
&\quad+
\|D_x^2h\|_\infty
\|\Delta X^n\|_{\infty;[a_n,S]}
\bigl(
\|X^{n,1}\|_{p;[a_n,S]}+
\|X^{n,2}\|_{p;[a_n,S]}
\bigr).
\end{aligned}
\]
The first term on the right-hand side converges to zero in $L^2$. In
the remaining terms,
$\|\Delta X^n\|_{\infty;[a_n,S]}$ converges to zero in
probability, while the other factors are bounded in probability by the
uniform conditional $(p,2)$-bounds. Therefore,
\[
\sup_y
\|h^{n,1}(y)-h^{n,2}(y)\|_{p;[a_n,S]}
\longrightarrow0
\]
in probability. Finally, boundedness of $D_xh$ and $D_xD_yh$ gives
\[
\sup_{t\in[a_n,S]}\sup_y
|h_t^{n,1}(y)-h_t^{n,2}(y)|
\le
\|D_xh\|_\infty
\|\Delta X^n\|_{\infty;[a_n,S]}
\]
and
\[
\sup_{t\in[a_n,S]}
|D_yh_t^{n,1}-D_yh_t^{n,2}|_\infty
\le
\|D_xD_yh\|_\infty
\|\Delta X^n\|_{\infty;[a_n,S]}.
\]
Both right-hand sides converge to zero in \(L^2\).
\end{proof}

\textbf{AI usage statement:} Generative AI tools, specifically ChatGPT (versions GPT-5.4--GPT-5.6), were used during the preparation of this manuscript to assist with language editing, exposition, literature searches, and feedback on mathematical arguments. All mathematical arguments were developed by the authors, with AI tools serving a supporting role. The authors independently checked all claims, proofs, and references and take full responsibility for the correctness and content of the manuscript.

\textbf{Acknowledgments:} The authors acknowledge funding by the
Deutsche Forschungsgemeinschaft (DFG, German Research Foundation) – CRC/TRR 388
"Rough Analysis, Stochastic Dynamics and Related Fields" – Project ID 516748464. 

\section*{Notation}
For the reader's convenience, we collect the main notation used throughout the
paper.
\begin{itemize}
  \item $C(\mathbb{R}^d;\mathbb{R}^k)$: continuous maps with the compact-open
  topology; $\mathrm{BUC}(\mathbb{R}^d;\mathbb{R}^k)$: bounded uniformly
  continuous maps equipped with the uniform norm.
  {In Section~\ref{section-markov-process}, the measurable
  state space $(E,\mathcal E)$ is $\mathrm{BUC}$ equipped with the
  $\sigma$-algebra generated by spatial evaluations, equivalently the
  Borel $\sigma$-algebra of the restricted compact-open topology.}
  \item $D(I;E)$: the space of c\`agl\`ad $E$-valued paths on $I$.
  \item $D^p(I;E)$: paths in $D(I;E)$ of finite $p$-variation, with seminorm
  $\|x\|_{p;[a,b];E}$; $C^p(I;E)$: the continuous subspace;
  $\mathcal P(I)$: finite partitions of $I$; see
  p.~\pageref{section-decorated-path}.
  \item $C^{0,p}(I;\mathbb{R}^e)$ and $D^{0,p}(I;\mathbb{R}^e)$: closures of
  smooth paths in $C^p$ and $D^p$ for the $p$-variation and $p$-variation-type
  $M1$ metrics; see p.~\pageref{smooth-path-closures}.
  \item $\bar{\mathcal{D}}(I;E)$: representatives $(\Phi,\Pi)$ of decorated
  paths; $\mathfrak{D}(I;E)$: their quotient under reparametrization;
  $\bar{\mathcal{D}}^{p\text{-var}}(I;E)$ and
  $\mathfrak{D}^{p\text{-var}}(I;E)$: the corresponding finite
  $p$-variation spaces before and after taking the quotient; see
  p.~\pageref{Def-decorated}.
  \item $\Phi^\delta$: the $\delta$-extension; $\tau^\delta$ and $c^\delta$:
  its time change and left inverse; see p.~\pageref{delta-extention}.
  \item $\alpha_p$, $\alpha_\infty$: Skorokhod-type metrics on decorated
  paths; $\sigma^p_{J1}$, $\sigma^p_{M1}$: the corresponding metrics on
  $D^p$; see p.~\pageref{Skorokhod-metric}.
  \item $\imath$, $\jmath$: the constant and linear embeddings into the
  decorated-path space; see p.~\pageref{embedding-i}.
  \item $\Delta W_t\assign W(t+)-W(t)$: the right jump of $W$;
  {$\Delta u(t,x)\assign u(t+,x)-u(t,x)$: the right jump of $u$;}
  $\mathcal C(W)$: its continuity points; $\diamond\,dW$: Marcus integration.
  \item $\varphi(V,x,s)$: the backward ODE flow with terminal value $x$;
  $\kappa_{V,W}$: the Marcus embedding for a vector field $V$ and driver
  $W$; $\kappa_{h,W}$: the Marcus lift for RPDE solution paths; see
  pp.~\pageref{Marcus-embedding} and~\pageref{Def-Marcus-lift}.
  \item $\mathcal L_t$: the second-order operator in \eqref{semilinear-RPDE}.
  \item $u^W\assign\mathcal U(\xi,W)$: the robust viscosity solution;
  $\mathbf u^W\assign\kappa_{-h,W}(u^W)$: its decorated Marcus lift.
  \item $B$: the Brownian motion in the rough FBSDE; $c$: a deterministic
  continuous non-decreasing clock; $M=B\circ c$: the clocked martingale;
  $B^\delta=B\circ c^\delta$: the time-extended Brownian motion; see
  \eqref{extended-rough-FBSDE} and~\eqref{pairwise-time-stretched-FBSDE}.
  \item $\mathcal B_p([0,S];\mathbb{R}^k)$ and
  $\mathrm{BMO}([0,S];\mathbb{R}^{k\times d})$: the solution spaces for the
  clocked rough BSDE, with norms $\|Y\|_{p,2;[0,S]}$ and
  $\|Z\|_{\mathrm{BMO};[0,S]}$, respectively; the BMO norm is taken with
  respect to $dc$; see p.~\pageref{YZ-statespace}.
  \item $L$: the stochastic driver in the SPDE;
  $\mathcal F^L_{s,t}$: its increment $\sigma$-field;
  $u^L(s;t,\eta)$: the randomised RPDE solution with terminal value $\eta$
  at time $t$.
  \item $\tilde L_s=L_T-L_{T-s}$: the time-reversed driver;
  $v^L(t;s,\eta)=u^L(T-t;T-s,\eta)$: the forward-in-time solution process;
  $P_{s,t}$: its transition operators;
  {$B_b(E,\mathcal E)$: bounded $\mathcal E$-measurable
  real-valued functions on the state space $E$;} see
  Section~\ref{Section-SPDE-Markov}.
\end{itemize}

\

\


\begin{thebibliography}{}
\bibitem[App09]{applebaum_levy_2009} David Applebaum.
\newblock \textit{L\'evy processes and stochastic calculus}.
\newblock Cambridge Studies in Advanced Mathematics. Cambridge University Press, Cambridge, 2nd edition, 2009.

\bibitem[BBP97]{barles_backward_1997}Guy Barles, Rainer Buckdahn, and  Etienne Pardoux.
\newblock Backward stochastic differential equations and integral-partial differential equations.
\newblock \textit{Stochastics and Stochastic Reports}, 60(1--2):57--83, 1997.

\bibitem[Bil99]{billingsley_convergence_1999}Patrick Billingsley.
\newblock \textit{Convergence of probability measures}.
\newblock John Wiley \& Sons, New York, 2nd edition, 1999. 

\bibitem[BHM20]{brzezniak_weak_2020}Zdzis{\l}aw Brze{\'z}niak, Fabian Hornung, and Utpal Manna.
\newblock Weak martingale solutions for the stochastic nonlinear Schr{\"o}dinger equation driven by pure jump noise.
\newblock \textit{Stochastics and Partial Differential Equations: Analysis and Computations}, 8(1):1--53, 2020.

\bibitem[BLZ14]{brzezniak_strong_2014}Zdzis{\l}aw Brze{\'z}niak, Wei Liu, and Jiahui Zhu.
\newblock Strong solutions for SPDE with locally monotone coefficients driven by L{\'e}vy noise.
\newblock \textit{Nonlinear Analysis: Real World Applications}, 17:283--310, 2014.

\bibitem[BM19]{brzezniak_weak_2019}Zdzis{\l}aw Brze{\'z}niak and Utpal Manna.
\newblock Weak solutions of a stochastic Landau--Lifshitz--Gilbert equation driven by pure jump noise.
\newblock \textit{Communications in Mathematical Physics}, 371(3):1071--1129, 2019.

\bibitem[BMM19]{brzezniak_wongzakai_2019} Zdzis{\l}aw Brze{\'z}niak, Utpal Manna, and Debopriya Mukherjee.
\newblock Wong--Zakai approximation for the stochastic Landau--Lifshitz--Gilbert equations.
\newblock \textit{Journal of Differential Equations}, 267(2):776--825, 2019.

\bibitem[BMP19]{brzezniak_martingale_2019} Zdzis{\l}aw Brze{\'z}niak, Utpal Manna, and Akash Ashirbad Panda.
\newblock Martingale solutions of nematic liquid crystals driven by pure jump noise in the Marcus canonical form.
\newblock \textit{Journal of Differential Equations}, 266(10):6204--6283, 2019.

\bibitem[BS26]{becherer_rough_2026}Dirk Becherer and Yuchen Sun.
\newblock Rough backward SDEs with discontinuous Young drivers.
\newblock \href{https://arxiv.org/html/2505.20437v2}{arXiv:2505.20437v2}, 2026.

\bibitem[CF09]{caruana_partial_2009}Michael Caruana and Peter Friz.
\newblock Partial differential equations driven by rough paths.
\newblock \textit{Journal of Differential Equations}, 247(1):140--173, 2009.

\bibitem[CT04]{cont_financial_2004} Rama Cont and Peter Tankov.
\newblock \textit{Financial modelling with jump processes}.
\newblock Chapman \& Hall/CRC Financial Mathematics Series. Chapman \& Hall/CRC, 2004.

\bibitem[CP14]{chechkin_marcus_2014} A.~Chechkin and I.~Pavlyukevich.
\newblock Marcus versus Stratonovich for systems with jump noise.
\newblock \textit{Journal of Physics A: Mathematical and Theoretical}, 47(34):342001, 2014.

\bibitem[CF19]{chevyrev_canonical_2019}Ilya Chevyrev and Peter~K.~Friz.
\newblock Canonical RDEs and general semimartingales as rough paths.
\newblock \textit{The Annals of Probability}, 47(1):420--463, 2019.

\bibitem[CFKM20]{chevyrev_superdiffusive_2020}Ilya Chevyrev, Peter~K.~Friz, Alexey Korepanov, and Ian Melbourne.
\newblock Superdiffusive limits for deterministic fast-slow dynamical systems.
\newblock \textit{Probability Theory and Related Fields}, 178(3--4):735--770, 2020.

\bibitem[CFO11]{caruana_rough_2011}Michael Caruana, Peter~K.~Friz, and  Harald Oberhauser.
\newblock A (rough) pathwise approach to a class of non-linear stochastic partial differential equations.
\newblock \textit{Annales de l'Institut Henri Poincar{\'e} C, Analyse non lin{\'e}aire}, 28(1):27--46, 2011.

\bibitem[CKM24]{chevyrev_superdiffusive_2024}Ilya Chevyrev, Alexey Korepanov, and Ian Melbourne.
\newblock Superdiffusive limits beyond the Marcus regime for deterministic fast-slow systems.
\newblock \textit{Communications of the American Mathematical Society}, 4(16):746--786, 2024.

\bibitem[{CIL92}]{crandall_users_1992} {Michael~G. Crandall, Hitoshi Ishii, and Pierre-Louis Lions. \newblock User's guide to viscosity solutions of second order partial differential equations. \newblock\textit{Bulletin of the American Mathematical Society}, 27(1):1--67, 1992.}

\bibitem[DN98]{dudley_introduction_1998-1}Richard~M.~Dudley and Rimas Norvai{\v s}a.
\newblock \textit{An introduction to $p$-variation and Young integrals: With emphasis on sample functions of stochastic processes}.
\newblock Centre for Mathematical Physics and Stochastics, University of Aarhus, 1998. 

\bibitem[DPZ14]{da_prato_stochastic_2014} Giuseppe Da Prato and Jerzy Zabczyk.
\newblock \textit{Stochastic equations in infinite dimensions}.
\newblock Encyclopedia of Mathematics and its Applications. Cambridge University Press, Cambridge, 2nd edition, 2014.

\bibitem[DF12]{diehl_backward_2012}Joscha Diehl and Peter Friz.
\newblock Backward stochastic differential equations with rough drivers.
\newblock \textit{The Annals of Probability}, 40(4):1715--1758, 2012.

\bibitem[DZ17]{diehl_backward_2017}Joscha Diehl and Jianfeng Zhang.
\newblock Backward stochastic differential equations with Young drift.
\newblock \textit{Probability, Uncertainty and Quantitative Risk}, 2(1):5, 2017.

\bibitem[EK86]{ethier_markov_1986}Stewart~N.~Ethier and Thomas~G.~Kurtz.
\newblock \textit{Markov processes}.
\newblock Wiley Series in Probability and Mathematical Statistics. John Wiley \& Sons, Inc., New York, 1986.

\bibitem[FH20]{friz_course_2020} Peter K. Friz and Martin Hairer.
\newblock \textit{A course on rough paths: With an introduction to regularity structures}.
\newblock Universitext. Springer, Cham, 2nd edition, 2020.

\bibitem[FO14]{friz_rough_2014}Peter Friz and Harald Oberhauser.
\newblock Rough path stability of (semi-)linear SPDEs.
\newblock \textit{Probability Theory and Related Fields}, 158(1):401--434, 2014.

\bibitem[FGLS17]{friz_eikonal_2017}Peter~K.~Friz, Paul Gassiat, Pierre-Louis Lions, and Panagiotis~E.~Souganidis.
\newblock Eikonal equations and pathwise solutions to fully non-linear SPDEs.
\newblock \textit{Stochastics and Partial Differential Equations: Analysis and Computations}, 5(2):256--277, 2017.

\bibitem[FV10]{friz_multidimensional_2010}Peter~K.~Friz and  Nicolas~B.~Victoir.
\newblock \textit{Multidimensional stochastic processes as rough paths: Theory and applications}.
\newblock Cambridge Studies in Advanced Mathematics. Cambridge University Press, Cambridge, 2010.

\bibitem[FZ18]{friz_differential_2018}Peter~K.~Friz and Huilin Zhang.
\newblock Differential equations driven by rough paths with jumps.
\newblock \textit{Journal of Differential Equations}, 264(10):6226--6301, 2018.

\bibitem[HP23]{hartmann_first-order_2023}Lena-Susanne Hartmann and Ilya Pavlyukevich.
\newblock First-order linear Marcus SPDEs.
\newblock \textit{Stochastics and Dynamics}, 23(6):2350054, 2023.

\bibitem[IL90]{ishii_viscosity_1990} Hitoshi Ishii and Pierre-Louis Lions.
\newblock Viscosity solutions of fully nonlinear second-order elliptic partial differential equations.
\newblock \textit{Journal of Differential Equations}, 83(1):26--78, 1990.

\bibitem[JS03]{jacod_limit_2003}Jean Jacod and Albert~N.~Shiryaev.
\newblock \textit{Limit theorems for stochastic processes}.
\newblock Grundlehren der mathematischen Wissenschaften, vol.~288.
\newblock Springer-Verlag, Berlin, 2nd edition, 2003.

\bibitem[{Kal21}]{kallenberg_foundations_2021} {Olav Kallenberg. \newblock \textit{Foundations of modern probability}. \newblock Probability Theory and Stochastic Modelling, vol.~99. Springer, Cham, 3rd edition, 2021.}

\bibitem[Kel75]{kelley_general_1975} John~L. Kelley.
\newblock \textit{General topology}.
\newblock Graduate Texts in Mathematics, vol.~27. Springer, New York, NY, 1975.

\bibitem[{Kun90}]{kunita1990stochastic} {Hiroshi Kunita. \newblock \textit{Stochastic flows and stochastic differential equations}. \newblock Cambridge Studies in Advanced Mathematics, vol.~24. Cambridge University Press, 1990.}

\bibitem[Kun04]{kunita_stochastic_2004} Hiroshi Kunita.
\newblock Stochastic differential equations based on L{\'e}vy processes and stochastic flows of diffeomorphisms.
\newblock In M.~M.~Rao, editor, \textit{Real and stochastic analysis: New perspectives}, pp.~305--373. Birkh{\"a}user, Boston, MA, 2004.

\bibitem[Kun19]{kunita_stochastic_2019} Hiroshi Kunita.
\newblock \textit{Stochastic flows and jump-diffusions}.
\newblock Probability Theory and Stochastic Modelling, vol.~92. Springer, Singapore, 2019.

\bibitem[KPP95]{kurtz_stratonovich_1995}Thomas~G.~Kurtz, {\'E}tienne Pardoux, and Philip Protter.
\newblock Stratonovich stochastic differential equations driven by general semimartingales.
\newblock \textit{Annales de l'Institut Henri Poincar{\'e}, Probabilit{\'e}s et Statistiques}, 31(2):351--377, 1995.

\bibitem[LS98]{lions_fully_1998}Pierre-Louis Lions and  Panagiotis~E.~Souganidis.
\newblock Fully nonlinear stochastic partial differential equations.
\newblock \textit{Comptes Rendus de l'Acad{\'e}mie des Sciences - Series I - Mathematics}, 326(9):1085--1092, 1998.

\bibitem[LT25]{liang_multidimensional_2023} Jiahao Liang and Shanjian Tang.
\newblock Multidimensional backward stochastic differential equations with rough drifts.
\newblock \textit{Transactions of the American Mathematical Society}, 378(1):201--257, 2025.

\bibitem[Mar80]{marcus_modeling_1980}Steven~I.~Marcus.
\newblock Modeling and approximation of stochastic differential equations driven by semimartingales.
\newblock \textit{Stochastics}, 4(3):223--245, 1980.

\bibitem[Mon72]{monroe_gamma-variation_1972} Itrel Monroe.
\newblock On the $\gamma$-variation of processes with stationary independent increments.
\newblock \textit{The Annals of Mathematical Statistics}, 43(4):1213--1220, 1972.

\bibitem[MR10]{marinelli_well-posedness_2010}Carlo Marinelli and Michael Röckner.
\newblock Well-posedness and asymptotic behavior for stochastic reaction-diffusion equations with multiplicative Poisson noise.
\newblock \textit{Electronic Journal of Probability}, 15, article no.~49, pp.~1528--1555, 2010.

\bibitem[Par98]{decreusefond_backward_1998}{\'E}tienne Pardoux.
\newblock Backward stochastic differential equations and viscosity solutions of systems of semilinear parabolic and elliptic PDEs of second order.
\newblock In Laurent Decreusefond, Bernt {\O}ksendal, Jon Gjerde, and Ali~S{\"u}leyman {\"U}st{\"u}nel, editors, \textit{Stochastic analysis and related topics VI}, pp.~79--127. Birkh{\"a}user, Boston, MA, 1998.

\bibitem[Pha09]{pham_continuous-time_2009}Huy{\^e}n Pham.
\newblock \textit{Continuous-time stochastic control and optimization with financial applications}. Stochastic Modelling and Applied Probability, vol.~61.
\newblock Springer, Berlin, Heidelberg, 2009.

\bibitem[PP92]{rozovskii_backward_1992}E.~Pardoux and S.~Peng.
\newblock Backward stochastic differential equations and quasilinear parabolic partial differential equations.
\newblock In Boris~L.~Rozovskii  and Richard~B.~Sowers, editors, \textit{Stochastic partial differential equations and their applications}, Lecture Notes in Control and Information Sciences, vol.~176, pp.~200--217. Springer-Verlag, Berlin, Heidelberg, 1992.

\bibitem[PPR97]{pardoux_probabilistic_1997}E.~Pardoux, F.~Pradeilles, and Z.~Rao.
\newblock Probabilistic interpretation of a system of semi-linear parabolic partial differential equations.
\newblock \textit{Annales de l'Institut Henri Poincar{\'e}, Probabilit{\'e}s et Statistiques}, 33(4):467--490, 1997.

\bibitem[PZ07]{peszat_stochastic_2007}S.~Peszat and J.~Zabczyk.
\newblock \textit{Stochastic partial differential equations with L{\'e}vy noise: An evolution equation approach}.
\newblock Encyclopedia of Mathematics and its Applications. Cambridge University Press, Cambridge, 2007.

\bibitem[SZZ25]{song_backward_2025-1} Jian Song, Huilin Zhang, and Kuan Zhang.
\newblock Backward stochastic differential equations with nonlinear Young drivers II.
\newblock \href{https://arxiv.org/abs/2509.05183}{arXiv:2509.05183}, 2025.

\bibitem[Wal98]{walter_ordinary_1998} Wolfgang Walter.
\newblock \textit{Ordinary differential equations}.
\newblock Graduate Texts in Mathematics, vol.~182. Springer, New York, NY, 1998.

\bibitem[Whi02]{whitt_stochastic-process_2002}Ward Whitt.
\newblock \textit{Stochastic-process limits: An introduction to stochastic-process limits and their application to queues}.
\newblock Springer Series in Operations Research. Springer, New York, Berlin, Heidelberg, 2002.

\bibitem[Zha17]{zhang_backward_2017}Jianfeng Zhang.
\newblock \textit{Backward stochastic differential equations}.
\newblock Probability Theory and Stochastic Modelling, vol.~86. Springer, New York, 2017.
\end{thebibliography}
\end{document}